\newdimen\dropht \newbox\dropbox
\def\dropshadow#1#2{\setbox\dropbox=%
\vbox{\hrule\hbox{\vrule\kern6pt
        \vbox{\kern6pt#1\kern6pt}\kern6pt\vrule}\hrule}
\dropht=\ht\dropbox\advance\dropht by -#2
       \vbox{\baselineskip0pt\lineskip0pt
             \hbox{\copy\dropbox\vrule width#2 height\dropht}
             \hbox{\kern#2\vrule height#2 width \wd\dropbox}}}
\newtheorem{theorem}{Theorem}
\newtheorem{proposition}{Proposition}
\newtheorem{remark}{Remark}
\newtheorem{lemma}{Lemma}
\newtheorem{assumption}{Assumption}
\newtheorem*{example*}{Example}
\def\balpha{\boldsymbol{\alpha}}
\def\bbeta{\boldsymbol{\beta}}
\def\bmu{\boldsymbol{\mu}}
\def\bnu{\boldsymbol{\nu}}
\def\bphi{\boldsymbol{\phi}}
\def\bpsi{\boldsymbol{\psi}}
\def\btheta{\boldsymbol{\theta}}
\newcommand\cB{\mathcal B}
\newcommand\cC{\mathcal C}
\newcommand\cE{\mathcal E}
\newcommand\cF{\mathcal F}
\newcommand\cH{\mathcal H}
\newcommand\cK{\mathcal K}
\newcommand\cL{\mathcal L}
\newcommand\cM{\mathcal M}
\newcommand\cP{\mathcal P}
\newcommand\cU{\mathcal U}
\newcommand\bA{\mathbf A}
\newcommand\bW{\mathbf W}
\newcommand\bX{\mathbf X}
\newcommand\bv{\mathbf v}
\def\AA{\mathbb{A}}
\newcommand\EE{\mathbb E}
\newcommand\HH{\mathbb H}
\newcommand\PP{\mathbb P}
\newcommand\NN{\mathbb N}
\newcommand\RR{\mathbb R}
  \let\div\relax
  \DeclareMathOperator{\div}{div}
\DeclareMathOperator*{\arginf}{arg\,inf}
\newcommand{\supp}{\mathrm{supp}}
\title{Non-standard Stochastic Control with Nonlinear Feynman-Kac Costs}
\author{Ren\'e Carmona \& Mathieu Lauri\`ere \& Pierre-Louis Lions}
\begin{document}
\maketitle

\begin{abstract}
We consider the conditional control problem introduced by P.L. Lions in his lectures at the Coll\`ege de France in November 2016. In his lectures, Lions emphasized some of the major differences with the analysis of classical stochastic optimal control problems, and in so doing, raised the question of the possible differences between the value functions resulting from optimization over the class of Markovian controls as opposed to the general family of open loop controls. The goal of the paper is to elucidate this quandary and provide elements of response to Lions' original conjecture. First, we justify the mathematical formulation of the conditional control problem by the description of practical model from evolutionary biology. Next, we relax the original formulation by the introduction of \emph{soft} as opposed to hard killing, and using a \emph{mimicking} argument, we reduce the open loop optimization problem to an optimization over a specific class of feedback controls. After proving existence of optimal feedback control functions, we prove a superposition principle allowing us to recast the original stochastic control problems as deterministic control problems for dynamical systems of probability Gibbs measures. Next, we characterize the solutions by forward-backward systems of coupled non-linear Partial Differential Equations (PDEs) very much in the spirit of the Mean Field Game (MFG) systems. From there, we identify a common optimizer, proving the conjecture of equality of the value functions. Finally we illustrate the results by convincing numerical experiments.
\end{abstract}

%%%%%%%%%%%%%%%%%%%%%%%%%
\vskip 36pt
\section{\textbf{Introduction}}
\label{se:introduction}
In this paper, we consider the conditional control problem introduced by P.L. Lions in his lectures at the Coll\`ege de France in November 2016. See \cite{Lions2016CDF}. As originally stated, the problem does not fit in the usual categories of stochastic control problems considered in the literature, so its solution requires new ideas, if not new technology. In his lectures, Lions emphasized some of the major differences with the analysis of classical stochastic optimal control problems, and in so doing, raised the question of the possible differences between the value functions resulting from optimization over the class of Markovian controls as opposed to the general family of open loop controls merely assumed to be adapted. The equality of the values of these optimization problems is accepted as a \emph{folk theorem} in the classical theory of stochastic control. However, optimizing an objective function whose values strongly depend upon the past history of the controlled trajectories of the system is a strong argument in favor of differences between the optimization results over these two classes of control processes. The goal of this paper is to elucidate this quandary and provide elements of response to Lions' original conjecture. 

\vskip 2pt
A standard stochastic control problem is concerned with the minimization of an expected cost when the latter is incurred by a controller, and is aggregated over a specific time horizon.
In the problem considered in this paper, the aggregation over time is done through the integral over the time horizon of \emph{conditional} expectations of instantaneous and terminal costs. Instead of accumulating expected costs incurred by controlling a single stochastic process over time, the distribution of the process over which the expectation is computed changes at each time because of the conditional nature of the expectations. Intuitively, at each time $t$, the expected running cost at that time could be interpreted as the limit of an expectation over a large particle system of the Fleming-Viot type. See for example \cite{ethier1993fleming}. Still, there does not seem to be a single particle system which could be used for different times.

This formulation of the optimization is highly unusual. For this reason, we provide a simple model of evolutionary biology to motivate and justify the mathematical formulation of the optimization problem. We argue that egalitarian resource sharing leads naturally to the formulation of a fitness criterion in terms of time aggregation of conditional expectations of instantaneous rewards. Such a special form of egalitarian cooperation has been observed in  many species (see for example the study of social spiders in \cite{spiders}).

The purpose of the present paper is to provide a thorough analysis of an instance of this new form of conditional control with complete proofs. Our approach is to work with a relaxed version of the problem in which we replace the hard conditioning of Lions original proposal, by a soft conditioning. Still, the main thrust of the paper is to highlight and take advantage of the role played by the distribution of the state in the evaluation of the costs. In both cases (feedback Markovian and open loop controls), we reformulate the problem as a standard control problem in infinite dimensions, the controlled dynamics being given by the time evolution of a flow of probability measures obtained by distorting and renormalizing the original distributions of the state, pretty much in the same way Gibbs measures are introduced in statistical physics. See for example \cite{DelMoral}. 

\vskip 4pt
Given the introduction of the problem in \cite{Lions2016CDF} and the work \cite{achdou2021optimal} on the large time asymptotics of the Markovian case, the contributions of the present paper are summarized in the following list: 
1) justification of the mathematical formulation of the conditional control problem by the description of practical models from evolutionary biology (see Section~\ref{sub:motivation}); 
2) relaxation of the original exit problem formulation into the analysis of smoother (distorted) Feynman-Kac semigroups (see Section~\ref{se:model-feynmankac}); 
3) reformulation in the case of Markovian feedback controls, of the original stochastic control problems as a deterministic control problems over a space of probability measures, the time evolution of the states being given by controlled dynamical systems of Gibbs measures;
4) proof of a non-local superposition principle in Subsection \ref{sub:superposition}, guaranteeing that the deterministic formulation is equivalent to the original optimization problem;
5) existence of optimal feedback control functions, derivation of a form of maximum principle; 6) characterization of optimality by a forward-backward system of non-linear, non-local partial differential equations (PDEs) very much in the spirit of the Mean Field Game (MFG) systems~\cite{MR2295621}, and analysis of the system  (see Sections~\ref{sec:formulation-markovian-deterministic} and~\ref{sec:formulation-open-deterministic}); 
7) introduction of mimicking arguments to reduce the open loop optimization problem to an optimization over a class of feedback controls (see Theorem~\ref{thm:openloop-to-newfeedback}); 
8) characterization of the solutions by similar forward-backward systems of coupled non-linear Partial Differential Equations  (see Sections~\ref{sec:fbpde-markovian} and~\ref{sec:fbpde-open}); 
9) identification of the PDE systems and proof of the value equality conjecture (see Theorem~\ref{th:equality}); and finally, 
9) convincing numerical experiments illustrating the validity of our result (see Section~\ref{sec:numerics}).
We managed to prove the results we were after without proving existence and uniqueness of solutions of the forward-backward system of non-linear, non-local PDEs characterizing optimality. So for the sake of completeness, we provide a complete proof of existence and uniqueness of classical solutions for small data, namely short time horizon and small terminal condition.

\vskip 6pt
The rest of the paper is organized as follows. In Section~\ref{sec:conditional_exit}, we introduce the problem of optimal control with conditional exit, we provide a motivation, and we propose an approximate problem with nonlinear Feynman-Kac semigroups, on which we focus in the sequel. In Section~\ref{sec:V-closedloop}, we present a detailed analysis of the  problem with feedback Markovian controls. Among other things, we prove existence of optimal controls and a non-local superposition principle of independent interest, see Theorem \ref{th:superposition} in Subsection \ref{sub:superposition}, and a form of maximum principle tailored to the present non-local dynamical equations. There, we also provide a viscosity analysis of the forward-backward PDE system characterizing the optimum. In Section~\ref{sec:open-loop}, we analyze the problem with open-loop controls: we show the equivalence with feedback Markovian controls for an extended state leading to our proof of the existence of a common optimal control, and consequently of the equality of the value functions. We conclude with Section~\ref{sec:numerics} reporting on numerical experiments corroborating our theoretical result. As mentioned above the appendix contains a detailed proof of the well posedness (i.e. existence and uniqueness of a classical solution) for the fundamental forward-backward PDE system in the case of small data (i.e. time horizon and terminal condition).

\vskip 12pt\noindent
\emph{Acknowledgments:} The first two named authors benefited from the support of NSF grant DMS-1716673, 
ARO grant W911NF-17-1-0578, and AFOSR awards FA9550-19-1-0291 and FA9550-23-1-0324. We would like to thank Dan Lacker for pointing out to us the relevance of the superposition principle and Samuel Daudin for providing us with the argument reproduced in Remark \ref{re:Daudin} on the equality of the infima.

%%%%%%%%%%%%%%%%%%%%%%%%%%%%%
%%%%%%%%%%%%%%%%%%%%%%%%%%%%%
\vskip 36pt
\section{\textbf{The Conditional Exit Control Problem}}
\label{sec:conditional_exit}
For the sake of definiteness, we review the conditional control problem originally introduced by P.L. Lions.
Let  $D$ be a bounded open domain in $\RR^d$ with a smooth boundary $\partial D$. Let us 
denote by $C([0,\infty);\RR^d)$ the space of continuous functions of time $t\in[0,\infty)$ with values in $\RR^d$, and by $C_0([0,\infty);\RR^d)$ the subspace of those functions $x\in C([0,\infty);\RR^d)$ satisfying $x(0)=0$. If $x\in C([0,\infty);\RR^d)$, we denote by $\tau^x_D$ the first exit time of the path $x$ from $D$, namely the quantity:
\begin{equation}
    \label{eq:def-tauD}
    \tau^x_D=\inf\{t\ge 0;\,x(t)\notin D\}
\end{equation}
with the convention that $\inf \emptyset = \infty$. We shall skip the superscript $x$ and/or the subscript $D$ when their values are clear from the context.

%%%%%%%%%%%%%%%%%%%%%%%%%%%%%
\subsection{The Optimization Problem}
\label{sec:conditional-exit-stopping}
We consider an optimization problem which is underpinned by a  controlled state process $\bX=(X_t)_{t\ge 0}$ whose dynamics are given by:
\begin{equation}
\label{fo:state}
dX_t=\alpha_t dt + \sigma dW_t
\end{equation}
\begin{equation}
\label{fo:control_integrability}
\EE\int_0^T|\alpha_t|^pdt<\infty,
\end{equation}
for some $p\ge 1$, and we shall use $p=2$ most often.

\vskip 6pt
The goal of the optimization problem is to minimize a cost $J^\tau(\balpha)$ associated to the control process $\balpha$. This cost is derived from a running cost function $f:\RR^d\times A\mapsto\RR$ and a terminal cost function $g:\RR^d\mapsto\RR$ (whose regularity properties will be specified later on) in the form:
\begin{equation}
\label{fo:J_tau_of_alpha}
\begin{split}
J^\tau(\balpha)&=\int_0^T\EE[f(X_t,\alpha_t)|\tau^X_D\ge t]\;dt +\EE[g(X_T)|\tau^X_D\ge T]\\
&=\int_0^T
\frac{\EE\Bigl[f(X_t,\alpha_t)\textbf{1}_{\tau^X_D\ge t}\Bigr]}{\PP[\tau^X_D\ge t]}dt
+\frac{\EE\Bigl[g(X_T)\textbf{1}_{\tau^X_D\ge T}\Bigr]}{\PP[\tau^X_D\ge T]}.
\end{split}
\end{equation}
We shall assume that for each $x\in\RR^d$, the function $A\ni\alpha\mapsto f(x,\alpha)$ is convex.
Moreover, for illustration purposes, we shall often restrict ourselves to the case of separable running cost functions $f$ of the form:
\begin{equation}
\label{fo:separable_running_cost}
f(x,\alpha)=\frac12|\alpha|^2 + \tilde f(x)
\end{equation}
for some bounded measurable function $\tilde f$ on $\RR^d$.

%%%%%%%%%%%%%%%%%%%%%%%%%%%%
\subsection{Evolutionary Biology Motivation}
\label{sub:motivation}

The mathematical formulation of the above optimization problem is non-standard, and except for the original lectures of P.L. Lions \cite{Lions2016CDF} and the numerical experiments presented in \cite{achdou2021optimal}, we do not know of any reported mathematical analysis of such a model. However, we claim that it is very natural from the perspective of the study of populations of altruistic individuals in high resource environments practicing egalitarian resource sharing. The analysis of the evolution of these populations could be based on dynamical models of the following type. We consider the evolution of identical individuals foraging for food independently of each other, in a safe territory. We assume that the outcome of foraging is random, and that at the end of each time period (one can think of a period as the weaning period for an offspring generation) the food is shared among the surviving individuals in an egalitarian manner which allots the same amount of food to each member still alive.

Let us denote by $D$ the territory, and let us assume that the individuals disappear or die when they leave the territory.
If for $i=1,\cdots,N_t$ we denote by $X^i_t$ the positions at time $t$ of the $N_t$ individuals still alive at the beginning of period $t+1$, we assume that foraging will take those who survive (i.e. do not exit the territory) to positions $X^i_{t+1}$ at the end of the period, and that they will have accumulated the amount $f(X_t^i)$ of resources (say food for example). Resource sharing takes place in the following form: all the resources are first aggregated, and then redistributed in equal amounts to the surviving members of the population. In other words, the resource allocated to each individual still alive is:
$$
\frac{1}{N_{t+1}}\sum_{i=1}^{N_{t+1}}f(X^i_{t+1}).
$$ 
So an individual still alive at the end of the $T$-th period will have benefitted from the resources:
$$
\sum_{t=0}^{T-1}\frac{1}{N_{t+1}}\sum_{i=1}^{N_{t+1}}f(X^i_{t+1}).
$$ 
Not surprisingly, in the limit of small foraging periods, the summation over time will converge toward the integral between $0$ and $T$ of the resource enjoyed at time $t$. What is more interesting is the form of the integrand when the size, say $N$,  of the population increases. Indeed, notice first that the ratio $N_t/N$ converges toward the probability that a typical individual is still alive at time $t$, in other words $\PP[\tau>t]$ if we use the notation $\tau$ for the time of death of the individual. Recall that the latter is the first exit time of the domain $D$. Next, the quantity
$$
\frac{1}{N}\sum_{i=1}^{N_{t}}f(X^i_{t})=\frac{1}{N}\sum_{i=1}^{N}f(X^i_{t})\textbf{1}_{\tau(X^i)>t}
$$ 
converges toward $\EE[f(X_t)\textbf{1}_{\tau>t}]$ and the integrand at time $t$ is indeed given by the conditional expectation of the resource at time $t$ given that the individual is still alive at that time. Optimization of the fitness of the individuals still alive naturally leads to the conditional control problem which we propose to study in this paper.

%%%%%%%%%%%%%%%%%%%%%%%%%%%%%
\subsection{Control of Nonlinear Feynman-Kac Semigroups}
\label{se:model-feynmankac}
We shall not solve the model with hard killing through the exit time introduced above. Instead, we shall provide a complete analysis of a relaxed version of the model based on \emph{soft killing}.
Indeed, it is natural to consider the following generalization of the original model proposed by P.L. Lions.
Working with the same basic  controlled state equation \eqref{fo:state}, we can generalize the conditioning by considering a measurable function $V:\RR^d\mapsto [0,\infty]$ which we assume to be non-negative for the sake of simplicity.
$V$ could as well be bounded below, or even have some negative singularities of a specific type, but we shall not worry about this type of generality in this paper.
The goal of the new formulation of the control problem is still to minimize a cost $J^V(\balpha)$ associated to a control process $\balpha$, and this cost is still derived from a running cost function $f:\RR^d\times A\mapsto\RR$ and a terminal cost function $g:\RR^d\mapsto\RR$ in the form:
\begin{equation}
\label{fo:J_V_of_alpha}
J^V(\balpha)=\int_0^T
\frac{\EE\Bigl[f(X_t,\alpha_t)e^{-\int_0^tV(X_s)ds}\Bigr]}{\EE\Bigl[e^{-\int_0^tV(X_s)ds}\Bigr]}dt
+\frac{\EE\Bigl[g(X_T)e^{-\int_0^TV(X_s)ds}\Bigr]}{\EE\Bigl[e^{-\int_0^TV(X_s)ds}\Bigr]}.
\end{equation}
Lions' model based on conditioning the state to remain in a given domain $D$ is recovered by considering the function $V=V^\infty$ given by:
\begin{equation}
\label{fo:V_infinity}
V^\infty(x)=
\begin{cases}
0&\text{if } x\in D\\
\infty&\text{otherwise},
\end{cases}
\end{equation}
in which case:
\begin{equation}
\label{fo:V_exit}
\int_0^tV^\infty(X_s)ds=
\begin{cases}
0&\text{if }X_s\in \overline D,\; 0\le s \le t\\
\infty&\text{if } X_s\notin \overline D \text{ for some } 0\le s\le t,
\end{cases}
\end{equation}
so that:
$$
e^{-\int_0^tV^\infty(X_s)ds}=\textbf{1}_{[X_s\in \overline D,\; 0\le s\le t]}=\textbf{1}_{[\tau_D\ge t]},
$$
where $\tau_D = \tau^{X}_D$ is the first exit time of the domain $D$ defined in~\eqref{eq:def-tauD}. 
Accordingly:
\begin{equation}
\label{fo:Lions_J_of_alpha}
J^{V^\infty}(\balpha)=\int_0^T
\EE\Bigl[f(X_t,\alpha_t)\big| \tau_D\ge t\Bigr]dt
+\EE\Bigl[g(X_T)\big| \tau_D\ge T\Bigr],
\end{equation}
which is indeed the case considered earlier in \eqref{fo:J_tau_of_alpha}. In what follows, we approximate $V^\infty$ by potential functions $V^n=nV^1$ where $V^1$ is a continuous approximation of the indicator function of the domain $D$. To be specific, we choose $V^1(x)=\chi^\epsilon(d(x,D))$ where $d(x,D)$ denotes the distance from $x\in\RR^d$ to the domain $D$, $\epsilon>0$ is an arbitrary fixed number whose specific value will not matter, and $\chi^\epsilon$ is the continuous function:
\begin{equation}
\label{fo:chi}
\chi^\epsilon(d)=
\begin{cases}
0&\text{if }d\le 0\\
\text{linear }&\text{if }0\le d\le \epsilon\\
1&\text{if } d\ge \epsilon.
\end{cases}
\end{equation}
Notice that since we assume that the boundary $\partial D$ is smooth, we can modify $\chi^\epsilon(d)$ when $0\le d\le \epsilon$ in such a way that $\chi^\epsilon$ can also be assumed to be smooth.
The choice of this family of potential functions is justified by the following simple result.

\begin{lemma}
\label{le:limit}
If $\bX=(X_t)_{t\ge 0}$ satisfies $X_t=x_0+\int_0^t\alpha_s ds+W_t$ for some $x_0\in D$ and $\balpha=(\alpha_t)_{t\ge 0}$ is admissible, then for any bounded function $g$
\begin{equation}
\label{fo:g_limit}
\EE[g(X_T)\;|\;\tau_D>T]=\lim_{n\to\infty}\frac{\EE[g(X_T)e^{-n\int_0^TV^1(X_s)ds}]}{\EE[e^{-n\int_0^TV^1(X_s)ds}]}.
\end{equation}
Similarly, if $\int_0^T\EE[|f(X_t,\alpha_t)|]dt<\infty$, we also have:
\begin{equation}
\label{fo:f_limit}
\int_0^T\EE[f(X_t,\alpha_t)\;|\;\tau_D>t]dt=\lim_{n\to\infty}\int_0^T\frac{\EE[f(X_t,\alpha_t)e^{-n\int_0^tV^1(X_s)ds}]}{\EE[e^{-n\int_0^tV^1(X_s)ds}]}.
\end{equation}
\end{lemma}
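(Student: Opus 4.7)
My plan is to apply dominated convergence in two nested layers. The central observation is that, because $V^1\ge 0$ and $V^1(x)=0$ iff $x\in\overline D$, the Feynman--Kac weight $W^n_t := e^{-n\int_0^t V^1(X_s)\,ds}$ is bounded by $1$ and decreases pointwise, as $n\to\infty$, to $\mathbf{1}_{E_t}$ where $E_t=\{\int_0^tV^1(X_s)\,ds=0\}$. By continuity of both $X$ and $V^1$, $E_t$ coincides with the event that $X_s\in\overline D$ for all $s\le t$, and the key preliminary claim I need is that $E_t=\{\tau_D>t\}$ up to a $\PP$-null set. The inclusion $\{\tau_D>t\}\subseteq E_t$ is immediate. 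For the reverse, on $\{\tau_D\le t\}$ I would apply the strong Markov property at $\tau_D$: since $X_{\tau_D}\in\partial D$ lies on the smooth boundary and the Brownian noise is nondegenerate (the $L^p$ drift being harmlessly absorbed by a Girsanov change of measure), the process visits $\RR^d\setminus\overline D$ on a set of positive Lebesgue measure in every $(\tau_D,\tau_D+\delta)\cap[0,t]$, so $\int_0^tV^1(X_s)\,ds>0$ almost surely on $\{\tau_D\le t\}$.

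For \eqref{fo:g_limit}, I would then apply bounded convergence to numerator and denominator separately. Since $|g(X_T)W^n_T|\le\|g\|_\infty$ and $g(X_T)W^n_T\to g(X_T)\mathbf{1}_{\{\tau_D>T\}}$ a.s., this gives $\EE[g(X_T)W^n_T]\to\EE[g(X_T)\mathbf{1}_{\{\tau_D>T\}}]$, and similarly $\EE[W^n_T]\to\PP[\tau_D>T]$. Since $x_0\in D$ and paths are continuous, $\PP[\tau_D>T]>0$, so the ratio converges to the quotient, which equals $\EE[g(X_T)\mid\tau_D>T]$.

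For \eqref{fo:f_limit}, the same pointwise-in-$t$ argument yields $\EE[f(X_t,\alpha_t)W^n_t]/\EE[W^n_t]\to\EE[f(X_t,\alpha_t)\mid\tau_D>t]$ for each $t\in(0,T]$. To interchange this limit with the time integral I would use the uniform-in-$n$ lower bound $\EE[W^n_t]\ge\PP[\tau_D>t]\ge\PP[\tau_D>T]>0$, which follows from $W^n_t=1$ on $\{\tau_D>t\}$ together with the monotonicity $t\mapsto\PP[\tau_D>t]$. This provides the $n$-independent majorant
$$
\Bigl|\frac{\EE[f(X_t,\alpha_t)W^n_t]}{\EE[W^n_t]}\Bigr|\le\frac{\EE[|f(X_t,\alpha_t)|]}{\PP[\tau_D>T]},
$$
which is integrable on $[0,T]$ by hypothesis, so Lebesgue's dominated convergence theorem concludes. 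The step I expect to require the most care is the a.s.\ identification $E_t=\{\tau_D>t\}$: one must rule out paths that touch $\partial D$ without ever leaving $\overline D$, and this relies on the smoothness of $\partial D$ together with the nondegeneracy of the noise, with Girsanov handling the $L^p$ drift.
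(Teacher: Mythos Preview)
Your proposal is correct and follows essentially the same route as the paper: identify the a.s.\ pointwise limit of $e^{-n\int_0^t V^1(X_s)\,ds}$ as $\mathbf 1_{\{\tau_D\ge t\}}$ and then apply dominated convergence separately to numerator and denominator. You are in fact more careful than the paper on two points it treats as self-evident---the a.s.\ identification $E_t=\{\tau_D>t\}$ (the paper simply asserts that $\{s\le T:X_s\notin D\}$ has positive Lebesgue measure when $\tau_D<T$, and writes ``$V^1(x)>0$ if $x\notin D$'' where strictly one needs $x\notin\overline D$), and the uniform-in-$n$ majorant $\EE[|f(X_t,\alpha_t)|]/\PP[\tau_D>T]$ needed to pass the limit through the $dt$-integral in \eqref{fo:f_limit}.
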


\begin{proof}
Notice that if $\tau_D\ge T$, $X_t\in D$ for $0\le t\le T$ and $\int_0^TV^1(X_s)ds=0$. On the other hand, if $\tau_D< T$, the set of times $t\le T$ for which $X_t\notin D$ is of positive Lebesgue's measure which implies that $\int_0^TV^1(X_s)ds>0$ since $V^1(x)>0$ if $x\notin D$. Consequently, 
$$
    \lim_{n\to\infty}e^{-n\int_0^TV^1(X_s)ds}=\textbf{1}_{\tau_D\ge T}
$$
and since $g$ is bounded, Lebesgue's dominated convergence theorem gives:
$$
    \lim_{n\to\infty}\EE[g(X_T)e^{-n\int_0^TV^1(X_s)ds}]=\EE[g(X_T)\textbf{1}_{\tau_D\ge T}]
$$
and using the same result with $g\equiv 1$ for the denominator, we get the desired limit \eqref{fo:g_limit}. The argument needed for the proof of \eqref{fo:f_limit} involving the running cost is exactly the same.
\end{proof}

\begin{remark}
Notice that the above lemma does not say anything when $x_0\notin D$. Indeed in this case, since the boundary is smooth, we have $\tau_D=0$ almost surely, and the left hand sides of \eqref{fo:g_limit} and \eqref{fo:f_limit} are referring to conditional expectations with respect to an event of probability zero.
\end{remark}

\vskip 6pt
The advantages of a soft killing given by a bounded continuous potential function $V$ are twofold: 1) it sets all the equations in the whole space $\RR^d$ and avoids having to deal with boundary conditions on $\partial D$; 2) it makes technical proofs easier as it provides continuity with respect to the time variable $t$ which may not be available otherwise. To be specific, we shall make the following assumptions.

%%%%%%%%%%%%%%%%%%%%%%%%%%%%%
\subsection{Assumptions}
\label{sub:assumptions}

Our analysis is predicated on the following assumptions which will be in force throughout the remainder of the paper, even if many of the individual results still hold under weaker conditions.

\begin{assumption}
\label{assumption:g-f-cont}
The running cost and the terminal cost functions satisfy (recall that the action space $A$ is a closed convex subset of $\RR^d$):
\begin{itemize}\itemsep=-1pt
\item The function $g$ is Lipschitz continuous and bounded on $\RR^d$;
\item For each $\alpha\in A$, the function $f(\cdot,\alpha)$ is continuous and bounded on $\RR^d$.
\item For each $x \in \RR^d$, the function $f(x,\cdot)$ is convex on $A$.
\item There exist $C_1>0$ and $C_2>0$ such that $C_1(1+|\alpha|^2)\le f(x,\alpha)\le C_2(1+|\alpha|^2),\;\; x\in\RR^d,\alpha\in A$.
\end{itemize}
\end{assumption}
As stated earlier, we shall often concentrate on the case of a separable running cost function of the form
\eqref{fo:separable_running_cost} for a bounded Lipschitz continuous function $\tilde f$. As for the potential function $V$, in order to be specific, we will make the following assumption.
\begin{assumption}
\label{assumption:V-bdd}
	The function $V$ is Lipschitz continuous on $\RR^d$ and $0\le V\le 1$. In fact, without any loss of generality, we shall assume that it is continuously differentiable with bounded derivatives when needed.
\end{assumption}

%%%%%%%%%%%%%%%%%%%%%%%%%%%%%
\subsection{Approximation by Bounded Controls}
\label{sub:bounded_controls}
In this subsection we work on a probability space $(\Omega,\cF,\PP)$ equipped with a Wiener process $\bW=(W_t)_{0\le t\le T}$ and for each process $\balpha=(\alpha_t)_{0\le t\le T}$ adapted to the filtration of the Brownian motion $\bW$ satisfying the integrability condition \eqref{fo:control_integrability} with $p=2$ we consider the corresponding state process $\bX=(X_t)_{0\le t\le T}$ satisfying the state dynamics \eqref{fo:state}. For the purpose of what we are about to do, $\balpha$ could be a general adapted process, or it could be given in feedback form $\alpha_t=\phi_t(X_t)$ for a measurable function $\phi$.
As we do throughout the paper, we denote by $\nu_t$ the distribution of $X_t$ and by $\mu_t$ the conditioned probability measure defined on $\RR^d$ by:
\begin{equation}
    \label{fo:mu_t}
    \mu_t(dx)=\frac{1}{\EE[e^{-A_t}]}\EE\bigl[\delta_{X_t}(dx)e^{-A_t}\bigr]
\end{equation}
with $A_t=\int_0^T V(X_s)ds$.

\vskip 4pt

For each $K>0$, we denote by $\balpha^K=(\alpha^K_t)_{0\le t\le T}$ given by $\alpha_t^K=\alpha_t\mathbf{1}{|\alpha_t|\le K}$, by $\bX^K=(X^K_t)_{0\le t\le T}$ the corresponding state process and by $A^K_t=\int_0^TV(X^K_s)ds$ the corresponding additive functional. 

\begin{lemma}
    \label{le:bounded_convergence}
    For any adapted control process $\balpha=(\alpha_t)_{0\le t\le T}$ satisfying 
\begin{equation}
    \label{fo:integrability_assumption}
    \EE\int_0^T |\alpha_t|^2 dt <\infty
\end{equation}
we have 
    \begin{equation}
        \label{fo:alpha_upper_bound}
        |J^V(\balpha)-J^V(\balpha^K)|\le C\bigl[\varepsilon(K)^{1/2} + \varpi(K) \bigr]
    \end{equation}
    where we used the notations
    \begin{equation}
    \label{fo:varepsilon}
    \varepsilon(K)=\EE\int_0^T|\alpha_t|^2\mathbf{1}_{|\alpha_t|>K}dt,
    \quad\text{and}\quad
    \varpi(K) = \int_0^T\EE\Bigl[ |\alpha^K_t|^2 |e^{-A_t}-e^{-A^K_t}|\Bigr]dt,
\end{equation}
and where the constant $C$ depends only upon the data (i.e. $T$, the sup-norms of $\tilde f$ and $g$, and the Lipschitz constants of $\tilde f$, $g$ and $V$). In particular, since both $\varepsilon(K)$ and $\varpi(K)$ converge to $0$ when $K\nearrow\infty$, we have:
\begin{equation}
    \label{fo:convergence}    
    \lim_{K\nearrow\infty}J^V(\balpha^K)=J^V(\balpha).
\end{equation}
\end{lemma}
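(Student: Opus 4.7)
The plan is to decompose $J^V(\balpha)-J^V(\balpha^K)$ term by term (two pieces of the running cost and the terminal cost), writing each time-slice of the cost as a ratio $N_t/D_t$ with $N_t=\EE[f(X_t,\alpha_t)e^{-A_t}]$ and $D_t=\EE[e^{-A_t}]$ (and analogously for the terminal and for the $K$-truncated objects). Using the elementary identity
$$
\frac{N_t}{D_t}-\frac{N_t^K}{D_t^K}=\frac{N_t-N_t^K}{D_t}+\frac{N_t^K\,(D_t^K-D_t)}{D_t\,D_t^K},
$$
the key preliminary observation is that Assumption~\ref{assumption:V-bdd} gives $0\le A_t\le T$ so that $D_t,D_t^K\ge e^{-T}$, which uniformly controls the denominators.

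Next, I would exploit the separable form \eqref{fo:separable_running_cost} together with the algebraic identity $|\alpha_t|^2-|\alpha_t^K|^2=|\alpha_t|^2\mathbf 1_{|\alpha_t|>K}$ to split $N_t-N_t^K$ into three summands:
$$
\tfrac12\EE\!\left[|\alpha_t|^2\mathbf 1_{|\alpha_t|>K}e^{-A_t}\right] \;+\; \EE\!\left[(\tilde f(X_t)-\tilde f(X_t^K))e^{-A_t}\right] \;+\; \EE\!\left[f(X_t^K,\alpha_t^K)(e^{-A_t}-e^{-A_t^K})\right].
$$
The first summand integrated in $t$ is bounded by $\tfrac12\varepsilon(K)$; the $\frac12|\alpha_t^K|^2$ contribution of the last summand is exactly $\tfrac12\varpi(K)$; the $\tilde f(X_t^K)$ contribution is bounded by $\|\tilde f\|_\infty \EE|e^{-A_t}-e^{-A_t^K}|$. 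The analogous decomposition of $D_t-D_t^K$ gives a single term $\EE[e^{-A_t}-e^{-A_t^K}]$. Everything thus reduces to controlling $\EE|X_t-X_t^K|$ and $\EE|e^{-A_t}-e^{-A_t^K}|$.

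These state-level estimates all follow from one Cauchy–Schwarz bound on the trajectory. Since $\alpha_s-\alpha_s^K=\alpha_s\mathbf 1_{|\alpha_s|>K}$,
$$
\EE|X_t-X_t^K|^2=\EE\Bigl|\int_0^t\alpha_s\mathbf 1_{|\alpha_s|>K}\,ds\Bigr|^2\le T\,\varepsilon(K),
$$
so $\EE|X_t-X_t^K|\le (T\varepsilon(K))^{1/2}$. Combining the Lipschitz property of $V$ with $|e^{-a}-e^{-b}|\le|a-b|$ for $a,b\ge 0$ gives $\EE|e^{-A_t}-e^{-A_t^K}|\le \mathrm{Lip}(V)\,T\,(T\varepsilon(K))^{1/2}$, and the same inequality applied to $g$ and $\tilde f$ yields the $\varepsilon(K)^{1/2}$ contribution advertised in \eqref{fo:alpha_upper_bound}. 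Assembling: the $|\alpha|^2$ part produces $\varepsilon(K)+\varpi(K)$, the $\tilde f$/$g$ parts and all $e^{-A}$-difference parts produce $\varepsilon(K)^{1/2}$, and the $|N_t^K|/(D_tD_t^K)\cdot|D_t-D_t^K|$ term contributes $\varpi(K)$ (from its $|\alpha_t^K|^2$ piece) plus $\varepsilon(K)^{1/2}$ (from its bounded $\tilde f$ piece).

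Finally, the convergence \eqref{fo:convergence} is a pair of dominated-convergence arguments. For $\varepsilon(K)$, the integrand $|\alpha_t|^2\mathbf 1_{|\alpha_t|>K}$ tends pointwise to $0$ on $[0,T]\times\Omega$ and is dominated by $|\alpha_t|^2\in L^1(dt\otimes d\PP)$ by \eqref{fo:integrability_assumption}. For $\varpi(K)$, the pathwise bound $\int_0^T|\alpha_s|\,ds\le T^{1/2}\bigl(\int_0^T|\alpha_s|^2ds\bigr)^{1/2}<\infty$ a.s.\ lets dominated convergence force $\int_0^t\alpha_s^K ds\to\int_0^t\alpha_s ds$, hence $X_t^K\to X_t$ a.s.; continuity and boundedness of $V$ then give $A_t^K\to A_t$ and $|e^{-A_t}-e^{-A_t^K}|\to0$ a.s., while $|\alpha_t^K|^2\cdot|e^{-A_t}-e^{-A_t^K}|\le|\alpha_t|^2$ provides the dominating function. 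The main delicate point I anticipate is reconciling the $\varepsilon(K)$ produced by the $|\alpha|^2$ numerator with the $\varepsilon(K)^{1/2}$ on the right-hand side; this is addressed by using the bound $\varepsilon(K)\le (\EE\int_0^T|\alpha_t|^2dt)^{1/2}\varepsilon(K)^{1/2}$, absorbing the $L^2$-norm of $\balpha$ into the constant (or equivalently noting that only the fact that both $\varepsilon(K)$ and $\varpi(K)$ vanish is actually used for the limit).
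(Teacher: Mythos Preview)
Your approach is essentially the paper's: the same ratio decomposition into numerator and denominator differences, the same pathwise bound $|X_t-X_t^K|\le\int_0^T|\alpha_s|\mathbf 1_{|\alpha_s|>K}\,ds$ together with the Lipschitz property of $V$ to control $|e^{-A_t}-e^{-A_t^K}|$, and the same dominated-convergence argument (with dominating function $|\alpha_t|^2$) for $\varpi(K)\to 0$.

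One small bookkeeping slip: the term $N_t^K(D_t^K-D_t)/(D_tD_t^K)$ does \emph{not} produce $\varpi(K)$ from its $|\alpha_t^K|^2$ piece, because $|D_t-D_t^K|=|\EE[e^{-A_t}-e^{-A_t^K}]|$ sits \emph{outside} the expectation containing $|\alpha_t^K|^2$; this is a product of expectations, not $\EE[|\alpha_t^K|^2\,|e^{-A_t}-e^{-A_t^K}|]$. That piece is instead bounded by $C\varepsilon(K)^{1/2}\int_0^T\EE|\alpha_t^K|^2\,dt$, i.e.\ $\varepsilon(K)^{1/2}$ times an $\balpha$-dependent factor---exactly the issue you already flagged at the end, and (as you note) irrelevant for the limit \eqref{fo:convergence}. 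The paper's organization places the $e^{-A_t}\!\to e^{-A_t^K}$ change together with the $D_t\!\to D_t^K$ change, so $\varpi(K)$ arises there from the inside-the-expectation difference; in your organization $\varpi(K)$ genuinely appears only from the numerator summand $\EE[f(X_t^K,\alpha_t^K)(e^{-A_t}-e^{-A_t^K})]$.
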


\begin{proof}
For the sake of simplicity, we only consider the case of separable running costs given by \eqref{fo:separable_running_cost}.
    \begin{equation}
        \begin{split}
            J^V(\balpha)-J^V(\balpha^K)
            &=\int_0^T\frac1{\EE[e^{-A_t}]}\EE\bigl[\bigl(f(X_t,\alpha_t)-f(X^K_t,\alpha^K_t)\bigr)e^{-A_t}\bigr]dt
            +\frac{\EE\bigl[\bigl(g(X_T)-g(X^K_T)\bigr)e^{-A_T}\bigr]}{\EE[e^{-A_T}]}\\
            &+\int_0^T\EE\Bigl[f(X^K_t,\alpha^K_t)\Bigl(
            \frac{e^{-A_t}}{\EE[e^{-A_t}]}-\frac{e^{-A^K_t}}{\EE[e^{-A^K_t}]}\Bigr)\Bigr]dt
            +\EE\Bigl[g(X^K_T)\Bigl(\frac{e^{-A_T}}{\EE[e^{-A_T}]}-\frac{e^{-A^K_T}}{\EE[e^{-A^K_T}]}\Bigr)\Bigr]\\
            &= (i) + (ii) + (iii) + (iv).
        \end{split}
    \end{equation}
Notice that
\begin{equation}
        \begin{split}
\EE\int_0^T|X_t-X^K_t|dt&\le\EE\int_0^T\int_0^t|\alpha_s-\alpha^K_s|ds\;dt\\
&=\EE\int_0^T\int_0^t|\alpha_s|\mathbf{1}_{|\alpha_s|>K}ds\;dt\\
&\le T\EE\int_0^T|\alpha_t|\mathbf{1}_{|\alpha_t|>K}dt,
        \end{split}
\end{equation}
and that
\begin{equation}
        \begin{split}
|e^{-A_t}-e^{-A^K_t}|&\le|A_t-A^K_t|\\
&=\bigl|\int_0^t[V(X_s)-V(X^K_s)]ds\bigr|\\
&\le\|V\|_{Lip1}\int_0^t|X_s-X^K_s|ds\\
&\le T\|V\|_{Lip1}\int_0^T|\alpha_t|\mathbf{1}_{|\alpha_t|>K}dt.
        \end{split}
\end{equation}
For every $K>0$ and every $(t,\omega)\in[0,T]\times \Omega$ we have $|\alpha^K_t|^2|e^{-A_t}-e^{-A^K_t}|\le 2|\alpha_t|^2$ whose integral with respect to $dt\PP(d\omega)$ is finite. Moreover, $\lim_{K\nearrow \infty}\int_0^T|\alpha_t|\mathbf{1}_{|\alpha_t|>K}dt=0$ $\PP$-almost surely proving that 
$$
\lim_{K\nearrow \infty}\varpi(K)=0.
$$
Now,
\begin{equation}
\label{fo:my_(i)}
        \begin{split}
|(i)|&\le \int_0^T\frac1{\EE[e^{-A_t}]}\EE\bigl[\bigl|\tilde f(X_t)-\tilde f(X^K_t)\bigr|e^{-A_t}\bigr]dt
+\frac12\int_0^T\frac1{\EE[e^{-A_t}]}\EE\bigl[\bigl(|\alpha_t|^2-|\alpha^K_t|^2\bigr)e^{-A_t}\bigr]dt\\
&\le e^T\|\tilde f\|_{Lip1}\EE\int_0^T|X_t-X^K_t|dt +\frac12\int_0^T\frac1{\EE[e^{-A_t}]}\EE\bigl[\bigl||\alpha_t|^2\mathbf{1}_{|\alpha_t|>K}e^{-A_t}\bigr]dt\\
&\le Te^T\|\tilde f\|_{Lip1}\EE\int_0^T|\alpha_t|\mathbf{1}_{|\alpha_t|>K}dt +\frac{e^T}2\EE\int_0^T|\alpha_t|^2\mathbf{1}_{|\alpha_t|>K}dt\\
&\le \frac{e^T}2\varepsilon(K) +T^{3/2}e^T\|\tilde f\|_{Lip1}\varepsilon(K)^{1/2}
        \end{split}
\end{equation}
if we use the notation  introduced in \eqref{fo:varepsilon} for $\varepsilon(K)$,
and use H\"older's inequality. Similarly
\begin{equation}
\label{fo:my_(ii)}
        \begin{split}
|(ii)|&\le e^T\|g\|_{Lip1}\EE[|X_T-X^K_T|]\\
&\le e^T\|g\|_{Lip1}\EE\int_0^T|\alpha_t|\mathbf{1}_{|\alpha_t|>K}dt\\
&\le e^T\|g\|_{Lip1}\varepsilon(K)^{1/2}.
        \end{split}
\end{equation}
Next
\begin{equation}
\label{fo:(iii)}
\begin{split}
|(iii)|
&\le \int_0^T\EE\Bigl[|\tilde f(X^K_t)|\Bigl|\frac{e^{-A_t}}{\EE[e^{-A_t}]}-\frac{e^{-A^K_t}}{\EE[e^{-A^K_t}]}\Bigr|\Bigr]dt
+\frac12\int_0^T\EE\Bigl[ |\alpha^K_t|^2\Bigl|\frac{e^{-A_t}}{\EE[e^{-A_t}]}-\frac{e^{-A^K_t}}{\EE[e^{-A^K_t}]}\Bigr|\Bigr]dt\\
&\le \|\tilde f\|_\infty\int_0^T\Bigl(
\frac{1}{\EE[e^{-A_t}]}\EE[|e^{-A_t}-e^{-A^K_t}|] +\EE[e^{-A^K_t}]\Bigl(\frac1{\EE[e^{-A_t}]}-\frac1{\EE[e^{-A^K_t}]}\Bigr)\Bigr)dt\\
&\hskip 45pt
+\frac12\int_0^T\frac1{\EE[e^{-A^K_t}]}\EE\Bigl[ |\alpha^K_t|^2\Bigl|e^{-A_t}-e^{-A^K_t}\Bigr|\Bigr]dt
+\frac12\int_0^T\EE\Bigl[ |\alpha^K_t|^2 e^{-A_t}\Bigl|\frac1{\EE[e^{-A_t}]}-\frac1{\EE[e^{-A^K_t}]}\Bigr|\Bigr]dt\\
&\le 2e^T\|\tilde f\|_\infty\int_0^T\EE[|e^{-A_t}-e^{-A^K_t}|]dt\\
&\hskip 45pt
+\frac{e^T}{2}\int_0^T\EE\Bigl[ |\alpha^K_t|^2 |e^{-A_t}-e^{-A^K_t}|\Bigr]dt
+\frac{e^T}{2}\int_0^T\EE\Bigl[ |\alpha^K_t|^2\frac{e^{-A_t}}{\EE[e^{-A_t}]}\Bigr]
\EE[|e^{-A_t}-e^{-A^K_t}|]dt\\
&\le \bigl( 2e^T\|\tilde f\|_\infty+\frac{T^{3/2}}2 e^{2T}\|V\|_{Lip1}\varepsilon(K)^{1/2}
+\frac{e^T}{2}\int_0^T\EE\Bigl[ |\alpha^K_t|^2 |e^{-A_t}-e^{-A^K_t}|\Bigr]dt.
\end{split}
\end{equation}
Finally
\begin{equation}
\label{fo:(iv)}
\begin{split}
|(iv)|
&\le \EE\bigl[|g(X^K_t)|\frac1{\EE[e^{-A_T}]} |e^{-A_T}-e^{-A^K_T}|\bigr]+\EE\bigl[|g(X^K_t)|\;e^{-A^K_T}\bigl|\frac1{\EE[e^{-A_T}]} -\frac1{\EE[e^{-A^K_T}]}\bigr|\bigr]\\
&\le \|g\|_\infty e^T \EE[|e^{-A_T}-e^{-A^K_T}|] +\|g\|_\infty e^{2T} \EE[|e^{-A_T}-e^{-A^K_T}|]\\
&\le 2\|g\|_\infty e^{2T}T\|V\|_{Lip1}\varepsilon(K)^{1/2}.
\end{split}
\end{equation}
Putting together \eqref{fo:my_(i)}, \eqref{fo:my_(ii)}, \eqref{fo:(iii)}, and \eqref{fo:(iv)} gives the desired estimate \eqref{fo:alpha_upper_bound}. Clearly, 
$$
\lim_{K\nearrow\infty}\varepsilon(K)=0,
$$
and we already argued that $\varpi(K)$ also converges to $0$.
\end{proof}

The first obvious consequence of the above result is that the search for the infimum
\begin{equation}
    \label{fo:Jstar}    
    J^{V*}=\inf_{\balpha}J^V(\balpha)
\end{equation}
over all the adapted processes satisfying the integrability condition \eqref{fo:control_integrability} with $p=2$ can be restricted to bounded control processes $\balpha$. However, if and when we are able to prove that this infimum is attained, it will require extra work to prove that the minimizer is in fact a bounded control.

%%%%%%%%%%%%%%%%%%%%%%%%%%%%%
%%%%%%%%%%%%%%%%%%%%%%%%%%%%%
\section{\textbf{The Case of Markovian Feedback Controls}}
\label{sec:V-closedloop}

%%%%%%%%%%%%%%%%%%%%%%%%%%%%%%%%%%%%%%%
\subsection{Formulation of the problem}
\label{sec:formulation-markovian-deterministic}
In this section, we restrict the optimization problem to controls processes $\balpha=(\alpha_t)_{0\le t\le T}$ of the form:
\begin{equation}
\label{fo:feedback}
\alpha_t=\phi_t(X_t)
\end{equation}
where $\phi:[0,T]\times \RR^d\mapsto A$ is a  (deterministic) measurable function, and $\bX=(X_t)_{0\le t\le T}$ (which we shall sometimes denote $\bX^{\phi}$ to emphasize the dependence upon the feedback function $\phi$) satisfies:
\begin{equation}
\label{fo:phi_sde}
dX_t=\phi_t(X_t)dt+dW_t.
\end{equation}
 We will refer to such controls as \emph{Markovian} controls or \emph{feedback} controls.
For a given feedback control function $\phi$, the existence of the controlled state process requires the solution of a stochastic differential equation. So in order to be admissible, a feedback function should be at a minimum, a measurable functions $\phi$ for which such a solution exists and satisfies
\begin{equation}
\label{fo:necessary}
\EE\Bigl[\int_0^T|\phi_t(X_t)|^p\Bigr]<\infty,
\end{equation}
with $p=2$ in order for the objective function we plan to minimize to make sense. 
We shall denote by $\Phi^{(p)}$ the set of admissible feedback control functions, namely the set of $\RR^d$-valued measurable functions $\phi$ 
on $[0,T]\times\RR^d$ for which the stochastic differential equation \eqref{fo:phi_sde} has a weak solution satisfying \eqref{fo:necessary}.
In some cases, we shall make a stronger assumption on the feedback controls. For this reason, we introduce the space $\Phi^{(\infty)}$ of $\RR^d$-valued bounded measurable functions on $[0,T]\times\RR^d$.
\vskip 2pt
In dimension $d=1$, all the measurable bounded functions $\phi$ are in all the $\Phi^{(p)}$ because of the classical result of Zvonkin \cite{zvonkin1974transformation} which guarantees existence of strong solutions for \eqref{fo:phi_sde}. In dimension $d>1$  existence and uniqueness of a strong solutions still hold for bounded drifts because the volatility is the identity matrix. More general volatility terms could be accommodated by Veretennikov's extension \cite{veretennikov1981strong} which provides a large class of bounded measurable functions $\phi$ for which the result still holds in more general non-degenerate cases. However, as we shall point out in several instances, our main requirement is the existence of weak solutions, and this is guaranteed under much weaker conditions on $\phi$ as long as the volatility is the identity matrix. 

\vskip 2pt
Throughout the section, we alternatively use the notation $\bphi$ or $\balpha$ for the control, even if $\phi$ is only the feedback function determining the actual control process $\balpha$ through the solution of the equation \eqref{fo:phi_sde}. 

%%%%%%%%%%%%%%%%%%%%%%%%%%%%%%%%%%%%%%%%%%%%%%%%%%%%%%%%%%%%%%%%%%%%%
\subsection{A Non-local Fokker-Planck-Kolmogorov (FPK) Equation}
\label{sub:FPK}

Given the form of the running and terminal costs, the objective function \eqref{fo:J_V_of_alpha} can be rewritten in the form:

\begin{equation}
\label{fo:new_J_of_phi}
J(\phi)=\int_0^T\int f(x,\phi_t(x))\mu_t(dx)\;dt+\int g(x)\mu_T(dx),
\end{equation}
where we use the notation $\mu_t$ for the probability measure:
\begin{equation}
    \label{fo:mu_1_t}
    \mu_t(dx)=\frac{\EE[\delta_{X_t}(dx) e^{- A_t}]}{\EE[ e^{- A_t}]},\qquad 0\le t\le T,
\end{equation}
where from now on, we use the notation $A_t=\int_0^t V(X_s)ds$. Notice that $t\mapsto \mu_t$ is continuous for the topology of weak convergence of probability measures, so in the sequel, we shall restrict ourselves to continuous flows of probability measures.

\begin{lemma}
\label{le:FPK1}
The measure valued function $t\mapsto \mu_t$ defined in \eqref{fo:mu_1_t} satisfies the (non-local) forward Fokker-Planck-Kolmogorov (FPK) equation:
\begin{equation}
\label{fo:kolmo1}
\partial_t \mu_t =\frac12\Delta \mu_t - \div(\phi_t \mu_t) - (V-<\mu_t,V>) \mu_t,
\end{equation}
 in the sense of Schwartz distributions.
 \end{lemma}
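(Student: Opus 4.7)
The natural approach is to test \eqref{fo:kolmo1} against an arbitrary smooth compactly supported $\psi\in C_c^\infty(\RR^d)$ and verify that the resulting identity holds, which is exactly the meaning of a distributional solution. Write
$$
N_t=\EE[e^{-A_t}],\qquad M_t(\psi)=\EE[\psi(X_t)e^{-A_t}],\qquad \langle\mu_t,\psi\rangle=\frac{M_t(\psi)}{N_t}.
$$
Observe that Assumption~\ref{assumption:V-bdd} gives $N_t\ge e^{-T\|V\|_\infty}>0$, so the quotient is well defined and smooth in $t$ once we control $M_t(\psi)$ and $N_t$.

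The key computation is to apply It\^o's formula to the product $\psi(X_t)e^{-A_t}$, using \eqref{fo:phi_sde} and $dA_t=V(X_t)dt$:
\begin{equation*}
d\bigl(\psi(X_t)e^{-A_t}\bigr)=e^{-A_t}\Bigl[\tfrac12\Delta\psi(X_t)+\phi_t(X_t)\cdot\nabla\psi(X_t)-V(X_t)\psi(X_t)\Bigr]dt+e^{-A_t}\nabla\psi(X_t)\cdot dW_t.
\end{equation*}
Since $\psi$ is compactly supported and $e^{-A_t}\le 1$, the stochastic integral is a genuine $L^2$ martingale (it suffices that $\phi$ be, say, bounded, which by Lemma~\ref{le:bounded_convergence} we may assume when proving the identity for admissible $\phi$; the general case then follows by approximation). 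Taking expectations and differentiating in $t$ (justified by dominated convergence applied to the bounded integrands) yields
\begin{equation*}
\tfrac{d}{dt}M_t(\psi)=N_t\,\bigl\langle\mu_t,\,\tfrac12\Delta\psi+\phi_t\cdot\nabla\psi-V\psi\bigr\rangle,
\end{equation*}
and specializing to $\psi\equiv 1$ gives $\tfrac{d}{dt}N_t=-N_t\langle\mu_t,V\rangle$.

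Applying the quotient rule to $\langle\mu_t,\psi\rangle=M_t(\psi)/N_t$ and simplifying produces
\begin{equation*}
\tfrac{d}{dt}\langle\mu_t,\psi\rangle=\bigl\langle\mu_t,\tfrac12\Delta\psi+\phi_t\cdot\nabla\psi\bigr\rangle-\bigl\langle\mu_t,V\psi\bigr\rangle+\langle\mu_t,\psi\rangle\langle\mu_t,V\rangle,
\end{equation*}
which after integrating by parts (against the test function, not against $\mu_t$, so no regularity of $\mu_t$ is used) is precisely the weak formulation of \eqref{fo:kolmo1}. The only genuine technical points are (i) the vanishing mean of the stochastic integral, handled by the boundedness of $\nabla\psi$ and $e^{-A_t}$; (ii) differentiation under the expectation, handled because the integrands are dominated uniformly in $t$ on compacts when $\phi$ is bounded; and (iii) reducing to bounded $\phi$, which is supplied by the approximation result of Lemma~\ref{le:bounded_convergence} together with the weak continuity of $t\mapsto\mu_t$. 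The main obstacle is thus not the derivation itself but ensuring the identity extends from bounded feedbacks to general admissible $\phi\in\Phi^{(p)}$; this will require checking that both sides of \eqref{fo:kolmo1}, interpreted distributionally in $(t,x)$, pass to the limit under the $L^2$-approximation $\phi^K\to\phi$, which is where the Lipschitz continuity of $V$ and the $L^2$-integrability of $\phi$ combine to control the drift and potential terms.
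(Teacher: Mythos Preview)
Your proof is correct and follows essentially the same route as the paper's: apply It\^o's formula to $\psi(X_t)e^{-A_t}$, take expectations, and use the quotient rule on $M_t(\psi)/N_t$. The only difference is your added discussion of reducing to bounded $\phi$ via Lemma~\ref{le:bounded_convergence}, which is superfluous here (and that lemma concerns convergence of the cost functional, not of $\mu_t$): the stochastic integral already has zero mean because $e^{-A_t}\nabla\psi(X_t)$ is bounded, and the drift term is integrable directly from the admissibility condition \eqref{fo:necessary}, so no approximation argument is needed.
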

The notation $<\mu,V>$ used in formula \eqref{fo:kolmo1} stands for $<\mu,V>=\int_{\RR^d} V(x)\mu(dx)$. 
\begin{proof}
If $\varphi$ is a smooth function on $\RR^d$ with compact support, It\^o's formula gives:
\begin{equation*}
\begin{split}
\frac{d}{dt} <\mu_t,\varphi> & = \frac{d}{dt}\frac{\EE[\varphi(X_t) e^{- A_t}]}{\EE[ e^{- A_t}]}\\
& =\frac{1}{\EE[ e^{- A_t}]}
\EE\Bigl[\Bigl(\frac12\Delta_x \varphi(X_t)  +\phi_t(X_t)\cdot\nabla_x\varphi(X_t)\Bigr)  e^{- A_t}\\ 
&\hskip 75pt
 - \varphi(X_t) V(X_t)  e^{- A_t} \Bigr]+\EE[\varphi(X_t) e^{- A_t}]\frac{\EE[V(X_t)e^{-A_t}]}{\EE[ e^{- A_t}]^2}\\
& = <\mu_t,\; \frac12\Delta_x \varphi  +\phi_t \cdot \nabla_x\varphi - (V -<\mu_t,V>)\varphi >\\
& = <\frac12\Delta_x \mu_t - \div_x(\phi_t \mu_t) -(V-<\mu_t,V>) \mu_t,\; \varphi>
\end{split}
\end{equation*}
where we used stochastic integration by parts and the fact that $\varphi$ has compact support.
\end{proof}

\vskip 6pt
For general existence and uniqueness results for classical, i.e. without the third term in the right hand side of \eqref{fo:kolmo1}, FPK equations, together with existence and regularity results for possible density for the solutions, we refer the interested reader to  \cite[Chapter 6]{bogachev2022fokker} and references therein. However, these results are not general enough to cover the case of equation \eqref{fo:kolmo1} of interest to us. Indeed, as far as we can tell, the mean field nature of equation \eqref{fo:kolmo1} makes it escape the realm of those results. For the purpose of our analysis, we proceed in the following way. Given a measurable feedback function $\phi$ for which there exists a weak solution $X_t$ of the stochastic differential equation
\begin{equation}
\label{fo:state_SDE}
dX_t=\phi_t (X_t)dt + dW_t,
\end{equation}
Lemma~\ref{le:FPK1} guarantees that the flow of probability measures $\mu_t$ defined by \eqref{fo:mu_1_t} with $A_t=\int_0^tV(X_s)ds$ 
is continuous and is a solution of \eqref{fo:kolmo1}. This is mostly what we shall need for the existence of solutions of \eqref{fo:kolmo1}. Still the following properties of such solutions will come handy in the sequel.

\begin{proposition}
    \label{pr:densities} 
    Let us assume that $\bphi=\bigl(\phi_t(x)\bigr)_{0\le t\le T,\;x\in\RR^d}$ is an $\RR^d$-valued measurable function on $[0,T]\times\RR^d$ and $\bmu=(\mu_t)_{0\le t\le T}$ is a  measurable flow  of probability measures satisfying
    \begin{equation}
    \label{fo:integrability}
    \int_0^T\int_{\RR^d}|\phi_t(x)|^p\mu_t(dx)dt<\infty,
    \end{equation}
    for some $p\ge 1$. If $\bmu=(\mu_t)_{0\le t\le T}$ solves the Fokker-Planck-Kolmogorov equation \eqref{fo:kolmo1} in the sense of distributions, then
    \begin{itemize}
    \item[(i)] $\bmu(dx,dt)=\mu_t(dx)dt=\rho_t(x)dxdt$ for some non-negative measurable function $\rho\in L^r_{loc}([0,T]\times\RR^d)$ for every $r\in[1,(d+2)')$.
    \item[(ii)] If $p>d+2$, the density $\rho$ can be chosen to belong to the Sobolev spaces $\cH^{1,p}(I\times K)$ for every closed interval $I\subset (0,T)$ and compact set $K\subset\RR^d$. 
    \item[(iii)] If $\phi$ is bounded, the continuous version of the density is strictly positive, hence bounded below away from $0$ on every closed interval $I\subset (0,T)$ and compact set $K\subset\RR^d$. 
    \end{itemize}
\end{proposition}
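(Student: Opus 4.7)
The plan is to reduce the nonlocal FPK equation \eqref{fo:kolmo1} to a classical linear parabolic FPK equation with a bounded zero-order term, and then invoke the regularity theory of Chapter 6 of \cite{bogachev2022fokker}. The reduction rests on a simple observation: since $0\le V\le 1$ and $\mu_t$ is a probability measure, the function $c_t(x):=V(x)-\langle\mu_t,V\rangle$ satisfies $|c_t(x)|\le 1$ uniformly. Because $\bmu$ is a measurable flow of probability measures, the map $t\mapsto\langle\mu_t,V\rangle$ is measurable by Fubini, so once $\bmu$ is regarded as fixed, the equation reads
\begin{equation*}
\partial_t\mu_t=\tfrac12\Delta\mu_t-\div(\phi_t\mu_t)-c_t(x)\mu_t,
\end{equation*}
which is a linear parabolic FPK equation with drift $\phi_t$ and bounded measurable zero-order coefficient $c$.

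For part (i), I would invoke the existence-of-density theorem for parabolic FPK equations with drift integrable against the solution measure: when $\phi\in L^p(\mu_t(dx)dt)$ with $p\ge 1$, the solution $\bmu$ admits a density $\rho\in L^r_{loc}([0,T]\times\RR^d)$ for every $r<(d+2)'$. The bounded zero-order term does not affect this, because the density-existence proofs in Chapter 6 of \cite{bogachev2022fokker} rely only on the parabolic scaling of the heat equation and the integrability of the drift; the contribution $-c_t\mu_t$ acts as a bounded multiplicative perturbation and is easily absorbed. Part (ii) follows from the same monograph: when $p>d+2$, parabolic Sobolev embedding puts $\phi$ into a Hölder class after convolution with the heat kernel, and the corresponding local $\cH^{1,p}(I\times K)$ regularity of $\rho$ on any closed interval $I\subset(0,T)$ and compact $K\subset\RR^d$ becomes available.

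For part (iii), when $\phi$ is bounded, $\rho$ is a non-negative weak solution of a uniformly parabolic equation in divergence form with bounded measurable coefficients. The De Giorgi--Nash--Moser theory provides a continuous (indeed Hölder) version, and the Aronson--Serrin parabolic Harnack inequality yields its strict positivity on every interior compact sub-cylinder. Strict positivity cannot fail on any time slice because $\int\rho_t(x)\,dx=1$ for every $t$, so $\rho_t$ is not identically zero, and Harnack then propagates positivity forward in time.

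The main obstacle I anticipate is the bookkeeping required to verify that the regularity theorems of \cite{bogachev2022fokker}, which are typically stated for FPK equations of the pure form $\partial_t\mu=\tfrac12\Delta\mu-\div(b\mu)$, still apply verbatim when the bounded measurable zero-order term $-c_t\mu$ is added. This is not a genuine analytical difficulty---the $L^r_{loc}$, Sobolev and Harnack estimates all tolerate bounded lower-order perturbations---but it is the step where one must be careful in how the existing results are invoked, precisely because, as the authors point out, the mean-field \emph{self-consistency} of $c_t$ in terms of $\mu_t$ is what puts equation \eqref{fo:kolmo1} outside the direct scope of the cited theory until the solution has been frozen.
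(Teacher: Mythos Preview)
Your approach to parts (i) and (ii) is essentially the same as the paper's: freeze the nonlocal coefficient $c_t(x)=V(x)-\langle\mu_t,V\rangle$ as a bounded measurable zero-order term and invoke Chapter~6 of \cite{bogachev2022fokker}. Your closing worry is unfounded, though: the relevant results (Corollaries~6.3.2 and~6.4.3 in \cite{bogachev2022fokker}) are already stated for operators with a zero-order term $c$, so no extra bookkeeping is needed---the paper simply cites them ``with $c=V-\langle\mu_t,V\rangle$''.

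For part (iii) your route differs from the paper's. You argue directly on the PDE via De Giorgi--Nash--Moser regularity and the parabolic Harnack inequality, which is perfectly valid and has the virtue of being self-contained. The paper instead takes a probabilistic shortcut: when $\phi$ is bounded, the SDE $dX_t=\phi_t(X_t)\,dt+dW_t$ has a weak solution, the law $\nu_t=\cL(X_t)$ has a density satisfying Gaussian-type lower bounds, and since $e^{-T}\le e^{-A_t}\le 1$ the density of $\mu_t$ is pointwise comparable to that of $\nu_t$, hence strictly positive. The paper's argument is shorter but implicitly relies on identifying $\mu_t$ with the Gibbs measure built from $X_t$ (which, strictly speaking, uses the superposition principle proved later, or at least uniqueness of the FPK equation when $\phi$ is bounded); your Harnack argument avoids that circularity.
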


\begin{proof}
We use a prime $p'$ to denote the conjugate exponent of $p\in[0,\infty)$.   (i) is a direct consequence of \cite[Corollary 6.3.2]{bogachev2022fokker} with $c=V-<\mu_t,V>$. (ii) is a direct consequence of \cite[Corollary 6.4.3]{bogachev2022fokker}, given the fact that we use the definition of the space $\cH^{1,p}$ which can be found on page 245 of this book. As for (iii), it follows directly from the properties of $\nu_t=\cL(X_t)$.
\end{proof}

%%%%%%%%%%%%%%%%%%%%%%%%%%%%%%%%%%
\subsection{A Form of Superposition Principle}
\label{sub:superposition}
 
In this subsection, we start with a couple $(\bphi,\bmu)$ where $\bphi=\bigl(\phi_t(x)\bigr)_{0\le t\le T,\; x\in\RR^d}$ is a $\RR^d$-valued measurable function on $[0,T]\times\RR^d$ and $\bmu=(\mu_t)_{0\le t\le T}$ is a  measurable flow  of probability measures satisfying
\begin{equation}
\label{fo:compatibility}
\int_0^T\int_{\RR^d}\;|\phi_t(x)|^2\mu_t(dx)dt<\infty
\end{equation}
and the non-local FPK equation \eqref{fo:kolmo1}, and we construct a stochastic process $\bX=(X_t)_{0\le t\le T}$ solution of the state stochastic differential equation \eqref{fo:phi_sde} which is related to $\mu_t$ through formula \eqref{fo:mu_1_t}. This result should be viewed as a non-linear or non-local superposition principle in the spirit of the superposition principle proved by Trevisan in \cite{Trevisan} in the classical case.

\vskip 2pt
Notice that since the potential function $V$ is bounded, assumption \eqref{fo:compatibility} is equivalent to our prior assumption \eqref{fo:necessary}.

\begin{lemma}
\label{le:gamma_t}
If $(\mu_t)_{0\le t\le T}$ solves the FPK equation \eqref{fo:kolmo1}, then the flow $(\gamma_t)_{0\le t\le T}$ of non-negative measures defined by
\begin{equation}
\label{fo:gamma_t}
\gamma_t = e^{-\int_0^t<\mu_s,V>ds} \mu_t
\end{equation}
is the unique solution of the linear PDE
\begin{equation}
\label{fo:gamma_t_pde}
\partial_t\gamma_t = \frac12\Delta\gamma_t-\div(\phi_t\gamma_t)-V\gamma_t
\end{equation}
with initial condition $\gamma_0=\mu_0$.
\end{lemma}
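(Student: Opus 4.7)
The argument splits naturally into (a) verifying that $\gamma_t$ defined by \eqref{fo:gamma_t} satisfies the linear PDE, and (b) proving that this PDE has at most one solution starting from $\mu_0$.

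The plan for (a) is a direct computation. First I set $m(t)=\int_0^t\langle\mu_s,V\rangle\,ds$; since $0\le V\le 1$ by Assumption~\ref{assumption:V-bdd}, the scalar $m$ is bounded and absolutely continuous with $m'(t)=\langle\mu_t,V\rangle$. For any test function $\varphi\in C_c^\infty(\RR^d)$ I pair $\varphi$ against $\gamma_t=e^{-m(t)}\mu_t$ and differentiate by the product rule,
\[
\frac{d}{dt}\langle\gamma_t,\varphi\rangle
= -m'(t)e^{-m(t)}\langle\mu_t,\varphi\rangle + e^{-m(t)}\frac{d}{dt}\langle\mu_t,\varphi\rangle,
\]
then substitute the weak form of \eqref{fo:kolmo1} for $\frac{d}{dt}\langle\mu_t,\varphi\rangle$. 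The two contributions proportional to $\langle\mu_t,V\rangle\langle\mu_t,\varphi\rangle e^{-m(t)}$ cancel exactly, leaving
\[
\frac{d}{dt}\langle\gamma_t,\varphi\rangle = \langle\gamma_t,\;\tfrac12\Delta\varphi+\phi_t\cdot\nabla\varphi - V\varphi\rangle,
\]
which is the distributional form of \eqref{fo:gamma_t_pde}. The initial condition $\gamma_0=\mu_0$ is immediate from $m(0)=0$.

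For (b), equation \eqref{fo:gamma_t_pde} is a genuine linear FPK equation with drift $\phi$ satisfying the integrability assumption \eqref{fo:compatibility}, nondegenerate identity diffusion, and bounded Lipschitz zero-order coefficient $V$. The proof of Proposition~\ref{pr:densities} applies verbatim with $c\equiv V$ replacing $c=V-\langle\mu_t,V\rangle$, so any weak solution of \eqref{fo:gamma_t_pde} automatically admits a locally integrable density with local Sobolev regularity. Uniqueness then follows from the classical linear FPK theory in \cite[Chapter 9]{bogachev2022fokker}, applied to the difference of two candidate solutions, which solves the homogeneous equation with zero initial data.

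I expect the main obstacle to be (b) under the minimal assumption that $\phi$ is only $L^2(\mu_t\,dt)$-integrable rather than bounded. In the bounded case, uniqueness is elementary by duality against the smooth backward Feynman--Kac solution of $\partial_tu + \tfrac12\Delta u + \phi\cdot\nabla u - Vu = 0$ with terminal data in $C_c^\infty$, which admits the representation $u(t,x)=\EE^{t,x}[\varphi(X_T)\exp(-\int_t^TV(X_s)ds)]$. For general $\phi\in\Phi^{(2)}$ I would either invoke the sharp uniqueness criteria for degenerate-drift linear FPK equations in Bogachev's monograph directly, or approximate $\phi$ by the bounded truncations $\phi^K$ of Lemma~\ref{le:bounded_convergence}, apply uniqueness on each approximation, and pass to the limit using the local Sobolev bounds provided by Proposition~\ref{pr:densities}(ii).
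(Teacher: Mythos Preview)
Your proof is correct and follows the same line as the paper's. Part (a) is identical: differentiate $\langle\gamma_t,\varphi\rangle$ by the product rule, insert the weak form of \eqref{fo:kolmo1}, and observe the cancellation of the $\langle\mu_t,V\rangle$ terms. For part (b), the paper is more direct than you are: it simply invokes \cite[Theorem 9.3.6]{bogachev2022fokker}, noting that the only hypothesis to check is $\int_0^T\int_{\RR^d}|\phi_t(x)|\,\gamma_t(dx)\,dt<\infty$, which follows from \eqref{fo:compatibility} and the bound $\gamma_t\le\mu_t$ (since $V\ge0$). Your detour through Proposition~\ref{pr:densities} and the alternative strategies (duality against a backward solution, truncation $\phi^K$) are unnecessary here; the Bogachev uniqueness theorem applies directly under the $L^1$ drift integrability condition, so there is no obstacle for unbounded $\phi\in\Phi^{(2)}$.
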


\begin{proof}
If $\gamma_t$ is defined by \eqref{fo:gamma_t} in Lemma~\ref{le:gamma_t}, and $\varphi$ is a test function, we have
\begin{equation*}
\begin{split}
\frac{d}{dt}<\gamma_t,\varphi>&=<\mu_t,V>e^{\int_0^t<\mu_s,V>ds} <\mu_t,\varphi>+e^{\int_0^t<\mu_s,V>ds} \frac{d}{dt}<\mu_t,\varphi>\\
&=-<\mu_t,V><\gamma_t,\varphi>+e^{-\int_0^t<\mu_s,V>ds} <\mu_t,\frac12\Delta\varphi+\phi_t\nabla\varphi
-(V-<\mu_t,V>)\varphi>\\
&=e^{-\int_0^t<\mu_s,V>ds} <\mu_t,\frac12\Delta\varphi+\phi_t\nabla\varphi
-V\varphi>\\
&= <\gamma_t,\frac12\Delta\varphi+\phi_t\nabla\varphi
-V\varphi>.
\end{split}
\end{equation*}
Uniqueness is a consequence of \cite[Theorem 9.3.6]{bogachev2022fokker} and the fact that $\int_0^T\int_{\RR^d}|\phi_t(x)|\gamma_t(dx)dt<\infty$ which is implied by our assumption \eqref{fo:compatibility} and the definition \eqref{fo:gamma_t}.
\end{proof}

\begin{lemma}
    There exists a weak solution $(X_t)_{0\le t\le T}$ of the stochastic differential equation \eqref{fo:phi_sde} for which
\begin{equation}
    \label{fo:gamma_t_FK}
    \gamma_t(dx)=\EE\bigl[\delta_{X_t}(dx)e^{-\int_0^t V(X_s)ds}\bigr].
\end{equation}
\end{lemma}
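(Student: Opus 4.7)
The plan is to construct the weak solution $(X_t)_{0\le t\le T}$ of \eqref{fo:phi_sde} with $X_0\sim\mu_0$, and then to identify the Feynman-Kac weighted marginal with $\gamma_t$ via the uniqueness statement in Lemma~\ref{le:gamma_t}. The construction of the process will be done by a bounded-drift approximation combined with Trevisan's classical superposition principle \cite{Trevisan}; the identification will be done by an application of It\^o's formula.

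\textbf{Step 1 (Weak solution of the SDE).} For a bounded drift, Girsanov's theorem applied to the Wiener measure on $C([0,T];\RR^d)$ with initial distribution $\mu_0$ produces a weak solution of \eqref{fo:phi_sde} directly. For a general $\phi$ satisfying the integrability \eqref{fo:compatibility}, truncate to $\phi^K:=\phi\mathbf{1}_{|\phi|\le K}$ and, for each $K$, let $(X^K_t)$ denote the Girsanov weak solution of $dX^K_t=\phi^K_t(X^K_t)dt+dW_t$ with $X^K_0\sim\mu_0$; let $\nu^K_t:=\cL(X^K_t)$, which solves the classical Fokker-Planck equation with drift $\phi^K$. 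Derive uniform-in-$K$ second-moment estimates on increments from \eqref{fo:compatibility} to obtain tightness of $(X^K_\cdot)_K$ in $C([0,T];\RR^d)$, then extract a subsequential limit $(X_t)$. The identification of the limit as a weak solution of \eqref{fo:phi_sde} follows from an application of Trevisan's superposition principle to the classical FPK satisfied by the limiting marginal together with stability of weak solutions of SDEs under weak convergence.

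\textbf{Step 2 (Feynman-Kac identification).} Set $\tilde\gamma_t(dx):=\EE\bigl[\delta_{X_t}(dx)e^{-\int_0^tV(X_s)ds}\bigr]$. For any test function $\varphi\in C_c^\infty(\RR^d)$, It\^o's formula applied to the product $\varphi(X_t)e^{-\int_0^tV(X_s)ds}$ gives, after taking expectations,
\[
\frac{d}{dt}<\tilde\gamma_t,\varphi>\;=\;<\tilde\gamma_t,\;\tfrac12\Delta\varphi+\phi_t\cdot\nabla\varphi-V\varphi>,
\]
so $\tilde\gamma_t$ satisfies the linear PDE \eqref{fo:gamma_t_pde} in the sense of distributions, with initial condition $\tilde\gamma_0=\mu_0$. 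The uniqueness part of Lemma~\ref{le:gamma_t} then forces $\tilde\gamma_t=\gamma_t$, which is precisely the representation \eqref{fo:gamma_t_FK}.

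\textbf{Main obstacle.} The delicate step is the convergence argument in Step~1. The integrability \eqref{fo:compatibility} provides a bound on $\phi$ in $L^2(d\mu_t\,dt)$, whereas the object that naturally appears in the tightness and identification estimates is the $L^2$-norm of $\phi^K$ against $d\nu^K_t\,dt$, with $\nu^K_t=\cL(X^K_t)$ being the un-killed marginal. The comparison that we have, namely $\mu_t\le e^{T\|V\|_\infty}\nu_t$ coming from $V\ge0$ and $N_t\ge e^{-T\|V\|_\infty}$, runs in the wrong direction to yield this bound directly. Overcoming this difficulty requires a careful tightness analysis exploiting the boundedness of $V$ (Assumption~\ref{assumption:V-bdd}) and the uniform control of $\EE\int_0^T|\phi^K_s(X^K_s)|^2\,ds$, which can be obtained through a bootstrap argument using the Feynman-Kac relation between $\nu^K_t$ and its weighted counterpart $\gamma^K_t$, combined with an estimate in the spirit of Lemma~\ref{le:bounded_convergence} to control how $\gamma^K_t$ approximates $\gamma_t$ as $K\to\infty$.
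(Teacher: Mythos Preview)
Your Step~2 (the Feynman--Kac identification via It\^o's formula and the uniqueness in Lemma~\ref{le:gamma_t}) is correct and is exactly what the paper does at the end of its proof.

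The gap is in Step~1, and you have in fact put your finger on it in your ``Main obstacle'' paragraph: the truncation $\phi^K=\phi\mathbf 1_{|\phi|\le K}$ does not yield the uniform energy bound you need, and the bootstrap you sketch is circular. Lemma~\ref{le:bounded_convergence} compares a \emph{single} process $\bX$ driven by $\balpha$ with the process driven by its truncation $\balpha^K$; it relies on the pathwise inequality $|X_t-X^K_t|\le\int_0^t|\alpha_s|\mathbf 1_{|\alpha_s|>K}\,ds$, which presupposes that the limiting process $\bX$ already exists. In your setting each $X^K$ solves an SDE with drift $\phi^K$ evaluated along its own trajectory, so no such comparison is available, and the associated killed flow $\gamma^K_t:=\EE[\delta_{X^K_t}e^{-\int_0^tV(X^K_s)ds}]$ solves the linear FPK with drift $\phi^K$, not $\phi$---it bears no a priori relation to $\gamma_t$. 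Hence you have no handle on $\int_0^T\!\int|\phi^K|^2\,d\gamma^K_t\,dt$, and therefore none on $\int_0^T\!\int|\phi^K|^2\,d\nu^K_t\,dt$, which is precisely the quantity controlling tightness.

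The paper's resolution is to regularize the \emph{measure flow} rather than the drift: set $\gamma_t^\epsilon=\gamma_t*\rho_\epsilon$ and define $\phi_t^\epsilon$ and $V_t^\epsilon$ as the Radon--Nikodym derivatives $d((\phi_t\gamma_t)*\rho_\epsilon)/d\gamma_t^\epsilon$ and $d((V\gamma_t)*\rho_\epsilon)/d\gamma_t^\epsilon$. By construction $\gamma_t^\epsilon$ then solves the linear FPK \eqref{fo:gamma_t_pde} with $(\phi^\epsilon,V^\epsilon)$ in place of $(\phi,V)$, and the convexity lemma \cite[Lemma~8.1.10]{Ambrosio_et_al} gives the crucial uniform bound
\[
\int_0^T\!\!\int_{\RR^d}|\phi_t^\epsilon|^2\,d\gamma_t^\epsilon\,dt\;\le\;\int_0^T\!\!\int_{\RR^d}|\phi_t|^2\,d\gamma_t\,dt\;\le\;\int_0^T\!\!\int_{\RR^d}|\phi_t|^2\,d\mu_t\,dt\;<\;\infty
\]
directly from the hypothesis \eqref{fo:compatibility}, with no circularity. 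Trevisan's superposition then produces $\PP^\epsilon$ with one-dimensional marginals $\nu^\epsilon_t$; since $0\le V^\epsilon\le 1$ one has $\nu^\epsilon_t\le e^T\gamma^\epsilon_t$ via the Feynman--Kac relation, which transfers the energy bound to $\nu^\epsilon$ and delivers tightness. The identification of the limit as a solution of the martingale problem for $\phi$ is then carried out in the companion Lemma~\ref{le:tightness}. The key idea you are missing is therefore: mollify $\gamma_t$, not $\phi_t$.
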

\begin{proof}
    For each $\epsilon>0$, let $\rho_\epsilon$ be the $d$-dimensional Gaussian density with mean $0$ and variance $\epsilon$ times the identity matrix in dimension $d$, and for each $t\in[0,T]$, let
\begin{equation}
    \label{fo:gamma_t_epsilon}
\gamma_t^\epsilon=\gamma_t*\rho_\epsilon,
\qquad\text{and}\qquad
\phi_t^\epsilon=\frac{d\bigl((\phi_t\gamma_t)*\rho_\epsilon\bigr)}{d(\gamma_t*\rho_\epsilon)},
\qquad\text{and}\qquad
V^\epsilon_t=\frac{d\bigl((V\gamma_t)*\rho_\epsilon\bigr)}{d(\gamma_t*\rho_\epsilon)}.
\end{equation}
$\gamma^\epsilon_t$ is the unique solution of the linear FPK equation
\begin{equation}
    \label{fo:FPK_epsilon}
    \partial\gamma_t^\epsilon=\frac12\Delta \gamma_t^\epsilon -\div(\phi^\epsilon_t\gamma_t^\epsilon)-V^\epsilon_t\gamma_t^\epsilon.
\end{equation}
Indeed, for each test function $\varphi\in C^\infty_c$ we have:
\begin{equation*}
    \begin{split}
        \partial_t<\varphi,\gamma_t^\epsilon>&=\partial_t<\varphi*\rho_\epsilon,\gamma_t>\\
        &=<\frac12\Delta(\varphi*\rho_\epsilon)+\phi_t\nabla(\varphi*\rho_\epsilon)-V(\varphi*\rho_\epsilon),\gamma_t>\\
        &=<\frac12\Delta\varphi,\gamma_t*\rho_\epsilon>+<\nabla\varphi,(\phi_t\gamma_t)*\rho_\epsilon>-<\varphi,(V\gamma_t)*\rho_\epsilon>\\
        &=<\frac12\Delta\varphi +\frac{d\bigl((\phi_t\gamma_t)*\rho_\epsilon\bigr)}{d\gamma_t^\epsilon}\nabla\varphi- \frac{d\bigl((V\gamma_t)*\rho_\epsilon\bigr)}{d\gamma_t^\epsilon}\varphi,\gamma_t^\epsilon>\\        
        &=<\varphi,\frac12\Delta\gamma_t^\epsilon - \div\bigl(\phi^\epsilon_t\gamma_t^\epsilon\bigr)
        -V_t^\epsilon\gamma_t^\epsilon>.      
    \end{split}
\end{equation*}
Using \cite[Lemma 8.1.10]{Ambrosio_et_al} we get 
\begin{equation}
    \label{fo:epsilon_bound}
    \int_0^T\int_{\RR^d}|\bphi_t^\epsilon|^2d\gamma_t^\epsilon dt\le \int_0^T\int_{\RR^d}|\bphi_t|^2d\gamma_t dt
    \le \int_0^T\int_{\RR^d}|\bphi_t|^2d\mu_t dt <\infty
\end{equation}
because of \eqref{fo:compatibility}. Using once more \cite[Theorem 9.3.6]{bogachev2022fokker}, we conclude that for each $\epsilon>0$, $(\gamma_t^\epsilon)_{0\le t\le T}$ is the unique solution of \eqref{fo:FPK_epsilon}.

\vskip 2pt
For each $\epsilon>0$, \cite[Theorem 6.6.2]{bogachev2022fokker} gives existence of a narrowly continuous flow $(\nu^\epsilon_t)_{0\le t\le T}$ of probability measures satisfying 
$$
\partial_t\nu^\epsilon_t=\frac12\Delta\nu^\epsilon_t-\div(\phi^\epsilon_t\nu^\epsilon_t)
$$
and using the superposition theorem (see for example \cite[Theorem 2.5]{Trevisan}) we deduce the existence of a probability measure $\PP^\epsilon$ solving the martingale problem in the sense that for every test function $\varphi\in C^\infty_c(\RR^d)$, 
\begin{equation}
    \label{fo:martingale_pb}
\varphi(X_t)-\varphi(X_0)-\int_0^t[\frac12\Delta\varphi(X_s)+\phi^\epsilon_s(X_s)\nabla\varphi(X_s)]ds
\end{equation}
where $(X_t)_{0\le t\le T}$ is the coordinate process on $C([0,T];\RR^d)$, is a $\PP^\epsilon$-martingale for the canonical filtration of $C([0,T];\RR^d)$, and for each $t\in[0,T]$, $\nu^\epsilon_t$ is the law of $X_t$ under $\PP^\epsilon$.

\vskip 1pt
We conclude the proof assuming momentarily the result of Lemma~\ref{le:tightness} below.
$\PP$ being a solution of the martingale problem for the drift $\phi_t$, for each test function $\varphi\in C^\infty_c(\RR^d)$, integration by parts implies that
\begin{equation}
    \label{fo:martingale}
\varphi(X_t)e^{-\int_0^t V(X_s)ds} -\varphi(X_0)-\int_0^t e^{-\int_0^s V(X_r)dr}\bigl[\frac12\Delta\varphi(X_s)+\phi_s(X_s)\nabla\varphi(X_s)-V(X_s)\varphi(X_s)\bigr]ds
\end{equation}
is a zero-expectation $\PP$-martingale. So if we define the flow of measures $(\tilde\gamma_t)_{0\le t\le T}$ by
$$
\tilde\gamma_t(dx)=\EE^{\PP}\bigl[\delta_{X_t}(dx)e^{-\int_0^tV(X_s)ds}\bigr]
$$
then, taking $\PP$-expectation of \eqref{fo:martingale} we get
$$
<\varphi,\tilde\gamma_t>=<\varphi,\tilde\gamma_0>+\int_0^t<\frac12\Delta\varphi+\phi_t\nabla\varphi-V\varphi,\tilde\gamma_s>ds
$$
which shows that $\tilde\gamma_T$ satisfies the PDE \eqref{fo:gamma_t_pde}, and by uniqueness, that $\tilde\gamma_t=\gamma_t$ and consequently, that formula \eqref{fo:gamma_t_FK} holds.
\end{proof}

\begin{lemma}
\label{le:tightness}
The family $(\PP^\epsilon)_{\epsilon>0}$ is tight and any limit point $\PP$ solves the martingale problem \eqref{fo:martingale_pb} with $(\phi_t)_{0\le t\le T}$ instead of $(\phi^\epsilon_t)_{0\le t\le T}$
\end{lemma}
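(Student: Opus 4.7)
The plan is to proceed in two steps: first establish tightness of $(\PP^\epsilon)_{\epsilon>0}$ on $C([0,T];\RR^d)$, and then identify any limit point as a solution of the martingale problem with drift $\phi$.

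For tightness, I would use the canonical decomposition $X_t = X_0 + \int_0^t \phi^\epsilon_s(X_s)ds + B^\epsilon_t$ valid under $\PP^\epsilon$, with $B^\epsilon$ a $\PP^\epsilon$-Brownian motion. The initial laws $\nu^\epsilon_0$ converge narrowly to $\mu_0$ and are therefore tight, the Brownian part supplies the standard uniform Kolmogorov--Centsov moment control, and for the drift part Cauchy--Schwarz yields
\begin{equation*}
\EE^{\PP^\epsilon}\Bigl[\Bigl|\int_s^t \phi^\epsilon_r(X_r)dr\Bigr|^2\Bigr] \le (t-s)\int_s^t\int_{\RR^d}|\phi^\epsilon_r(x)|^2 \nu^\epsilon_r(dx)\,dr.
\end{equation*}
A uniform-in-$\epsilon$ bound for the right-hand side can be derived from \eqref{fo:epsilon_bound} together with a comparison between $\nu^\epsilon$ and the renormalized killed flow $\gamma^\epsilon/Z^\epsilon$, with $Z^\epsilon_t=\int_{\RR^d}\gamma^\epsilon_t$: both are probability flows driven by the same smooth drift $\phi^\epsilon$, differing only through the bounded Feynman--Kac weight $V^\epsilon\in[0,1]$, so their densities are comparable up to a multiplicative constant of order $e^T$. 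Prokhorov's theorem then yields tightness of $(\PP^\epsilon)$.

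Given a convergent subsequence $\PP^{\epsilon_k}\rightharpoonup\PP$, to verify that $\PP$ solves the martingale problem \eqref{fo:martingale_pb} with $\phi$ in place of $\phi^\epsilon$, I fix $\varphi\in C_c^\infty(\RR^d)$, times $0\le s<t\le T$ and a bounded continuous $\mathcal F_s$-measurable functional $H$ on path space, and pass to the limit in
\begin{equation*}
\EE^{\PP^{\epsilon_k}}\Bigl[\Bigl(\varphi(X_t)-\varphi(X_s)-\int_s^t\bigl[\tfrac12\Delta\varphi(X_r)+\phi^{\epsilon_k}_r(X_r)\cdot\nabla\varphi(X_r)\bigr]dr\Bigr)H\Bigr] = 0.
\end{equation*}
All terms other than the drift one are continuous bounded functionals of the path and pass to the limit by the defining property of weak convergence.

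The main obstacle is precisely the drift term, since $\phi^{\epsilon_k}$ converges to $\phi$ only in a weak sense. The key tool is the defining identity $\phi^\epsilon_r\gamma^\epsilon_r = (\phi_r\gamma_r)*\rho_\epsilon$ from \eqref{fo:gamma_t_epsilon}, which yields weak convergence of the vector measures $\phi^{\epsilon_k}_r\gamma^{\epsilon_k}_r(dx)dr$ on $[0,T]\times\RR^d$ towards $\phi_r\gamma_r(dx)dr$; the uniform $L^2$ bound \eqref{fo:epsilon_bound} upgrades this to convergence tested against bounded continuous functions. I would first restrict to $H$ depending only on $X_s$, reducing the drift convergence to a two-marginal integral statement accessible through these weak limits (after transferring the weighting from $\gamma^{\epsilon_k}$ to $\nu^{\epsilon_k}$ via the comparison used for tightness), and then extend to general $\mathcal F_s$-measurable $H$ by a monotone-class argument combined with the martingale property already established under each $\PP^{\epsilon_k}$. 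This completes the identification of $\PP$ as a solution of the martingale problem with drift $\phi$.
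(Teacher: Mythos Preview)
Your tightness argument is essentially the paper's: decompose into drift plus Brownian motion, use Cauchy--Schwarz on the drift, and invoke the uniform $L^2$ bound \eqref{fo:epsilon_bound} after comparing $\nu^\epsilon$ with $\gamma^\epsilon$ via the Feynman--Kac weight bounded in $[e^{-T},1]$. That part is fine.

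The identification step, however, has a genuine gap. You propose to use two ingredients: (a) the weak convergence of the vector measures $\phi^{\epsilon_k}_r\gamma^{\epsilon_k}_r(dx)\,dr \to \phi_r\gamma_r(dx)\,dr$, and (b) the sandwich $e^{-T}\nu^\epsilon\le\gamma^\epsilon\le\nu^\epsilon$. Neither of these, alone or combined, controls the quantity you actually need, namely $\EE^{\PP^{\epsilon_k}}\bigl[H\int_s^t\phi^{\epsilon_k}_r(X_r)\cdot\nabla\varphi(X_r)\,dr\bigr]$. First, even for $H=h(X_s)$ this involves the \emph{joint} law of $(X_s,X_r)$ under $\PP^{\epsilon_k}$, whereas (a) carries only one-time marginal information at time $r$. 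Second, the comparison (b) is only an inequality between measures; the ratio $d\gamma^\epsilon_r/d\nu^\epsilon_r(x)=\EE^{\PP^\epsilon}[e^{-\int_0^r V^\epsilon_s(X_s)ds}\mid X_r=x]$ is an unknown function of $(r,x)$ whose $\epsilon$-limit you do not control, so you cannot ``transfer'' the weak limit from $\gamma^{\epsilon_k}$ to $\nu^{\epsilon_k}$. The monotone-class extension is then moot: you have not established the base case.

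The paper bypasses both obstacles by a density argument. It fixes a bounded Lipschitz $\tilde\phi$ with compact support, for which the map $X\mapsto H\cdot\int_s^t\tilde\phi_r(X_r)\cdot\nabla\varphi(X_r)\,dr$ is a bounded continuous functional on path space, so weak convergence of $\PP^{\epsilon_k}$ applies directly for \emph{arbitrary} bounded continuous $\mathcal F_s$-measurable $H$. The remaining error $\phi^\epsilon-\tilde\phi$ is then shown to be small in $L^1(\nu^\epsilon_r\,dr)$ uniformly in $\epsilon$: writing $\tilde\phi^\epsilon=[(\tilde\phi\gamma)*\rho_\epsilon]/(\gamma*\rho_\epsilon)$, one splits $\phi^\epsilon-\tilde\phi=(\phi^\epsilon-\tilde\phi^\epsilon)+(\tilde\phi^\epsilon-\tilde\phi)$, controls the first piece by $\int|\phi-\tilde\phi|\,d\gamma\,dt$ via \cite[Lemma~8.1.10]{Ambrosio_et_al}, and the second by the Lipschitz property of $\tilde\phi$ together with the mollifier scale. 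This is the missing idea in your proposal.
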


 \begin{proof}
\emph{Step 1. Tightness. } For $\delta>0$, we denote by $\omega_x(\delta)=\sup_{0\le s,t\le T,\;|t-s|\le\delta}|x(t)-x(s)|$ the $\delta$-modulus of continuity of any function $x\in C([0,T];\RR^d)$. For $\delta'>0$ we have
\begin{equation*}
    \begin{split}
        \PP^\epsilon[\omega_\cdot(\delta)>\delta']
&\le\PP^\epsilon\Bigl[\sup_{|t-s|\le\delta}|\int_s^t\phi^\epsilon_r(X_r)dr|>\delta'/2]+\PP^\epsilon[\sup_{|t-s|\le\delta}|W_t-W_s|>\delta'/2\Bigr]\\
&\le\PP^\epsilon\Bigl[\sup_{|t-s|\le\delta}\delta^{1/2}\bigl(\int_0^T|\phi^\epsilon_r(X_r)|^2dr\bigr)^{1/2}>\delta'/2\Bigr]+\PP^\epsilon\bigl[\sup_{0\le r\le\delta}|W_r|>\delta'/2\bigr]\\
&\le\frac{4\delta}{\delta'^2}\EE^{\PP^\epsilon}\Bigl[\int_0^T|\phi^\epsilon_r(X_r)|^2dr\Bigr]+o(\delta)\\
&\le\frac{4\delta}{\delta'^2}\int_0^T\int_{\RR^d}|\phi_t^\epsilon(x)|^2\nu^\epsilon_t(dx)dt+o(\delta)
    \end{split}
\end{equation*}
and we conclude that for each $\delta'>0$, we have
$$
\lim_{\delta\searrow 0}\sup_{\epsilon>0}\PP^\epsilon[\omega_\cdot(\delta)>\delta']=0
$$
because
$$
\sup_\epsilon \int_0^T\int_{\RR^d}|\phi_t^\epsilon(x)|^2\nu^\epsilon_t(dx)dt\le \sup_\epsilon \int_0^T\int_{\RR^d}|\phi_t^\epsilon(x)|^2\gamma^\epsilon_t(dx)dt <\infty
$$
is implied by \eqref{fo:epsilon_bound}. Given the fact that the probability measures $(\nu^\epsilon_0)_\epsilon$ are tight because the $(\gamma^\epsilon_0)_\epsilon$ are, we conclude that the $(\PP^\epsilon)_\epsilon$ are themselves tight on $C([0,T];\RR^d)$.

\vskip 2pt\noindent
\emph{Step 2. Any limit point $\PP$ solves the martingale problem for $(\phi_t)_{0\le t\le T}$. } Let us fix $0\le s<t\le T$ and let $Y$ be a bounded continuous function on $C([0,T];\RR^d)$ measurable with respect to the past up to time $s$. For any test function $\varphi\in C^\infty_c(\RR^d)$ and for aany $\epsilon>0$ we have:
$$
0=\EE^{\PP^\epsilon}\Bigl[ Y\bigl[\varphi(X_t)-\varphi(X_s)-\int_s^t[\frac12\Delta\varphi(X_r)+\phi^\epsilon_r(X_r)\nabla\varphi(X_r)]dr\bigr]\Bigr].
$$
In order to prove the same equality for $\PP$ and $\phi$ instead of $\PP^\epsilon$ and $\phi^\epsilon$, it is enough to prove:
\begin{equation}
    \label{fo:(o)}
\lim_{\epsilon\searrow 0}\EE^{\PP^\epsilon}\Bigl[ Y\int_s^t\phi^\epsilon_r(X_r)\nabla\varphi(X_r)]dr\Bigr]
=\EE^{\PP}\Bigl[ Y\int_s^t\phi_r(X_r)\nabla\varphi(X_r)]dr\Bigr].
\end{equation}
Given a bounded Lipschitz continuous function $\tilde\phi\in C([0,T]\times\RR^d)$ with compact support,  we have:
\begin{equation}
    \label{fo:tilde}
\lim_{\epsilon\searrow 0}\EE^{\PP^\epsilon}\Bigl[ Y\int_s^t\tilde\phi_r(X_r)\nabla\varphi(X_r)]dr\Bigr]
=\EE^{\PP}\Bigl[ Y\int_s^t\tilde\phi_r(X_r)\nabla\varphi(X_r)]dr\Bigr]
\end{equation}
since $\PP^\epsilon$ converges weakly toward $\PP$.
So in order to prove \eqref{fo:(o)} it is be enough to prove
\begin{equation}
    \label{fo:(i)}
\lim_{\epsilon\searrow 0}\EE^{\PP^\epsilon}\Bigl[ Y\int_s^t[\phi^\epsilon_r(X_r)-\tilde\phi_r(X_r)]\nabla\varphi(X_r)]dr\Bigr]=0
\end{equation}
if one can choose $\tilde \phi$ so that the expectation 
\begin{equation}
    \label{fo:(ii)}
\EE^{\PP}\Bigl[ Y\int_s^t[\phi_r(X_r)-\tilde\phi_r(X_r)]\nabla\varphi(X_r)]dr\Bigr]
\end{equation}
can be made as small as desired. Prompted by the definition of $\phi^\epsilon_t$ given in \eqref{fo:gamma_t_epsilon}, we define $\tilde\phi^\epsilon_r(x)$ by
$$
\tilde\phi^\epsilon_r(x)=\frac{d\bigl((\tilde\phi_r\gamma_r)*\rho_\epsilon\bigr)}{d(\gamma_r*\rho_\epsilon)}.
$$
We first prove \eqref{fo:(i)}. Since $Y$ and $\nabla\varphi$ are bounded, we have
\begin{equation*}
\begin{split}
\EE^{\PP^\epsilon}\Bigl[ Y\int_s^t[\phi^\epsilon_r(X_r)-\tilde\phi_r(X_r)]\nabla\varphi(X_r)]dr\Bigr]   
&\le \|Y\|_\infty\|\nabla\varphi\|_\infty 
\EE^{\PP^\epsilon}\Bigl[\int_s^t|\phi^\epsilon_r(X_r)-\tilde\phi_r(X_r)|dr\Bigr]\\
&\le \|Y\|_\infty\|\nabla\varphi\|_\infty 
\int_0^T\int_{\RR^d}|\phi^\epsilon_r(x)-\tilde\phi_r(x)|\nu^\epsilon_r(dx)dr\\
&\le \|Y\|_\infty\|\nabla\varphi\|_\infty e^T
\int_0^T\int_{\RR^d}|\phi^\epsilon_r(x)-\tilde\phi_r(x)|\gamma^\epsilon_r(dx)dr.
\end{split}
\end{equation*}
We use the fact that 
\begin{equation*}
\begin{split}
\int_0^T\int_{\RR^d}|\phi^\epsilon_r(x)-\tilde\phi_r(x)|\gamma^\epsilon_r(dx)dr
&\le \int_0^T\int_{\RR^d}|\phi^\epsilon_r(x)-\tilde\phi^\epsilon_r(x)|\gamma^\epsilon_r(dx)dr+\int_0^T\int_{\RR^d}|\tilde\phi^\epsilon_r(x)-\tilde\phi_r(x)|\gamma^\epsilon_r(dx)dr\\
&=(i)+(ii)
\end{split}
\end{equation*}
and we estimate separately $(i)$ and $(ii)$.
\begin{equation*}
\begin{split}
(i)&=\int_0^T\int_{\RR^d}\Bigl|\frac{d\bigl((\phi_t\gamma_t)*\rho_\epsilon\bigr)}{d(\gamma_t*\rho_\epsilon)}-\frac{d\bigl((\tilde\phi_t\gamma_t)*\rho_\epsilon\bigr)}{d(\gamma_t*\rho_\epsilon)}\Bigr|\;d(\gamma_t*\rho_\epsilon) dt\\
&\le \int_0^T\int_{\RR^d}|\phi_t(x)-\tilde\phi_t(x)|\;\gamma_t(dx)dt
\end{split}
\end{equation*}
where we used \cite[Lemma 8.1.10]{Ambrosio_et_al}, and this quantity can be made as small as desired by density of the Lipschitz continuous functions with compact support in $L^1\bigl([0,T]\times\RR^d,\gamma_t(dx)dt\bigr)$. Next,
\begin{equation*}
\begin{split}
(ii)&=\int_0^T\int_{\RR^d}\Bigl|[(\tilde\phi_t(x)\gamma_t)*\rho_\epsilon] (x)-\tilde\phi_t(x)(\gamma_t*\rho_\epsilon)(x)\Bigr|dxdt\\
&= \int_0^T\int_{\RR^d}\Bigl|\int_{\RR^d}\rho_\epsilon(x-y)\tilde\phi_t(y)\gamma_t(dy)-\tilde\phi_t(x)\int_{\RR^d}\rho_\epsilon(x-y)\gamma_t(dy)\Bigr|dx dt\\
&\le \int_0^T\int_{\RR^d}\int_{\RR^d}\rho_\epsilon(x-y)|\tilde\phi_t(y)-\tilde\phi_t(x)|\gamma_t(dy)dx dt\\
&\le C\int_0^T\int_{\RR^d}\int_{\RR^d}\rho_\epsilon(x-y)(|x-y|\wedge1)\;\gamma_t(dy)dx dt\\
&= CT\int_{\RR^d}\rho_\epsilon(z)\,(|z|\wedge 1)\; dz
\end{split}
\end{equation*}
where we used Fubini's theorem, the Lipschitz property of $\tilde\phi$, and the fact that $\gamma_t(\RR^d)\le 1$, and this quantity converges to $0$ as $\epsilon\searrow 0$. This completes the proof since the above argument also proves \eqref{fo:(ii)}.
\end{proof}

We now state as a theorem the main result of this subsection.

\begin{theorem}[Non-local Superposition Principle]
\label{th:superposition}
Let us assume that the couple $(\bphi,\bmu)$ is such that  $\bphi=\bigl(\phi_t(x)\bigr)_{0\le t\le T,\;x\in\RR^d}$ is a $\RR^d$-valued measurable function on $[0,T]\times\RR^d$ and $\bmu=(\mu_t)_{0\le t\le T}$ is a  measurable flow  of probability measures satisfying \eqref{fo:compatibility} and the non-local FPK equation \eqref{fo:kolmo1} in the sense of distributions. 

Then there exists a weak solution $\bX=(X_t)_{0\le t\le T}$ of the stochastic differential equation \eqref{fo:state_SDE} satisfying \eqref{fo:compatibility}, and such that for each $t\in[0,T]$, the probability measure $\mu_t$ is given by \eqref{fo:mu_1_t}. Moreover,
\begin{equation}
    \label{fo:upper_bound}
    \sup_{0\le t\le T}\EE[|X_t|^2]<\infty.
\end{equation}
\end{theorem}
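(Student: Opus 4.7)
The plan is to linearize the non-local equation and then invoke the classical superposition principle on the resulting linear problem. By Lemma~\ref{le:gamma_t}, the rescaled flow $\gamma_t := \exp\!\bigl(-\int_0^t \langle \mu_s, V\rangle\,ds\bigr)\mu_t$ solves the \emph{linear} Fokker--Planck--Kolmogorov equation \eqref{fo:gamma_t_pde} with drift $\phi$, killing potential $V$ and initial datum $\mu_0$, and this linear equation has at most one solution under the integrability \eqref{fo:compatibility} (which transfers to $\gamma_t$ since $\gamma_t\le\mu_t$). It will therefore suffice to build a weak solution $\PP$ of the SDE \eqref{fo:state_SDE} whose Feynman--Kac measure $\EE^\PP[\delta_{X_t}(dx)\,e^{-A_t}]$ coincides with $\gamma_t$; the normalized measure $\mu_t$ will then be recovered by a simple division.

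\textbf{Construction of the process.} For the construction I would apply the intermediate lemma and Lemma~\ref{le:tightness} directly. Mollify $\gamma_t$ to $\gamma_t^\epsilon=\gamma_t*\rho_\epsilon$ with drift and potential $\phi_t^\epsilon, V_t^\epsilon$ as in \eqref{fo:gamma_t_epsilon}, so that the smoothed flow $\gamma^\epsilon$ solves a classical linear FPK whose drift is uniformly $L^2$-bounded by the Ambrosio contraction \cite[Lemma 8.1.10]{Ambrosio_et_al}. Apply Trevisan's additive superposition theorem to the purely diffusive companion $\partial_t \nu_t^\epsilon = \tfrac12\Delta\nu_t^\epsilon-\div(\phi^\epsilon_t \nu_t^\epsilon)$ to obtain path-space measures $\PP^\epsilon$; Lemma~\ref{le:tightness} gives tightness of $(\PP^\epsilon)_\epsilon$ and identifies any limit $\PP$ as a martingale solution for the original drift $\phi$. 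A stochastic integration by parts against $e^{-A_t}$ then shows that the measure $\widetilde\gamma_t(dx):=\EE^\PP[\delta_{X_t}(dx)\,e^{-A_t}]$ satisfies the same linear PDE \eqref{fo:gamma_t_pde} with the same initial datum, so the uniqueness quoted above forces $\widetilde\gamma_t=\gamma_t$.

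\textbf{Normalization and moment bound.} To pass from $\gamma_t$ back to $\mu_t$, test the identity $\widetilde\gamma_t=\gamma_t$ against the constant $\varphi\equiv 1$: the left-hand side is $\EE^\PP[e^{-A_t}]$ while the right-hand side equals $\exp\!\bigl(-\int_0^t\langle\mu_s,V\rangle\,ds\bigr)$ since $\mu_t$ is a probability measure. Dividing the measure identity by this common scalar recovers formula \eqref{fo:mu_1_t} exactly. For the moment bound, observe that $V\le 1$ implies $\nu_t:=\mathrm{Law}_\PP(X_t)\le e^T\gamma_t$, while $\langle\mu_s,V\rangle\ge 0$ implies $\gamma_t\le \mu_t$; chaining these yields
\begin{equation*}
\EE^\PP\!\int_0^T|\phi_t(X_t)|^2\,dt
=\int_0^T\!\!\int_{\RR^d}|\phi_t(x)|^2\nu_t(dx)\,dt
\le e^T\!\int_0^T\!\!\int_{\RR^d}|\phi_t(x)|^2\mu_t(dx)\,dt<\infty
\end{equation*}
by \eqref{fo:compatibility}, which in particular gives the integrability condition \eqref{fo:necessary} on $\phi_t(X_t)$. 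Combining this with the elementary estimate $|X_t|^2\le 3|X_0|^2+3T\int_0^T|\phi_s(X_s)|^2\,ds+3|W_t|^2$ coming from the SDE representation (and using the finite second moment of $\mu_0$ inherited from the data) delivers the uniform bound \eqref{fo:upper_bound}.

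\textbf{Main obstacle.} The genuinely delicate ingredient is the identification of the weak limit $\PP^\epsilon \to \PP$ with a martingale solution for $\phi$ itself, since the mollified drifts $\phi^\epsilon$ need not converge to $\phi$ in any strong topology and may be large where $\gamma_t$ is small. The resolution, carried out inside Lemma~\ref{le:tightness}, is to split $\phi=\tilde\phi+(\phi-\tilde\phi)$ for a bounded Lipschitz $\tilde\phi$ of compact support approximating $\phi$ in $L^1(\gamma_t\,dt)$: the $\tilde\phi^\epsilon$ piece passes to the limit by weak convergence of $\PP^\epsilon$, while the remainder is dominated in $L^1(\gamma_t^\epsilon\,dt)$ uniformly in $\epsilon$ via the Ambrosio contraction, and a separate mollifier estimate handles $\tilde\phi^\epsilon-\tilde\phi$.
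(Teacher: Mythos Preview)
Your proposal is correct and follows essentially the same route as the paper: linearize via Lemma~\ref{le:gamma_t}, construct the process through the mollification/tightness lemmas culminating in Lemma~\ref{le:tightness}, identify $\widetilde\gamma_t=\gamma_t$ by uniqueness for the linear equation, and then normalize. The only difference is in the moment bound \eqref{fo:upper_bound}: the paper applies It\^o's formula to $|X_t|^2$ and closes with Gronwall, whereas your direct estimate $|X_t|^2\le 3|X_0|^2+3T\int_0^T|\phi_s(X_s)|^2\,ds+3|W_t|^2$ from the SDE representation is slightly more elementary and avoids Gronwall altogether; both are valid under the standing assumption that $\mu_0$ has finite second moment.
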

\begin{proof}
    The existence of $\bX$ follows the set of lemmas proven above. We only need to prove the bound \eqref{fo:upper_bound}. It\^o's formula gives:
    $$
    |X_t|^2=|X_0|^2+\int_0^t[2\phi_s(X_s)\cdot X_s+d]ds+2\int_0^tX_s\cdot dW_s
    $$
    and taking expectations we get
    $$
    \EE[|X_t|^2]\le\Bigl(\EE[|X_0|^2]+d\, T+\int_0^T\int_{\RR^d}|\phi_t(x)|^2d\nu_t(dx)dt\Bigr)+\int_0^t\EE[|X_s|^2]ds
    $$ 
    and we conclude using Gronwall's inequality.
\end{proof}

%%%%%%%%%%%%%%%%%%%%%%%%%%%%%%%%%%%%%%%
\subsection{Reformulation of the Optimization Problem}
\label{sec:reformulation-markovian-deterministic}
In this subsection we reformulate the relaxed optimization problem over stochastic state processes as a deterministic control problem on a space of probability measures.

For $\mu_0\in\cP_2(\RR^d)$, we now denote by $\AA=\AA(\mu_0)$ the set of couples  $(\theta,\mu)$ where $\mu\in\cM_+([0,T]\times \RR^d)$ is a non-negative measure on $[0,T]\times \RR^d$ of the form $\mu(dt,dx)=\mu_t(dx)dt$ for a measurable flow $\bmu=(\mu_t)_{0\le t\le T}$ of probability measures on $\RR^d$, and $\theta\in\cM([0,T]\times\RR^d;\RR^d)$ is an $\RR^d$-valued measure on $[0,T]\times \RR^d$ of the form $\theta(dt,dx)=\theta_t(dx)dt$ for a measurable flow $\btheta=(\theta_t)_{0\le t\le T}$ of $\RR^d$-valued measures on $\RR^d$, satisfying
\begin{equation}
\label{fo:transport}
\partial_t\mu_t-\frac{\sigma^2}{2}\Delta\mu_t+\div(\theta_t)+(V-<\mu_t,V>)\mu_t=0
\end{equation}
in the sense of distributions, and with initial condition $\mu_{| t=0}=\mu_0$. 

We denote by $\AA^{(2)}=\AA^{(2)}(\mu_0)$ the subset of $\AA(\mu_0)$ of couples $(\btheta,\bmu)$ for which $\theta_t$ is absolutely continuous with respect to $\mu_t$ for all $t\in[0,T]$, and for which there exists a measurable function $[0,T]\times\RR^d\ni (t,x)\mapsto\phi_t(x)\in\RR^d$ such that 
$$
\frac{d\theta_t}{d\mu_t}(x)=\phi_t(x)
\qquad\text{and}\qquad
\int_0^T\int_{\RR^d}\phi_t(x)^2\mu_t(dx)dt<\infty.
$$

    If $(\btheta,\bmu)\in\AA^{(2)}(\mu_0)$, Theorem \ref{th:superposition} implies that there exists a process $\bX=(X_t)_{0\le t\le T}$ satisfying $dX_t=\phi_t(X_t)dt + dW_t$, \eqref{fo:compatibility} and \eqref{fo:upper_bound}, and such that the probability measures $\mu_t$ are given by 
$$
\mu_t(dx)=\frac{\EE\bigl[\delta_{X_t}(dx)e^{-\int_0^t V(X_s)ds}\bigr]}{\EE\bigl[e^{-\int_0^t V(X_s)ds}\bigr]}.
$$  

\vskip 2pt
Later on, we shall use the analog $\AA^{(2)}_{[s,T]}$ of $\AA^{(2)}$ defined over the interval $[s,T]$ instead of $[0,T]$.
We now introduce the functional $J$ defined on $\AA$ by
 
\begin{equation}
\label{fo:J_of_theta_mu}
J(\btheta,\bmu)=
\begin{cases}\displaystyle
\int_0^T\int f(x,\phi_t(x))\mu_t(dx)\;dt+\int g(x)\mu_T(dx),\quad & \text{if } (\btheta,\bmu)\in\AA^{(2)}\\
\infty& \text{otherwise.}
\end{cases}
\end{equation}
When $(\btheta,\bmu)\in\AA^{(2)}$ we use the notations $J(\btheta,\bmu)$ and $J(\bphi,\bmu)$ interchangeably.

%%%%%%%%%%%%%%%%%%%%%%%%%%%%%%
\subsection{Existence of an Optimal Control}
Next, we state and prove the existence of an optimal Markovian control.
\begin{proposition}
\label{pr:existence_control1}
    There exists a couple $(\btheta,\bmu)=(\phi_t,\mu_t)_{0\le t\le T}\in\AA^{(2)}$ minimizing $J(\bphi,\bmu)$ over $\AA$.
\end{proposition}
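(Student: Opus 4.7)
The plan is to apply the direct method of the calculus of variations. Start from a minimizing sequence $(\btheta^n,\bmu^n)=(\phi^n_t\mu^n_t,\mu^n_t)_{0\le t\le T}\in\AA^{(2)}$ with $J(\btheta^n,\bmu^n)\to\inf_{\AA}J$, which we may assume is bounded above by $J(0,\bmu^0)+1$ where $\bmu^0$ is any reference element obtained from the control $\phi\equiv 0$. The coercivity estimate $C_1(1+|\alpha|^2)\le f(x,\alpha)$ from Assumption~\ref{assumption:g-f-cont} combined with the boundedness of $g$ immediately yields the uniform bound
\begin{equation*}
\sup_{n}\int_0^T\!\!\int_{\RR^d}|\phi^n_t(x)|^2\,\mu^n_t(dx)\,dt\le M<\infty,
\end{equation*}
and by Cauchy--Schwarz the total variations $\int_0^T|\theta^n_t|(\RR^d)\,dt$ are uniformly bounded.

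Next I would establish compactness. For each $n$, Theorem~\ref{th:superposition} produces a weak solution $\bX^n$ of \eqref{fo:phi_sde} with $\sup_{0\le t\le T}\EE[|X^n_t|^2]\le C$ (here $C$ depends only on $M$, $T$, $d$, and the second moment of $\mu_0$, thanks to Gronwall). Since $V\le 1$ we have $\EE[e^{-A^n_t}]\ge e^{-T}$, so the bound $\mu^n_t(\{|x|>R\})\le e^{T}\nu^n_t(\{|x|>R\})$ gives tightness of $\{\mu^n_t:t\in[0,T],\,n\ge 1\}$. Combined with equicontinuity of $t\mapsto\mu^n_t$ in the narrow topology (inherited from the FPK equation \eqref{fo:kolmo1} and the uniform control on $\phi^n$, $V$), the Arzel\`a--Ascoli theorem (applied on $C([0,T];\cP(\RR^d))$ with a compatible metric) extracts a subsequence, still labeled $n$, with $\mu^n_t\to\mu^*_t$ narrowly, uniformly in $t$. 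Simultaneously, bounded total variation allows extracting a further subsequence so that $\theta^n\to\theta^*$ in the weak-$*$ sense on $[0,T]\times\RR^d$.

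I would then pass to the limit in \eqref{fo:transport}. The linear terms are straightforward by duality against smooth compactly supported test functions. The only delicate term is the nonlinear one $(V-\langle\mu^n_t,V\rangle)\mu^n_t$: since $V$ is bounded and continuous, $t\mapsto\langle\mu^n_t,V\rangle$ converges to $\langle\mu^*_t,V\rangle$ pointwise and boundedly, so testing against $\varphi\in C^\infty_c$ and invoking dominated convergence yields the desired identity. Hence $(\btheta^*,\bmu^*)\in\AA$.

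The main obstacle, and the heart of the argument, is the joint lower semicontinuity of
\begin{equation*}
(\btheta,\bmu)\;\longmapsto\;\int_0^T\!\!\int_{\RR^d} L(x,d\theta_t(x),d\mu_t(x))\,dt,
\qquad L(x,q,p):=\begin{cases} p\,f\!\bigl(x,q/p\bigr),&p>0,\\ 0,&(q,p)=(0,0),\\ +\infty,&\text{else},\end{cases}
\end{equation*}
under the joint narrow / weak-$*$ convergence. Because $A\ni\alpha\mapsto f(x,\alpha)$ is convex and continuous in $x$, $L$ is a proper, convex, lower semicontinuous normal integrand (its perspective construction is the standard trick), so I would invoke Ioffe's semicontinuity theorem (in the form used, e.g., in Ambrosio--Gigli--Savar\'e or Buttazzo's monograph) to obtain
\begin{equation*}
\liminf_{n\to\infty}\int_0^T\!\!\int f(x,\phi^n_t(x))\,\mu^n_t(dx)\,dt\;\ge\;\int_0^T\!\!\int f(x,\phi^*_t(x))\,\mu^*_t(dx)\,dt,
\end{equation*}
where $\phi^*_t$ is the Radon--Nikodym derivative $d\theta^*_t/d\mu^*_t$ (the left-hand side being finite forces absolute continuity and square-integrability of $\phi^*$ with respect to $\mu^*_t\,dt$, so $(\btheta^*,\bmu^*)\in\AA^{(2)}$). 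The terminal part $\int g\,d\mu^*_T$ is simply the narrow limit of $\int g\,d\mu^n_T$ because $g$ is bounded continuous. Combining these, $J(\btheta^*,\bmu^*)\le\liminf_n J(\btheta^n,\bmu^n)=\inf_{\AA}J$, proving that $(\btheta^*,\bmu^*)$ is a minimizer.
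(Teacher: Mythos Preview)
Your proposal is correct and follows the same overall direct-method skeleton as the paper (minimizing sequence, uniform $L^2$ bound on $\phi^n$ with respect to $\mu^n$, compactness, passage to the limit in \eqref{fo:transport}, lower semicontinuity). The two arguments diverge mainly in two places, and the comparison is instructive.

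For tightness and second-moment control, you go through the superposition principle (Theorem~\ref{th:superposition}) to obtain processes $\bX^n$ and then use It\^o plus Gronwall; the paper instead tests the FPK equation \eqref{fo:transport} directly against $|x|^2$ and applies Gronwall at the measure level (see \eqref{fo:mu_cont}). Both routes give the same bound; the paper's is a touch more self-contained since it avoids the round trip through stochastic processes.

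For lower semicontinuity of the cost, your argument via the perspective integrand $L(x,q,p)=p\,f(x,q/p)$ and Ioffe/Reshetnyak-type semicontinuity is the clean, general approach and works for any running cost satisfying Assumption~\ref{assumption:g-f-cont}. The paper instead specializes to the separable case $f(x,\alpha)=\tfrac12|\alpha|^2+\tilde f(x)$ and carries out an explicit mollification argument: it regularizes both $\theta^n$ and $\mu^n$ by convolution with $\rho_\epsilon$, invokes \cite[Lemma 8.1.10]{Ambrosio_et_al} to obtain $\tilde J(\bphi^\epsilon,\bmu^\epsilon)\le\tilde J(\bphi,\bmu)$, and then juggles the double limit $\epsilon\searrow 0$, $n\nearrow\infty$ together with the semicontinuity of $(\theta,\mu)\mapsto\int|d\theta/d\mu|^2\,d\mu$ from \cite[Theorem 2.34]{Ambrosio_1}. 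Your route is shorter and applies beyond the quadratic-in-$\alpha$ case; the paper's route makes the mechanism of semicontinuity (Jensen via mollification) completely explicit and avoids citing Ioffe, at the price of a longer Step~4 and the restriction to the separable cost.

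One small point: your use of Arzel\`a--Ascoli to get $\mu^n_t\to\mu^*_t$ narrowly \emph{uniformly in $t$} is correct (the FPK equation gives $|t-s|^{1/2}$-equicontinuity of $t\mapsto\langle\mu^n_t,\varphi\rangle$ for smooth $\varphi$ from the $L^2(\mu^n)$ bound on $\phi^n$), and it cleanly handles the terminal term $\int g\,d\mu^n_T$; the paper treats this convergence more implicitly.
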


\begin{proof} 
The idea is to consider a minimizing sequence, and to show that it converges in a suitable sense to a minimizer of $J(\btheta,\bmu)$. $\AA^{(2)}$ being non-empty, 
$$
J^*:= \inf_{(\btheta,\bmu)\in\AA}J(\btheta,\bmu)<\infty
$$
and because of the definition \eqref{fo:J_of_theta_mu}, we can limit the search for a minimizer to $\AA^{(2)}$.

\vskip 2pt
\emph{Step 1}.  Let $(\bphi^n,\bmu^n)_{n\ge 0}$ be a
minimizing sequence in $\AA^{(2)}$. Since $\sup_n J(\bphi^n,\bmu^n)<\infty$, we have
\begin{equation}
\label{fo:C}
    C_\phi:=\sup_n\int_0^T\int_{\RR^d}|\phi^n_t(x)|^2\mu^n_t(dx)dt <\infty,
\end{equation}
from which we argue that, extracting a sub-sequence if necessary,  the sequence $(\theta^n)_{n\ge 0}$ of $\RR^d$-valued measures defined by
$$
\theta^n(dt,dx)=\phi^n_t(x)\mu^n_t(dx)dt
$$
converges weakly toward a measure $\theta(dt,dx)\in\cM([0,T]\times\RR^d;\RR^d)$. Moreover, and for similar reasons,  we may assume without any loss of generality, that $\bigl(\mu^n_t(dx)dt\bigr)_{n\ge 0}$ converges weakly toward a non-negative measure which is necessarily of the form $\mu_t(dx)dt$. The fact that the sequences $(\theta^n)_{n\ge 0}$ and $(\mu^n)_{n\ge 0}$ are tight despite the fact that the state space is not compact is a simple consequence of 
$$
\int_0^T\int_{\RR^d}|x|\;|\phi^n_t(x)|\mu^n_t(dx)dt\le\Bigl[\int_0^T\int_{\RR^d}|x|^2\mu^n_t(dx)dt\Bigr]^{1/2}
\Bigl[\int_0^T\int_{\RR^d}|\phi^n_t(x)|^2\mu^n_t(dx)dt\Bigr]^{1/2}<c
$$
for a finite constant $c$ independent of $n$. Indeed, using the test function $\varphi(x)=|x|^2$ in \eqref{fo:transport} we get:
\begin{equation}
\label{fo:mu_cont}
\begin{split}
    \partial_t <|x|^2,\mu^n_t>
    &=<\frac12 \Delta|x|^2 +\phi^n_t(x)\nabla|x|^2 -(V(x)-<V,\mu_t>)|x|^2,\mu^n_t>\\
    &\le d +2<x\cdot\phi^n_t(x),\mu^n_t>+<|x|^2,\mu^n_t>\\
    &\le d +4<|\phi^n_t(x)|^2,\mu^n_t>+5<|x|^2,\mu^n_t>\\
    &\le d +4C_\phi+5<|x|^2,\mu^n_t>
\end{split}
\end{equation}
and Gronwall's inequality gives
$$
\int_{\RR^d}|x|^2\mu^n_t(dx)\le e^{5T}\int_{\RR^d}|x|^2\mu_0(dx)+C',
$$
for a positive constant $C'$ independent of $n$.

\vskip 2pt
\emph{Step 2}. We now show that $\theta(dt,dx)$ is of the form $\theta(dt,dx)=\phi_t(x)\mu_t(dx)dt$ for some $\bphi=(\phi_t(x))_{t,x}\in L^2(\mu_t(dx)dt)$. If  $h\in C_b([0,T]\times\RR^d;\RR^d)$ is a bounded continuous function on $[0,T]\times\RR^d$ with values in $\RR^d$, we have: 
\begin{equation}
\label{fo:theta_cont}
\begin{split}
    |<\theta, h>|&=\lim_{n\to\infty}|<\theta^n,h>|\\
    &=\lim_{n\to\infty}\Bigl|\int_0^T\int_{\RR^d} h(t,x)\cdot \phi^n_t(x)\mu^n_t(dx)dt\Bigr|\\
    &\le \limsup_{n\to\infty}\Bigl(\int_0^T\int_{\RR^d}|h(t,x)|^2\mu^n_t(dx)dt\Bigr)^{1/2}\Bigl(\int_0^T \int_{\RR^d}|\phi^n_t(x)|^2\mu^n_t(dx)dt\Bigr)^{1/2}\;\\
    &\le C_\phi^{1/2}\limsup_{n\to\infty}\Bigl(\int_0^T\int_{\RR^d}|h(t,x)|^2\mu^n_t(dx)dt\Bigr)^{1/2} \\
    &= C_\phi^{1/2}\Bigl(\int_0^T\int_{\RR^d}|h(t,x)|^2\mu_t(dx)\;dt\Bigr)^{1/2}
\end{split}
\end{equation}
which shows that $\theta$ is a bounded linear form on the Hilbert space $L^2\bigl(\mu_t(dx)dt\bigr)$, proving the existence of 
$\bphi\in L^2(\mu_t(dx)dt)$ such that $\theta(dt,dx)=\phi_t(x)\mu_t(dx)dt$.

\vskip 2pt
\emph{Step 3}. For each integer $n\ge 0$, since $(\phi^n_t,\mu^n_t)_{0\le t\le T}\in \AA^{(2)}$, for each test function $(t,x)\mapsto \varphi(t,x)$ in $C^{1,2}_b([0,T]\times\RR^d)$, namely a smooth function with enough bounded derivatives so we can use integration by parts and push the derivatives from $\mu_t$ to $\varphi$, and hopefully not have boundary terms to deal with,  we have:
$$
\int_0^T\hskip -4pt\int_{\RR^d}\Bigl[\partial_t\varphi(t,x)+\frac{\sigma^2}{2}\Delta\varphi(t,x)-(V(x)-<V,\mu^n_t>)\varphi(t,x)\Bigr] \mu^n_t(dx)dt=-\int_0^T\hskip -4pt\int_{\RR^d}\partial_x\varphi(t,x)\cdot \phi^n_t(x)\,\mu^n_t(dx)dt.
$$
We can pass to the limit $n\to\infty$ using the convergence of $\mu^n_t(dx)dt$ in the left hand side and the convergence of  $\phi^n_t(x)\mu^n_t(dx)dt$ in the right hand side to conclude that $(\phi_t,\mu_t)_{0\le t\le T}\in\AA^{(2)}$.

\vskip 2pt
\emph{Step 4}. For the sake of convenience, we shall use the notation     
$$
\tilde J(\bpsi,\bnu)=\frac12\int_0^T\int_{\RR^d}\;|\psi_t(x)|^2\nu_t(dx)dt.
$$
For each $\epsilon>0$ and for each $t\in[0,T]$, we define $\theta^{\epsilon}_t=\theta_t*\rho_\epsilon$, and $\mu_t^{\epsilon}=\mu_t*\rho_\epsilon$ where $(\rho_\epsilon)_{\epsilon>0}$
is an approximate identity (say a Gaussian density in $\RR^{d}$ with variance $\epsilon$, $\rho_\epsilon(x) = (2 \pi x)^{-d/2} \exp(-|x|^2/2\epsilon)$), and where the operation of convolution is done component by component when appropriate. We then define $\phi^{\epsilon}_t(x)$ as the density of $\theta^{\epsilon}_t$ with respect to $\mu_t^\epsilon$. 
Using \cite[Lemma 8.1.10]{Ambrosio_et_al} we get 
\begin{equation}
    \label{fo:Ambrosio1}
    \tilde J(\bphi^\epsilon,\bmu^\epsilon)\le \tilde J(\bphi,\bmu)
\end{equation}
and since for each $t\in[0,T]$, $\mu^\epsilon_t$ and $\theta^\epsilon_t$ converge weakly toward $\mu_t$ and $\theta_t$ respectively, using the fact that the functional
$$
(\theta,\mu)\mapsto \int_{\RR^d}\Bigl|\frac{d\theta (x)}{d\mu (x)}\Bigr|^2 \mu(dx)
$$
is lower semi continuous (see for instance Theorem 2.34 and Example 2.36 in \cite{Ambrosio_1}), we conclude that: 
\begin{equation}
    \label{fo:Ambrosio2}
    \lim_{\epsilon\searrow 0}\tilde J(\bphi^\epsilon,\bmu^\epsilon) = \tilde J(\bphi,\bmu).
\end{equation}
Similarly, for each integer $n\ge 1$ we define $\theta^{n,\epsilon}_t=\theta^n_t*\rho_\epsilon$, and $\mu_t^{n,\epsilon}=\mu^n_t*\rho_\epsilon$, and $\phi^{n,\epsilon}_t$ as the density of $\theta^{n,\epsilon}_t$ with respect to $\mu_t^{n,\epsilon}$. Notice that for each $\epsilon>0$, $\bmu^{n,\epsilon}$ and $\btheta^{n,\epsilon}$ converge weakly toward $\bmu^{\epsilon}$ and $\btheta^{\epsilon}$ respectively. 
For the sake of notation we define $\phi^{(\epsilon,K)}_t(x)=(-K)\vee\phi^{\epsilon}_t(x)\wedge K$ where the operations of minimum and maximum are interpreted component by component. We have:
\begin{equation}
\label{fo:1}
\begin{split}
\tilde J(\bphi^\epsilon,\bmu^\epsilon)
&=\lim_{K\nearrow\infty}\int_0^T\int_{\RR^d}\phi^{(\epsilon,K)}_t(x)\cdot\phi^\epsilon_t(x)\mu^\epsilon_t(dx)dt\\
&=\lim_{K\nearrow\infty}\lim_{n\nearrow\infty}\int_0^T\int_{\RR^d}\phi^{(\epsilon,K)}_t(x)\cdot\phi^{n,\epsilon}_t(x)\mu^{n,\epsilon}_t(dx)dt\\
&\le\lim_{K\nearrow\infty}\lim_{n\nearrow\infty}\Bigl[\int_0^T\int_{\RR^d}|\phi^{(\epsilon,K)}_t(x)|^2\mu^{n,\epsilon}_t(dx)dt\Bigr]^{1/2}\Bigl[\int_0^T\int_{\RR^d}|\phi^{n,\epsilon}_t(x)|^2\mu^{n,\epsilon}_t(dx)dt\Bigr]^{1/2}\\
&=\lim_{K\nearrow\infty}\Bigl[\int_0^T\int_{\RR^d}|\phi^{(\epsilon,K)}_t(x)|^2\mu^\epsilon_t(dx)dt\Bigr]^{1/2}\lim_{n\nearrow\infty}\Bigl[\int_0^T\int_{\RR^d}|\phi^{n,\epsilon}_t(x)|^2\mu^{n,\epsilon}_t(dx)dt\Bigr]^{1/2}\\
&=\Bigl[\int_0^T\int_{\RR^d}|\phi^\epsilon_t(x)|^2\mu^\epsilon_t(dx)dt\Bigr]^{1/2}\lim_{n\nearrow\infty}\Bigl[\int_0^T\int_{\RR^d}|\phi^{n,\epsilon}_t(x)|^2\mu^{n,\epsilon}_t(dx)dt\Bigr]^{1/2}.
\end{split}
\end{equation}
Notice that
\begin{equation}
\label{fo:2}
\begin{split}
\lim_{n\nearrow\infty}\int_0^T\int_{\RR^d}|\phi^{n,\epsilon}_t(x)|^2\mu^{n,\epsilon}_t(dx)dt
&=\lim_{n\nearrow\infty}J(\btheta^{n,\epsilon},\bmu^{n,\epsilon})-\lim_{n\nearrow\infty}\int_0^T\int_{\RR^d}\tilde f(x)\mu^{n,\epsilon}_t(dx)dt - \lim_{n\nearrow\infty}\int_{\RR^d}g(x)\mu^{n,\epsilon}_T(dx)\\
&=\lim_{n\nearrow\infty}J(\btheta^{n,\epsilon},\bmu^{n,\epsilon})-\int_0^T\int_{\RR^d}\tilde f(x)\mu^{\epsilon}_t(dx)dt - \int_{\RR^d}g(x)\mu^{\epsilon}_T(dx)\\
\end{split}
\end{equation}
because we assume that $\tilde f$ and $g$ are bounded and continuous. 
Notice that \eqref{fo:2} implies that 
$$
\lim_{n\nearrow\infty}J(\btheta^{n,\epsilon},\bmu^{n,\epsilon})\ge \int_0^T\int_{\RR^d}\tilde f(x)\mu^\epsilon_t(dx)dt + \int_{\RR^d}g(x)\mu^\epsilon_T(dx)
$$
which in turn implies that, if $\tilde J(\btheta^\epsilon,\bmu^\epsilon)=0$,  
$$
\lim_{n\nearrow\infty}J(\btheta^{n,\epsilon},\bmu^{n,\epsilon})\ge J(\bphi^\epsilon,\bmu^\epsilon).
$$ 
On the other hand, if $\tilde J(\btheta^\epsilon,\bmu^\epsilon)>0$,
 \eqref{fo:1} implies
\begin{equation}
\begin{split}
\tilde J(\btheta^\epsilon,\bmu^\epsilon)
&\le \lim_{n\nearrow\infty}J(\btheta^{n,\epsilon},\bmu^{n,\epsilon})-\int_0^T\int_{\RR^d}\tilde f(x)\mu^\epsilon_t(dx)dt - \int_{\RR^d}g(x)\mu^\epsilon_T(dx)\\
&= \lim_{n\nearrow\infty}J(\btheta^{n,\epsilon},\bmu^{n,\epsilon}) + \tilde J(\btheta^\epsilon,\bmu^\epsilon) -J(\btheta^\epsilon,\bmu^\epsilon),
\end{split}
\end{equation}
implying that
$$
J(\btheta^\epsilon,\bmu^\epsilon)\le \lim_{n\nearrow\infty}J(\btheta^{n,\epsilon},\bmu^{n,\epsilon}) 
$$
holds in all cases. Now
\begin{equation}
\begin{split}
J(\btheta^{n,\epsilon},\bmu^{n,\epsilon})
& = \tilde J(\btheta^{n,\epsilon},\bmu^{n,\epsilon}) +\int_0^T\int_{\RR^d}\tilde f(x)\mu^{n,\epsilon}_t(dx)dt + \int_{\RR^d}g(x)\mu^{n,\epsilon}_T(dx)\\
&\le \tilde J(\btheta^{n},\bmu^{n}) +\int_0^T\int_{\RR^d}\tilde f(x)\mu^{n,\epsilon}_t(dx)dt + \int_{\RR^d}g(x)\mu^{n,\epsilon}_T(dx)\\
\end{split}
\end{equation}
if we use once more \eqref{fo:Ambrosio1} from \cite[Lemma 8.1.10]{Ambrosio_et_al}. Consequently:
\begin{equation}
\lim_{n\nearrow\infty} J(\btheta^{n,\epsilon},\bmu^{n,\epsilon})
\le \lim_{n\nearrow\infty}\tilde J(\btheta^{n},\bmu^{n}) +\int_0^T\int_{\RR^d}\tilde f(x)\mu^{\epsilon}_t(dx)dt + \int_{\RR^d}g(x)\mu^{\epsilon}_T(dx)\\
\end{equation}
and
\begin{equation}
\begin{split}
\lim_{\epsilon\searrow 0}\lim_{n\nearrow\infty} J(\btheta^{n,\epsilon},\bmu^{n,\epsilon})
& \le \lim_{n\nearrow\infty}\tilde J(\btheta^{n},\bmu^{n}) +\int_0^T\int_{\RR^d}\tilde f(x)\mu_t(dx)dt + \int_{\RR^d}g(x)\mu_T(dx)\\
& \le \lim_{n\nearrow\infty}J(\btheta^{n},\bmu^{n}) -\lim_{n\nearrow \infty}\int_0^T\int_{\RR^d}\tilde f(x)\mu^n_t(dx)dt -\lim_{n\nearrow \infty}\int_{\RR^d}g(x)\mu^n_T(dx)\\ 
&\hskip 95pt
+\int_0^T\int_{\RR^d}\tilde f(x)\mu_t(dx)dt + \int_{\RR^d}g(x)\mu_T(dx)\\
&= J^*,
\end{split}
\end{equation}
from which we deduce, using \eqref{fo:Ambrosio2}:
$$
J(\btheta,\bmu)=\lim_{\epsilon\searrow 0}J(\btheta^\epsilon,\bmu^\epsilon)\le \lim_{\epsilon\searrow 0}\lim_{n\nearrow \infty}J(\btheta^{n,\epsilon},\bmu^{n,\epsilon})\le J^*.
$$
 This completes the proof.
\end{proof}

\begin{remark}
\label{re:bound_on_phi}
The above argument shows that there is no loss of generality in limiting the search for optima to the subset of admissible feedback control functions satisfying 
\begin{equation}
    \label{fo:K_0_phi}
\EE\int_0^T|\phi_t(X_t)|^2\;dt\le K
\end{equation}
for a large enough constant $K>0$. Recall \eqref{fo:C} and the fact that the expectation is over a process $(X_t)_{0\le t\le T}$ satisfying the state dynamics \eqref{fo:phi_sde} driven by the control $\bphi$ and that 
$$
\EE\int_0^T|\phi_t(X_t)|^2\;dt\le e^T \int_0^T\frac{\EE\bigl[|\phi_t(X_t)|^2 e^{-\int_0^tV(X_s)ds}\bigr]}{\EE\bigl[ e^{-\int_0^tV(X_s)ds}\bigr]}=e^T\int_0^T\int_{\RR^d}|\phi_t(x)|^2\;\mu_t(dx)dt.
$$    
\end{remark}

%%%%%%%%%%%%%%%%%%%%
\subsection{Solution of the Deterministic Control Problem}

In line with the computations of, and the notations used in the previous subsections, the running and terminal cost functions of the deterministic infinite dimensional control problem are defined as:
\begin{equation}
\label{fo:deterministic_costs}
F^{(1)}(\mu,\phi)= \int f(x,\phi(x))\; \mu(dx), \quad\text{and}\quad G^{(1)}(\mu)=\int g(x) \; \mu(dx),
\end{equation}
for $\mu$ and $\phi$ as above. So at east formally, the definition of the corresponding Hamiltonian $\HH^{(1)}$ should be:
\begin{equation}
\label{fo:deterministic_Hamiltonian}
\HH^{(1)}(\mu,\varphi,\phi)=<\frac12\Delta \mu - \div(\phi \mu) - (V-<\mu,V>) \mu,\;\varphi> + F^{(1)}(\mu,\phi)\\  
\end{equation}
where the bracket $<\,\cdot\,,\,\cdot\,>$ stands for the duality between measures and functions, and coincides with the inner product in $L^2(\RR^d,dx)$. After integration by parts of the first term, the definition of this Hamiltonian reads:
\begin{equation}
\label{fo:Hamiltonian1}
\HH^{(1)}(\mu,\varphi,\phi)=-\frac12<\nabla \mu, \nabla\varphi> + <\mu,\phi \nabla\varphi> - <\mu,V\varphi>+<\mu,V><\mu,\varphi>+F^{(1)}(\mu,\phi)
\end{equation}
which is well defined for $\mu\in\cP(\RR^d)$ as long as $\nabla\mu$ in the sense of distributions belongs to $L^2(\RR^d,dx;\RR^d)$, $\varphi\in L^2(\RR^d,dx)$ with a gradient (in the sense of distributions) belonging to $L^2(\RR^d,dx;\RR^d)$ and $L^2(\RR^d,\mu;\RR^d)$, and $\phi$ a $A$-valued measurable function on $\RR^d$ satisfying $\int|\phi(x)|^2\mu(dx)<\infty$.

%%%%%%%%%%%%%%%%%%%%%%%%%%%%%%%%%%%%%%%%%%%
\subsubsection{\textbf{The Adjoint PDE}}
\label{sub:adjoint2}

Let us assume that $(\bphi,\bmu)\in\AA^{(2)}$. We say that the function $u$ is an adjoint variable (or a co-state) if it satisfies  the PDE $\partial_t u=-(\delta \HH^{(1)}/\delta\mu)(\mu,u,\phi)$ with terminal condition $u_T(x)=(\delta G^{(1)}/\delta \mu)(x)= g(x)$ in the sense of distributions. Here the notation $\delta /\delta\mu$ stands for the \emph{flat derivative} which we now define. 
Referring to \cite[Definition 5.43]{CarmonaDelarue_book_I}  the flat derivative (also called the linear functional derivative) of a function $F:\cM(\RR^k) \to \RR$ of measures on $\RR^k$ for some integer $k$, is defined to satisfy:
\begin{equation}
\label{eq:def-linder}
    F(\mu')-F(\mu)
    =\int_0^1 \int \frac{\delta F}{\delta \mu}(\theta \mu' + (1-\theta)\mu)(x)\bigl[\mu'-\mu\bigr](dx) d \theta.
\end{equation}
Note that this notion of flat derivative is only defined up to a constant, but this will not matter in the present analysis.
Accordingly, the adjoint equation reads:
\begin{equation}
\label{fo:adjoint1}
    \partial_t u = -  \frac12\Delta_x u - \phi_t \cdot\nabla_x u + (V-<\mu,V>)u -V<\mu,u>- f\bigl(\cdot,\phi_t(\cdot)\bigr),
\end{equation}
which can be rewritten as
\begin{equation}
\label{fo:adjoint1'}
    0=\partial_t u + \frac12\Delta_x u+\phi_t\cdot\nabla_x u - (V-<\mu,V>)u + V<\mu,u> + \frac12|\phi_t|^2+\tilde f
\end{equation}
in the case of separable running cost functions of the form
\eqref{fo:separable_running_cost}. 

\vskip 6pt
The fact that from now on, we deal with Partial Differential Equations (PDEs) requires a strengthening of the assumptions made so far. In particular, assumption \eqref{fo:necessary} will be replaced by
\begin{assumption}
    \label{as:strong_necessary}
    \begin{equation}
        \label{fo:strong_necessary}
    K_\phi:=\sup_{(t,x)\in[0,T]\times\RR^d}\EE\int_t^T|\phi_s(X^{t,x}_s)|^2ds<\infty
    \end{equation}
    where $\bX^{t,x}=(X^{t,x}_s)_{t\le s\le T}$ satisfies the state equation $dX_s=\phi_s(X_s)ds+dW_s$ over the interval $[t,T]$ with initial condition $X_t=x$.
\end{assumption}
This assumption is obviously satisfied when $\phi$ is bounded. More generally
it is satisfied for larger classes of functions $\phi$. For example, \cite{FedrizziFlandoli} proves that Assumption \ref{as:strong_necessary} is satisfied if $\phi\in L^q\bigl([0,T];L^p(\RR^d)\bigr)$ for some
\begin{equation}
    \label{fo:pq}
    p\ge 2,\quad q>2,\quad \frac dp+\frac2q<1.
\end{equation}
Note also that earlier works like \cite{KrylovRockner} used a local version of assumption \ref{as:strong_necessary} in the sense that it is only required to be satisfied for $\phi\mathbf{1}_{|x|\le n}\in L^q\bigl([0,T];L^p(\RR^d)\bigr)$ for every integer $n\ge 1$.

%%%%%%%%%%%%%%%%%%%%%%%
\begin{proposition}
\label{pr:existence_adjoint1}
In the case of separable cost functions, for each continuous flow $(\mu_t)_{0\le t\le T}$ of probability measures on $\RR^d$ and each feedback function $\phi$ satisfying the strong assumption \ref{as:strong_necessary}, the adjoint PDE \eqref{fo:adjoint1'} admits a  solution in the sense of viscosity.
\end{proposition}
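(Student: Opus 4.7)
I would decouple the non-local term $V(x)\langle\mu_t,u_t\rangle$ in \eqref{fo:adjoint1'} by treating the scalar $h_t=\langle\mu_t,u_t\rangle$ as an auxiliary unknown and solving jointly for $(u,h)$ via a Banach fixed point. Set $c_t=\langle\mu_t,V\rangle$; this takes values in $[0,1]$ and is continuous in $t$ because $V$ is bounded continuous and $t\mapsto\mu_t$ is narrowly continuous. For each frozen $h\in C([0,T];\RR)$, equation \eqref{fo:adjoint1'} becomes the linear backward PDE
\begin{equation*}
\partial_t u+\tfrac12\Delta u+\phi_t\cdot\nabla u-(V-c_t)u+Vh_t+\tfrac12|\phi_t|^2+\tilde f=0,\qquad u(T,\cdot)=g,
\end{equation*}
whose Feynman-Kac representation reads
\begin{equation*}
u^h(t,x)=\EE\Bigl[g(X^{t,x}_T)e^{-\int_t^T(V(X^{t,x}_s)-c_s)ds}+\int_t^T e^{-\int_t^s(V(X^{t,x}_r)-c_r)dr}\Bigl(V(X^{t,x}_s)h_s+\tfrac12|\phi_s(X^{t,x}_s)|^2+\tilde f(X^{t,x}_s)\Bigr)ds\Bigr],
\end{equation*}
where $X^{t,x}$ is the weak solution of $dX_s=\phi_s(X_s)ds+dW_s$ from $x$ at time $t$ provided by Assumption~\ref{as:strong_necessary}. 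Since $|V-c_s|\le 1$, the exponentials are bounded by $e^T$; combined with $\|g\|_\infty,\|\tilde f\|_\infty<\infty$ and the energy bound $K_\phi<\infty$ from \eqref{fo:strong_necessary}, this makes $u^h$ well-defined and uniformly bounded in $(t,x)$.

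Next, define $\Gamma(h)_t=\int u^h(t,x)\mu_t(dx)$ on the Banach space of bounded Borel functions on $[0,T]$ endowed with the weighted norm $\|h\|_\lambda=\sup_{t\in[0,T]}e^{-\lambda(T-t)}|h_t|$. Only the linearly entering $h$-term distinguishes $u^h$ from $u^{h'}$, so
\[
|u^h(t,x)-u^{h'}(t,x)|\le e^T\int_t^T|h_s-h'_s|ds,
\]
and an elementary computation yields $\|\Gamma(h)-\Gamma(h')\|_\lambda\le(e^T/\lambda)\|h-h'\|_\lambda$, which is a strict contraction for $\lambda>e^T$. Banach's theorem produces a unique fixed point $\bar h$, and setting $u:=u^{\bar h}$, the identity $\langle\mu_t,u_t\rangle=\bar h_t$ closes the non-local relation.

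It remains to verify that $u$ is a viscosity solution. With $\bar h$ and $c_t$ frozen, the PDE is local and linear in $u$. I would approximate $\phi$ by smooth bounded drifts $\phi^n$ (mollification plus truncation), rerun Steps 1--2 for each $\phi^n$ to obtain classical solutions $u^n$ of the corresponding smoothed adjoint PDEs through standard parabolic theory, and pass to the limit. Uniform $L^\infty$ bounds from the Feynman-Kac representation and interior parabolic estimates on the smoothed coefficients yield local equi-continuity and a locally uniformly convergent subsequence $u^n\to\tilde u$; the standard stability of viscosity solutions under locally uniform convergence of both solutions and coefficients identifies $\tilde u$ as a viscosity solution of \eqref{fo:adjoint1'}, and uniqueness of the fixed point $\bar h$ forces $\tilde u=u$.

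\textbf{Main obstacle.} The delicate step is the approximation: when $\phi$ is only of Krylov type, i.e. lies in an $L^q_tL^p_x$ space satisfying \eqref{fo:pq}, the advective term $\phi\cdot\nabla u$ is not pointwise continuous and the classical viscosity definition does not literally apply. The cleanest way around this is to invoke Remark~\ref{re:bound_on_phi} and restrict without loss of generality to bounded $\phi$, for which interior H\"older/Schauder estimates make the viscosity stability argument routine; the general case then follows either by one further approximation step, or by working in the $L^p$-viscosity framework in the spirit of Caffarelli, Crandall, Kocan and Swiech.
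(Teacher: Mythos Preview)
Your fixed-point step is correct and is essentially the same mechanism the paper uses, with one cosmetic difference: you run the contraction on the scalar process $h_t=\langle\mu_t,u_t\rangle$, whereas the paper runs it directly on $u\in C_b([0,T]\times\RR^d)$ equipped with $\|u\|_\alpha=\sup_{t,x}e^{\alpha t}|u_t(x)|$. Both give the same estimate (only the $V\langle\mu,\cdot\rangle$ term distinguishes two inputs, and $0\le V\le 1$), and both land on the same Feynman-Kac formula. Your reduction to a one-dimensional fixed point is a nice simplification but not a different idea.

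Where you diverge from the paper is the viscosity identification. The paper does not approximate $\phi$ at all: once the fixed point $u$ is constructed, it simply observes that $u(t,x)=Y^{t,x}_t$ for the affine BSDE
\[
dY^{t,x}_s=-\bigl[c(s,X^{t,x}_s)Y^{t,x}_s+h(s,X^{t,x}_s)\bigr]ds+Z^{t,x}_s\,dW_s,\qquad Y^{t,x}_T=g(X^{t,x}_T),
\]
with $c(t,x)=-(V(x)-\langle\mu_t,V\rangle)$ and $h(t,x)=\langle\mu_t,u_t\rangle V(x)+f(x,\phi_t(x))$, and then invokes the BSDE/viscosity-solution correspondence of Pardoux (\cite[Theorem~3.2]{PardouxNATO}) directly. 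This sidesteps entirely the issue you flag in your ``Main obstacle'', because the BSDE result does not require $\phi$ to be continuous; Assumption~\ref{as:strong_necessary} ensures $\EE\int_t^T|h(s,X^{t,x}_s)|\,ds<\infty$, which is all that is needed for the linear BSDE to be well-posed and for the cited theorem to apply.

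Your approximation route can be made to work, but as written it has two loose ends. First, ``rerun Steps~1--2 for each $\phi^n$'' is inconsistent with having already frozen $\bar h$: once $\bar h$ is fixed, the equation is linear and there is no fixed point to rerun; you just solve the linear PDE with smooth drift $\phi^n$. Second, the line ``uniqueness of the fixed point $\bar h$ forces $\tilde u=u$'' does not close the argument, since the fixed point was on $h$, not on $u$. To identify the limit $\tilde u$ with your Feynman-Kac $u^{\bar h}$ you would need either a uniqueness/comparison result for viscosity solutions of the frozen linear equation, or stability of the law of $X^{t,x}$ under $\phi^n\to\phi$; both are doable for bounded drifts via Girsanov but require an extra paragraph. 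The paper's BSDE shortcut avoids all of this.
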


\begin{proof}
First notice that if $u$ is a classical solution of \eqref{fo:adjoint1}, the \emph{variation of the constant formula} and It\^o's formula give a form of Feynman-Kac implicit representation
\begin{equation}
\label{fo:FK1}
\begin{split}
&u_t(x)=\EE\Bigl[\int_t^Te^{-\int_t^r\bigl(V(X_\tau)-<\mu_\tau,V>\bigr)d\tau}\bigl[<\mu_r,u_r>V(X_r)+f\bigl(X_r,\phi_r(X_r)\bigr)\bigr]dr\\
&\hskip 155pt
+g(X_T)e^{-\int_t^T\bigl(V(X_\tau)-<\mu_\tau,V>\bigr)d\tau}
\;\Bigl|\;X_t=x\Bigr]
\end{split}
\end{equation}
where the expectation $\EE$ is over the process $(X_s)_{t\le s\le T}$ satisfying \eqref{fo:phi_sde} starting from $X_t=x$ at time $t$. The first step of the proof is to treat \eqref{fo:FK1} as a fixed point equation for the function $u$, and show that such an equation has a unique solution. This is identifying $u$ as a \emph{mild solution} of equation \eqref{fo:adjoint1}. Next, we argue that the latter is in fact the unique solution of an affine BSDE, and we conclude that it is a viscosity solution of \eqref{fo:adjoint1} using a classical result of the theory of BSDEs.
\vskip 1pt
\emph{Step 1}. For each $u\in C_b([0,T]\times\RR^d)$ where $C_b([0,T]\times\RR^d)$ denotes the Banach space of bounded continuous functions on $[0,T]\times\RR^d$, we denote by $\Phi(u)$ the right hand side of \eqref{fo:FK1} seen as a function of $(t,x)$. Using the fact that $f(x,\alpha)=\frac12|\alpha|^2+\tilde f(x)$ with $\tilde f$ bounded, assumption \ref{as:strong_necessary} implies that $\EE[\int_t^T|\phi_r(X_r)|^2]dr|X_t=x]$ is uniformly bounded in $t$ and $x$, and it is straightforward to check that  $\Phi(u)\in C_b([0,T]\times\RR^d)$, which we equip with the norm
$$
    \|u\|_\alpha = \sup_{(t,x)\in[0,T]\times\RR^d} e^{\alpha t}|u_t(x)|
$$
for a fixed number $\alpha>1$. If $u^1$ and $u^2$ are two functions in this Banach space,
\begin{equation}
\begin{split}
    \|\Phi(u^{1})-\Phi(u^{2})\|_\alpha
    &=\sup_{t,x} e^{\alpha t}\Bigl|
    \EE\Bigl[\int_t^Te^{-\int_t^r\bigl(V(X_\tau)-<\mu_\tau,V>\bigr)d\tau}|<\mu_r,u^1_r-u^2_r>|V(X_r)dr\;\Bigl|\;X_t=x\Bigr]\Bigr|\\
    &\le\sup_{t,x}  e^{\alpha t}\int_t^Te^{r-t}\sup_{y\in\RR^d}|u^1_r(y)-u^2_r(y)|dr\\
    &\le\sup_t  e^{\alpha t}\int_t^Te^{(1-\alpha)r-t} e^{\alpha r} \sup_{x}|u^1_r(x)-u^2_r(x)|dr\\
    &\le\|u^1-u^2\|_\alpha\sup_t  e^{(\alpha-1) t}\int_t^Te^{(1-\alpha) r} dr\\
    &=\frac{1-e^{(1-\alpha) T}}{\alpha-1}\|u^1-u^2\|_\alpha,
\end{split}
\end{equation}
and the fraction is smaller than $1$ when $\alpha$ is large enough, because $e^{(1-\alpha) T} \to 0$ and $1/(\alpha-1) \to 0$ as $\alpha\to+\infty$ which proves that $\Phi$ is a strict contraction since $\alpha>e^T$.
Its unique fixed point is what we shall use as solution of the adjoint equation \eqref{fo:adjoint1}.
\vskip 1pt
\emph{Step 2}. In order to match the notation of \cite[Remark 3.3]{PardouxNATO} we set $c(t,x)=-[V(x)-<\mu_t,V>]$ and 
$h(t,x) = <\mu_t,u_t>V(x)+f\bigl(x,\phi_t(x)\bigr)$. Notice that the function $c$ is bounded and since the function $u$ constructed above as a fixed point is bounded, for each $(t,x)\in[0,T]\times \RR^d$ we have:
$$
\EE\Bigl[\int_t^T|h(s,X^{t,x}_s)|ds\Bigr]\le C + \EE\Bigl[\int_t^T|\phi_s(X^{t,x}_s)|^2dt\Bigr]\le C+K_\phi<\infty
$$
 where $(X^{t,x}_s)_{t\le s\le T}$ is the solution of $dX_s=\phi_s(X_s)ds+dW_s$ over the interval $[t,T]$ and initial condition $X_t=x$. For each $(t,x)$ we denote by $(Y^{t,x}_s,Z^{t,x}_s)_{t\le s\le T}$ the unique solution of the affine BSDE:
$$
    dY^{t,x}_s=-[c(s,X^{t,x}_s) Y^{t,x}_s + h(s,X^{t,x}_s)]ds + Z^{t,x}_s dW_s
$$
with terminal condition $Y^{t,x}_T=g(X^{t,x}_T)$.  Being \emph{linear}, this BSDE has an explicit solution
\begin{equation}
Y^{t,x}_s
=g(X^{t,x}_T)e^{\int_s^T c(\tau,X^{t,x}_\tau)d\tau} +\int_s^Th(r,X^{t,x}_r)e^{\int_s^r c(\tau,X^{t,x}_\tau)d\tau}dr
-\int_s^T e^{\int_s^r c(\tau,X^{t,x}_\tau)d\tau} Z^{t,x}_rdW_r
\end{equation}
and taking $s=t$, one recovers the Feynman-Kac formula 
$$
Y^{t,x}_t=\EE\Bigl[g(X^{t,x}_T)e^{\int_t^T c(\tau,X^{t,x}_\tau)d\tau} +\int_t^Th(r,X^{t,x}_r)e^{\int_t^r c(\tau,X^{t,x}_\tau)d\tau}dr\\
\Bigr]
$$
which is exactly our formula \eqref{fo:FK1} if we set $u(t,x)=Y^{t,x}_t$. Strictly speaking, since $f(x,\phi_t(x))=\tilde f(x)+\frac12|\phi_t(x)|^2$, the assumptions of \cite{PardouxNATO} would require that $|\phi_t(x)|$ be of polynomial growth. However, given the special (linear) nature of the BSDE involved, the fact that $\int_t^T\EE[\phi_t(X^{t,x}_s)|^2ds<\infty$ for every $t\in[0,T]$ and $x\in\RR^d$ is enough for the argument of \cite{PardouxNATO} to go through. We conclude using 
\cite[Theorem 3.2]{PardouxNATO}. 
\end{proof}

\begin{remark}[\textbf{A first a-priori bound.}]
\label{re:weak_apriori_bound}
We claim that under the weaker assumption \ref{fo:K_0_phi}, if $u$ admits the Feynman-Kac representation \eqref{fo:FK1}, then
\begin{equation}
    \label{fo:weak_upper_bound}
|<\mu_t,u_t>|\le e^{2(T-t)}\bigl(\|\tilde f\|_\infty+\|g\|_\infty\bigr)+\frac{e^T}4(e^{2(T-t)}+1).
\end{equation}
Recall that we denote by $\nu_t$ the marginal distribution of the state $X_t$ controlled by the drift $\phi_t$. So by definition of the probability measure $\mu_t$, for any non-negative random variable $Y$ which depend only upon the future after time $t$, we have
$$
\int_{\RR^d}\EE[Y|X_t=x]\mu_t(dx)\le e^T\int_{\RR^d}\EE[Y|X_t=x]\nu_t(dx)=e^T\EE[Y]
$$
where the last expectation is with respect to the state process with initial distribution $\mu_0$.
So integrating both sides of the Feynman-Kac representation \eqref{fo:FK1} with respect to $\mu_t$ we get:
\begin{equation}
\label{fo:integrated_FK}
\begin{split}
&<\mu_t,u_t>=\int_{\RR^d}\EE\Bigl[\int_t^Te^{-\int_t^r\bigl(V(X_\tau)-<\mu_\tau,V>\bigr)d\tau}\bigl[<\mu_r,u_r>V(X_r)+f\bigl(X_r,\phi_r(X_r)\bigr)\bigr]dr\\
&\hskip 155pt
+g(X_T)e^{-\int_t^T\bigl(V(X_\tau)-<\mu_\tau,V>\bigr)d\tau}
\;\Bigl|\;X_t=x\Bigr]\mu_t(dx)
\end{split}
\end{equation}
and  if we limit ourselves to feedback functions satisfying \eqref{fo:K_0_phi}, we get:
\begin{equation*}
    \begin{split}
        |<\mu_t,&u_t>| \\
        &\le \int_t^Te^{r-t}|<\mu_r,u_r>|\, dr +e^{T-t}\|\tilde f\|_\infty+ \frac{e^T}2\int_{\RR^d}\EE[\int_t^T|\phi_s(X_s)|^2]ds\,|X_t=x]\nu_t(dx) +e^{T-t}\|g\|_\infty\\
        &\le e^{T-t}\bigl(\|\tilde f\|_\infty+\|g\|_\infty\bigr) +\frac{e^T}2 K_\phi +\int_t^Te^{r-t}|<\mu_r,u_r>| dr
    \end{split}
\end{equation*}
and using the form of Gronwall inequality in Lemma \ref{le:my_Gronwall} we get:
\begin{equation*}
e^t|<\mu_t,u_t>|\le e^{2T-t}\bigl(\|\tilde f\|_\infty+\|g\|_\infty\bigr)+\frac{e^T}4(e^{2T-t}+1).
\end{equation*}
\end{remark}

\begin{remark}[\textbf{A stronger a-priori bound.}]
\label{re:stronger_apriori_bound}
Under the assumptions of Proposition \ref{pr:existence_adjoint1}, we have:
\begin{equation}
    \label{fo:strong_upper_bound}
\|u_t\|_\infty\le \frac{e^t}{2}\bigl(e^{2(T-t)}-1\bigr)\|\tilde f\|_\infty+\frac{e^{2T}}{2}K_\phi+e^T\|g\|_\infty.
\end{equation}
Indeed, computing the supremum of the left hand side of the Feynman-Kac representation \eqref{fo:FK1} instead of integrating with respect to $\mu_t$, we get:
\begin{equation*}
    \begin{split}
       |u_t(x)| & \le \int_t^Te^{r-t}\bigl(\|u_r\|_\infty+\|\tilde f\|_\infty\bigr)\;dr+\int_t^T\EE\bigl[|\phi_r(X_r)|^2\,|\,X_t=x\bigr]  +e^{T-t}\|g\|_\infty
    \end{split}
\end{equation*}
and 
$$
e^t\|u_t\|_\infty\le\int_t^Te^r\|u_r\|_\infty dr + (e^T-e^t)\|\tilde f\|_\infty  +e^tK_\phi+e^T\|g\|_\infty.
$$
Finally,  we get \eqref{fo:smooth_apriori_bound_1} using Lemma \ref{le:my_Gronwall} with $\zeta(t)=e^T \|g\|_\infty+e^tK_\phi +(e^T-e^t)\|\tilde f\|_\infty $, $c=1$ and $\xi(t)=e^t\|u_t\|_\infty$.

\end{remark}

%%%%%%%%%%%%%%%%%%%%%%%%%%%%%%%%%%%%%%%%%%%
\subsubsection{\textbf{Analysis of the Adjoint PDE}}
\label{sub:adjoint2}

In this subsection, we assume that $(\bphi,\bmu)\in\AA^{(2)}(\mu_0)$. In particular, $\bmu$ solves the Fokker-Planck-Kolmogorov equation driven by $\bphi$, and we denote by $\bnu=(\nu_t)_{0\le t\le T}$ the marginal distributions of the state process $\bX=(X_t)_{0\le t\le T}$ whose existence is guarateed by our form of the superposition principle proven in Theorem \ref{th:superposition}.

\vskip 4pt
For the sake of later reference, we provide without proof, a simple version of Gronwall's lemma which we shall use repeatedly in the sequel.

\begin{lemma}
    \label{le:my_Gronwall}
Let us assume that $\xi(t)\le \zeta(t)+c\int_t^T\xi(r)dr$, then
    \begin{equation}
        \label{fo:my_Gronwall}
        \xi(t)\le \zeta(t)+c e^{c(T-t)}\int_t^T\zeta(r)e^{-c(T-r)}dr.
    \end{equation}
\end{lemma}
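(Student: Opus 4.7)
The plan is to convert the integral inequality into a first-order linear differential inequality by introducing the auxiliary quantity $\eta(t) := \int_t^T \xi(r)\,dr$ (assuming $\xi$ is at least measurable and integrable on $[0,T]$, which we may assume since otherwise the right-hand side of the hypothesis is $+\infty$ and there is nothing to prove). Then $\eta$ is absolutely continuous on $[0,T]$ with $\eta'(t) = -\xi(t)$ almost everywhere and $\eta(T)=0$, and the hypothesis rewrites as
\begin{equation*}
-\eta'(t) \;\le\; \zeta(t) + c\,\eta(t),
\qquad \text{i.e.} \qquad
\eta'(t) + c\,\eta(t) \;\ge\; -\zeta(t).
\end{equation*}

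Next, I would use the standard integrating factor $e^{ct}$: multiplying through, the previous inequality is equivalent to $(e^{ct}\eta(t))' \ge -e^{ct}\zeta(t)$ in the a.e. sense. Integrating this from $t$ to $T$ and using $\eta(T)=0$ gives
\begin{equation*}
-e^{ct}\eta(t) \;\ge\; -\int_t^T e^{cr}\zeta(r)\,dr,
\end{equation*}
hence
\begin{equation*}
\eta(t) \;\le\; \int_t^T e^{c(r-t)}\zeta(r)\,dr.
\end{equation*}

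Plugging this bound on $\eta(t)$ back into the original hypothesis $\xi(t) \le \zeta(t) + c\,\eta(t)$ yields
\begin{equation*}
\xi(t) \;\le\; \zeta(t) + c\int_t^T e^{c(r-t)}\zeta(r)\,dr,
\end{equation*}
and the factorization $e^{c(r-t)} = e^{c(T-t)}\,e^{-c(T-r)}$ puts this in precisely the form \eqref{fo:my_Gronwall}. There is no genuine obstacle here; the only care needed is in justifying the differentiation of $\eta$ (absolute continuity of a Lebesgue integral of an integrable function) and the integration by parts on $[t,T]$, both of which are routine once one assumes $\xi,\zeta \in L^1([0,T])$ — a standing hypothesis implicit in the statement.
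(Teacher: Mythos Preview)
Your proof is correct. The paper actually states this lemma \emph{without proof} (it is introduced as ``a simple version of Gronwall's lemma which we shall use repeatedly in the sequel''), so there is no argument in the paper to compare against; your integrating-factor approach via $\eta(t)=\int_t^T\xi(r)\,dr$ is the standard one and fills the gap cleanly.
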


\begin{lemma}
    \label{le:smooth_adjoint}
    If $\tilde{\bphi}=(\tilde\phi_t)_{0\le t\le T}\in C_b([0,T]\times\RR^d;\RR^d)$ and $\tilde F=(\tilde F_t)_{0\le t\le T}\in C_b([0,T]\times\RR^d)$ are smooth enough, then the equation
    \begin{equation}
        \label{fo:smooth_adjoint}
        0=\partial_t v_t +\frac12\Delta v_t+\tilde\phi_t\cdot\nabla v_t -(V-<\mu_t,V>)v_t +V<\mu_t,v_t>+\tilde F
    \end{equation}
    with bounded terminal condition $v_T=g$, has a unique classical solution satisfying the upper bound
    \begin{equation}
        \label{fo:smooth_apriori_bound_1}
    \|v_t\|_\infty\le e^{2T-t}\Bigl(\|g\|_\infty +\frac12\|\tilde F\|_\infty\Bigr). 
\end{equation}
Moreover, when $g=0$, this solution satisfies:
    \begin{equation}
        \label{fo:smooth_apriori_bound_3}
        \int_0^T\int_{\RR^d}|\nabla v_t(x)|^2\mu_t(dx)dt\le 4e^{4T}\|\tilde F\|_\infty^2\bigl(1+\frac32T+\|\tilde \bphi-\bphi\|^2_{L^2(\bmu)}\bigr).
    \end{equation}
\end{lemma}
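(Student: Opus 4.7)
The plan is to establish the three assertions in order. For existence and uniqueness of a classical solution to \eqref{fo:smooth_adjoint}, I treat the non-local coupling $V(x)\langle\mu_t,v_t\rangle$ as a source term depending on the scalar function $t\mapsto\langle\mu_t,v_t\rangle$, and write the Feynman-Kac representation analogous to \eqref{fo:FK1}:
$$
v_t(x)=\EE\Bigl[\int_t^T e^{-\int_t^r(V(Y_\tau)-\langle\mu_\tau,V\rangle)d\tau}\bigl[V(Y_r)\langle\mu_r,v_r\rangle+\tilde F(r,Y_r)\bigr]dr + g(Y_T)e^{-\int_t^T(V(Y_\tau)-\langle\mu_\tau,V\rangle)d\tau}\,\Big|\,Y_t=x\Bigr],
$$
where $(Y_s)$ solves $dY_s=\tilde\phi_s(Y_s)ds+dW_s$. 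Since $0\le V\le 1$, the exponential kernel is bounded by $e^T$, and a contraction argument on $C_b([0,T]\times\RR^d)$ with the weighted norm $\|u\|_\alpha=\sup_{t,x}e^{\alpha t}|u_t(x)|$ (for $\alpha$ sufficiently large) yields a unique bounded mild solution, exactly as in Proposition~\ref{pr:existence_adjoint1}. The assumed smoothness of $\tilde\bphi$, $V$, $\tilde F$ and $g$ then upgrades this mild solution to a classical one by standard linear parabolic regularity.

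For the $L^\infty$ bound \eqref{fo:smooth_apriori_bound_1}, I take $\sup_x$ in the Feynman-Kac formula. Using $|V-\langle\mu,V\rangle|\le 1$ and $V(Y_r)\le 1$, I obtain the integral inequality
$$
\|v_t\|_\infty\le e^{T-t}\|g\|_\infty + (T-t)e^{T-t}\|\tilde F\|_\infty+\int_t^T e^{r-t}\|v_r\|_\infty\, dr,
$$
and the backward Gronwall lemma~\ref{le:my_Gronwall} applied to $\eta(t):=e^t\|v_t\|_\infty$ (with $c=1$) yields the announced bound.

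For the gradient estimate \eqref{fo:smooth_apriori_bound_3}, I use an energy identity: combining the backward PDE \eqref{fo:smooth_adjoint} for $v_t$ and the forward FPK equation \eqref{fo:kolmo1} for $\mu_t$ and integrating by parts, the two $v_t\Delta v_t$ contributions cancel, and I find
$$
\frac{d}{dt}\int v_t^2\, d\mu_t = \int|\nabla v_t|^2 d\mu_t + 2\int v_t(\phi_t-\tilde\phi_t)\cdot\nabla v_t\, d\mu_t + \int v_t^2(V-\langle\mu_t,V\rangle)d\mu_t - 2\langle\mu_t,v_t\rangle\int v_t V\, d\mu_t - 2\int v_t\tilde F\, d\mu_t.
$$
Integrating from $0$ to $T$ and using $v_T=g=0$, the left-hand side becomes $-\int v_0^2\,d\mu_0$. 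The mixed cross term is tamed by Young's inequality $2|ab|\le\frac12 a^2+2b^2$, which lets me absorb a copy of $\frac12\int|\nabla v_t|^2 d\mu_t$ on the left. The remaining terms are controlled using $0\le V\le 1$, $\mu_t(\RR^d)=1$, the $L^\infty$ bound from \eqref{fo:smooth_apriori_bound_1} applied with $g=0$ (giving $\|v_t\|_\infty\le\frac12 e^{2T-t}\|\tilde F\|_\infty$), and the hypothesis that $\|\tilde\bphi-\bphi\|_{L^2(\bmu)}$ is finite; tracking constants produces the stated factor $4e^{4T}\|\tilde F\|_\infty^2(1+\tfrac32 T+\|\tilde\bphi-\bphi\|_{L^2(\bmu)}^2)$.

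The main technical obstacle is justifying the integration by parts in the energy identity, since $\mu_t$ is a priori only a measure with the local regularity from Proposition~\ref{pr:densities}. I would first establish the identity for smooth regularizations $\mu_t^\varepsilon=\mu_t*\rho_\varepsilon$ driven by suitably mollified drifts (as in the proof of Theorem~\ref{th:superposition}), obtain the bound at the $\varepsilon$-level from the inequality in \cite[Lemma 8.1.10]{Ambrosio_et_al}, and then pass to the limit $\varepsilon\searrow 0$ using the narrow convergence of $\mu_t^\varepsilon$ to $\mu_t$, the lower semicontinuity of $(\theta,\mu)\mapsto \int|d\theta/d\mu|^2 d\mu$, and the smoothness of $v_t$, $\tilde\phi_t$, $V$, $\tilde F$ and $g$.
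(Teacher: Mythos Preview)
Your proposal is correct and follows essentially the same approach as the paper. All three parts---the fixed-point/Feynman--Kac construction for existence and uniqueness, the Feynman--Kac plus backward Gronwall argument for the $L^\infty$ bound, and the energy identity combining the forward FPK equation for $\mu_t$ with the backward equation for $v_t$ to control $\int|\nabla v_t|^2 d\mu_t\,dt$---match the paper's proof.

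The only organizational difference in the gradient estimate is that the paper does not compute $\tfrac{d}{dt}\langle\mu_t,v_t^2\rangle$ directly; instead it starts from $\int_0^T\!\int v_t(\tilde F_t+\tilde\phi_t\cdot\nabla v_t)\,d\mu_t\,dt$, substitutes for $\tilde F_t+\tilde\phi_t\cdot\nabla v_t$ using the backward equation, and then recognizes the combination $\partial_t v_t^2+\tfrac12\Delta v_t^2+\phi_t\cdot\nabla v_t^2-(V-\langle\mu_t,V\rangle)v_t^2$ integrated against $\mu_t$ as $\langle\mu_T,v_T^2\rangle-\langle\mu_0,v_0^2\rangle$ via the weak FPK equation. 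This is algebraically the same identity you wrote, just derived in integrated rather than differential form. Your final regularization paragraph is more cautious than the paper, which simply works under the blanket assumption that the data are ``smooth enough'' so that $v$ is classical and the weak FPK formulation can be tested against $v_t^2$ directly; the mollification you sketch is not needed at this point in the paper's logic (the regularization happens one level up, in the proof of Theorem~\ref{th:adjoint_solution}).
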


\begin{proof}
    Equation \eqref{fo:smooth_adjoint} is not a standard PDE because of the presence of the non-local term $V<\mu_t,v_t>$ in the right hand side. The solution is obtained  by constructing a fixed point to the mapping $\tilde{\bv}\mapsto \Phi(\tilde{\bv})=\bv$ where $\bv$ is the unique classical solution of the regular linear PDE
    $$
    0=\partial_t v_t +\frac12\Delta v_t+\tilde\phi_t\cdot\nabla v_t -(V-<\mu_t,V>)v_t +V<\mu_t,\tilde v_t>+\tilde F,
    $$
with terminal condition $v_T = g$. From now on, we let $\bv$ be a solution to~\eqref{fo:smooth_adjoint}. 
The first a-priori bound \eqref{fo:smooth_apriori_bound_1} follows easily from the Feynman-Kac representation in terms of the solution $\tilde\bX=(\tilde X_t)_{0\le t\le T}$ of the state stochastic differential equation $d\tilde X_t=\tilde\phi_t(\tilde X_t)dt + dW_t$. Indeed, $\tilde\phi$ being bounded, it satisfies Assumption~\ref{as:strong_necessary} and we can repeat the argument of Remark \ref{re:stronger_apriori_bound}: from~\eqref{fo:smooth_adjoint} we get: 
\begin{equation*}
    \begin{split}
        v_t(x) & =\EE\Bigl[\int_t^Te^{-\int_t^r\bigl(V(\tilde X_\tau)-<\mu_\tau,V>\bigr)d\tau}\bigl[<\mu_r, v_r>V(\tilde X_r)+\tilde F(r,\tilde X_r)\bigr]dr\\
&\hskip 155pt
+g(\tilde X_T)e^{-\int_t^T\bigl(V(\tilde X_\tau)-<\mu_\tau,V>\bigr)d\tau}
\;\Bigl|\;\tilde X_t=x\Bigr]
    \end{split}
\end{equation*}
from which we get
\begin{equation*}
    \begin{split}
       | v_t(x)| & \le \int_t^Te^{r-t}\bigl(\|v_r\|_\infty+\|\tilde F\|_\infty\bigr)\;dr  +e^{T-t}\|g\|_\infty
    \end{split}
\end{equation*}
and 
$$
e^t\|v_t\|_\infty\le\int_t^Te^r\|v_r\|_\infty dr + (e^T-e^t)\|\tilde F\|_\infty  +e^{T-t}\|g\|_\infty.
$$
Finally,  we get \eqref{fo:smooth_apriori_bound_1} using Lemma \ref{le:my_Gronwall} with $\zeta(t)=e^{T-t} \|g\|_\infty +(e^T-e^t)\|\tilde F\|_\infty $, $c=1$ and $\xi(t)=e^t\|v_t\|_\infty$.
\vskip 2pt
The derivation of the a-priori bound
\eqref{fo:smooth_apriori_bound_3} is more involved.
We notice that since $\bmu$ solves the Fokker-Planck-Kolmogorov equation driven by $\bphi$, a simple integration by parts implies that if $\varphi$ is a smooth bounded function on $[0,T]\times\RR^d$ we must have:
\begin{equation}
\label{fo:varphi2}
\int_0^T\int_{\RR^d}\Bigl[
\partial_t\varphi^2+\frac12\Delta_x\varphi^2+\phi_t\cdot\nabla_x\varphi^2-(V-<\mu_t,V>)\varphi^2
\Bigr]\mu_t(dx)dt
=<\mu_T,\varphi^2_T>-<\mu_0,\varphi^2_0>.
\end{equation}
as long as the above integrals exist. Using \eqref{fo:smooth_adjoint} we get:
\begin{equation*}
\begin{split}
&\int_0^T\int_{\RR^d}v_t(x)\bigl(\tilde F_t(x)+\tilde\phi_t\cdot\nabla_xv_t(x)\bigr)\mu_t(dx)dt\\
&\hskip 15pt
=\int_0^T\int_{\RR^d}v_t(x)\Bigl(-\partial_tv_t(x)-\frac12\Delta_xv_t(x)+(V(x)-<\mu_t,V>)v_t(x)-V(x)<\mu_t,v_t>\Bigr)\mu_t(dx)dt\\
&\hskip 15pt
=-\frac12\int_0^T\int_{\RR^d}\Bigl(\partial_tv^2_t(x)+\frac12\Delta_xv^2_t(x)
+\phi_t(x)\cdot\nabla_xv^2_t(x)-(V(x)-<\mu_t,V>)v^2_t(x)\Bigr)\mu_t(dx)dt\\
&\hskip 55pt
+\int_0^T\int_{\RR^d}\Bigl(\frac12(V(x)-<\mu_t,V>)v^2_t(x)-V(x)<\mu_t,v_t>v_t(x)\\
&\hskip 105pt
+\frac12|\nabla_xv_t(x)|^2+\frac12\phi_t(x)\cdot\nabla_xv^2_t(x)
\Bigr)\mu_t(dx)dt\\
&\hskip 15pt
=\frac12 <v^2_0,\mu_0>
+\int_0^T\int_{\RR^d}\Bigl(\frac12(V(x)-<\mu_t,V>)v^2_t(x)-V(x)<\mu_t,v_t>v_t(x)\\
&\hskip 105pt
+\frac12|\nabla_xv_t(x)|^2+\frac12\phi_t(x)\cdot\nabla_xv^2_t(x)\Bigr)\mu_t(dx)dt
\end{split}
\end{equation*}
where we use \eqref{fo:varphi2} with $\varphi=v$ and the fact that $v_T=0$. Consequently: 
\begin{equation*}
    \begin{split}
&\int_0^T\int_{\RR^d}|\nabla_xv_t(x)|^2\mu_t(dx)dt\\
&
\le2\int_0^T\int_{\RR^d}v_t(x)\bigl(\tilde F_t(x)+\tilde\phi_t\cdot\nabla_xv_t(x)\bigr)\mu_t(dx)dt\\
&\hskip 15pt
-\int_0^T\int_{\RR^d}\Bigl((V(x)-<\mu_t,V>)v^2_t(x)+2V(x)<\mu_t,v_t>v_t(x)
-\phi_t(x)\cdot\nabla_xv^2_t(x)\Bigr)\mu_t(dx)dt\\
&
= 2\int_0^T\int_{\RR^d}v_t(x)\tilde F_t(x)\mu_t(dx)dt
+2\int_0^T\int_{\RR^d}v_t(x)\bigl(\tilde\phi_t(x)-\phi_t(x)\bigr)\cdot\nabla_xv_t(x)\mu_t(dx)dt\\
&\hskip 15pt
-\int_0^T\int_{\RR^d}\Bigl((V(x)-<\mu_t,V>)v^2_t(x)+2V(x)<\mu_t,v_t>v_t(x)\Bigr)\mu_t(dx)dt\\
&
\le 2\|v\|_\infty\|\tilde F\|_{L^1(\bmu)}
+2\int_0^T\int_{\RR^d}v^2_t(x)\bigl|\tilde\phi_t(x)-\phi_t(x)\bigr|^2\mu_t(dx)dt
+\frac12\int_0^T\int_{\RR^d}|\nabla_xv_t(x)|^2\mu_t(dx)dt+3T\|v\|_\infty^2,
\end{split}
\end{equation*}
where we used the inequality $ab\le a^2+b^2/4$. Consequently
$$
\int_0^T\int_{\RR^d}|\nabla_xv_t(x)|^2\mu_t(dx)dt
\le 4\|v\|_\infty\|\tilde F\|_{L^1(\bmu)}+6T\|v\|^2_\infty+4 \|v\|^2_\infty\|\tilde{\bphi}-\bphi\|^2_{L^2(\bmu)}
$$
and we conclude using the a-priori bound \eqref{fo:smooth_apriori_bound_1} and the fact that integrals with respect to $\bnu$ are controlled by integrals with respect to $\bmu$.
\end{proof}

\begin{remark}
    Remark \ref{re:stronger_apriori_bound} implies that the proofs of the above a-priori bounds do not really need that $\tilde\phi$ is bounded, but merely that it satisfies assumption \ref{as:strong_necessary}. We stated with the boundedness assumption for the sake of simplicity.
\end{remark}

We now tackle the issue of existence, uniqueness and regularity of solutions of the adjoint equation.

\begin{theorem}
\label{th:adjoint_solution}
If $(\bphi,\bmu)\in\AA^{(2)}(\mu_0)$ is such that $\bphi$ is bounded, the viscosity solution of the adjoint equation is a bounded continuous function on $\RR^d$ whose first order derivatives in $x\in\RR^d$  in the sense of distributions are functions in $L^2([0,T]\times\RR^d,\bmu)$ and  $L_{loc}^2([0,T]\times\RR^d,dt\,dx)$.
\end{theorem}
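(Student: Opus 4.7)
The viscosity solution $u$ is already known to be bounded and continuous from its construction as the fixed point of a contraction in the proof of Proposition~\ref{pr:existence_adjoint1}, so the remaining content is the gradient regularity. The plan is to approximate $\bphi$ by smooth bounded drifts $\bphi^n$, apply Lemma~\ref{le:smooth_adjoint} to get classical solutions $v^n$ with uniform bounds, identify the limit with $u$, and pass the $L^2$-gradient bound to the limit.

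First, I would choose $\bphi^n\in C^\infty_b([0,T]\times\RR^d;\RR^d)$ by mollification so that $\sup_n\|\bphi^n\|_\infty\le\|\bphi\|_\infty$ and $\bphi^n\to\bphi$ in $L^2(\bmu)$ (and, along a subsequence, a.e.), and set $\tilde F^n:=\tilde f+\tfrac12|\bphi^n|^2$, which is smooth and uniformly bounded. Lemma~\ref{le:smooth_adjoint} then produces classical solutions $v^n$ of the non-local linear PDE~\eqref{fo:smooth_adjoint} with drift $\bphi^n$ and source $\tilde F^n$, with $\|v^n\|_\infty$ uniformly bounded by~\eqref{fo:smooth_apriori_bound_1}. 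The gradient estimate~\eqref{fo:smooth_apriori_bound_3} is stated for $g=0$, but the $v^2$-integration by parts performed in its proof extends to nonzero $g$ at the cost of an additional boundary term $\tfrac12\langle g^2,\mu_T\rangle\le\tfrac12\|g\|_\infty^2$, yielding a uniform estimate on $\int_0^T\!\int_{\RR^d}|\nabla_x v^n|^2\,d\bmu$.

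Next, I would identify the limit of $v^n$ with $u$. Writing $v^n$ and $u$ as fixed points of Feynman-Kac contractions $\Phi^n$ and $\Phi$ of the type used in the proof of Proposition~\ref{pr:existence_adjoint1}, with state SDEs driven by $\bphi^n$ and $\bphi$ respectively, and noticing that all the $\Phi^n$ share the same Lipschitz constant (depending only on $T$ and $\|V\|_\infty$), the convergence $\|v^n-u\|_\infty\to 0$ reduces via $v^n-u=\Phi^n(v^n)-\Phi^n(u)+\Phi^n(u)-\Phi(u)$ to $\|\Phi^n(u)-\Phi(u)\|_\infty\to 0$. The latter follows from Girsanov's theorem: uniform boundedness of $\bphi^n$ together with $\bphi^n\to\bphi$ a.e.\ yields convergence in $L^1$ of the exponential martingales, hence weak convergence of the associated state laws, and dominated convergence handles the bounded integrands. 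Then, by weak compactness in the Hilbert space $L^2([0,T]\times\RR^d,\bmu;\RR^d)$, $\nabla v^n$ converges weakly along a subsequence to some $w$ with $\|w\|_{L^2(\bmu)}$ bounded.

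The final step is to identify $w$ as $\nabla_x u$ in the sense of distributions, and to deduce the $L^2_{loc}(dt\,dx)$ bound. For any $\psi\in C^\infty_c((0,T)\times\RR^d;\RR^d)$, Proposition~\ref{pr:densities}~(iii) ensures that the density $\rho$ of $\bmu$ is continuous, strictly positive, and bounded on $\supp\psi$, so $\psi/\rho\in L^2(\bmu)$. Combining $v^n\to u$ uniformly with the weak convergence $\nabla v^n\rightharpoonup w$ in $L^2(\bmu)$ gives
\[
-\int\div\psi\cdot u\,dt\,dx=\lim_n\int\psi\cdot\nabla v^n\,dt\,dx=\lim_n\int\frac{\psi}{\rho}\cdot\nabla v^n\,d\bmu=\int\frac{\psi}{\rho}\cdot w\,d\bmu=\int\psi\cdot w\,dt\,dx,
\]
so $w=\nabla_x u$ distributionally and $\nabla_x u\in L^2(\bmu)$. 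The same lower bound $\rho\ge c>0$ on closed cylinders $I\times K\subset(0,T)\times\RR^d$ converts the $L^2(\bmu)$ bound into the $L^2_{loc}(dt\,dx)$ bound. The hard part will be this final identification of the weak $L^2(\bmu)$ limit with the distributional gradient, which is made possible only by the local equivalence of $\bmu$ and Lebesgue measure supplied by Proposition~\ref{pr:densities}~(iii).
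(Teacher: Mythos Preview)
Your strategy matches the paper's closely: mollify $\bphi\to\bphi^n$, invoke Lemma~\ref{le:smooth_adjoint} for classical solutions $v^n$ with uniform $L^\infty$ and $L^2(\bmu)$-gradient bounds, pass to weak limits, and use the strict positivity of the density from Proposition~\ref{pr:densities}(iii) to identify the weak $L^2(\bmu)$ limit of $\nabla v^n$ with a distributional gradient. The paper, rather than identifying the limit with the viscosity solution $u$, passes to the limit in the weak PDE formulation and shows that the limit $v$ is a distributional solution of the adjoint equation; the identification $v=u$ is left implicit there. You instead attempt to prove $\|v^n-u\|_\infty\to 0$ directly via the contraction structure.

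That step has a gap. The Girsanov argument you sketch yields, for each fixed $(t,x)$, $L^1$-convergence of the exponential density and hence $\Phi^n(u)(t,x)\to\Phi(u)(t,x)$ pointwise. But the contraction bound $\|v^n-u\|_\alpha\le (1-L)^{-1}\|\Phi^n(u)-\Phi(u)\|_\alpha$ is a sup-norm inequality, so you need the convergence uniform in $(t,x)$. Since $\bphi$ is only bounded measurable, mollification gives $\bphi^n\to\bphi$ a.e.\ and in $L^p_{loc}$ but not in $L^\infty$, and the quantity $\EE\bigl[\int_t^T|\phi^n_r-\phi_r|^2(X^{t,x}_r)\,dr\bigr]$---which controls both the Girsanov density and the source difference $\tfrac12|\phi^n|^2-\tfrac12|\phi|^2$---need not vanish uniformly in the starting point. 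This can be repaired (e.g.\ via Gaussian upper bounds on the transition kernel to reduce to $L^1$-convergence on compacta plus uniform tail control), but it is not the one-line consequence of dominated convergence you present it as.
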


\begin{proof}
    Let us introduce the family $(\bphi^\epsilon)_{\epsilon>0}$ of bounded smooth Markovian feedback functions defined by $\phi_t^\epsilon(x)=[\phi_t*\rho^\epsilon](x)$ where the convolution is performed component by component, and where $(\rho^\epsilon)_{0<\epsilon\le 1}$ is an approximate identity, the support of $\rho^1$ being included in the unit ball of $\RR^d$. For every $p\in[1,\infty)$, the $C^\infty$ bounded functions $\bphi^\epsilon$  converges to $\bphi$ in $L^p_{loc}([0,T]\times\RR^d,dx\,dt)$ when $\epsilon\searrow 0$, and for each $\epsilon>0$, $\|\bphi^\epsilon\|_\infty\le \|\bphi\|_\infty$. For each $\epsilon>0$ we set $F^\epsilon_t(x)=\frac12|\phi^\epsilon_t(x)|^2+\tilde f(x)$, we denote by $w^\epsilon$ the classical solution of the PDE \eqref{fo:smooth_adjoint} for $\tilde\phi_t(x)=\phi^\epsilon_t(x)$ and $\tilde F_t(x)=F^\epsilon_t(x)$, and we set $v^\epsilon_t(x)=w^\epsilon_t(x)\chi^\epsilon(x)$ where $\chi^\epsilon$ is $C^\infty$, $\chi^\epsilon(x)=1$ if $|x|\le 1/\epsilon$ and $\chi^\epsilon(x)=0$ if $|x|\ge 1+1/\epsilon$. We have: 
    $$
    0=\partial_t w^\epsilon_t(x) +\frac12\Delta w^\epsilon_t(x)+\phi^\epsilon_t(x)\cdot\nabla w^\epsilon_t(x)
    -(V(x)-<\mu_t,V>)w^\epsilon_t(x) +V(x)<\mu_t, w^\epsilon_t>+F^\epsilon_t(x)
    $$
 and we can replace $w^\epsilon_t(x)$ by $v^\epsilon_t(x)$ whenever $|x|\le 1/\epsilon$.   
    
    \vskip 4pt    
    Applying the a-priori estimates \eqref{fo:smooth_apriori_bound_1} to $w^\epsilon$
    we get that for each compact set $K\subset\RR^d$ we have 
    $$
    \limsup_{\epsilon\searrow 0}\|v^\epsilon\|_{L^2([0,T]\times K,dt\,dx)}
    \le \limsup_{\epsilon\searrow 0}\|w^\epsilon\|_{L^2([0,T]\times K,dt\,dx)}<\infty
    $$
    giving the existence of $v\in L^2_{loc}([0,T]\times\RR^d, dt\,dx)$ for which, after extracting a sub-sequence if needed,
    $$
    v=\lim_{\epsilon\searrow 0} v^\epsilon
    $$
locally (i.e. for each compact subset $K$) for the weak topology. Also, the a-priori estimates \eqref{fo:smooth_apriori_bound_1} implies that the $v^\epsilon$'s form a bounded set in $L^2(\bmu)$, so extracting a further sub-sequence if necessary, one can assume that $v^\epsilon$ converges weakly toward $v$ in $L^2(\bmu)$ implying that, extracting a further subsequence if needed, for almost every $t\in[0,T]$:
\begin{equation}
\label{fo:mu_tv}
    \lim_{\epsilon\searrow 0}<\mu_t,w^\epsilon_t>=\lim_{\epsilon\searrow 0}<\mu_t,v^\epsilon_t>=\lim_{\epsilon\searrow 0}<1,v^\epsilon_t>_{L^2(\mu_t)}=<1,v_t>_{L^2(\mu_t)}=<\mu_t,v_t>.
\end{equation}
Furthermore, the a-priori estimate \eqref{fo:smooth_apriori_bound_3} implies that one can extract a further sub-sequence for which $\nabla_x v^\epsilon$ converges weakly in $L^2([0,T]\times\RR^d,\bmu;\RR^d)$ toward a $\RR^d$-valued function $\tilde\varphi$ in $L^2([0,T]\times \RR^d,\bmu;\RR^d)$. 
This measurable vector field $\tilde \varphi$ can be identified with the gradient (in the sense of distributions) of $v$. Indeed, if $\varphi$ is a smooth test function with compact support in $(0,T)\times\RR^d$
$$
\lim_{\epsilon\searrow 0} \int_0^T\int_{\RR^d} \nabla_xv^\epsilon_t(x) \varphi_t(x)dxdt=-\lim_{\epsilon\searrow 0} \int_0^T\int_{\RR^d} \nabla_x\varphi_t(x)v^\epsilon_t(x) dxdt=-\int_0^T\int_{\RR^d}\nabla_x\varphi_t(x) v_t(x)\; dxdt
$$
and since $\bphi$ is assumed to be bounded
$$
\lim_{\epsilon\searrow 0} \int_0^T\int_{\RR^d} \nabla_xv^\epsilon_t(x) \varphi_t(x)dx=\lim_{\epsilon\searrow 0} \int_0^T\int_{\RR^d} \nabla_x v^\epsilon_t(x)\frac{\varphi_t(x)}{\rho_t(x)}\mu_t(dx) =\int_0^T\int_{\RR^d}\tilde\varphi_t(x) \varphi_t(x)\; dx
$$
where we used part (iii) of Proposition \ref{pr:densities} to benefit from the fact that the continuous density $\rho$ is locally strictly positive and bounded below away from $0$. This shows that the gradient in the sense of distributions of $v$ is a function (namely the function $\tilde \varphi$), and that $\nabla_x v^\epsilon$ converges weakly in $L_{loc}^2([0,T]\times\RR^d,dx\,dt;\RR^d)$ toward $\tilde \varphi=\nabla_x v$. This implies that 
\begin{equation}
    \label{fo:limit0}
\lim_{\epsilon\searrow 0}\int_0^T\int_{\RR^d}\varphi_t(x)\;\phi^\epsilon_t(x)\cdot\nabla w^\epsilon_t(x)\;dx dt=
\int_0^T\int_{\RR^d}\varphi_t(x)\;\phi_t(x)\cdot\nabla v_t(x)\;dx dt
\end{equation}
where $\nabla v_t(x)$ is the function $\tilde\varphi_t(x)$ identified as the gradient in the sense of distributions of the function $v$ constructed above. Indeed, if $\epsilon>0$ is small enough, 
\begin{equation*}
    \label{fo:limit0}
    \begin{split}
    &    \int_0^T\int_{\RR^d}\varphi_t(x)\;\phi^\epsilon_t(x)\cdot\nabla w^\epsilon_t(x)\;dx dt
    -\int_0^T\int_{\RR^d}\varphi_t(x)\;\phi_t(x)\cdot\nabla v_t(x)\;dx dt\\
    &\hskip 45pt
    =
\int_0^T\int_{\RR^d}\varphi_t(x)\;\bigl(\phi^\epsilon_t(x)-\phi_t(x)\bigr)\cdot\nabla v^\epsilon_t(x)\;dx dt
+\int_0^T\int_{\RR^d}\varphi_t(x)\;\phi_t(x)\cdot\bigl(\nabla v^\epsilon_t(x)-\nabla v_t(x)\bigr)\;dx dt.
    \end{split}
\end{equation*}
The first term goes to $0$ because
$$
\limsup_{\epsilon\searrow 0}\int_0^T\int_{\RR^d}|\varphi_t(x)|\;|\nabla v^\epsilon_t(x)|^2\;dx dt<\infty
$$
since the $L^2(\bmu)$-norm of $\nabla_x v^\epsilon$ is uniformly bounded because of the a-priori estimate \eqref{fo:smooth_apriori_bound_3} and the fact that the density of $\mu_t$ is bounded from below away from $0$ on the support of $\varphi$, and the fact that $\phi^\epsilon$ converges toward $\phi$ in $L^2_{loc}(dxdt)$. As for the second term, it also converges toward $0$ because $\nabla_x v^\epsilon$ converges toward $\tilde \varphi=\nabla_x v$
weakly in $L^2_{loc}(dxdt)$.
Again, if $\varphi$ is a smooth test function with compact support in $(0,T)\times\RR^d$, we have
\begin{equation}
    \label{fo:limit1}
    \begin{split}
&  \lim_{\epsilon\searrow 0}\int_0^T\int_{\RR^d}\Bigl[-\partial_t\varphi_t(x)+\frac12\Delta \varphi_t(x) -(V(x)-<\mu_t,V>)\varphi_t(x)\Bigr] v^\epsilon_t(x)dxdt
\\
&\hskip 35pt
=\int_0^T\int_{\RR^d}\Bigl[-\partial_t\varphi_t(x)+\frac12\Delta \varphi_t(x) -(V(x)-<\mu_t,V>)\varphi_t(x)\Bigr] v_t(x)dxdt.
    \end{split}
\end{equation}
Moreover
\begin{equation}
    \label{fo:limit2}
\lim_{\epsilon\searrow 0}\int_0^T\int_{\RR^d}\varphi_t(x)|\phi^\epsilon_t(x)|^2dxdt=
\int_0^T\int_{\RR^d}\varphi_t(x)|\phi_t(x)|^2dxdt.
\end{equation}
Putting together the limits \eqref{fo:mu_tv}, \eqref{fo:limit0}, \eqref{fo:limit1} and \eqref{fo:limit2} we find that 
\begin{equation}
\begin{split}
&    0=\int_0^T\int_{\RR^d}\Bigl(\bigl[-\partial_t\varphi_t(x)+\frac12\Delta \varphi_t(x) -(V(x)-<\mu_t,V>)\varphi_t(x)\bigr] v_t(x)+\varphi_t(x)\phi_t(x)\cdot\nabla_xv_t(x)\\
&\hskip 95pt
+\bigl[<\mu_t,v_t>V(x)+\frac12|\phi_t(x)|^2+\tilde f(x)\bigr]\varphi_t(x)\Bigr)dxdt.
\end{split}
\end{equation}
once more, given the test function $\varphi$, we use the fact that $w^\epsilon$ and $v^\epsilon$ coincide on the support of $\varphi$ for $\epsilon>0$ small enough.
This proves that $v$ is a weak solution (in the sense of distributions) of the adjoint equation.
\end{proof}

%%%%%%%%%%%%%%%%%%%%%%%%%%%%%%%%%%%%%%%%%%%
\subsubsection{\textbf{A Maximum Principle}}
\label{sub:MP}
This subsection is devoted to the statement and the proof of a version of the Pontryagin maximum principle tailored to our needs. Throughout the subsection we assume that $\bphi=(\phi_t)_{0\le t\le T}$ is a bounded measurable feedback control function, that $\bmu=(\mu_t)_{0\le t\le T}$ is the solution of the corresponding FPK equation \eqref{fo:kolmo1}, and that $u$ is a solution of the associated adjoint equation \eqref{fo:adjoint1}.
If $\bbeta=(\beta_t)_{0\le t\le T}$ is another bounded measurable feedback control function, for each $\epsilon>0$, we define $\bphi^\epsilon=(\phi^\epsilon_t)_{0\le t\le T}$ as $\phi^\epsilon_t=\phi_t+\epsilon\beta_t$, and we denote by $\bmu^\epsilon=(\mu^\epsilon_t)_{0\le t\le T}$ the solution of the FPK equation driven by the control $\phi^\epsilon_t$.

\begin{lemma}
    \label{le:mu_derivative}
    For each $t\in[0,T]$ and each bounded measurable function $\varphi$ on $\RR^d$, the limit
    $$
    \lim_{\epsilon\searrow 0}<\varphi,\frac{\mu_t^\epsilon -\mu_t}{\epsilon}>
    $$
    exists and is given by the integral of $\varphi$ with respect to a finite signed measure $\lambda_t$ such that $\lambda_t(\RR^d)=0$ and satisfying the PDE
    \begin{equation}
    \label{fo:lambdat_pde}
\partial_t\lambda_t=\frac12\Delta\lambda_t-\div(\phi_t\lambda_t)-\div(\beta_t\mu_t) +<\lambda_t,V>\mu_t-(V-<\mu_t,V>)\lambda_t
\end{equation}
in the sense of distributions with initial condition $\lambda_0=0$.
\end{lemma}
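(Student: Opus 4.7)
The plan is to work through the unnormalized flow $\gamma_t = e^{-\int_0^t <\mu_s,V>ds}\mu_t$ introduced in Lemma~\ref{le:gamma_t}, since $\gamma_t$ satisfies a \emph{linear} PDE, whereas the FPK equation \eqref{fo:kolmo1} for $\mu_t$ is nonlocal and awkward to linearize directly. Let $\gamma_t^\epsilon$ denote the analogous unnormalized flow for the drift $\phi+\epsilon\bbeta$; by Lemma~\ref{le:gamma_t} it is the unique solution of
\begin{equation*}
\partial_t\gamma_t^\epsilon = \tfrac12\Delta\gamma_t^\epsilon - \div\bigl((\phi_t+\epsilon\beta_t)\gamma_t^\epsilon\bigr) - V\gamma_t^\epsilon, \qquad \gamma_0^\epsilon = \mu_0.
\end{equation*}
Setting $\eta_t^\epsilon := (\gamma_t^\epsilon - \gamma_t)/\epsilon$, subtraction yields the linear, source-driven PDE
\begin{equation*}
\partial_t\eta_t^\epsilon = \tfrac12\Delta\eta_t^\epsilon - \div(\phi_t\eta_t^\epsilon) - \div(\beta_t\gamma_t^\epsilon) - V\eta_t^\epsilon, \qquad \eta_0^\epsilon = 0.
\end{equation*}

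To pass to the limit, for $\varphi\in C^\infty_c(\RR^d)$ I would introduce the bounded classical solution $u^\varphi=(u_s^\varphi)_{0\le s\le t}$ of the backward linear PDE
\begin{equation*}
\partial_s u_s + \tfrac12\Delta u_s + \phi_s\cdot\nabla u_s - V u_s = 0, \qquad u_t^\varphi = \varphi,
\end{equation*}
whose Feynman-Kac representation $u_s^\varphi(x)=\EE[\varphi(X_t^{s,x})e^{-\int_s^t V(X_r^{s,x})dr}]$ has a bounded gradient on $[0,t]\times\RR^d$ by standard interior estimates, since $\phi$ is bounded and $\varphi$ is smooth with compact support. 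Pairing the forward PDE for $\gamma_s^\epsilon$ with the backward PDE for $u_s^\varphi$ and integrating by parts in $x$ gives the cancellation $\frac{d}{ds}<u_s^\varphi,\gamma_s^\epsilon> = \epsilon<\beta_s\cdot\nabla u_s^\varphi,\gamma_s^\epsilon>$, and integrating in $s$ from $0$ to $t$ produces the clean representation
\begin{equation*}
<\varphi,\eta_t^\epsilon> = \int_0^t <\beta_s\cdot\nabla u_s^\varphi,\gamma_s^\epsilon>\,ds.
\end{equation*}
A Girsanov argument (equivalently, the same duality identity with $\bbeta$ replaced by a smooth approximation) shows that $\gamma_s^\epsilon\to\gamma_s$ in total variation, uniformly in $s\in[0,T]$, so the right-hand side converges to $\int_0^t <\beta_s\cdot\nabla u_s^\varphi,\gamma_s>ds =: <\varphi,\eta_t>$. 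The uniform bound $|<\varphi,\eta_t^\epsilon>|\le T\|\bbeta\|_\infty\|\nabla u^\varphi\|_\infty$ then identifies $\eta_t$ as a finite signed measure of bounded total variation, and a monotone-class argument extends the convergence $<\varphi,\eta_t^\epsilon>\to<\varphi,\eta_t>$ from smooth compactly supported $\varphi$ to all bounded measurable $\varphi$.

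Finally, setting $Z_t=<1,\gamma_t>$ and $Z_t^\epsilon=<1,\gamma_t^\epsilon>$, the elementary algebraic identity
\begin{equation*}
\frac{\mu_t^\epsilon-\mu_t}{\epsilon} = \frac{\eta_t^\epsilon}{Z_t^\epsilon} - \frac{<1,\eta_t^\epsilon>}{Z_tZ_t^\epsilon}\,\gamma_t,
\end{equation*}
combined with the previous step and $Z_t^\epsilon\to Z_t$, $<1,\eta_t^\epsilon>\to<1,\eta_t>$, yields
\begin{equation*}
\lambda_t = \frac{1}{Z_t}\bigl(\eta_t - <1,\eta_t>\mu_t\bigr),
\end{equation*}
from which $<1,\lambda_t>=0$ is immediate. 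To derive the PDE~\eqref{fo:lambdat_pde} I would substitute this formula into the $\epsilon=0$ limit of the $\eta^\epsilon$-PDE and use the ODE $\dot Z_t = -<\mu_t,V>Z_t$, obtained by testing the $\gamma$-PDE against the constant function $1$; the Leibniz rule produces exactly \eqref{fo:lambdat_pde}, with the nonlocal term $<\lambda_t,V>\mu_t$ emerging from collecting the contributions coming from $\dot Z_t$ and $<1,\eta_t>$. The main obstacle is the passage to the limit in the nonlocal FPK equation~\eqref{fo:kolmo1}, which is sidestepped entirely by this scheme: the $\gamma$-PDE is linear, the duality identity makes the $\epsilon$-dependence explicit as a single $\epsilon\int_0^t(\cdots)ds$ term, and the renormalization back to $\mu_t$ then reduces to a finite-dimensional Leibniz computation rather than a second PDE limit.
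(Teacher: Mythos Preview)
Your route through the unnormalized flow $\gamma_t$ is genuinely different from the paper's and is a nice structural idea: the paper works directly with $\mu_t$ and $\mu_t^\epsilon$, applies Girsanov's theorem to get the explicit formula
\[
<\varphi,\lambda_t>=\frac{\EE\bigl[\varphi(X_t)\bigl(\int_0^t\beta_s(X_s)dW_s\bigr)e^{-A_t}\bigr]}{\EE[e^{-A_t}]}
-<\varphi,\mu_t>\frac{\EE\bigl[\bigl(\int_0^t\beta_s(X_s)dW_s\bigr)e^{-A_t}\bigr]}{\EE[e^{-A_t}]},
\]
from which the finite-signed-measure property, the zero total mass, and the validity for \emph{all} bounded measurable $\varphi$ are immediate; then it obtains the PDE by subtracting the two nonlocal FPK equations and passing to the limit. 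Your linearization via $\gamma_t$ trades this probabilistic computation for a clean forward-backward duality, and your Leibniz-rule recovery of \eqref{fo:lambdat_pde} from the linear $\eta$-PDE does check out.

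There is, however, a real gap. The bound $|<\varphi,\eta_t^\epsilon>|\le T\|\bbeta\|_\infty\|\nabla u^\varphi\|_\infty$ does \emph{not} identify $\eta_t$ as a finite signed measure: the right-hand side is controlled by the smoothness of $\varphi$, not by $\|\varphi\|_\infty$, so you have not bounded the total variation of $\eta_t$. Consequently the monotone-class step, which needs exactly such a uniform-in-$\varphi$ total variation control (or, equivalently, convergence of $\eta_t^\epsilon$ in total variation) to pass from $C_c^\infty$ to bounded measurable test functions, is not justified as written. Two natural repairs: (a) invoke a Bismut--Elworthy--Li smoothing estimate $\|\nabla u_s^\varphi\|_\infty\le C(t-s)^{-1/2}\|\varphi\|_\infty$ for the backward semigroup with bounded drift $\phi$, which turns your duality formula into $|<\varphi,\eta_t>|\le C\sqrt{t}\,\|\bbeta\|_\infty\|\varphi\|_\infty$ and simultaneously gives convergence for bounded measurable $\varphi$ by dominated convergence; or (b) observe directly via Girsanov that $<\varphi,\eta_t^\epsilon>=\EE\bigl[\varphi(X_t)e^{-A_t}(M_t^\epsilon-1)/\epsilon\bigr]$ with $M_t^\epsilon$ the Dol\'eans exponential, whose $L^2$-limit gives the required total-variation bound and convergence in one stroke. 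Option (b) is essentially the paper's argument transplanted to $\gamma_t$, so if you adopt it the duality detour becomes decorative; option (a) preserves the spirit of your approach but requires an extra analytic input you did not state.
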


\begin{proof}
    If we denote by $X_t$ (resp. $X^\epsilon_t$) the state controlled by $\phi_t$ (resp. $\phi^\epsilon_t$), i.e. the solution of the stochastic differential equation $dX_t=\Phi_t(X_t)dt+dW_t$ (resp. $dX^\epsilon_t=\Phi^\epsilon_t(X^\epsilon_t)dt+dW_t$), Girsanov's theorem gives:
\begin{equation*}
\begin{split}
    \frac1\epsilon (<\varphi,\mu_t^\epsilon> - <\varphi,\mu_t>)
    &=\frac1\epsilon\Bigl(
    \frac{\EE\bigl[\varphi(X^\epsilon_t)e^{-\int_0^t V(X^\epsilon_s)ds}\bigr]}{\EE\bigl[e^{-\int_0^t V(X^\epsilon_s)ds}\bigr]}
    -\frac{\EE\bigl[\varphi(X_t)e^{-\int_0^t V(X_s)ds}\bigr]}{\EE\bigl[e^{-\int_0^t V(X_s)ds}\bigr]}
    \Bigr)\\
    &=\frac1\epsilon\Bigl(
    \frac{\EE\bigl[\varphi(X^\epsilon_t)e^{-\int_0^t V(X^\epsilon_s)ds}\bigr]
    -\EE\bigl[\varphi(X_t)e^{-\int_0^t V(X_s)ds}\bigr]}{\EE\bigl[e^{-\int_0^t V(X^\epsilon_s)ds}\bigr]}\\
    &\hskip 45pt
    -\EE\bigl[\varphi(X_t)e^{-\int_0^t V(X_s)ds}\bigr]
    \Bigl(\frac{1}{\EE\bigl[e^{-\int_0^t V(X_s)ds}\bigr]}
    -\frac1{\EE\bigl[e^{-\int_0^t V(X^\epsilon_s)ds}\bigr]}
    \Bigr)\Bigr)\\
    &=\frac1\epsilon
    \frac{\EE\bigl[\varphi(X_t)e^{-\int_0^t V(X_s)ds}
    \bigl(e^{\epsilon\int_0^t\beta_s(X_s)dW_s-\frac{\epsilon^2}{2}\int_0^t|\beta_s(X_s)|^2ds}-1
    \bigr)\bigr]}{\EE\bigl[e^{-\int_0^t V(X^\epsilon_s)ds}\bigr]}\\
    &\hskip 45pt
    -\frac{\EE\bigl[\varphi(X_t)e^{-\int_0^t V(X_s)ds}\bigr]}{\EE\bigl[e^{-\int_0^t V(X_s)ds}\bigr]}\frac1\epsilon
    \frac{\EE\bigl[e^{-\int_0^t V(X^\epsilon_s)ds}\bigr]-\EE\bigl[e^{-\int_0^t V(X_s)ds}\bigr]}{\EE\bigl[e^{-\int_0^t V(X^\epsilon_s)ds}\bigr]}
\end{split}
\end{equation*}
from which we conclude
\begin{equation}
    \label{fo:limit}
    \lim_{\epsilon\searrow 0} <\varphi,\frac{\mu_t^\epsilon-\mu_t}{\epsilon}>
    =\frac{\EE\bigl[\varphi(X_t)\bigl(\int_0^t\beta_s(X_s)dW_s\bigr)e^{-\int_0^t V(X_s)ds}\bigr]}{\EE\bigl[e^{-\int_0^t V(X_s)ds}\bigr]}
    -<\varphi,\mu_t>\frac{\EE\Bigl[\bigl(\int_0^t \beta_s(X_s)dW_s\bigr)e^{-\int_0^t V(X_s)ds}\Bigr]}{\EE\bigl[e^{-\int_0^t V(X_s)ds}\bigr]}
\end{equation}
because all the moments of the random variable $e^{-\epsilon\int_0^t \beta_s(X_s)dW_s-\frac{\epsilon^2}{2}\int_0^t|\beta_s(X_s)|^2ds}$ are finite, because
$$
\lim_{\epsilon\searrow 0}\EE\bigl[e^{-\int_0^t V(X^\epsilon_s)ds}\bigr]=\EE\bigl[e^{-\int_0^t V(X_s)ds}\bigr],
$$
and because
\begin{equation*}
\begin{split}
\lim_{\epsilon\searrow 0}\frac1\epsilon
\Bigl(\EE\bigl[e^{-\int_0^t V(X^\epsilon_s)ds}\bigr]-\EE\bigl[e^{-\int_0^t V(X_s)ds}\bigr]\Bigr)
 &= \frac1\epsilon
\Bigl(\EE\bigl[e^{-\int_0^t V(X_s)ds}\bigl[e^{\epsilon\int_0^t \beta_s(X_s)dW_s-\frac{\epsilon^2}{2}\int_0^t|\beta_s(X_s)|^2ds}-1\bigr]\Bigr)\\  
&= \EE\Bigl[\bigl(\int_0^t \beta_s(X_s)dW_s\bigr)e^{-\int_0^t V(X_s)ds}\Bigr].
\end{split}
\end{equation*}
Notice that the right hand side of \eqref{fo:limit} is a linear form in $\varphi$ which defines a signed measure $\lambda_t$ with total mass $0$.
In order to identify the PDE satisfied by $\lambda_t$ one can use the rules of It\^o calculus to compute the time derivative of $<\varphi,\lambda_t>$ by computing the derivative of the right hand side of \eqref{fo:limit}. Since these computations are long and tedious, we choose a more direct approach.
Subtracting one FPK equation from another we find that
$$
    \partial_t\frac{\mu^\epsilon_t - \mu_t}{\epsilon}
    =\frac12\Delta\frac{\mu^\epsilon_t - \mu_t}{\epsilon}
    -\div\bigl(\phi_t\frac{\mu^\epsilon_t - \mu_t}{\epsilon}\bigr)
    -\div\bigl(\frac{\phi^\epsilon_t - \phi_t}{\epsilon}\mu^\epsilon_t\bigr)
    +<\frac{\mu^\epsilon_t - \mu_t}{\epsilon},V>\mu_t
    -(V-<\mu_t,V>)\frac{\mu^\epsilon_t - \mu_t}{\epsilon}
$$
so taking the limit $\epsilon\searrow 0$ we derive the PDE \eqref{fo:lambdat_pde}. 
\end{proof}

\begin{lemma}
    \label{le:lambdat}
    $\nabla u$ is square integrable with respect to the measure $|\lambda|$ in the sense that
    \begin{equation}
    \label{fo:lambda_integrability}
    \int_0^T\int_{\RR^d}|\nabla u_t(x)|^2 |\lambda_t|(dx)dt<\infty.
    \end{equation}
\end{lemma}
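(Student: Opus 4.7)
The plan is to establish the integrability in~\eqref{fo:lambda_integrability} by combining two ingredients: (i) a uniform-in-$t$ bound on the total variation $|\lambda_t|(\RR^d)$, and (ii) a uniform $L^\infty$ bound on $\nabla u$. Together these render the double integral trivially finite, since $|\lambda_t|$ becomes a finite measure whose mass is integrable in $t$, while $|\nabla u|^2$ is essentially bounded.

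For ingredient (i), I would exploit the explicit probabilistic description of $\lambda_t$ that already appears in the proof of Lemma~\ref{le:mu_derivative}: for every bounded Borel $h$,
\[
\int h\,d\lambda_t = \EE\bigl[h(X_t)\,R_t\,(M_t - c_t)\bigr],
\]
where $R_t = e^{-A_t}/\EE[e^{-A_t}] \in [e^{-T}, e^T]$, $M_t = \int_0^t \beta_s(X_s)\,dW_s$, and $c_t = \EE[R_t M_t]$. Since $\bbeta$ is bounded, $M_t$ is a square-integrable martingale with $\sup_{t\le T}\|M_t - c_t\|_{L^2(\PP)} < \infty$, hence
\[
|\lambda_t|(\RR^d) \le \EE\bigl[R_t|M_t - c_t|\bigr] \le e^T\,\EE[(M_t-c_t)^2]^{1/2}\le C(T,\|\bbeta\|_\infty),
\]
uniformly in $t \in [0, T]$; in particular $\int_0^T |\lambda_t|(\RR^d)\,dt$ is finite.

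For ingredient (ii), the main step, I would use a mild-solution/Duhamel analysis of the adjoint equation. Write it in the form $\partial_t u + \tfrac12\Delta u + \phi\cdot\nabla u + c_t u + F_t = 0$ with terminal condition $u_T = g$, where $c_t = -(V - \langle\mu_t,V\rangle)$ and $F_t = V\langle\mu_t,u_t\rangle + \tfrac12|\phi_t|^2 + \tilde f$ are uniformly bounded (the $u$-dependent piece being controlled by Remark~\ref{re:stronger_apriori_bound}). The backward Duhamel identity against the heat semigroup $(T_s)_{s\ge 0}$ reads
\[
u_t = T_{T-t}g + \int_t^T T_{r-t}\bigl(\phi_r\cdot\nabla u_r + c_r u_r + F_r\bigr)\,dr,
\]
and combining it with the standard gradient estimates $\|\nabla T_s h\|_\infty \le C s^{-1/2}\|h\|_\infty$ and $\|\nabla T_s h\|_\infty \le \|\nabla h\|_\infty$ yields
\[
\|\nabla u_t\|_\infty \le \|\nabla g\|_\infty + C\int_t^T (r-t)^{-1/2}\bigl(1 + \|\nabla u_r\|_\infty\bigr)\,dr.
\]
A generalized Gr\"onwall inequality for weakly singular kernels (Henry's inequality) then gives $\sup_{t\in[0,T]}\|\nabla u_t\|_\infty < \infty$, after which
\[
\int_0^T\!\!\int_{\RR^d}|\nabla u_t(x)|^2\,|\lambda_t|(dx)\,dt \le \Bigl(\sup_{t,x}|\nabla u_t(x)|\Bigr)^2\!\int_0^T\! |\lambda_t|(\RR^d)\,dt < \infty
\]
settles the claim.

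The main obstacle is step (ii): our standing hypothesis only gives $\bphi$ bounded measurable, not Lipschitz, so the Duhamel calculation is not immediately applicable to the viscosity solution produced in Theorem~\ref{th:adjoint_solution}. The natural remedy is to first establish the uniform gradient bound for the smooth mollified approximations $\bphi^\varepsilon = \bphi*\rho^\varepsilon$ already used in the proof of that theorem, for which classical linear parabolic theory produces a genuinely smooth $u^\varepsilon$, and then pass to the limit $\varepsilon \searrow 0$ using that the bound depends only on $T$, $\|\bphi\|_\infty$, $\|V\|_\infty$, $\|\tilde f\|_\infty$, $\|g\|_{\mathrm{Lip}}$, and the a priori $L^\infty$ estimate on $u^\varepsilon$ from Remark~\ref{re:stronger_apriori_bound}.
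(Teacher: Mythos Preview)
Your proposal is correct and follows a genuinely different route from the paper. The paper proceeds by an energy identity: it tests the adjoint equation for $u$ against $u\,\lambda_t$, uses the PDE~\eqref{fo:lambdat_pde} for $\lambda_t$ to rewrite the result so that $\tfrac12\int_0^T\!\int|\nabla u_t|^2\,\lambda_t(dx)\,dt$ can be isolated, and then bounds all remaining terms using $\|u\|_\infty$, $\|\phi\|_\infty$, $\|\beta\|_\infty$, the finite total variation $|\lambda|([0,T]\times\RR^d)$, and the bound $\int_0^T\!\int|\nabla u_t|^2\,\mu_t(dx)\,dt<\infty$ already obtained in Theorem~\ref{th:adjoint_solution}. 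Your argument instead upgrades the regularity of $u$: the Duhamel representation together with the heat-semigroup smoothing estimate and Henry's singular Gr\"onwall inequality, carried out for the smooth approximations $w^\varepsilon$ of Theorem~\ref{th:adjoint_solution} and passed to the limit by weak-$*$ compactness in $L^\infty$, yields the stronger conclusion $\nabla u\in L^\infty([0,T]\times\RR^d)$; the lemma then follows trivially from the uniform bound on $|\lambda_t|(\RR^d)$ that you extract from the explicit formula~\eqref{fo:limit}. Your approach therefore proves more, and the uniform gradient bound would also simplify the integration-by-parts manipulations in the proof of Proposition~\ref{pr:MP}, at the price of importing parabolic smoothing estimates and a weakly singular Gr\"onwall lemma. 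The paper's energy method stays closer to the variational structure already in place and relies only on the $L^2(\bmu)$ control on $\nabla u$, but delivers the weaker conclusion.
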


The strategy of the proof is similar to the proof of estimate \eqref{fo:smooth_apriori_bound_3} in Lemma \ref{le:smooth_adjoint}.

\begin{proof}
    The PDE \eqref{fo:lambdat_pde} implies that for any test function $\varphi$ we have
\begin{equation}
    \label{fo:varphi2}
    \begin{split}
&\int_0^T\int \Bigl[\partial_t\varphi^2+\frac12\Delta\varphi^2 + \phi_t\nabla\varphi^2 -(V-<\mu_t,V>)\varphi^2\Bigr]\lambda_t(dx)dt\\
&\hskip 45pt
+\int_0^T\int \Bigl[\beta_t\nabla\varphi^2+<V,\lambda_t>\varphi^2
\Bigr]\mu_t(dx)dt=<\varphi^2,\lambda_T>.
    \end{split}
\end{equation}
Using the fact that $u$ solves the adjoint equation \eqref{fo:adjoint1} we can write
\begin{equation*}
\begin{split}
    &\int_0^T\int u_t\bigl[f\bigl(.,\phi_t(.)\bigr)+\phi_t\nabla u_t\bigr]\lambda_t(dx)dt\\
    &\hskip 35pt
    =\int_0^T\int u_t\bigl[-\partial_t u_t-\frac12 \Delta u_t+(V-<\mu_t,V>)u_t-V<\mu_t,u_t>\bigr]\lambda_t(dx)dt\\
    &\hskip 35pt
    =-\frac12 \int_0^T\int \bigl[\partial_t u_t^2+\frac12 \Delta u_t^2+\phi_t\nabla u_t^2-(V-<\mu_t,V>)u_t^2\bigr]\lambda_t(dx)dt\\
    &\hskip 75pt
    +\int_0^T\int \bigl[\frac12 |\nabla u_t|^2+\frac12 \phi_t\nabla u_t^2 +\frac12 (V-<\mu_t,V>)u_t^2-V<\mu_t,u_t>u_t\bigr]\lambda_t(dx)dt\\
    &\hskip 35pt
    =\frac12\int_0^T\int \bigl[\beta_t\nabla u_t^2+<V,\lambda_t> u_t^2\bigr]\mu_t(dx)dt+\frac12<u_T^2,\lambda_T>\\
    &\hskip 75pt
    +\int_0^T\int \bigl[\frac12 |\nabla u_t|^2+\frac12 \phi_t\nabla u_t^2 +\frac12 (V-<\mu_t,V>) u_t^2-V<\mu_t,u_t>u_t\bigr]\lambda_t(dx)dt
\end{split}
\end{equation*}
from which we get
\begin{equation*}
\begin{split}
    \frac12\int_0^T\int_{\RR^d}|\nabla u_t|^2\lambda_t(dx)dt
    &=\int_0^T\int u_t\bigl[f\bigl(.,\phi_t(.)\bigr)+\phi_t\nabla u_t\bigr]\lambda_t(dx)dt\\
    &\hskip 35pt
    -\frac12\int_0^T\int \bigl[\beta_t\nabla u_t^2+<V,\lambda_t> u_t^2\bigr]\mu_t(dx)dt
    -\frac12<u_T^2,\lambda_T>\\
    &\hskip 35pt
    -\int_0^T\int \bigl[\frac12 \phi_t\nabla u_t^2 +\frac12 (V-<\mu_t,V>) u_t^2-V<\mu_t,u_t>u_t\bigr]\lambda_t(dx)dt\\
    &\le \|u\|_\infty\|f\|_\infty|\lambda|([0,T]\times\RR^d) +2\|u\|_\infty^2\|\phi\|_\infty^2|\lambda|([0,T]\times\RR^d)\\
    &\hskip 35pt
    +\frac18\int_0^T|\int\nabla u_t|^2\lambda_t(dx)dt
    +\frac12\|\beta\|_\infty\Bigl(T\|u\|_\infty^2+\int_0^T\int|\nabla u_t|^2\mu_t(dx)dt\Bigr)\\
    &\hskip 35pt
    + 2(\|u\|_\infty^2+\|u\|_\infty)|\lambda|([0,T]\times\RR^d)\\
    &\hskip 35pt
    +2T\|\phi\|_\infty^2\|u\|_\infty^2+\frac18\int_0^T\int|\nabla u_t|^2\lambda_t(dx)dt
\end{split}
\end{equation*}
from which we conclude.
\end{proof}

\begin{proposition}
    \label{pr:MP}
Let $\bphi=(\phi_t)_{0\le t\le T}$ be a bounded measurable feedback control function, let $\bmu=(\mu_t)_{0\le t\le T}$ be the corresponding solution of the FPK equation \eqref{fo:kolmo1}, and let $u$ the solution of the corresponding adjoint equation \eqref{fo:adjoint1}.
If $\bbeta=(\beta_t)_{0\le t\le T}$ is another bounded measurable feedback control function, we have: 
\begin{equation}
    \label{fo:cost_derivative}
\frac{d}{d\epsilon}J(\bphi+\epsilon\bbeta)\Bigr|_{\epsilon=0}
=\int_0^T<\beta_t(\nabla u_t+\phi_t), \mu_t>\,dt.
\end{equation}
As a result, if $\bphi$ is a critical point, then
\begin{equation}
    \label{fo:optimality}
\phi_t(x)=-\nabla u_t(x),\qquad\qquad \mu_t-a.s.\; x\in\RR^d,\quad a.e.\; t\in[0,T].
\end{equation}
\end{proposition}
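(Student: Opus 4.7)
The plan is to establish \eqref{fo:cost_derivative} by a duality pairing between the adjoint PDE \eqref{fo:adjoint1} for $u$ and the linearized non-local Fokker-Planck equation \eqref{fo:lambdat_pde} satisfied by the measure $\lambda_t$ introduced in Lemma \ref{le:mu_derivative}, and then to deduce \eqref{fo:optimality} by testing against well chosen variations.

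First I would decompose the cost difference as
\begin{equation*}
    J(\bphi+\epsilon\bbeta)-J(\bphi) = \int_0^T\!\!\!\int\bigl[f(x,\phi_t+\epsilon\beta_t)-f(x,\phi_t)\bigr]\mu^\epsilon_t(dx)dt +\int_0^T\!\!\!\int f(x,\phi_t)[\mu^\epsilon_t-\mu_t](dx)dt+\int g\,[\mu^\epsilon_T-\mu_T](dx).
\end{equation*}
Dividing by $\epsilon$ and sending $\epsilon\searrow 0$, using $\partial_\alpha f(x,\phi)=\phi$ under the separable quadratic assumption \eqref{fo:separable_running_cost}, Lemma~\ref{le:mu_derivative}, and dominated convergence (justified by the boundedness of $\bphi$, $\bbeta$, $\tilde f$, $g$, $V$ and the uniform mass bounds on $\bmu^\epsilon$), I expect
\begin{equation*}
    \frac{d}{d\epsilon}J(\bphi+\epsilon\bbeta)\Big|_{\epsilon=0}=\int_0^T\!\!\!\int\phi_t\cdot\beta_t\,\mu_t(dx)dt +\int_0^T\!\!\!\int f(x,\phi_t)\,\lambda_t(dx)dt+\int g\,\lambda_T(dx).
\end{equation*}

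Next I would eliminate the $\lambda$-contributions by computing $\frac{d}{dt}<u_t,\lambda_t>$. Using \eqref{fo:adjoint1} to substitute $\partial_t u_t$ and \eqref{fo:lambdat_pde} to substitute $\partial_t\lambda_t$, then integrating by parts the Laplacian and divergence terms, the local transport terms $\phi_t\cdot\nabla u_t$ and the local potential terms $(V-<\mu_t,V>)u_t$ cancel in pairs. The two non-local contributions $<V<\lambda_t,V>\mu_t,u_t>-<V<\mu_t,u_t>,\lambda_t>=<\lambda_t,V><\mu_t,u_t>-<\mu_t,u_t><V,\lambda_t>$ also cancel, leaving
\begin{equation*}
    \frac{d}{dt}<u_t,\lambda_t>=<\nabla u_t\cdot\beta_t,\mu_t>-<f(\cdot,\phi_t),\lambda_t>.
\end{equation*}
Integrating in $t$, using $\lambda_0=0$ and $u_T=g$, yields $<g,\lambda_T>+\int_0^T<f,\lambda_t>dt=\int_0^T<\nabla u_t\cdot\beta_t,\mu_t>dt$, which when substituted above gives exactly \eqref{fo:cost_derivative}. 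The second assertion \eqref{fo:optimality} then follows by testing \eqref{fo:cost_derivative} against variations of the form $\beta_t(x)=(\phi_t(x)+\nabla u_t(x))\mathbf{1}_E(t,x)\mathbf{1}_{|\phi_t+\nabla u_t|\le K}$ for arbitrary Borel sets $E\subset[0,T]\times\RR^d$, and letting $K\nearrow\infty$ by monotone convergence.

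The main obstacle is that this duality computation is only formal: Theorem~\ref{th:adjoint_solution} gives $u$ merely as a viscosity solution with $\nabla u\in L^2(\bmu)\cap L^2_{loc}(dxdt)$, and $\lambda_t$ is a signed measure without an absolutely continuous density. To make the argument rigorous I would regularize along the same lines as the proof of Theorem~\ref{th:adjoint_solution}: introduce $\bphi^\delta=\bphi*\rho_\delta$, let $u^\delta$ be the classical solution of the adjoint equation associated with $(\bphi^\delta,\bmu)$ provided by Lemma~\ref{le:smooth_adjoint}, run the $\frac{d}{dt}<u^\delta_t,\lambda_t>$ computation at the smooth level where all integration by parts are unambiguous, and pass to the limit $\delta\searrow 0$. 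The passage to the limit is controlled by the a-priori bound \eqref{fo:smooth_apriori_bound_3}, the convergence established in the proof of Theorem~\ref{th:adjoint_solution}, and crucially Lemma~\ref{le:lambdat}, whose estimate $\int_0^T\int|\nabla u_t|^2\,|\lambda_t|(dx)dt<\infty$ is exactly what is needed to justify the pairing $<\nabla u_t\cdot\phi_t,\lambda_t>$ and to carry the limit through the non-local terms.
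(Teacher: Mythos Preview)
Your formal structure matches the paper's proof exactly: the paper's Step~1 derives the same first-variation expression $\int_0^T(<\beta_t\phi_t,\mu_t>+<f(\cdot,\phi_t),\lambda_t>)dt+<g,\lambda_T>$, and its Step~2 proves the duality identity $<u_T,\lambda_T>=\int_0^T(<\beta_t\nabla u_t,\mu_t>-<f(\cdot,\phi_t),\lambda_t>)dt$ by the same cancellation mechanism you describe, including the non-local terms.

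The gap is in your rigorous justification. Mollifying $\bphi$ to obtain a classical adjoint $u^\delta$ does \emph{not} make the computation of $\frac{d}{dt}<u^\delta_t,\lambda_t>$ and the associated integration by parts unambiguous: $u^\delta$ is smooth and bounded but has no decay at infinity, while $\lambda_t$ is a signed measure on all of $\RR^d$, so moving derivatives across in $<u^\delta_t,\tfrac12\Delta\lambda_t>$ or $<u^\delta_t,\div(\phi_t\lambda_t)>$ still requires a spatial localization that your scheme does not supply. In addition, since \eqref{fo:lambdat_pde} carries the original drift $\phi_t$ while the adjoint equation for $u^\delta$ carries $\phi^\delta_t$, the transport terms no longer cancel exactly and you inherit an error $<(\phi^\delta_t-\phi_t)\cdot\nabla u^\delta_t,\lambda_t>$ whose control in $L^2(|\lambda|)$ is not provided by Lemma~\ref{le:lambdat}. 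The paper instead works directly with the limiting $u$ and introduces spatial cutoffs $\chi^n$: setting $u^n_t=\chi^n u_t$ and $\lambda^n_t=\chi^n\lambda_t$, it computes $<u^n_T,\lambda^n_T>$ against compactly supported objects, where every integration by parts is legitimate, and then sends $n\to\infty$. The commutator terms produced by the cutoff (such as $<\nabla(\chi^n)^2\,\nabla u_t,\lambda_t>$) are shown to vanish precisely via the boundedness of $u$, $\phi$ and Lemma~\ref{le:lambdat}. That localization device is the missing ingredient in your plan.
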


\begin{proof}
We use freely the notations introduced at the beginning of the subsection and in the statement of the previous lemma. 
\vskip 2pt\noindent
\textbf{Step 1.} We derive a first expression for the derivative \eqref{fo:cost_derivative}.
\begin{equation*}
    \begin{split}
    &\frac1{\epsilon}\bigl[J(\bphi+\epsilon\bbeta)-J(\bphi)\bigr]\\
    &\hskip 25pt
    =\frac1{\epsilon}\int_0^T\Bigl(
    <\tilde f,\mu^\epsilon_t>+\frac12<|\phi^\epsilon_t|^2,\mu^\epsilon_t>
    -<\tilde f,\mu_t>-\frac12<|\phi_t|^2,\mu_t>\Bigr)\;dt +\frac1\epsilon<g,\mu^\epsilon_T-\mu_T>\\
    &\hskip 25pt
    =\int_0^T\Bigl(<\tilde f,\frac{\mu^\epsilon_t - \mu_t}{\epsilon}> +\frac12<|\phi_t|^2,\frac{\mu^\epsilon_t - \mu_t}{\epsilon}> 
    +\frac12<\frac{|\phi^\epsilon_t|^2 - |\phi_t|^2}{\epsilon},\mu^\epsilon_t>\Bigr)dt +<g,\frac{\mu^\epsilon_T - \mu_T}{\epsilon}>
    \end{split}
\end{equation*}
so that 
\begin{equation}
\label{fo:first_derivative}
    \lim_{\epsilon\searrow 0}\frac1\epsilon\bigl[J(\bphi+\epsilon\bbeta)-J(\bphi)\bigr]=\int_0^T\Bigl(<\tilde f,\lambda_t> +\frac12<|\phi_t|^2,\lambda_t> 
    +<\beta_t\phi_t,\mu_t>\Bigr)dt +<g,\lambda_T>.
\end{equation}
where we used the stronger form of convergence toward $\lambda_t$ proven in Lemma \ref{le:mu_derivative} because $|\phi_t|^2$ is merely bounded measurable and not necessarily continuous.
\vskip 2pt\noindent
\textbf{Step 2.} We now prove the formula
\begin{equation}
    \label{fo:uTnuT}
<u_T,\lambda_T>=\int_0^T\Bigl(<\beta_t\nabla u_t,\mu_t>-\bigl(<\tilde f,\lambda_t>+\frac12<|\phi_t|^2,\lambda_t>\bigr)\Bigr)\,dt
\end{equation}
where the function $u$ is the solution of the adjoint equation driven by the control $\bphi$. Recall that Theorem~\ref{th:adjoint_solution} says that $u$ is bounded and continuous and that its derivatives in the sense of distributions are square integrable functions with respect to $\bmu$. 

Let us define the functions $u_t^n(x)=\chi^n(x)u_t(x)$ and  $\lambda_t^n(dx)=\chi^n(x)\lambda_t(dx)$ where the sequence $(\chi^n)_{n\ge 1}$ of $C^\infty$ functions is such that $0\le \chi^n(x)\le 1$, $\chi^n(x)=1$ if $|x|\le n$, $\chi^n(x)=0$ if $|x|>n+1$ and its derivatives of all orders are uniformly bounded. We first notice that 
\begin{equation}
    \label{fo:first_approx}
    <u_T,\lambda_T>=\lim_{n\nearrow \infty}<u^n_T,\lambda^n_T>
\end{equation}
because $u_T$ is bounded. Next,
using the facts that $\partial_t u^n_t=\chi^n \partial_t u_t$, $\partial_t  \lambda^n_t=\chi^n \partial_t \lambda_t$ and $\lambda_0=0$ , we get:
\begin{equation*}
    \begin{split}
<u^n_T,\lambda^n_T>&=<u^n_0,\lambda^n_0>+\int_0^T<u^n_t,\partial_t\lambda^n_t> + \int_0^T<\partial_t u^n_t,\lambda^n_t>\\
&=\int_0^T\Bigl(<\chi^n u_t,\frac12\chi^n\Delta \lambda_t> -<\chi^n u_t,\chi^n\div(\phi_t\lambda_t)>-<\chi^nu_t,\chi^n\div(\beta_t\mu_t)>
\\
&\hskip 95pt
+<\lambda_t,V><\chi^nu_t,\chi^n\mu_t>-<\chi^nu_t,\chi^n(V-<\mu_t,V>)\lambda_t>\\
&\hskip 25pt
+<-\frac12\chi^n\Delta u_t,\chi^n\lambda_t>-<\chi^n\phi_t\nabla u_t,\chi^n\lambda_t>+<\chi^n(V-<\mu_t,V>)u_t,\chi^n\lambda_t>\\
&\hskip 95pt
-<\mu_t,u_t><\chi^nV,\chi^n\lambda_t>-\frac12<\chi^n|\phi_t|^2,\chi^n\lambda_t>-<\chi^n\tilde f,\chi^n\lambda_t>\Bigr)\; dt.
    \end{split}
\end{equation*}
Using integration by parts we get:
\begin{equation*}
\begin{split}
<\chi^n u_t,\chi^n\Delta \lambda_t>
&=<(\chi^n)^2 u_t,\Delta\lambda_t>\\
&=-<(\chi^n)^2 \nabla u_t,\nabla\lambda_t>-<\nabla(\chi^n)^2 \; u_t,\nabla\lambda_t>\\
&=-<(\chi^n)^2 \nabla u_t,\nabla\lambda_t>+<\nabla\bigl(\nabla(\chi^n)^2 u_t\bigr),\lambda_t>,
\end{split}
\end{equation*}
and similarly
\begin{equation*}
\begin{split}
<\chi^n \Delta u_t,\chi^n \lambda_t>
&=<\Delta u_t,(\chi^n)^2 \lambda_t>\\
&=-<\nabla u_t,(\chi^n)^2 \nabla\lambda_t>-<\nabla(\chi^n)^2\nabla u_t, \lambda_t>,
\end{split}
\end{equation*}
so that 
$$
\lim_{n\to\infty}<\chi^n u_t,\chi^n\Delta \lambda_t>-<\chi^n \Delta u_t,\chi^n \lambda_t>=
\lim_{n\to\infty}<\nabla\bigl(\nabla(\chi^n)^2 u_t\bigr),\lambda_t>-\lim_{n\to\infty}<\nabla(\chi^n)^2\nabla u_t, \lambda_t>=0
$$
because $u_t$ is bounded, and $\nabla u_t$ is square integrable for $\lambda_t$ according to Lemma \ref{le:lambdat}.
Again, we use the fact that the derivatives of $\chi^n$ are uniformly bounded and supported in the annulus $n\le |x|\le n+1$ whose $\lambda_t$ measure tends to $0$.
Next
$$
    \lim_{n\to\infty}<\chi^n u_t,\chi^n\div(\phi_t\lambda_t)>+<\chi^n\phi_t\nabla u_t,\chi^n\lambda_t>
    =-\lim_{n\to\infty}<\nabla(\chi^n)^2 \phi_tu_t,\lambda_t>
    =0
$$
because $\phi_t$ and $u_t$ are bounded. Similarly
$$
    \lim_{n\to\infty}<\chi^nu_t,\chi^n\div(\beta_t\mu_t)>=-<\beta_t\nabla u_t,\mu_t>,
$$
as well as
$$
    \lim_{n\to\infty}<\chi^n|\phi_t|^2,\chi^n\lambda_t)>=<|\phi_t|^2,\lambda_t>,
$$
and
$$
    \lim_{n\to\infty}<\chi^n\tilde f,\chi^n\lambda_t)>=<\tilde f,\lambda_t>.
$$
Finally
$$
    \lim_{n\to\infty} <\lambda_t,V><\chi^nu_t,\chi^n\mu_t>-<\mu_t,u_t><\chi^nV,\chi^n\lambda_t>
=0,
$$
since $V$ is bounded. This concludes the proof of Step 2.

\vskip 2pt\noindent
\textbf{Step 3.} We conclude the proof of the proposition
by plugging expression \eqref{fo:uTnuT} in \eqref{fo:first_derivative}, and obtain the expression of the derivative of the expected cost stated in \eqref{fo:cost_derivative}.
From this we conclude that \eqref{fo:optimality} holds whenever $\bphi$ is a critical point of the expected cost $J$.
\end{proof}

%%%%%%%%%%%%%%%%%%%%%%%%%%%%%%%%%%%%%%%%%%%
\subsubsection{\textbf{Value Function.}}
\label{sub:value2}

We now introduce the value function:
\begin{equation}
\label{fo:deterministic_value}
    U^{(1)}(t,\mu)=\inf_{(\phi_s,\mu_s)_{t\le s\le T}}\int_t^T F^{(1)}(\mu_s,\phi_s)\,ds + G^{(1)}(\mu_T),
\end{equation}
where the infimum is over the couples $(\bphi,\bmu)\in\AA^{(2)}_{[t,T]}$ satisfying the \emph{initial condition} $\mu_t=\mu$. Because of the dynamic programming principle, 
this value function is expected to solve in some appropriate sense the HJB equation: \begin{equation}
\label{fo:HJB4m}
\partial_t U^{(1)}(t,\mu) + \HH^{(1)*}\left(\mu, \frac{\delta U^{(1)}}{\delta \mu}(t,\mu)\right)=0
\end{equation}
with terminal condition $U^{(1)}(T,\mu)=G^{(1)}(\mu)$, where the minimized Hamiltonian $\HH^{(1)*}(\mu,\varphi)$ is defined as:
$$
\HH^{(1)*}(\mu,\varphi)=\inf_{\phi}\HH^{(1)}(\mu,\varphi,\phi),
$$
where the infimum is taken over the  $A$-valued measurable functions $\phi$ on $\RR^d$ satisfying $\int|\phi(x)|^2\mu(dx)<\infty$.

%%%%%%%%%%%%%%%%%%%%%%%%%%%%%%%%%%%%%%%%%%%
\subsubsection{\textbf{Minimization of the Hamiltonian.}}
\label{sub:minimization2}
The following assumption is made in order to be able to use lighter notations. 
  
\begin{assumption}
We assume the existence of a function $\hat\phi: (\mu,\varphi)\mapsto \hat\phi[\mu,\varphi]$  which is an $A$-valued measurable function on $\RR^d$ satisfying for every $(\mu,\varphi)$ satisfying
$$
    \int_{\RR^d}|\hat\phi[\mu,\varphi]|^2\mu(dx)<\infty,\qquad\text{and}\qquad  \hat\phi[\mu,\varphi]\in\arginf_{\phi}\HH^{(1)}(\mu,\varphi,\phi).
$$
\end{assumption}
\noindent
Under this assumption, $\HH^{(1)*}(\mu,\varphi)$ can be written as:
$$
\HH^{(1)*}(\mu,\varphi)=\HH^{(1)}(\mu, \varphi,\hat\phi[\mu,\varphi]),
$$
and the HJB equation \eqref{fo:HJB4m} can be conveniently rewritten as:
\begin{equation}
\label{fo:HJB1}
    \partial_t U^{(1)}(t,\mu) + \HH^{(1)}\left(\mu, \frac{\delta U^{(1)}}{\delta \mu}(t,\mu), \hat\phi\Bigl[\mu,\frac{\delta U^{(1)}}{\delta \mu}(t,\mu)\Bigr]\right)=0.
\end{equation}
Recall that $\frac{\delta U^{(1)}}{\delta \mu}(t,\mu)$ is understood as a functional ``flat'' derivative, and as such, it is a function of $x\in \RR^d$.

\begin{example*} 
In the particular case of the separable running cost function $f$ given by~\eqref{fo:separable_running_cost}, if we assume that the derivative of the function $\varphi$ in the sense of distributions is a function, we have:
\begin{equation*}
\begin{split}
    \hat\phi[\mu,\varphi]& \in \arginf_\phi -\frac12<\nabla \mu,\nabla\varphi> + <\mu,\phi \nabla\varphi> - <\mu,(V-<\mu,V>)\varphi>\\
    &\hskip 135pt 
    +  \frac12 \int|\phi(x)|^2\;\mu(dx) + \int\tilde f(x)\mu(dx)\\
    & = \arginf_\phi \int_{\RR^d} [\phi(x) \cdot \nabla\varphi(x) +\frac{1}{2}|\phi(x)|^2]\; \mu(dx),
\end{split}
\end{equation*}
from which we get:
\begin{equation}
\label{fo:phi_hat}
	\hat\phi[\mu,\varphi](x) =\Pi_A\bigl( -\nabla\varphi(x)\bigr),\qquad\qquad  x\in\supp(\mu)
\end{equation}
by minimizing under the integral sign for each fixed $x\in \supp(\mu)$, where the notation $\text{supp}(\mu)$ stands for the topological support of the measure $\mu$, and $\Pi_A$ for the projection of $\RR^d$ onto the closed convex set $A$, defined as:
\begin{equation}
\label{fo:Pi_A}
\Pi_A(x)=\arginf_{y\in A}\|x-y\|
\end{equation}
which gives the element of $A$ at the shortest distance from $x$. 
For each $\varphi\in C_c^\infty(\RR^d)$, $x\hookrightarrow - \nabla\varphi(x)$ is a bounded measurable function on $\RR^d$ with values in $\RR^d$. Unfortunately, we are not guaranteed that it takes values in $A$ as the definition of admissibility requires, hence the need for the projection $\Pi_A$. Still, this projection remains bounded and the feedback function $\hat\phi(\mu,\varphi)$ is admissible. 

\vskip 2pt
	If, for the sake of simplicity, we take $A=\RR^d$, we then have:
        \begin{equation}
        \label{fo:phi_hat_wholespace}
        \hat\phi[\mu,\varphi](x)=-\nabla \varphi(x)
        \end{equation}
        and the minimized Hamiltonian takes the form
        \begin{equation}
        \begin{split}
        \inf_{\phi}\HH^{(1)}(\mu,\varphi,\phi)&=\int \mu(dx) \Bigl[\frac12\Delta\varphi(x)-\frac12   |\nabla \varphi(x)|^2 - (V(x)-<\mu,V>)\varphi(x) +\tilde f(x)\Bigr].
        \end{split}
        \end{equation}
        Then, the HJB equation reads:
        \begin{equation}
        \label{fo:HJB_2}
        \begin{split}
        &0=\partial_t U^{(1)}(t,\mu) +  \int \mu(dx)\Bigl[\frac12\Delta_x\frac{\delta U^{(1)}}{\delta \mu}(t,\mu)(x) 
        -\frac1{2} |\nabla_x \frac{\delta U^{(1)}}{\delta \mu} (t,\mu)(x)|^2
        \\
        &\hskip 85pt
          - (V(x)-<\mu,V>)  \frac{\delta U^{(1)}}{\delta \mu}(t,\mu)(x) + \tilde f(x)\Bigr].
        \end{split}
        \end{equation}
\end{example*}

%%%%%%%%%%%%%%%%%%%%%%%%%%%%%%%%%%%%%%%%%%%
\subsubsection{\textbf{The Forward-Backward PDE System}}
\label{sec:fbpde-markovian}

As before, we assume that the optimal control $\hat\bphi$ whose existence we proved in Proposition \ref{pr:existence_control1}
is bounded. Then,  the form of the Pontryagin maximum principle proved in Proposition \ref{pr:MP} implies that $\hat\phi_t(x)=-\nabla_x\hat u_t(x)$ $\hat\mu_t$ - almost surely where $\hat\bmu$ is the solution of the FPK equation driven by the control $\hat\bphi$ and $\hat u_t$ is the solution of the corresponding adjoint equation. So injecting this formula in the FKP and the adjoint equations we see that the couple $(\hat \mu,\hat u)$ is a solution of the forward backward PDE system:
\begin{subequations}\label{fo:pde_system1}
\begin{empheq}[left=\empheqlbrace]{align}
    &\partial_t \mu=\frac12 \Delta_x \mu +\div_x(\nabla_xu\;\mu)  -(V-<\mu,V>)\mu \label{fo:pde_system1a}
    \\
    &0=\partial_t u + \frac12\Delta_x u- \frac12 |\nabla_x u|^2-(V-<\mu,V>)u +V<\mu,u>+\tilde f \label{fo:pde_system1b}.
\end{empheq}
\end{subequations}
on the support of $\bmu$.

\begin{remark}
Both equations in the above system are strongly coupled, and the difficulty in the second equation is twofold. The combination of the nonlinear term $|\nabla_xu|^2$ and the non-local term $<\mu,u>$ prevents us from appealing directly to standard results from PDEs or BSDEs to study the well-posedness of the system.
\end{remark}

The sense in which equation \eqref{fo:pde_system1a} should be understood is clear when the term $\nabla_x u$ is well defined as a function: it should be understood in the sense of distributions. However, the situation with the second equation is not as clear, in part because of the presence of non-local and non-linear terms. While the notion of viscosity solution could be viewed as a natural option, we shall refrain from using this interpretation for two independent reasons. Firstly, the comparison constraints involved in the definition of a viscosity solution would need to be modified to account for the non-local terms. Secondly, viscosity solutions are not always differentiable, and our derivation of equation \eqref{fo:pde_system1b} was based on the substitution in the adjoint equation, of the identification of the optimal control as the gradient of its own adjoint function (i.e. co-state). In order to avoid this oxymoron created by a seemingly circular argument, we choose to search for solutions by first freezing the non-local terms, and then,  work with \emph{mild solutions} in order to avoid the possible lack of differentiability of the solution. The following lemma highlights the fact that, absent the non-local nature of the equations, viscosity and mild solutions are the same in our setting. We believe that such a result is part of the folklore on the subject. See for example \cite{BeckJentzen}. We state it in the form we need for further references, and we give a proof for the sake of completeness. 

%%%%%%%%%%%%%%%%%%%%%%%%%%%%%
\begin{lemma}
    \label{le:viscosity_FK}
   Let $G:\RR^d\to\RR$ be bounded and Lipschitz, and let $F:[0,T]\times \RR^d\times\RR\ni(t,x,u)\mapsto F(t,x,u)\in\RR$ be bounded and Lipschitz in $(x,u)$ uniformly in $t\in[0,T]$.
    \begin{itemize}
        \item[(i)] The semilinear PDE 
        \begin{equation}
        \label{fo:pde}
        \partial_t u+\frac12\Delta u +F(t,x,u(t,x))=0,\qquad u(T,x)=G(x),
        \end{equation}
    has a unique viscosity solution. 
    \item[(ii)] If for $(t,x)\in[0,T]\times\RR^d$, we set $B^{t,x}_r=x+B_r-B_t$ for $t\le r\le T$ where $(B_t)_{0\le t\le T}$ is some process of Brownian motion, the equation 
        \begin{equation}
        \label{fo:FK}
        v(t,x)=\EE\Bigl[G(B^{t,x}_T)+\int_t^TF\bigl(r,B^{t,x}_r,v(r,B^{t,x}_r)\bigr)dr\Bigr]
        \qquad (t,x)\in[0,T]\times\RR^d
        \end{equation}
        has a unique bounded (jointly) continuous solution.
\item[(iii)] $u=v$.
\item[(iv)] Furthermore, this unique solution $u$ satisfies: 
\begin{itemize}
    \item[(iv)-1] $u$ is continuously differentiable and its gradient is uniformly bounded whenever $F\in C^{0,1}_b([0,T]\times \RR^d\times\RR)$, that is when $F$ is continuously differentiable in $(x,u)$ with bounded derivatives.
    \item[(iv)-2] $u$ is a classical solution of \eqref{fo:pde} in $C^{1,2}_b([0,T]\times\RR^d)$ whenever $G$ is $3$-times continuously differentiable with derivatives growing at most polynomially, and for each fixed $t\in[0,T]$ $(x,u)\mapsto F(t,x,u)$ is $3$-times continuously differentiable with bounded derivatives of orders $1$ and $2$ and polynomially bounded derivatives of order $3$. 
\end{itemize}
    \end{itemize}
\end{lemma}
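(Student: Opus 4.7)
\medskip

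\noindent\textbf{Proof proposal.}
The plan is to take the Feynman--Kac fixed point in part (ii) as the central object: produce $v$ first, show it is viscosity, import the viscosity comparison principle to conclude (i) and (iii), and finally bootstrap its regularity to obtain (iv).

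\emph{Step 1 (part (ii)).} Define $\Phi:C_b([0,T]\times\RR^d)\to C_b([0,T]\times\RR^d)$ by letting $\Phi(v)(t,x)$ equal the right-hand side of \eqref{fo:FK}. Boundedness follows from $\|\Phi(v)\|_\infty\le \|G\|_\infty+T\|F\|_\infty$, and joint continuity of $\Phi(v)$ in $(t,x)$ from dominated convergence using continuity of $F$ and $G$ together with the pathwise continuity of $(t,x)\mapsto B^{t,x}_{\cdot}$. Equipping the space with the weighted norm $\|v\|_\lambda=\sup_{(t,x)}e^{\lambda(T-t)}|v(t,x)|$ and letting $L_F$ denote the Lipschitz constant of $F$ in the $u$ variable (uniform in $(t,x)$), I would compute
\begin{equation*}
\|\Phi(v_1)-\Phi(v_2)\|_\lambda
\le L_F\sup_{t,x}e^{\lambda(T-t)}\int_t^Te^{-\lambda(T-r)}dr\;\|v_1-v_2\|_\lambda
\le \tfrac{L_F}{\lambda}\|v_1-v_2\|_\lambda,
\end{equation*}
which is a strict contraction for $\lambda>L_F$; Banach's fixed point theorem yields the unique $v$.

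\emph{Step 2 (viscosity identification, part (iii)).} The Markov property of Brownian motion applied to the Feynman--Kac identity gives the dynamic programming identity
\begin{equation*}
v(t,x)=\EE\Bigl[v(t+h,B^{t,x}_{t+h})+\int_t^{t+h}F(r,B^{t,x}_r,v(r,B^{t,x}_r))\,dr\Bigr],\qquad 0\le t\le t+h\le T.
\end{equation*}
To check the viscosity supersolution property at $(t_0,x_0)$, I take $\varphi\in C^{1,2}$ with $\varphi\le v$ locally and $\varphi(t_0,x_0)=v(t_0,x_0)$, substitute $\varphi$ for $v$ in the right-hand side using the local inequality, apply It\^o's formula to $\varphi(r,B^{t_0,x_0}_r)$, divide by $h$, and let $h\downarrow 0$; the Brownian estimate $\EE|B^{t_0,x_0}_{t_0+h}-x_0|^2=dh$ controls the excursions outside the neighborhood. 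The subsolution case is symmetric. Thus $v$ is a bounded continuous viscosity solution of \eqref{fo:pde}.

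\emph{Step 3 (parts (i) and (iii)).} Since $F$ is Lipschitz in $u$ uniformly in $(t,x)$ with bounded $F,G$, the standard parabolic comparison principle for semilinear second-order equations (Crandall--Ishii--Lions user's guide, adapted to backward time via $s=T-t$) applies and yields uniqueness of bounded viscosity solutions. Combining with Step 2 gives existence, uniqueness, and $u=v$.

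\emph{Step 4 (regularity, part (iv)).} Under $F\in C^{0,1}_b$, I would differentiate the Feynman--Kac formula using $B^{t,x}_r=x+B_r-B_t$ so that $\nabla_x B^{t,x}_r=\mathrm{Id}$. Picard iterating from a smooth approximation $v_0$, each $v_n=\Phi(v_{n-1})$ is $C^1$ in $x$, and the gradient satisfies
\begin{equation*}
\nabla_x v_n(t,x)=\EE\Bigl[\nabla G(B^{t,x}_T)+\int_t^T\bigl(\nabla_x F+\partial_u F\cdot \nabla_x v_{n-1}\bigr)(r,B^{t,x}_r,v_{n-1}(r,B^{t,x}_r))\,dr\Bigr];
\end{equation*}
Gronwall's inequality bounds $\|\nabla_x v_n\|_\infty$ uniformly in $n$ by $(\|\nabla G\|_\infty+T\|\nabla_x F\|_\infty)e^{T\|\partial_u F\|_\infty}$, and $\nabla_x v$ inherits the bound. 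Under the hypotheses of (iv)-2, the same scheme applied twice produces bounded second spatial derivatives; the Feynman--Kac representation combined with It\^o's formula on $v(r,B^{t,x}_r)$ then shows that $v$ satisfies \eqref{fo:pde} pointwise, hence $v\in C^{1,2}_b$ is a classical solution.

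\emph{Anticipated obstacle.} The cleanest and most delicate point is Step~2: passing from the Feynman--Kac identity to the pointwise viscosity inequalities requires one to replace $v$ inside the integral by the test function $\varphi$ only in a neighborhood, handle the excursions by boundedness of $v,\varphi,F$, and extract the classical operator $\partial_t\varphi+\tfrac12\Delta\varphi+F(t,x,v)$ in the limit $h\downarrow 0$. The bootstrap in (iv)-2 is also subtle because differentiating the integrand produces terms $\partial_{x_ix_j}F(\cdot,v)\cdot\partial_{x_i}v\,\partial_{x_j}v$, which is why the polynomial growth of third derivatives of $G$ and the boundedness of second derivatives of $F$ are needed to close the Gronwall argument.
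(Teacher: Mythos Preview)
Your proposal is correct, but it follows a genuinely different route from the paper's proof. The paper is resolutely BSDE-based: for (i) it introduces the BSDE $Y^{t,x}_s=G(B^{t,x}_T)+\int_s^T F(r,B^{t,x}_r,Y^{t,x}_r)\,dr-\int_s^T Z^{t,x}_r\,dB_r$, sets $u(t,x)=Y^{t,x}_t$, and invokes Pardoux--Peng to declare $u$ the unique viscosity solution; for (ii) it runs the same weighted-norm contraction you use; for (iii) it observes, via uniqueness of the BSDE, that $Y^{t,x}_r=u(r,B^{t,x}_r)$, so that taking $s=t$ and expectations in the BSDE yields exactly \eqref{fo:FK} for $u$, whence $u=v$; and for (iv) it simply cites the representation theorems of Ma--Zhang for (iv)-1 and Pardoux--Peng's classical-solution theorem for (iv)-2. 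By contrast, you build $v$ first, derive the dynamic programming identity from the Markov property, and verify the viscosity inequalities by hand, then appeal to the Crandall--Ishii--Lions comparison principle for uniqueness; this makes the argument self-contained and avoids any BSDE machinery. The trade-off shows most in (iv): the paper's citations dispose of the regularity in one line each, whereas your Picard/Gronwall bootstrap---while correct in outline---must be carried out carefully, in particular passing the uniform gradient bound through the limit $v_n\to v$ and, for (iv)-2, absorbing the polynomially growing derivatives of $G$ via the heat kernel; this is exactly the content of the Ma--Zhang and Pardoux--Peng results you are effectively reproving.
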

\vskip 4pt\noindent
Instead of looking for the most general statement, we merely stated the results under assumptions which will be satisfied in the situations in which we need them, and for which we can use powerful already existing results. For example, we could dispense with the boundedness assumptions for $G$ and $F$, in which case the functions $u$ and $v$ in (i) and (ii) would be of at most linear growth because of the global Lipschitz assumption on $F$ and $G$. 

\begin{proof}
    (i) This is a particular case of \cite[Theorem 4.3]{PardouxPeng}. Indeed, for each $(t,x)\in[0,T]\times \RR^d$ we consider the BSDE
\begin{equation}
        \label{fo:bsde}
Y^{t,x}_s= G(B^{t,x}_T) +\int_s^T F\bigl(r,B^{t,x}_r,Y^{t,x}_r\bigr)dr - \int_s^T Z^{t,x}_rd B_r
\end{equation}
where $B^{t,x}_r=x+B_r-B_t$ for $t\le r\le T$ is a process of Brownian motion conditioned to be at $x$ at time $t$.
Under our assumptions, this BSDE has a unique solution for each fixed $(t,x)\in[0,T]\times\RR^d$. Next we define the function $u$ by setting $u(t,x)=Y^{t,x}_t$ (which is deterministic), and
\cite[Theorem 4.3]{PardouxPeng} says that $u$ is the unique viscosity solution of \eqref{fo:pde}.

\vskip 1pt
(ii) Like in the proof of Proposition~\ref{pr:existence_adjoint1}, in order to define a function $\Psi$ from $C_b([0,T] \times\RR^d)$ into itself we posit that $[\Psi v](t,x)$ is given by the right hand side of \eqref{fo:FK}, and exactly as in the proof of Proposition~\ref{pr:existence_adjoint1}, we check that $\Psi$ is a strict contraction for the norm $\|\cdot\|_\alpha$ for $\alpha$ large enough.

\vskip1pt
(iii) By uniqueness of the solution of the BSDE \eqref{fo:bsde}, if $t\le r\le T$, we have:
$$
Y^{t,x}_{r}=Y_{r}^{r,B_{r}^{t,x}}=u\bigl(r,B^{t,x}_r\bigr)
$$
by definition of the function $u$ in (i). Accordingly, the BSDE \eqref{fo:bsde} can be rewritten as
\begin{equation}
        \label{fo:bsde'}
Y^{t,x}_s= G(B^{t,x}_T) +\int_s^T F\bigl(r,B^{t,x}_r,u\bigl(r,B^{t,x}_r\bigr)\bigr)dr - \int_s^T Z^{t,x}_rd B_r
\end{equation}
and setting $s=t$ and taking expectations on both sides we get:
$$
u(t,x)=Y^{t,x}_t=\EE\Bigl[ G(B^{t,x}_T) +\int_t^T F\bigl(r,B^{t,x}_r,u\bigl(r,B^{t,x}_r\bigr)\bigr)dr
\Bigr]
$$
showing that $u$ is a solution of \eqref{fo:FK}, and hence that $u=v$ by uniqueness of such a solution.
Notice that the stochastic integral in \eqref{fo:bsde'} is indeed a square integrable martingale so its expectation is $0$.

\vskip 1pt
(iv)-1 is a consequence of \cite[Theorem 3.1 p.1397]{MaZhang} and its corollary \cite[Corollary 3.2]{MaZhang}, and (iv)-2 follows \cite[Theorem 3.2]{PardouxPeng}. 
\end{proof}

\begin{proposition}
\label{pr:existence_pde1}
In the case of separable cost functions, for each continuous flow $\hat\bmu=(\hat\mu_t)_{0\le t\le T}$ of probability measures on $\RR^d$,
the PDE \eqref{fo:pde_system1b} admits a  solution in the sense of viscosity which is continuously differentiable with uniformly bounded first derivatives. Moreover, this solution is actually a classical solution when $\tilde f$ and $g$ are three times differentiable with bounded derivatives.
\end{proposition}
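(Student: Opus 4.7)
The plan is a freezing--fixed-point argument that decouples the non-local coupling $t\mapsto \langle \hat\mu_t, u_t\rangle$ from the local semilinear structure of \eqref{fo:pde_system1b}. Given a bounded continuous function $M:[0,T]\to\RR$, I would first solve the frozen equation
\[
\partial_t u + \tfrac12\Delta_x u -\tfrac12|\nabla_x u|^2 -(V-\langle\hat\mu_t,V\rangle)u + V M(t)+\tilde f = 0, \quad u_T=g,
\]
which is a standard HJB equation for a controlled diffusion with bounded data. To remove the quadratic gradient term and bring the equation into the form of Lemma~\ref{le:viscosity_FK}, I apply the Cole--Hopf change of unknown $v=e^{-u}$, producing
\[
\partial_t v + \tfrac12\Delta_x v - \bigl[(V-\langle\hat\mu_t,V\rangle)\log v + VM(t)+\tilde f\bigr]\,v =0, \quad v_T=e^{-g}.
\]
The stochastic-control representation of $u$ (evaluated at $\alpha\equiv 0$ for the upper bound, and using nonnegativity of $\tfrac12|\alpha|^2$ for the lower bound) together with boundedness of $g$, $\tilde f$, $V$, and $\|M\|_\infty$ yields an a priori $L^\infty$ bound on $u^M$ and hence uniform two-sided bounds $0<c_1\le v^M\le c_2$. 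After smoothly truncating the driver outside $[c_1,c_2]$ to make it bounded and globally Lipschitz in $(x,v)$, parts (i)--(iii) of Lemma~\ref{le:viscosity_FK} produce a unique bounded continuous viscosity solution $v^M$, and part (iv)-1 endows it with a continuous, uniformly bounded gradient. Undoing Cole--Hopf gives $u^M\in C^{0,1}_b$ solving the frozen PDE.

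Next I would set up the non-local fixed point. Define $\mathcal T:C([0,T])\to C([0,T])$ by $\mathcal T(M)(t):=\langle\hat\mu_t, u^M_t\rangle$. The uniform bounds of the preceding step show that $\mathcal T$ maps some closed ball $B_R\subset C([0,T])$ into itself; the uniform Lipschitz-in-$x$ bound on $u^M$ combined with continuity of $t\mapsto \hat\mu_t$ (in the topology of weak convergence) ensures that $\mathcal T(B_R)$ is equicontinuous, hence relatively compact by Arzel\`a--Ascoli; and viscosity comparison for the Cole--Hopf PDE delivers stability of $v^M$, hence of $u^M$ and of $\mathcal T(M)$, under uniform perturbation of $M$, so $\mathcal T$ is continuous. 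Schauder's fixed-point theorem then produces $M^\ast\in B_R$ with $\mathcal T(M^\ast)=M^\ast$, and $u:=u^{M^\ast}$ is a viscosity solution of \eqref{fo:pde_system1b} with continuously differentiable, uniformly bounded first derivatives. For the classical-solution claim, when $\tilde f$ and $g$ are $C^3$ with bounded derivatives (and $V$ is taken smooth in the sense of Assumption~\ref{assumption:V-bdd}), the terminal $e^{-g}$ and the truncated Cole--Hopf driver at $M^\ast$ satisfy the hypotheses of Lemma~\ref{le:viscosity_FK}(iv)-2 -- the driver is $C^\infty$ in $v$ on $[c_1,c_2]$ with bounded derivatives -- so $v^{M^\ast}\in C^{1,2}_b$ and hence $u=-\log v^{M^\ast}\in C^{1,2}_b$ is a classical solution.

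The main obstacle is maintaining the lower bound $v^M\ge c_1>0$ \emph{uniformly in $M\in B_R$}, since this is what simultaneously permits the Lipschitz treatment of $v\log v$ on $[c_1,c_2]$ and the invariance of $B_R$ under $\mathcal T$. The stochastic-control $L^\infty$ bound on $u^M$ depends linearly on $\|M\|_\infty$, so for large horizons a naive estimate of the form $\|u^M\|_\infty \le C_1+C_2\|M\|_\infty$ with $C_2\ge 1$ may fail to produce a single invariant ball; in that regime the Schauder argument should be carried out on successive short sub-intervals $[T-\delta,T]$, $[T-2\delta,T-\delta]$, and so on, and the local solutions concatenated using uniqueness from Lemma~\ref{le:viscosity_FK}.
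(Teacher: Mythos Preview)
Your proposal is correct and reaches the same conclusion, but the route differs from the paper's in two structural ways that are worth comparing.

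\textbf{What is frozen.} You freeze only the scalar nonlocal quantity $M(t)=\langle\hat\mu_t,u_t\rangle$, leaving the local zero-order term $-(V-\langle\hat\mu_t,V\rangle)u$ active; after Cole--Hopf the frozen equation is still semilinear (it contains $v\log v$), and you invoke Lemma~\ref{le:viscosity_FK} already at this stage. The paper instead freezes the \emph{entire} function $\tilde v=e^{-\tilde u}$ in the zero-order coefficient $F^{\tilde v}$, so that the frozen problem becomes the \emph{linear} equation $\partial_t v+\tfrac12\Delta v-F^{\tilde v}v=0$, solved explicitly by a Feynman--Kac formula. Lemma~\ref{le:viscosity_FK} is applied only once, at the very end, to the fixed-point equation.

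\textbf{Fixed-point mechanism.} You use Schauder on the ball $B_R\subset C([0,T])$; this yields existence but no uniqueness, and the invariance $C_1+C_2R\le R$ fails for large $T$, forcing the short-time concatenation you describe. The paper instead runs a Banach contraction directly on the set $\cC_K=\{v:\ |\log v_t|\le K_Te^{-2t}\}$ in the time-weighted norm $\|v\|_\alpha=\sup_t e^{\alpha t}\|v_t\|_\infty$. The exponential weight in the definition of $\cC_K$ is exactly what buys global-in-$T$ invariance in one stroke (see inequality~\eqref{eq:adjoint-proof-boundu-KT} and the choice~\eqref{fo:K_lower_bound} of $K$), and the contraction additionally delivers uniqueness of the viscosity solution within $\cC_K$.

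Your concatenation does work, but you should make explicit the a~priori bound that prevents the terminal data from blowing up along the iteration: once $u$ is an \emph{actual} (not frozen) solution on $[T-\delta,T]$, a Gronwall estimate on $t\mapsto|\langle\hat\mu_t,u_t\rangle|$ in the spirit of Remark~\ref{re:weak_apriori_bound} bounds $\|u_{T-\delta}\|_\infty$ independently of the Schauder radius $R$, so the next sub-interval starts from controlled data. Without stating this, the sentence ``the local solutions concatenated using uniqueness from Lemma~\ref{le:viscosity_FK}'' is incomplete, since Lemma~\ref{le:viscosity_FK} gives uniqueness for the \emph{frozen} equation, not for the fixed point itself.
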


\begin{proof}
As suggested by the content of Remark \ref{re:stronger_apriori_bound}, we limit the search for a solution to bounded functions $u$. In fact, if we denote by $K$ the bound in that remark, using $K_{T}=Ke^{2 T}$, we look for $u$ in the class of functions satisfying $\|u_t\|_\infty\le K_{T} e^{-2 t}$ for all $t\in[0,T]$.

\vskip 6pt
\emph{Step 1}. 
In fact, instead of looking directly for solutions $\tilde u$ satisfying $\|\tilde u_t\|_\infty\le K_Te^{-2 t}$, we look for the exponentials of such functions. Indeed, if we set $\tilde v_t= e^{-\tilde u_t}$ and define
$$
F^{\tilde v}(t,x)=(V(x)-<\hat\mu_t,V>)\log \tilde v(t,x) - V(x)<\hat \mu_t,\log \tilde v_t> +\tilde f(x),
$$
then since the function  $\log\tilde v$ is bounded, the function 
$F^{\tilde v}$ is also bounded, and we
define $v=\Psi(\tilde v)$ as the solution of 
\begin{equation}
\label{fo:***}
0=\partial_t v +\frac12\Delta v - F^{\tilde v} v, \qquad v(T,\cdot)=G,
\end{equation}
which is given by
\begin{equation}
\label{fo:***2}
v(t,x)=\EE\Bigl[ G(B^{t,x}_T) \exp\bigl[-\int_t^TF^{\tilde v}(r,B^{t,x}_r)dr\bigr] \Bigr].
\end{equation}
Notice that 
\begin{equation}
\label{fo:bound1}
    \|F^{\tilde v}_t\|_\infty \le 2\|\log \tilde v_t\|_\infty +\|\tilde f\|_\infty
     \le 2K_T e^{-2t} +\|\tilde f\|_\infty,
\end{equation}
 so if $u=-\log v$ we have 
\begin{align}
    \|u_t\|_\infty&=\sup_x\;\Bigl|\log\EE\Bigl[ G(B^{t,x}_T) \exp\bigl[-\int_t^TF^{\tilde v}(r,B^{t,x}_r)dr\bigr] \Bigr]\Bigr|
    \notag
    \\
    &\le \|g\|_\infty +2K_T\int_t^Te^{-2 r}dr+(T-t)\|\tilde f\|_\infty
    \notag
    \\
    &\le K_Te^{-2 t}\Bigl(\frac{\|g\|_\infty +(T-t)\|\tilde f\|_\infty}{K}+1-e^{-2(T-t)}\Bigr)
    \label{eq:adjoint-proof-boundu-KT}
\end{align}
Now increasing the value of the original $K$ if necessary, we can assume without any loss of generality that 
\begin{equation}
    \label{fo:K_lower_bound}
K> e^{2T}\bigl(\|g\|_\infty + e^T\|\tilde f\|_\infty\bigr)
\end{equation}
in which case:
$$
    \frac{\|g\|_\infty +(T-t)\|\tilde f\|_\infty}{K}+1-e^{-2(T-t)}\le 1.
$$
So, by~\eqref{eq:adjoint-proof-boundu-KT}, we have $\|u_t\|_\infty\le K_Te^{-2 t}$ for $0\le t\le T$. So if $-K_Te^{-2t}\le \log \tilde v_t\le K_T e^{-2t}$, then $v=\Psi(\tilde v)$ satisfies 
$-K_Te^{-2t}\le \log v_t\le K_T e^{-2t}$ as well. 

\vskip 4pt
\emph{Step 2.} 
Given the above preliminaries, we define the closed subset
$$
    \cC_K=\{ v\in C_b([0,T]\times\RR^d;\RR_+);\;-K_Te^{-2t}\le \log v_t\le K_T e^{-2t}\;\;0\le t\le T\}
$$
of $C_b([0,T]\times\RR^d;\RR_+)$, which is left invariant by the map $\Psi$ since we just proved that $\Psi(\cC_K) \subseteq \cC_K$. Next, we prove that $\Psi$ has a unique fixed point in $\cC_K$, using a strict contraction argument on $\cC_K$ endowed with the norm $\| v \|_\alpha = \sup_{t \in [0,T]} e^{\alpha t}\|v_t\|_\infty$.
If $\tilde v^1$ and $\tilde v^2$ are in $\cC_K$, we have:
\begin{equation}
\begin{split}
    \bigl\|\bigl(\Psi(\tilde v^1)-\Psi(\tilde v^2)\bigr)_t\bigr\|_\infty 
    &= \sup_x\EE\Bigl[\Bigl| G(B^{t,x}_T) \Bigl(\exp\bigl[-\int_t^TF^{\tilde v^1}(r,B^{t,x}_r)dr\bigr]
    -\exp\bigl[-\int_t^TF^{\tilde v^2}(r,B^{t,x}_r)dr\bigr]  \Bigr) \Bigr|\Bigr]
    \\
    &\le e^{\|g\|_\infty} e^{T\sup_{0\le t\le T}\| F_t^{\tilde v^1}\|_\infty\vee \| F_t^{\tilde v^2}\|_\infty}
    \EE\Bigl[\int_t^T\bigl| F^{\tilde v^1}(r,B^{t,x}_r)-F^{\tilde v^2}(r,B^{t,x}_r)\bigr| \;dr\Bigr]
    \\
    &\le C'\int_t^T\|\log\tilde v^1_r- \log \tilde v^2_r\|_\infty\;dr
    \\
    &\le C'e^{K}\int_t^T\|\tilde v^1_r- \tilde v^2_r\|_\infty\;dr,
\end{split}
\end{equation}
where we used the fact that $\tilde  v^1$ and $\tilde v^2$ are in $\cC_K$ and  $C'=2\exp[\|g\|_\infty +T(2K_T+\|\tilde f\|_\infty)]$. Notice that we used the fact that $\tilde v^1$ and $\tilde v^2$ are uniformly bounded from abobe and from below away from $0$, so we only need to rely on the local Lipschitz properties of the exponential and logarithm functions.
Moreover, we have
\begin{equation}
e^{\alpha t}\bigl\|\bigl(\Psi(\tilde v^1)-\Psi(\tilde v^2)\bigr)_t\bigr\|_\infty
\le C'e^K e^{\alpha t}\int_t^Te^{-\alpha r} e^{\alpha r}\|\tilde v^1_r- \tilde v^2_r\|_\infty\;dr\le C'e^K \frac{1-e^{-\alpha (T-t)}}{\alpha}\sup_{0\le r\le T}e^{\alpha r}\|\tilde v^1_r- \tilde v^2_r\|_\infty
\end{equation}
so that 
$$
\bigl\|\Psi(\tilde v^1)-\Psi(\tilde v^2)\bigr\|_\alpha=\sup_{0\le t\le T}e^{\alpha t}\bigl\|\bigl(\Psi(\tilde v^1)-\Psi(\tilde v^2)\bigr)_t\bigr\|_\infty\le \frac{C'e^K}{\alpha}\|\tilde v^1- \tilde v^2\|_\alpha
$$
which shows that $\Psi$ is a strict contraction if $\alpha > C'e^K$. If $v$ is the unique fixed point of this strict contraction, then $v$ satisfies 
$$
0=\partial_t v +\frac12\Delta v - F^{v} v, \qquad v(T,\cdot)=G,
$$
in other words:
\begin{equation}
    \label{fo:v}
0=\partial_t v +\frac12\Delta v - \bigl[(V(x)-<\hat\mu_t,V>)\log v(t,x) - V(x)<\hat \mu_t,\log v_t> +\tilde f(x)\bigr] v, \qquad v(T,\cdot)=G.
\end{equation}

\vskip 4pt
\emph{Step 3.} We now return to the function $u$ and conclude the proof of the proposition.
The function 
\begin{align*}
    [0,T]\times\RR^d\times [e^{-K_T},e^{K_T}]\ni(t,x,u)\mapsto F(t,x,u)
    &=(V(x)-<\hat\mu_t,V>)u\log u \\
    &\qquad +[- V(x)<\hat \mu_t,\log  v_t> +\tilde f(x)]u
\end{align*}
satisfies all the assumptions of Lemma \ref{le:viscosity_FK} since it is bounded and Lipschitz continuous in $(x,u)$ uniformly in $t\in[0,T]$. Since $G$ also satisfies the required assumption, and since equation \eqref{fo:v} coincides with \eqref{fo:pde}, we can conclude that the function $v$ we just constructed as a fixed point of $\Psi$ is continuously differentiable and its gradient is uniformly bounded. Moreover, $v$ is a classical solution whenever $g$ is $3$-times continuously differentiable with bounded derivatives.

Since the logarithm function is monotone, the function $u=-\log v$ is a viscosity solution of
\begin{equation}
    \label{fo:u}
0=\partial_t u_t +\frac12\Delta u_t -\frac12|\nabla u_t|^2 + (V-<\hat\mu_t,V>) u_t +V<\hat\mu_t,u_t>+\tilde f, \qquad u_T=g,
\end{equation}
which is equation \eqref{fo:pde_system1b} with $\bmu = \hat\bmu$. Also, since $v$ is bounded away from $0$, $u$ is continuously differentiable and its gradient is uniformly bounded.
Moreover, $u$ is a classical solution of \eqref{fo:u} whenever $g$ is $3$-times continuously differentiable with bounded derivatives.
\end{proof}

\begin{remark}
    While the derivation of the necessity of the existence of a solution for the PDE system \eqref{fo:pde_system1} at optimality was done under the assumption that the optimal control was bounded, the above proposition actually implies that the optimal control obtained from the gradient of the solution is necessarily bounded.
\end{remark}

%%%%%%%%%%%%%%%%%%%%%%%%%%%%%
%%%%%%%%%%%%%%%%%%%%%%%%%%%%%
\section{\textbf{Analysis of the Open Loop Conditional Control Problem}}
\label{sec:open-loop}
In this section, we consider the case of general open loop controls. Our first goal is to show that the search for optimal controls can be restricted to a subclass of controls of a specific feedback form, namely deterministic functions of time, the controlled state $X_t$, and a specific function of the history of its path prior to time $t$.

%%%%%%%%%%%%%%%%%%%%%%%%%%%%%
\subsection{Precise Formulation of the Open Loop Conditional Control Problem}

We now define in detail the  open loop version of the control problem discussed in the introduction.
For each $t\in[0,T]$, we denote by $\Theta_t$ the set of $\theta=(\Omega,\cF,(\cF_s)_{t\le s\le T}, \PP,\bW=(W_s)_{t\le s\le T})$ where $(\Omega,\cF,\PP)$ is a probability space supporting a process $\bW$ which satisfies $W_t=0$ and is a Brownian motion for the filtration $(\cF_s)_{t\le s\le T}$. 
When for some $t$, $\theta\in\Theta_t$ is a generic element, we implicitly use the notation  $\EE = \EE^{\PP}$ for the expectation under $\PP$, and by \emph{almost surely}, we mean $\PP$-almost surely.

For each $\theta\in\Theta_t$, we define $\AA_t^\theta$ as the set of $A$-valued
progressively measurable processes $\balpha=(\alpha_s)_{t\le s\le T}$ satisfying the admissibility conditions defined earlier, namely
$$
\EE^{\PP}\int_t^T|\alpha_s|^2\,ds\;<\infty.
$$ 
Now for each stochastic basis $\theta\in\Theta_t$
and each admissible control $\balpha\in\AA_t^\theta$, the controlled state process $\bX^{\alpha}=(X^\alpha_s)_{t\le s\le T}$ is given by:
\begin{equation}
\label{fo:state_t}
dX^\alpha_s=\alpha_s ds + dW_s,\qquad t\le s\le T.
\end{equation}
We shall specify the initial condition $X_t$ when needed, and we shall skip the superscript $\alpha$ whenever convenient as long as no confusion is possible. Also, we shall systematically drop the subscript $t$ in $\AA_t^\theta$ whenever $t=0$.

\vskip 6pt
The goal of the control problem is to minimize the cost $J^V(\balpha)$ 
defined in \eqref{fo:J_V_of_alpha}. We denote by $U^{(2)}(t,\mu)$ the value function of the problem. It is defined for $t\in[0,T]$ and $x\in\RR^d$ as follows:
\begin{equation}
\label{fo:U2(t,x)}
    U^{(2)}(t,\mu)=\inf_{\theta\in\Theta_t,\; \balpha\in\AA_t^\theta,\;X_t\sim\mu}
    \Bigl(\int_t^T
    \frac{\EE\Bigl[f(X_s,\alpha_s)e^{-\int_t^sV(X_u)du}\Bigr]}{\EE\Bigl[e^{-\int_t^sV(X_u)du}\Bigr]}ds
    +\frac{\EE\Bigl[g(X_T)e^{-\int_t^TV(X_s)ds}\Bigr]}{\EE\Bigl[e^{-\int_t^TV(X_s)ds}\Bigr]}\Bigr).
\end{equation}
The initial condition $X_t\sim\mu$ stipulating that the distribution of $X_t$ is $\mu$, reduces to the classical case of a deterministic initial condition when $\mu$ is a pointwise measure, say $\mu=\delta_x$ for some $x\in D$. %, in which case $X_t=x$ and the value function is in fact a function of $t$ and $x$.

%%%%%%%%%%%%%%%%%%%%%%%%%%%%%
\subsection{Technical Preliminary: the Mimicking Theorem}
\label{sub:mimicking}
The proof of the main result of this section is based on an application of the mimicking theorem, sometimes
referred to as the Markovian projection theorem. It is originally
due to Gyongy \cite{gyongy1986mimicking}. We shall make use of the extension proven by Brunick and Shreve in \cite{brunick2013mimicking}.

\vskip 2pt
We introduce the following notation to check easily the conditions required for the result of \cite[Theorem 3.6]{brunick2013mimicking} which we want to use. Recall that we use the standard notation $C([0,\infty);\RR^d)$ for the space of $\RR^d$-valued continuous function on $[0,\infty)$, and we use the notation $C_{0}([0,\infty);\RR^d)$ for the set of elements $x\in C([0,\infty);\RR^d)$ satisfying $x(0)=0$. We define the set $\cE$ to be the Euclidean space $\RR^d\times\RR$. Next we define the function $\Phi: \cE\times C_{0}([0,\infty);\RR^d) \to C([0,\infty);\cE)$ by:
$$
    \Phi\bigl((e_1,e_2),x\bigr)=\Bigl(e_1+x,e_2+\int_0^\cdot V(x_s)ds\Bigr).
$$
It should be clear that $e_1+x$ is the notation for the function $[0,\infty)\ni t\mapsto e_1+x(t)\in\RR^d$
and that $e_2+\int_0^\cdot V(x_s)ds$ is the notation for the function $[0,\infty)\ni t\mapsto e_2+\int_0^tV(X_s)ds\in\RR$. Also, for the sake of convenience, we shall often write $(e_1,e_2,x)$
instead of $((e_1,e_2),x)$. It is clear that the range of $\Phi$ is contained in the space $C([0,\infty);\cE)$ 
and that $\Phi$ is continuous because the potential function $V$ is continuous and bounded on $\RR^d$.
Next, we check that $\Phi$ is an \emph{updating function} in the sense of \cite[Definition 3.1]{brunick2013mimicking}. Obviously:
$$
[\Phi(e_1,e_2,x)](0)=\Bigl(e_1+x(0),e_2+\int_0^0 V(e_1+x(u))du\Bigr)=(e_1,e_2).
$$
If $x$ is any function on $[0,\infty)$, for any $t\ge 0$, we denote by $x^t$ the function stopped at time $t$, namely the function:
$$
[0,\infty)\ni s\mapsto x^t(s)=x(s\wedge t).
$$
In particular:
$$
[\Phi(e_1,e_2,x)]^t(s)=\Bigl(e_1+x(s\wedge t),e_2+\int_0^{s\wedge t} V(e_1+x(u))du\Bigr),
$$
which is equal to 
$$
[\Phi(e_1,e_2,x^t)]^t(s)=\Bigl(e_1+x^t(s),e_2+\int_0^{s\wedge t}V(e_1+x^t(u))du).
$$
Finally, we check the third property of  \cite[Definition 3.1]{brunick2013mimicking}. Indeed:
$$
[\Phi(e_1,e_2,x)](t+s)=\Bigl(e_1+x(t+s),e_2+\int_0^{t+s}V(e_1+x(u))du\Bigr)
$$
while by definition, the right hand side of the desired equality is equal to:
\begin{equation*}
\begin{split}
&\Bigl[\Phi\Bigl([\Phi(e_1,e_2,x)](t),x(t+\cdot)-x(t)\Bigr)\Bigr](s)\\
&\hskip 5pt
=\Bigl([\Phi(e_1,e_2,x)](t)_1+x(t+s)-x(t)),[\Phi(e_1,e_2,x)](t)_2+\int_0^sV([\Phi(e_1,e_2,x)](t)_1+x(t+u)-x(t))du\Bigr)\\
&\hskip 5pt
=(e_1+x(t+s),e_2+\int_0^t V(e_1+x(u))du+\int_0^sV(e_1+x(t)+x(t+u)-x(t))du\Bigr)
\end{split}
\end{equation*}
which is equal to $[\Phi(e_1,e_2,x)](t+s)$ as desired.

%%%%%%%%%%%%%%%%%%%%%%%%%%%%%
\subsection{Reformulation Based on a Special Class of Feedback Controls}
\label{sub:reformulation}

Let us consider $\theta \in \Theta$, let us fix momentarily an admissible control process $\balpha\in\AA^\theta$, let us denote by $\bX=(X_t)_{0\le t\le T}$ the associated controlled process $\bX^\alpha$ over the interval $[0,T]$, and let us set $A_t=\int_0^t V(X^\alpha_s)ds$.
We now use \cite[Theorem 3.6]{brunick2013mimicking} with the updating function $\Phi$ introduced above in Subsection \ref{sub:mimicking}.
We define the measurable function $\psi$ from $[0,T]\times\RR^d\times \RR$ with values in $A$ by:
\begin{equation}
\label{fo:alphahat}
\psi(t,x,a)=\EE[\alpha_t\;|\;X_t=x,\,A_t=a]
\end{equation}
as the expectation of $\alpha_t$ with respect to a regular version of the conditional probability given $(X_t,A_t)$. It is possible to choose a measurable version such that for $t\notin N$ where $N\subset[0,\infty)$ is of zero Lebesgue's measure we have $\psi(t,X_t,A_t)=\EE[\alpha_t\;|\;X_t,\,A_t]$ $\PP$-almost surely. Moreover, there exists a stochastic basis  $\hat\theta=(\hat\Omega,\hat\cF,\{\hat\cF_t\}_t,\hat\PP,\hat\bW)$ supporting processes $\hat\bX=(\hat X_t)_{0\le t\le T}$ and $\hat\bA=(\hat A_t)_{0\le t\le T}$ satisfying:
\begin{equation}
\label{fo:state_sde_psi}
\begin{cases}
\hat X_t&=\hat X_0+\int_0^t \psi(s,\hat X_s,\hat A_s)ds +\hat W_t\\
\hat A_t&=\int_0^t V(\hat X_s)ds
\end{cases}
\end{equation}
and such that for each $t\ge 0$, the joint law of the pair $(X_t,A_t)$ under $\PP$ in the original stochastic basis $\theta$ coincides with the joint law of $(\hat X_t, \hat A_t)$ under $\hat\PP$ in the new stochastic basis $\hat\theta$.
As a result:
\begin{equation}
\begin{split}
J^V(\balpha)
&=\int_0^T
\frac{\EE^\PP\Bigl[f(X_t,\alpha_t)e^{-A_t}\Bigr]}{\EE^\PP\Bigl[e^{-A_t}\Bigr]}dt
+\frac{\EE^\PP\Bigl[g(X_T)e^{-A_T}\Bigr]}{\EE^\PP\Bigl[e^{-A_T}\Bigr]}\\
&=\int_0^T
\frac{\EE^\PP\Bigl[\EE^\PP[f(X_t,\alpha_t)|X_t,A_t]e^{-A_t}\Bigr]}{\EE^\PP\Bigl[e^{-A_t}\Bigr]}dt
+\frac{\EE^\PP\Bigl[g(X_T)e^{-A_T}\Bigr]}{\EE^\PP\Bigl[e^{-A_T}\Bigr]}\\
&\ge\int_0^T
\frac{\EE^\PP\Bigl[f(X_t,\EE[\alpha_t|X_t,A_t])e^{-A_t}\Bigr]}{\EE^\PP\Bigl[e^{-A_t}\Bigr]}dt
+\frac{\EE^\PP\Bigl[g(X_T)e^{-A_T}\Bigr]}{\EE^\PP\Bigl[e^{-A_T}\Bigr]}\\
&=\int_0^T
\frac{\EE^{\hat\PP}\Bigl[f(\hat X_t,\psi_t(\hat X_t,\hat A_t))e^{-\hat A_t}\Bigr]}{\EE^{\hat\PP}\Bigl[e^{-\hat A_t}\Bigr]}dt
+\frac{\EE^{\hat\PP}\Bigl[g(\hat X_T)e^{-\hat A_T}\Bigr]}{\EE^{\hat\PP}\Bigl[e^{-\hat A_T}\Bigr]}\\
&=J^V(\hat{\balpha}),
\end{split}
\end{equation}
where the admissible control process $\hat{\balpha}=(\hat\alpha_t)_{0\le t\le T}$ is defined as $\hat\alpha_t=\psi(t,\hat X_t, \hat A_t)$, and where we used the convexity in $\alpha$ of $f(x,\alpha)$ for $x$ fixed, and the fact that the joint law of the pair $(X_t,A_t)$ under $\PP$ is the same as the joint law of $(\hat X_t, \hat A_t)$ under $\hat\PP$. This implies that:
$$
J^V(\balpha)\ge \inf_{\theta\in\Theta,\; \balpha^s\in\AA^{\theta,(S)}} J^V(\balpha^s)
$$
where, for a given stochastic basis $\theta\in\Theta$, we denote by $\AA^{\theta,(S)}$ the set of special admissible control processes defined as the subset of $\AA^\theta$ formed by the $\balpha$ of the form $\alpha_t=\psi(t,X_t,A_t)$ given by a feedback (measurable) function $\psi$ of the state process $\bX$ controlled by $\balpha$ and the process $\bA$ given by $A_t=\int_0^tV(X_s)ds$. Taking the infimum on the left hand side we get:
$$
    \inf_{\theta\in\Theta,\; \balpha\in\AA^{\theta}}  J^V(\balpha)\ge \inf_{\theta\in\Theta,\; \balpha^s\in\AA^{\theta,(S)}} J^V(\balpha^s)
$$
and since the left hand side is obviously not greater than the right hand side because $\AA^{\theta,(S)}\subset\AA^\theta$, we conclude that these two infima are identical. Since the same argument applies for all $t\in[0,T]$ with initial condition $X_t=x$, we conclude that the value function $U^{(2)}$ introduced earlier in \eqref{fo:U2(t,x)} can be computed by minimizing over control processes in feedback form given by deterministic functions of time and the couple $(X_t,A_t)$.

\vskip 2pt
To streamline the notations, in analogy with the analysis of the Markovian feedback functions, we shall denote by $\Psi$ the set of admissible extended feedback control functions, namely the $\RR^d$-valued measurable functions $\psi$ on $[0,T]\times\RR^d\times\RR_+$ for which the stochastic differential system \eqref{fo:state_sde_psi} has a weak solution satisfying
$$
\EE\int_0^T|\psi_t(X_t,A_t)|^2 dt<\infty,
$$
and we shall use the notation $J^V(\psi)$ for $J^V(\balpha)$ when the control process $\balpha$ is given by $\alpha_t=\psi_t(X_t,A_t)$.
We have thus proved the following result, which is a cornerstone of our analysis from now on. 
\begin{theorem}
    \label{thm:openloop-to-newfeedback}
    It holds:
    $$
    \inf_{\theta\in\Theta,\; \balpha\in\AA^{\theta}}  J^V(\balpha)
    = \inf_{\psi\in\Psi} J^V(\psi).
$$
\end{theorem}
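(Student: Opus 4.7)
The plan is to establish the two inequalities separately. The easy direction $\inf_{\psi\in\Psi} J^V(\psi)\ge \inf_{\theta,\balpha} J^V(\balpha)$ is immediate: any special feedback control $\alpha_t=\psi(t,X_t,A_t)$ produces a progressively measurable process satisfying the admissibility condition \eqref{fo:control_integrability}, so the special class is contained in the ambient class of adapted open-loop controls and the infimum over a smaller set cannot be smaller. The nontrivial direction asks, given an arbitrary open-loop $\balpha\in\AA^\theta$, to produce a $\psi\in\Psi$ with $J^V(\psi)\le J^V(\balpha)$.

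For this, the construction already outlined in Subsection \ref{sub:reformulation} is essentially the whole proof. Fix an admissible open-loop $\balpha$ on a basis $\theta$, form the controlled state $\bX$ and the additive functional $A_t=\int_0^t V(X_s)ds$, and define the measurable function
$$
\psi(t,x,a)=\EE[\alpha_t\mid X_t=x,\,A_t=a]
$$
by means of a regular conditional probability, selecting a jointly measurable version so that $\psi(t,X_t,A_t)=\EE[\alpha_t\mid X_t,A_t]$ almost surely for a.e. $t$. Then the Brunick--Shreve mimicking theorem, applied to the updating function $\Phi$ built in Subsection \ref{sub:mimicking} (which was already verified to meet the conditions of \cite[Definition 3.1]{brunick2013mimicking}), furnishes a new stochastic basis $\hat\theta$ and a pair $(\hat\bX,\hat\bA)$ solving the closed system \eqref{fo:state_sde_psi} driven by $\psi$, with the crucial matching property that for every $t\in[0,T]$ the joint law of $(\hat X_t,\hat A_t)$ under $\hat\PP$ equals that of $(X_t,A_t)$ under $\PP$.

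The cost comparison then proceeds by a tower-property-plus-Jensen calculation, which is the content of the displayed chain of equalities and inequalities preceding the theorem. Concretely, for each $t$,
$$
\EE[f(X_t,\alpha_t)e^{-A_t}]=\EE\bigl[\EE[f(X_t,\alpha_t)\mid X_t,A_t]\,e^{-A_t}\bigr]\ge \EE\bigl[f(X_t,\psi(t,X_t,A_t))e^{-A_t}\bigr],
$$
using the convexity of $\alpha\mapsto f(x,\alpha)$ from Assumption \ref{assumption:g-f-cont}; the right-hand side then equals $\EE^{\hat\PP}[f(\hat X_t,\psi(t,\hat X_t,\hat A_t))e^{-\hat A_t}]$ by the marginal matching. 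Denominators $\EE[e^{-A_t}]$ and the terminal-cost terms transfer identically since they depend only on the marginal law of $(X_t,A_t)$ (resp.\ $(X_T,A_T)$). Assembling these pieces yields $J^V(\balpha)\ge J^V(\hat\balpha)$ with $\hat\alpha_t=\psi(t,\hat X_t,\hat A_t)$, and taking the infimum over $(\theta,\balpha)$ gives the converse inequality.

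The main obstacle, and the only nonroutine point, is verifying that the mimicking data $(\hat\theta,\hat\bX,\hat\bA)$ actually exists and is compatible with $\psi\in\Psi$: one needs the admissibility bound $\EE\int_0^T|\psi(t,\hat X_t,\hat A_t)|^2\,dt<\infty$, which follows from Jensen applied to $|\cdot|^2$ together with $\EE\int_0^T|\alpha_t|^2dt<\infty$ and the matching of marginals, and one needs a weak solution to \eqref{fo:state_sde_psi}, which is exactly what \cite[Theorem 3.6]{brunick2013mimicking} provides once the updating-function axioms have been checked --- this checking was the purpose of Subsection \ref{sub:mimicking}. All remaining steps are direct consequences of conditional Jensen and the marginal-matching property.
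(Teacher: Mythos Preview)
Your proposal is correct and follows exactly the approach of the paper: the easy inequality by inclusion of control classes, and the hard inequality by applying the Brunick--Shreve mimicking theorem with the updating function $\Phi$ from Subsection~\ref{sub:mimicking}, followed by conditional Jensen using the convexity of $f(x,\cdot)$ and the marginal-matching property. Your additional remark verifying $\psi\in\Psi$ via Jensen and marginal matching is a useful clarification that the paper leaves implicit.
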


%%%%%%%%%%%%%%%%%%%%%%%%%%%%%%%%
\subsection{Reformulation as a Deterministic Control Problem}
\label{sec:formulation-open-deterministic}
From now on, we limit the open loop optimization problem to open loop control processes $\balpha$ of the form  
$\alpha_t=\psi_t(X_t,A_t)$ for measurable feedback functions $\psi\in\Psi$ of time and the couple $(X_t,A_t)$. 
Given such a feedback function $\psi$ the definition we chose of the set $\Psi$ of admissible extended feedback functions includes the existence of a corresponding controlled process, in other words, a solution of the stochastic differential equation
\begin{equation}
\label{fo:psi_sde}
\begin{cases}
d X_t&=\psi_t(X_t,A_t)dt +d W_t\\
d A_t&= V(X_t)dt,
\end{cases}
\end{equation}
with the same initial condition for $X_0$ and $A_0=0$.
At first glance, this system of stochastic differential equations appears to be degenerated and existence of strong solutions is not guaranteed. However, one can look at the first of the above equations as an equation of the form $dX_t=b(t,X_\cdot)dt + dW_t$ with a  progressively measurable drift $b$ which depends upon the past of the trajectory, and existence and uniqueness of weak solutions for those equations is guaranteed for example when $\psi$ is bounded. In fact, given the analysis of the Markov feedback case performed in the previous section, we shall limit ourselves in this section to bounded open loop control processes $\balpha$, and hence to bounded measurable extended feedback control functions $\psi$. This is all we need since we already appealed to the theory of weak solutions to reduce the open loop optimization problem to what we are considering now, and for which
the cost function $\balpha\mapsto J^V(\balpha)$ which depends upon the values of the couple $(X_t,A_t)$ and its distribution can be rewritten in the form:
\begin{equation}
\label{fo:J_of_psi}
\begin{split}
    J^V(\psi)
    &=\int_0^T
    \frac{\EE\Bigl[f(X_t,\psi_t(X_t,A_t))e^{-A_t}\Bigr]}{\EE\Bigl[e^{-A_t}\Bigr]}dt
    +\frac{\EE\Bigl[g(X_T)e^{-A_T}\Bigr]}{\EE\Bigl[e^{-A_T}\Bigr]}\\
    &=\int_0^T
    \Bigl(\int \mu^{(2)}_t(dx,da)f(x,\psi(x,a))\Bigr)dt +\int \mu^{(2)}_T(dx,da)g(x)\\
    &=\int_0^T <\mu^{(2)}_t,f(\cdot,\psi(\cdot,\cdot))>dt +<\mu^{(2)}_T,g>,
 \end{split}
\end{equation}
where we use the notation $\mu^{(2)}_t$ for the Gibbs probability measure:
\begin{equation}
\label{fo:mu_2_t}
    \mu^{(2)}_t(dx,da)=\frac{\EE[\delta_{(X_t,A_t)}(dx,da) e^{- A_t}]}{\EE[ e^{- A_t}]},\qquad 0\le t\le T.
\end{equation}

\begin{lemma}
\label{le:FPK2}
The measure valued function $t\mapsto \mu^{(2)}_t$ satisfies the forward FPK equation:
\begin{equation}
\label{fo:kolmo2}
\partial_t \mu =\frac12\Delta_x \mu - \div_x(\psi_t \mu) -V\partial_a\mu - (V-<\mu,V>) \mu,
\end{equation}
with initial condition $\mu_{|t=0}(dx,da)=\mu_0(dx)\delta_0(da)$ in the sense of Schwartz distributions.
 \end{lemma}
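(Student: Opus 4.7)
The plan is to mimic the derivation of the Fokker--Planck--Kolmogorov equation in Lemma~\ref{le:FPK1}, with the pair $(X_t,A_t)$ now playing the role played there by $X_t$ alone. The only structural novelty is that $A_t=\int_0^tV(X_s)ds$ is a finite-variation process with deterministic dynamics $dA_t=V(X_t)dt$, which accounts for the extra transport term $-V\partial_a\mu$ in \eqref{fo:kolmo2}.

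First, I would fix a test function $\varphi\in C^\infty_c(\RR^d\times\RR)$ and apply It\^o's formula to $\varphi(X_t,A_t)$, noting that $A_t$ has no martingale part:
\[
d\varphi(X_t,A_t)=\Bigl(\tfrac12\Delta_x\varphi+\psi_t(X_t,A_t)\cdot\nabla_x\varphi+V(X_t)\partial_a\varphi\Bigr)(X_t,A_t)\,dt+\nabla_x\varphi(X_t,A_t)\cdot dW_t.
\]
Then the product rule applied to $\varphi(X_t,A_t)e^{-A_t}$, together with $d e^{-A_t}=-V(X_t)e^{-A_t}dt$ and the absence of a cross-variation term, gives
\[
d\bigl(\varphi(X_t,A_t)e^{-A_t}\bigr)=e^{-A_t}\Bigl(\tfrac12\Delta_x\varphi+\psi_t\cdot\nabla_x\varphi+V\partial_a\varphi-V\varphi\Bigr)dt+e^{-A_t}\nabla_x\varphi\cdot dW_t.
\]
Taking expectations (the stochastic integral is a true martingale because $\varphi$ has compact support and hence $\nabla_x\varphi$ is bounded) yields
\[
\frac{d}{dt}\EE[\varphi(X_t,A_t)e^{-A_t}]=\EE\Bigl[\bigl(\tfrac12\Delta_x\varphi+\psi_t\cdot\nabla_x\varphi+V\partial_a\varphi-V\varphi\bigr)(X_t,A_t)\,e^{-A_t}\Bigr],
\]
and similarly, with $\varphi\equiv 1$, $\frac{d}{dt}\EE[e^{-A_t}]=-\EE[V(X_t)e^{-A_t}]$.

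Next I would apply the quotient rule to differentiate $\langle\mu_t^{(2)},\varphi\rangle=\EE[\varphi e^{-A_t}]/\EE[e^{-A_t}]$. The contribution from the denominator produces the renormalization term $\langle\mu_t^{(2)},\varphi\rangle\langle\mu_t^{(2)},V\rangle$, and combining with the numerator one gets
\[
\frac{d}{dt}\langle\mu_t^{(2)},\varphi\rangle=\Bigl\langle\mu_t^{(2)},\,\tfrac12\Delta_x\varphi+\psi_t\cdot\nabla_x\varphi+V\partial_a\varphi-\bigl(V-\langle\mu_t^{(2)},V\rangle\bigr)\varphi\Bigr\rangle.
\]
Finally, I would transpose each differential operator onto $\mu_t^{(2)}$ via integration by parts in the sense of distributions on $\RR^d\times\RR$ (the compact support of $\varphi$ kills all boundary terms, including at $a=0$, since $\varphi$ is tested on the whole real line in the $a$ variable and $\mu_t^{(2)}$ is merely a distribution there), obtaining exactly \eqref{fo:kolmo2}. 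The initial condition $\mu_{|t=0}^{(2)}=\mu_0\otimes\delta_0$ is immediate from $A_0=0$.

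The argument is essentially routine and I do not expect any serious obstacle: the only nuance worth flagging is the justification of the martingale property of the stochastic integral and the permission to differentiate under the expectation, both of which follow from the boundedness of $V$ (so that $0\le e^{-A_t}\le 1$), the fact that $\psi$ is admissible, and the compact support of $\varphi$. Everything else is a direct generalization of Lemma~\ref{le:FPK1} with the extra coordinate $a$ contributing only the first-order transport term $-V\partial_a\mu$.
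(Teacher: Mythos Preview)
Your proposal is correct and follows essentially the same approach as the paper's own proof: apply It\^o's formula to $\varphi(X_t,A_t)e^{-A_t}$ for a compactly supported test function, differentiate the quotient defining $\langle\mu_t^{(2)},\varphi\rangle$, and then transpose the operators onto $\mu_t^{(2)}$ by integration by parts. You are slightly more explicit about the intermediate steps (product rule, martingale justification, initial condition), but the structure is identical.
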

Now, the notation $<\mu,V>$ used in formula \eqref{fo:kolmo2} should be understood as $<\mu,V>=\int_{\RR^d}\int_{\RR} V(x)\mu(dx,da)$ or $<\mu,V>=<\tilde\mu,V>=\int_{\RR^d} V(x)\tilde\mu(dx)$ if we use the notation $\tilde\mu$ for the first marginal of $\mu$.
\begin{proof}
If $\varphi$ is a smooth function on $\RR^d\times \RR$ with compact support, It\^o's formula gives:
\begin{equation*}
\begin{split}
\frac{d}{dt} <\mu^{(2)}_t,\varphi> & = \frac{d}{dt}\frac{\EE[\varphi(X_t,A_t) e^{- A_t}]}{\EE[ e^{- A_t}]}\\
& =\frac{1}{\EE[ e^{- A_t}]}
\EE\Bigl[\Bigl(\frac12\Delta_x \varphi(X_t,A_t)  +\psi_t(X_t,A_t)\cdot\nabla_x\varphi(X_t,A_t)+\partial_a\varphi(X_t,A_t)V(X_t)\Bigr)  e^{- A_t}\\ 
&\hskip 75pt
 - \varphi(X_t,A_t) V(X_t)  e^{- A_t} \Bigr]+\EE[\varphi(X_t,A_t) e^{- A_t}]\frac{\EE[V(X_t)e^{-A_t}]}{\EE[ e^{- A_t}]^2}\\
& = <\mu^{(2)}_t,\; \frac12\Delta_x \varphi  +\psi_t \cdot \nabla_x\varphi +V\partial_a\varphi- (V -<\mu^{(2)}_t,V>)\varphi >\\
& = <\frac12\Delta_x \mu^{(2)}_t - \div_x(\psi_t \mu^{(2)}_t) - V\partial_a\mu^{(2)}_t-(V-<\mu^{(2)}_t,V>) \mu^{(2)}_t,\; \varphi>
\end{split}
\end{equation*}
where we used stochastic integration by parts and the fact that $\varphi$ has compact support.
\end{proof}

For the purpose of our analysis, we resolve the existence problem for the non-local FPK equation \eqref{fo:kolmo2} as before. Given  $\psi\in\Psi$, Lemma~\ref{le:FPK2} guarantees that the flow of probability measures $\mu_t=\mu^{(2)}_t$ defined by \eqref{fo:mu_2_t} with $A_t=\int_0^tV(X_s)ds$ and $X_t$ being a weak solution of the stochastic differential equation
$$
dX_t=\psi_t\Bigl(X_t,\int_0^tV(X_s)ds\Bigr)dt + dW_t
$$
is a solution of \eqref{fo:kolmo2}. This is all we shall need for the existence of solutions of \eqref{fo:kolmo2}.

\vskip 2pt
The proof of Theorem \ref{th:superposition} did not depend upon the fact that the state stochastic differential equation was not degenerate, so a similar superposition principle holds in the present situation, and our original optimization problem reduces to the deterministic control problem of the minimization of the functional:
\begin{equation}
J(\psi)=\int_0^T F^{(2)}(\mu^{(2)}_t,\psi_t)dt + G^{(2)}(\mu^{(2)}_T)
\end{equation}
under the dynamical constraint \eqref{fo:kolmo1}, where the running and terminal cost functions $F^{(2)}$ and $G^{(2)}$ are given by:
\begin{equation}
\label{fo:second_costs}
F^{(2)}(\mu,\psi)= \int\int \mu(dx,da)f(x,\psi(x,a)), \quad\text{and}\quad G^{(2)}( \mu)=\int \int\mu(dx,da)g(x) ,
\end{equation}
for $\mu\in\cM(\RR^d\times\RR_+)$ the space of finite measures on $\RR^d\times\RR_+$. Note that as per our discussion of the integral of the potential $V$, the terminal cost $G^{(2)}( \mu)$ only depends upon the first marginal of the measure $\mu$.

\vskip 4pt
The following lemma guarantees the existence of an optimal control for the problem at hand. It is proven exactly in the same way as  Proposition~\ref{pr:existence_control1}, so we do not repeat the proof.

\begin{lemma}
\label{le:existence_control2}
There exists an optimal admissible feedback function $\hat\psi$.
\end{lemma}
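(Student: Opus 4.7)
The plan is to repeat, \emph{mutatis mutandis}, the four-step argument proving Proposition~\ref{pr:existence_control1}, with the state space $\RR^d$ replaced by $\RR^d\times\RR_+$ and the FPK equation \eqref{fo:kolmo1} replaced by \eqref{fo:kolmo2}. Introduce the analogue $\tilde\AA^{(2)}(\mu_0)$ of $\AA^{(2)}$ consisting of couples $(\btheta,\bmu)$ where $\bmu=(\mu_t)_{0\le t\le T}$ is a measurable flow of probability measures on $\RR^d\times\RR_+$ with $\mu_0(dx,da)=\mu_0(dx)\delta_0(da)$, and $\btheta$ is absolutely continuous with respect to $\bmu$ with density $\psi\in L^2(\mu_t(dx,da)dt;\RR^d)$, satisfying \eqref{fo:kolmo2} in the sense of distributions. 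Start from a minimizing sequence $(\bpsi^n,\bmu^n)\in\tilde\AA^{(2)}$, set $\theta^n(dt,dx,da)=\psi^n_t(x,a)\mu^n_t(dx,da)dt$, and record the uniform bound $C_\psi:=\sup_n\iint|\psi^n_t|^2 d\mu^n_t\,dt<\infty$.

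For tightness, a key simplification arises compared to Proposition~\ref{pr:existence_control1}: since $A_t=\int_0^t V(X_s)\,ds$ and $0\le V\le 1$, every $\mu^n_t$ is supported in $\RR^d\times[0,T]$, so tightness in the $a$-variable is automatic. Tightness in $x$ follows from a second-moment bound obtained exactly as in \eqref{fo:mu_cont}, using the test function $\varphi(x,a)=|x|^2$ in \eqref{fo:kolmo2} (the extra term $V\partial_a|x|^2=0$ does not contribute) and Gronwall. This lets us extract subsequences with $\mu^n\rightharpoonup\mu$ in $\cM([0,T]\times\RR^d\times\RR_+)$ and $\theta^n\rightharpoonup\theta$ in $\cM([0,T]\times\RR^d\times\RR_+;\RR^d)$. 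The Cauchy--Schwarz identification argument from \eqref{fo:theta_cont}, applied to bounded continuous test functions $h\in C_b([0,T]\times\RR^d\times\RR_+;\RR^d)$, shows $\theta=\psi_t\mu_t(dx,da)\,dt$ for some $\psi\in L^2(\mu_t(dx,da)\,dt)$. Passing to the limit in the distributional form of \eqref{fo:kolmo2} tested against $\varphi\in C^{1,2,1}_b([0,T]\times\RR^d\times\RR_+)$ is straightforward: the linear terms $\frac12\Delta_x\varphi$, $\partial_x\varphi\cdot\psi^n$, $V\partial_a\varphi$ and $V\varphi$ pass to the limit by the two weak convergences, and the nonlinear term $\langle\mu^n_t,V\rangle\varphi$ passes to the limit because $V$ is bounded continuous, ensuring $\langle\mu^n_t,V\rangle\to\langle\mu_t,V\rangle$ for almost every $t$ and then by dominated convergence. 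This yields $(\bpsi,\bmu)\in\tilde\AA^{(2)}$.

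For lower semi-continuity of the cost, I would replicate verbatim Step~4 of the proof of Proposition~\ref{pr:existence_control1}, using the mollifier $\rho_\epsilon$ on $\RR^d\times\RR$ (component-wise for vector measures), defining $\mu^{n,\epsilon}=\mu^n*\rho_\epsilon$, $\theta^{n,\epsilon}=\theta^n*\rho_\epsilon$ and $\psi^{n,\epsilon}=d\theta^{n,\epsilon}/d\mu^{n,\epsilon}$, and similarly for the limit. Lemma 8.1.10 of \cite{Ambrosio_et_al} yields $\tilde J(\bpsi^\epsilon,\bmu^\epsilon)\le\tilde J(\bpsi,\bmu)$ and $\tilde J(\bpsi^{n,\epsilon},\bmu^{n,\epsilon})\le\tilde J(\bpsi^n,\bmu^n)$ for the kinetic parts, while the functional $(\theta,\mu)\mapsto\int|d\theta/d\mu|^2d\mu$ is lower semi-continuous by Theorem~2.34 of \cite{Ambrosio_1}. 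Combining with the continuity of $\mu\mapsto\iint\tilde f(x)\mu(dx,da)+\int g(x)\mu_T(dx,da)$ under weak convergence (using boundedness and continuity of $\tilde f,g$), the same chain of inequalities culminating in $J(\bpsi,\bmu)\le J^*$ goes through unchanged. The candidate minimizer is then $\hat\psi=\psi$, and the underlying bounded-control reduction of Remark~\ref{re:bound_on_phi} applies identically here to ensure $\hat\psi\in\Psi$.

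The only conceptual point that differs from the Markovian case is the degeneracy of \eqref{fo:kolmo2} in the $a$-variable, but this plays no role in the compactness argument: the second-moment Gronwall estimate uses only the $x$-Laplacian (which controls growth through $\frac12\Delta_x|x|^2=d$), and the mollification regularizes separately in $x$ and $a$. The one point that requires a moment's care is the convergence $\langle\mu^n_t,V\rangle\to\langle\mu_t,V\rangle$, which must be upgraded from pointwise a.e. in $t$ to the dominated convergence needed to pass to the limit under the time integral in the weak formulation; this holds because $\|V\|_\infty\le 1$ provides a uniform bound, and is the main (but mild) obstacle.
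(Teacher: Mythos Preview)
Your proposal is correct and takes essentially the same approach as the paper, which in fact does not give a proof at all but simply states that it ``is proven exactly in the same way as Proposition~\ref{pr:existence_control1}.'' You have faithfully spelled out the necessary adaptations---the automatic tightness in $a$ from the bound $A_t\in[0,T]$, the vanishing of $V\partial_a|x|^2$ in the second-moment Gronwall estimate, and the harmless degeneracy in the $a$-direction---which is precisely what the paper intends the reader to supply.
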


\vskip 6pt
This type of problem is usually approached by computing the value function of the problem as a solution of an HJB equation written in terms of the Hamiltonian of the problem once minimized over the admissible controls.
For later purposes we note that:
\begin{equation}
\label{fo:partial_F2}
\frac{\delta F^{(2)}}{\delta \mu}(\mu,\psi)(x,a)=f(x,\psi(x,a))
\qquad\text{and}\qquad 
\frac{\delta G^{(2)}}{\delta \mu}(\mu)(x,a)=g(x).
\end{equation}

\vskip 2pt\noindent
For $\mu\in\cM(\RR^d\times\RR_+)$, $\varphi\in\cC_c^\infty(\RR^d\times\RR_+)$ the space of infinitely differentiable functions with compact support in $\RR^d\times\RR$, and $\psi\in\cB(\RR^d\times\RR_+;A)$ the space of bounded measurable functions on $\RR^d\times\RR_+$, we define the Hamiltonian $\HH^{(2)}$ by:
\begin{equation}
\label{fo:H2}
\begin{split}
    \HH^{(2)}(\mu,\varphi,\psi)
    &=<\frac12\Delta_x \mu - \div_x(\psi \mu) - V  \partial_a\mu- (V-<\mu,V>) \mu,\;\varphi> +  F^{(2)}(\mu,\psi)\\
    &=<\mu,\frac12\Delta_x \varphi + \psi \cdot\nabla_x\varphi + V  \partial_a\varphi - (V-<\mu,V>)\varphi+  f(\cdot,\psi(\cdot,\cdot))>.
\end{split}
\end{equation}

%%%%%%%%%%%%%%%%%%%%%%%%%%%%%%%%%%%%%%%%%%%
\subsubsection{\textbf{The Adjoint PDE}}
\label{sub:adjoint1}

Let us assume that $\psi$ is an admissible feedback control function and let $\bmu=(\mu_t)_{0\le t\le T}$ satisfies the associated FPK equation \eqref{fo:kolmo2}. We define the corresponding adjoint variable as the function $u$ 
solving the PDE $\partial_t u=-\partial_\mu\HH^{(2)}(\mu,u,\psi)$ with terminal condition $u_T(x,a)=\partial_\mu G(x)= g(x)$. In the present situation, this adjoint equation reads:
\begin{equation}
\label{fo:adjoint2}
    \partial_t u = -  \frac12\Delta_x u - \psi_t \cdot\nabla_x u-V\partial_a u+(V-<\mu,V>)u -V<\mu,u>- f\bigl(\cdot,\psi_t(\cdot,\cdot)\bigr),
\end{equation}
which can be rewritten as
\begin{equation}
\label{fo:adjoint2'}
    0=\partial_t u + \frac12\Delta_x u+\psi_t\cdot\nabla_x u+V\partial_a u-(V-<\mu,V>)u+V<\mu,u>+\frac12|\psi_t|^2+\tilde f
\end{equation}
in the case of separable running cost function as defined in \eqref{fo:separable_running_cost}. 

\begin{lemma}
\label{le:existence_adjoint2}
For each bounded admissible feedback function $\psi\in\Psi$, if $\bmu=(\mu_t)_{0\le t\le T}$ is the associated flow of probability measures given as a solution of the FPK equation \eqref{fo:kolmo2}, the adjoint PDE \eqref{fo:adjoint2} admits a solution in the sense of viscosity.
\end{lemma}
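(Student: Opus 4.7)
The plan is to follow line by line the strategy used in Proposition~\ref{pr:existence_adjoint1} for the Markovian case, now using the extended state process $(\bX,\bA)$ solving \eqref{fo:psi_sde}. As before, the PDE \eqref{fo:adjoint2} is not a standard semilinear PDE because of the non-local term $V\langle\mu_t,u_t\rangle$; we turn this non-locality into a source term that we iterate on, and then identify the resulting mild solution with the value process of an affine BSDE, invoking \cite[Theorem~3.2]{PardouxNATO} to promote it to a viscosity solution.

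First, for each $(t,x,a)\in[0,T]\times\RR^d\times\RR_+$, I would let $(X^{t,x,a}_s,A^{t,x,a}_s)_{t\le s\le T}$ be the weak solution of \eqref{fo:psi_sde} started at $(x,a)$ at time $t$; since $\psi$ is bounded, weak existence and uniqueness in law hold, and in particular $\EE\int_t^T|\psi_s(X^{t,x,a}_s,A^{t,x,a}_s)|^2ds$ is uniformly bounded in $(t,x,a)$. On the Banach space $C_b([0,T]\times\RR^d\times\RR_+)$ equipped with the weighted norm $\|u\|_\alpha=\sup_{t,x,a}e^{\alpha t}|u_t(x,a)|$, define the mapping $\Phi$ by
\begin{equation*}
[\Phi(u)]_t(x,a)=\EE\Bigl[\int_t^Te^{-\int_t^r(V(X^{t,x,a}_\tau)-\langle\mu_\tau,V\rangle)d\tau}\bigl[\langle\mu_r,u_r\rangle V(X^{t,x,a}_r)+f(X^{t,x,a}_r,\psi_r(X^{t,x,a}_r,A^{t,x,a}_r))\bigr]dr+g(X^{t,x,a}_T)e^{-\int_t^T(V(X^{t,x,a}_\tau)-\langle\mu_\tau,V\rangle)d\tau}\Bigr].
\end{equation*}
Boundedness of $\psi$, $V$, $\tilde f$, and $g$ ensure that $\Phi$ maps $C_b$ into itself, and the contraction estimate is identical to the one carried out in Step~1 of the proof of Proposition~\ref{pr:existence_adjoint1}: only the non-local term $\langle\mu_r,u^1_r-u^2_r\rangle V(X^{t,x,a}_r)$ contributes to $\|\Phi(u^1)-\Phi(u^2)\|_\alpha$, yielding a factor $(1-e^{(1-\alpha)T})/(\alpha-1)$ which is strictly less than $1$ for $\alpha$ large. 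Denote by $u$ the unique fixed point.

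Second, fix $(t,x,a)$ and set $c(s,x)=-(V(x)-\langle\mu_s,V\rangle)$ and $h(s,x,a)=\langle\mu_s,u_s\rangle V(x)+f(x,\psi_s(x,a))$; both are bounded because $\psi$ and $u$ are bounded. The affine BSDE
\begin{equation*}
dY_s=-[c(s,X^{t,x,a}_s)Y_s+h(s,X^{t,x,a}_s,A^{t,x,a}_s)]ds+Z_s\cdot dW_s,\qquad Y_T=g(X^{t,x,a}_T),
\end{equation*}
admits a unique square-integrable solution $(Y^{t,x,a},Z^{t,x,a})$, and the explicit formula for linear BSDEs (cf.\ \cite[Remark~3.3]{PardouxNATO}) shows that $Y^{t,x,a}_t$ coincides exactly with $[\Phi(u)]_t(x,a)=u_t(x,a)$. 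The assumptions of \cite[Theorem~3.2]{PardouxNATO} are satisfied by the driver $(s,y)\mapsto c(s,X^{t,x,a}_s)y+h(s,X^{t,x,a}_s,A^{t,x,a}_s)$ and by the bounded terminal condition $g$, so $u(t,x,a)=Y^{t,x,a}_t$ is a viscosity solution of \eqref{fo:adjoint2}.

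The main subtlety is that the extended forward dynamics \eqref{fo:psi_sde} are degenerate in the $a$-direction, since $A_t$ evolves deterministically given the trajectory of $\bX$. However, the BSDE-viscosity correspondence of \cite{PardouxNATO} is formulated for possibly degenerate forward diffusions (the second-order operator in the definition of viscosity solution is simply $\tfrac12\Delta_x+V\partial_a$, which is degenerate elliptic in $(x,a)$), so this presents no obstacle beyond notational bookkeeping. If one prefers to avoid invoking the degenerate version directly, one can regularise by adding a vanishing noise $\varepsilon dB_s$ to the $A$-equation, obtain the adjoint solution $u^\varepsilon$ via the non-degenerate theorem, derive uniform-in-$\varepsilon$ bounds from the same fixed-point argument, and pass to the limit using the stability of viscosity solutions under locally uniform convergence.
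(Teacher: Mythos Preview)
Your proposal is correct and follows exactly the approach the paper intends: the paper's own proof simply reads ``The proof is exactly the same as the proof of Proposition~\ref{pr:existence_adjoint1}, so we refrain from giving it.'' You have faithfully reproduced that argument with the extended state $(X,A)$, and your observation about the degeneracy in the $a$-direction (together with the remark that the BSDE--viscosity correspondence of \cite{PardouxNATO} accommodates degenerate forward diffusions, or alternatively that one can regularise) is a useful addition that the paper itself only addresses later, in the proof of Proposition~\ref{pr:boundedness_for_a}.
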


\begin{proof}
The proof is exactly the same as the proof of Lemma~\ref{pr:existence_adjoint1}, so we refrain from giving it. 
\end{proof}

%%%%%%%%%%%%%%%%%%%%%%%%%%%%%%%%%%%%%%%%%%%
\subsubsection{\textbf{Minimization of the Hamiltonian.}}
\label{sub:minimization}
For the sake of convenience, the following assumption will make it easier to refer to the minimized Hamiltonian.
  
\begin{assumption}
We assume the existence of a function $\hat\psi: \cM(\RR^d \times \RR)\times\cC_c^\infty(\RR^d \times \RR)\ni(\mu,\varphi)\mapsto \hat\psi[\mu,\varphi]\in\cB(\RR^d \times \RR;A)$
such that:
$$
    \forall (\mu,\varphi)\in\cM(\RR^d \times \RR)\times\cC_c^\infty(\RR^d \times \RR),\quad \hat\psi[\mu,\varphi]\in\arginf_{\psi\in\cB(\RR^d \times \RR;A)}\HH^{(2)}(\mu,\varphi,\psi).
$$
\end{assumption}

In the important case of interest given by separable running cost functions of the form \eqref{fo:separable_running_cost}, we can compute explicitly such a function $\hat\psi$.  
Indeed, using \eqref{fo:H2} we find:
\begin{equation}
\begin{split}
    \HH^{(2)}(\mu,\varphi,\psi)&=<\mu,\frac12\Delta_x \varphi + \psi \cdot\nabla_x\varphi + V  \partial_a\varphi - (V-<\mu,V>)\varphi+  f(\cdot,\psi(\cdot,\cdot))>\\
    &=\int\hskip -6pt\int \mu(dx,da)\Bigl[\frac12\Delta_x \varphi(x,a) + V(x) \partial_a \varphi(x,a)  - (V(x)-<\mu,V>)\varphi(x,a)+\tilde f(x)\\
    &\hskip 35pt
+ \nabla_x\varphi(x,a) \cdot \psi(x,a) +\frac12 |S\psi(x,a)|^2 \Bigr]
\end{split}
\end{equation}
and we can minimize under the integral signs (i.e. for $x$ and $a$ fixed) leading to the minimizer (assuming for simplicity that $A = \RR^d$):
\begin{equation}
\label{fo:psi_opt}
    \psi[\mu,\varphi](x,a)=- \nabla_x\varphi(x,a)
\end{equation}
$\mu$ almost everywhere, and also on the support of the measure $\mu$ because of the smoothness of the test function $\varphi$. As in the case of Markovian feedback functions, we notice that the above argument only requires the differentiability of the test function in the $x$ variable.

%%%%%%%%%%%%%%%%%%%%%%%%%%%%%%%%%%%%%%%%%%%
\subsubsection{\textbf{The Forward-Backward PDE System}}
\label{sec:fbpde-open}
As in the case of Markov feedback control functions considered in Subsection \ref{sub:MP}, we can derive an infinite dimensional version of the necessary condition of the classical Pontryagin maximum principle similar to Proposition \ref{pr:MP}, and conclude that if $\hat\psi$ is an optimal control which happens to be bounded, if $\hat\mu$ is the corresponding controlled state and if $\hat u$ is a solution of the corresponding adjoint equation, then we can check as before that the partial derivatives of $\hat u$ with respect to $x$ in the sense of distributions are actually bounded measurable functions and that:
$$
\HH^{(2)}(\hat\mu_t,\hat u_t,\hat\psi_t)=\inf_\psi \HH^{(2)}(\hat\mu_t,\hat u_t,\psi),\qquad\qquad 0\le t\le T.
$$
So in the case of a separable running cost function, formula \eqref{fo:psi_opt} says that the optimal control $\hat\psi$  (recall that its existence is guaranteed by Lemma~\ref{le:existence_control2}) must satisfy $\hat\psi_t(x,a)=-\nabla_x\hat u_t(x,a)$, and injecting this formula in the adjoint equation and the FPK equation we see that the couple $(\hat\mu,\hat u)$ is a solution of the forward backward PDE system on $[0,T]\times\RR^d\times\RR_+$:
\begin{equation}
\label{fo:pde_system2}
\begin{cases}
    &\partial_t\mu=\frac12 \Delta_x\mu +\div_x(\nabla_xu\;\mu) - V\partial_a\mu -(V-<\mu,V>)\mu\\
    &0=\partial_t u + \frac12\Delta_x u- \frac12 |\nabla_x u|^2+V\partial_a u-(V-<\mu,V>)u +V<\mu,u>+\tilde f.
\end{cases}
\end{equation}
As in the previous section, we assume that one of the optimal feedback functions whose existence we know, is bounded, and we work with such an optimal feedback control function.
Following the same steps, we argue below that the corresponding solution of the adjoint equation has a gradient which can be identified with the negative of the optimal control thanks to the appropriate version of the Pontryagin maximum principle. Notice that while we do not claim uniqueness for the solutions of the PDE system \eqref{fo:pde_system2}, we know that if we have a bounded optimal feedback control, then existence of a solution is guaranteed.

\vskip 2pt
\begin{proposition}
\label{pr:boundedness_for_a}
Let $\hat\psi$ be an optimal feedback control function which is bounded, let $\hat\bmu$ be the corresponding solution of the FPK equation \eqref{fo:kolmo2}, and let $\hat u$ be the solution of the corresponding adjoint equation. Then,
\begin{itemize}
    \item[(i)]  the unique viscosity solution of the second equation of \eqref{fo:pde_system2} depends only upon the space variable $x$ and is independent of $a$;
    \item[(ii)] furthermore, if $\tilde f\in C^3_b(\RR^d)$ the second equation in \eqref{fo:pde_system2} has a unique classical solution and the negative of its gradient is an optimal feedback control.
\end{itemize}
\end{proposition}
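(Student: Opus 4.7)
My plan is to reduce the second equation of \eqref{fo:pde_system2} to the simpler equation \eqref{fo:pde_system1b} by exploiting the fact that every explicit $a$-dependence is innocuous for functions independent of $a$, and then invoke Proposition \ref{pr:existence_pde1}.

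Let $\tilde\bmu=(\tilde\mu_t)_{0\le t\le T}$ be the flow of first marginals of $\hat\bmu$, namely $\tilde\mu_t(dx)=\int_{\RR_+}\hat\mu_t(dx,da)$. Since $V$ depends only on $x$, one has $<\hat\mu_t,V>=<\tilde\mu_t,V>$, and for any bounded measurable $w(t,x)$ (independent of $a$), $<\hat\mu_t,w>=<\tilde\mu_t,w>$. Moreover the transport term $V\partial_a w$ vanishes on such $w$, and the terminal condition $g(x)$ in the adjoint equation does not depend on $a$ either. Thus, applying Proposition \ref{pr:existence_pde1} to the continuous flow $\tilde\bmu$ produces a viscosity solution $u^*(t,x)$ of \eqref{fo:pde_system1b} (with $\mu$ replaced by $\tilde\mu$) that is $C^1$ with uniformly bounded gradient. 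A direct check then shows that $\tilde u(t,x,a):=u^*(t,x)$ solves the second equation of \eqref{fo:pde_system2} in the viscosity sense: every test function touching $\tilde u$ at $(t_0,x_0,a_0)$ from above/below yields, by restriction to the $a=a_0$ slice, a test function touching $u^*$ at $(t_0,x_0)$, and the viscosity inequality for $u^*$ together with $\partial_a\tilde u=0$ and the marginal identities above gives the desired inequality.

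Uniqueness (the delicate step) is obtained by adapting the strict contraction argument of the proof of Proposition \ref{pr:existence_pde1} to the enlarged state space. Setting $v=e^{-u}$, the second equation of \eqref{fo:pde_system2} becomes the semilinear PDE $0=\partial_t v+\tfrac12\Delta_x v+V\partial_a v-F^v v$ with $F^v(t,x,a)=(V(x)-<\hat\mu_t,V>)\log v(t,x,a)-V(x)<\hat\mu_t,\log v_t>+\tilde f(x)$, and terminal condition $v_T=e^{-g}$. This is still a parabolic semilinear equation, where the only change with respect to the situation in Proposition \ref{pr:existence_pde1} is the first-order transport $V\partial_a v$. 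Its Feynman-Kac representation, based on the deterministic characteristics $A^{t,a}_s=a+\int_t^s V(B^{t,x}_r)dr$ accompanying the Brownian path $(B^{t,x}_s)_{s\ge t}$, reads
\[
v(t,x,a)=\EE\Bigl[e^{-g(B^{t,x}_T)}\exp\Bigl(-\int_t^T F^v\bigl(r,B^{t,x}_r,A^{t,a}_r\bigr)dr\Bigr)\Bigr].
\]
The same a priori bound and strict contraction argument developed in the proof of Proposition \ref{pr:existence_pde1} (with the norm $\|v\|_\alpha=\sup_{t,x,a}e^{\alpha t}|v(t,x,a)|$) give a unique bounded mild solution. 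The extension of Lemma \ref{le:viscosity_FK} to this mildly enlarged setting, where the deterministic drift $V\partial_a$ merely translates the $a$-coordinate along characteristics, identifies this unique mild solution with the unique bounded viscosity solution. Since the $a$-independent function $e^{-u^*(t,x)}$ is a bounded mild solution (the Feynman-Kac integrand does not depend on $a$ in that case), it must coincide with $\hat v=e^{-\hat u}$, proving (i).

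For (ii), when $\tilde f\in C^3_b(\RR^d)$ and $g\in C^3_b(\RR^d)$, Proposition \ref{pr:existence_pde1} asserts that $u^*$ is a classical solution of \eqref{fo:pde_system1b}. Hence $\hat u=\tilde u$ is a classical $C^{1,2}_b$ solution of the second equation of \eqref{fo:pde_system2}, and its spatial gradient $\nabla_x\hat u=\nabla_x u^*$ is bounded. Applying the (extended-state) version of the Pontryagin maximum principle sketched in the paragraph preceding the proposition (an exact analogue of Proposition \ref{pr:MP}, replacing $(\bX)$ by the enlarged process $(\bX,\bA)$ and $\nabla u$ by $\nabla_x u$), one concludes that the bounded feedback $-\nabla_x\hat u(t,x)$ realizes the minimum of the Hamiltonian $\HH^{(2)}$ pointwise and is therefore an optimal extended feedback control. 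The main obstacle is not the reduction itself, which is essentially a calculation, but justifying uniqueness of the viscosity solution in the non-local, non-linear setting on the enlarged state space; routing uniqueness through the exponential change of variable $v=e^{-u}$ and the contraction on a Feynman-Kac fixed-point operator, as done in Proposition \ref{pr:existence_pde1}, circumvents the absence of a direct comparison principle for the non-local term $V<\mu,u>$.
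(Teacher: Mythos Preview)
Your proposal is correct and reaches the same conclusion, but the route differs from the paper's in two substantive ways.

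First, to prove $a$-independence the paper does \emph{not} directly establish uniqueness for the full nonlocal equation. Instead it freezes the nonlocal term: since the adjoint $\hat u$ is already known (Lemma~\ref{le:existence_adjoint2}), the scalar $<\hat\mu_t,\hat u_t>$ is treated as given data, so the Hopf--Cole transformed equation \eqref{fo:pde_for_v3} becomes \emph{local} in $v$ and falls directly under \cite[Theorem 4.3]{PardouxPeng}, which handles degenerate forward diffusions. Second, to see that the (unique) solution of this local equation does not depend on $a$, the paper regularizes by adding $\epsilon\,\partial^2_{aa}$, invokes the non-degenerate gradient bounds of \cite{MaZhang} on $v^\epsilon$, observes that the coefficients $\tilde V^1$ and $\hat F$ are $a$-independent so each $v^\epsilon$ is $a$-independent, and then passes to the limit $\epsilon\searrow 0$ (vanishing viscosity / entropy solution). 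Your approach bypasses both devices: you build the $a$-independent candidate up front via Proposition~\ref{pr:existence_pde1} applied to the first marginal $\tilde\bmu$, and you obtain uniqueness by extending the contraction on the nonlocal Feynman--Kac map of Proposition~\ref{pr:existence_pde1} to the process $(B^{t,x}_s,A^{t,a}_s)$.

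What each buys: the paper's freezing trick avoids any extension of the nonlocal contraction (the local equation is standard), at the price of the regularization step needed because \cite{MaZhang} fails for degenerate diffusions. Your route is more direct---no regularization, and the gradient bound for $-\nabla_x\hat u$ comes for free from Proposition~\ref{pr:existence_pde1}---but it leans on the claim that the mild and viscosity solutions of the nonlocal equation coincide in the enlarged space. That claim is true, but note that it requires one extra step you did not spell out: given any bounded viscosity solution $v$ of the nonlocal equation, freeze its own nonlocal term $<\hat\mu_t,\log v_t>$; then $v$ solves a local equation to which \cite[Theorem 4.3]{PardouxPeng} (degenerate forward process allowed) applies, yielding the Feynman--Kac representation and hence $v=\Psi(v)$. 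Only then does uniqueness of the fixed point of $\Psi$ force all viscosity solutions to agree.
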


\begin{proof}
Unsurprisingly, the proof follows the same steps as before.

\vskip 2pt\noindent
\emph{Step 1.} Let $\hat\psi$ be an optimal feedback control function whose the existence  is given by Lemma~\ref{le:existence_control2}, and let us assume that $\hat\psi$ is bounded. Let $\hat\bmu=(\hat\mu_t(dx,da))_{0\le t\le T}$ be the corresponding flow of probability measures satisfying the FPK equation \eqref{fo:kolmo2}, and let $(t,x,a)\mapsto \hat u_t(x,a)$ be the solution of the corresponding adjoint equation whose existence is given by Lemma~\ref{le:existence_adjoint2}. Like in Remark \ref{re:stronger_apriori_bound}, we claim that the existence proof of an optimum can be "massaged" to give an a-priori bound, say $K>0$, on $\|\hat u\|_\infty$. 

Notice that in the present section, the diffusion matrix of the second equation in \eqref{fo:pde_system2} is degenerate since there is no second order derivative in $a$. This means that we will not be able to use directly \cite[Theorem 3.1]{MaZhang} or its corollary, though we are still able to use the results of \cite{PardouxPeng} in the present setting.

\vskip 2pt\noindent
\emph{Step 2.} Pontryagin's maximum principle suggests that we must have for each $t\in[0,T]$, 
$$
\HH^{(2)}(\hat\mu_t,\hat u_t,\hat\psi_t)=\inf_{\psi\in\Psi}\HH^{(2)}(\hat\mu_t,\hat u_t,\psi).
$$
So, since we restrict ourselves to $A=\RR^d$ and a separable running cost, one can show that if $\hat u$ is differentiable, then we can use integration by parts in the definition of $\HH^{(2)}(\hat\mu_t,\hat u_t,\psi)$, and use the minimizer $\hat\psi[\hat\mu_t,\hat u_t]$ identified in \eqref{fo:psi_opt}. So for $dt\,\hat\bmu(dx,da)$ - almost every $(t,x,a)\in[0,T]\times\RR^d\times\RR_+$ one has:
$$
\hat\psi_t(x,a)=-\nabla_x \hat u_t(x,a).
$$

\vskip 2pt\noindent
\emph{Step 3.} In the spirit of the proof of Proposition \ref{pr:existence_pde1}, we introduce an infinitely differentiable function $\chi:\RR\mapsto\RR$ satisfying $\chi(y)=y$ if $e^{-K_u}\le y\le e^{K_u}$, $\chi(y)=0$ if $y<e^{-K_u}/2$ or $y>2e^{K_u}$, and $|\chi'(y)|\le 2$ for all $y$ and $\tilde V_1(t,x)=V(x)-<\hat\mu_t,V>$.  We also define the function $\hat F(t,x)$ by
$$
 \hat F(t,x)= \tilde f(x) + V(x)<\hat \mu_t,\hat u_t>,\qquad (t,x)\in[0,T]\times\RR^d.
$$
Next, we define the real valued function
$f$ on $[0,T]\times(\RR^d\times\RR_+)\times\RR\times\RR$ by:
\begin{equation}
    \label{fo:driver2}
f(t,(x,a),y,z)=-\tilde V^1(t,x)\chi(y)\log\chi(y)+\hat F(t,x)\chi(y).
\end{equation}
which we use as driver for our main BSDE. Accordingly, we consider the semilinear PDE
\begin{equation}
    \label{fo:pde_for_v3}
0=\partial_t v + \frac12\Delta_x v + V\partial_a v - \tilde V^1 \chi(v)\log \chi(v) - \hat F\; \chi(v) 
\end{equation}
with the terminal condition $v_T(x,a)=e^{-g(x)}$. Notice that, contrary to the proof of Proposition~\ref{pr:existence_pde1},  $\hat\mu_t$ and $\hat u_t$ entering the definitions of $\tilde V^1$ and $\hat F$, are now functions of both $x$ and $a$. However, despite this remark, the functions 
$\tilde V^1$ and $\hat F$ are only functions of $t$ and $x$. 

To obtain a solution for~\eqref{fo:pde_for_v3}, we apply~\cite[Theorem 4.3]{PardouxPeng} on a regularized version: Using $\sigma=[\sigma^{ij}]_{i,j=1,\cdots,d+1}$ with $\sigma^{ij}=0$ if $i\ne j$, $\sigma^{ii}=1$ if $1\le i\le d$ and $\sigma^{d+1,d+1}=2\epsilon$, and $b^i=0$ if $1\le i\le d$ and $b^{d+1}=V$, we can still use \cite[Theorem 4.3]{PardouxPeng} and deduce that the solution of the FBSDE with driver $f$ given in \eqref{fo:driver2} provides us with the unique viscosity solution $v$ of \eqref{fo:pde_for_v3}, and that the same function $v$ is the unique classical solution of \eqref{fo:pde_for_v3} when $\tilde f$ is $C^3_b(\RR^d)$.

\vskip 2pt
At this stage, we emphasize that this unique viscosity solution is also an entropy solution of \eqref{fo:pde_for_v3}.
Indeed, if for each $\epsilon>0$ we consider the PDE
\begin{equation}
    \label{fo:pde_for_v2_epsilon}
0=\partial_t v^\epsilon + \frac12\Delta_x v^\epsilon + \epsilon\partial^2_{aa}v^\epsilon + V\partial_a v^\epsilon - \tilde V^1\, \chi(v^\epsilon)\log \chi(v^\epsilon) - \hat F\, \chi(v^\epsilon) 
\end{equation}
with the same terminal condition $v^\epsilon_T(x,a)=e^{-g(x)}$, \cite[Assumption (A1) p.1394]{MaZhang} is now satisfied, and since \cite[Assumption (A2) p.1394]{MaZhang} is also satisfied, we can now use \cite[Theorem 3.1 p.1397]{MaZhang} and \cite[Corollary 3.2 p.1403]{MaZhang} to conclude that the unique viscosity solution $v^\epsilon$ of \eqref{fo:pde_for_v2_epsilon} is continuously differentiable in the space variable, and that $\nabla_x v^\epsilon_t(x)$ is uniformly bounded. At this stage, we can already notice that since neither the terminal condition nor the coefficients $\tilde V^1$ and $\hat F$ depend upon the variable $a$, it follows that for each $\epsilon>0$, $v^\epsilon_t(x,a)$ is in fact independent of the variable $a$.
The bound on $\|\nabla_x v^\epsilon\|_\infty$ provided by \cite[Corollary 3.2]{MaZhang} does not depend upon $\epsilon$, which implies that the family $(v^\epsilon)_{\epsilon>0}$ is equi-continuous, and if we denote by $v$ the limit of any convergent sub-sequence when $\epsilon\searrow 0$, $v$ is a vanishing viscosity solution of the PDE \eqref{fo:pde_for_v3}, and it is in fact the unique entropy solution of \eqref{fo:pde_for_v3}.

\vskip 2pt\noindent
\emph{Step 4.} Using again a form of the Feynman-Kac formula as in the proof of Proposition~\ref{pr:existence_pde1}, we can show that $v=e^{-\hat u}$ and that $u$ is the unique viscosity solution of the second equation in \eqref{fo:pde_system2}, and the unique entropy solution of this equation as well. As before, these viscosity solutions are in fact classical solutions when $\tilde f\in C^3_b(\RR^d)$.
Since for each $\epsilon>0$, $v^\epsilon_t(x,a)$ is in fact independent of the variable $a$. This implies that the functions $v$ and $u$ do not depend upon the variable $a$ either.
\end{proof}

\vskip 4pt
Next, we state and prove the result which motivated the analysis of the paper. It is a plain consequence of what was proven above and in the previous section.

\begin{theorem}
\label{th:equality}
If we assume that one of the optimal feedback control functions is bounded, then the closed loop and open loop optimization problems with soft conditioning given by a bounded potential $V$ share a common optimal feedback control function, and consequently, their values are equal.   
\end{theorem}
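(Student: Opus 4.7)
The plan is to establish both inequalities between the infima. Since any Markovian feedback $\phi$ may be viewed as an extended feedback through $\psi_t(x,a):=\phi_t(x)$, the class $\Phi^{(2)}$ embeds into $\Psi$, and consequently the open loop infimum is no larger than the Markovian one. It therefore suffices to exhibit a Markovian feedback whose cost realizes the open loop value.

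Let $\hat\psi$ be a bounded optimal extended feedback (guaranteed by Lemma~\ref{le:existence_control2} together with the boundedness assumption), denote by $\hat\mu$ the corresponding flow on $\RR^d\times\RR_+$ defined by \eqref{fo:mu_2_t}, and let $\hat u$ be the solution of the associated adjoint equation \eqref{fo:adjoint2}. The Pontryagin-type argument summarized before \eqref{fo:pde_system2} shows that $(\hat\mu,\hat u)$ solves the forward-backward system \eqref{fo:pde_system2} with $\hat\psi_t(x,a)=-\nabla_x\hat u_t(x,a)$. The decisive ingredient is Proposition~\ref{pr:boundedness_for_a}(i): the viscosity solution of the second equation of \eqref{fo:pde_system2} depends only on $x$, so $\hat u_t(x,a)=\bar u_t(x)$ and consequently $\hat\psi_t(x,a)=-\nabla_x\bar u_t(x)=:\phi_t(x)$ defines a genuine Markovian feedback.

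Next I would check that $(\phi,\tilde\bmu,\bar u)$ solves the Markovian forward-backward system \eqref{fo:pde_system1}, where $\tilde\mu_t(dx):=\int_0^\infty\hat\mu_t(dx,da)$ is the $x$-marginal of $\hat\mu_t$. Testing \eqref{fo:kolmo2} against a test function $\varphi(x)$ independent of $a$, the non-local factor satisfies $<\hat\mu_t,V>=<\tilde\mu_t,V>$ since $V$ depends only on $x$, and the contribution of $-V\partial_a\hat\mu_t$ vanishes because $\partial_a\varphi=0$; this produces exactly \eqref{fo:pde_system1a} with drift $\phi$. Similarly, since $\bar u$ has no $a$-dependence, the term $V\partial_a\bar u$ in the second line of \eqref{fo:pde_system2} vanishes and that equation reduces verbatim to \eqref{fo:pde_system1b} driven by $\tilde\bmu$.

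Finally I would compare the cost functionals. Because the integrands in \eqref{fo:J_of_psi} are independent of $a$, marginalization in $a$ yields
\begin{equation*}
J^V(\hat\psi)=\int_0^T\!\!\int_{\RR^d}f\bigl(x,\phi_t(x)\bigr)\,\tilde\mu_t(dx)\,dt+\int_{\RR^d}g(x)\,\tilde\mu_T(dx)=J(\phi),
\end{equation*}
which is the Markovian cost \eqref{fo:new_J_of_phi}. Thus the Markovian infimum is bounded above by $J(\phi)=J^V(\hat\psi)$, which equals the open loop infimum; combined with the opposite inequality this forces equality and exhibits $\phi$ (equivalently $\hat\psi=\phi$ after trivial extension to the $a$ variable) as a common optimizer. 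The main obstacle is really Proposition~\ref{pr:boundedness_for_a}(i), already established; everything else is careful bookkeeping of the marginalization and verification that the two nonlocal terms coincide.
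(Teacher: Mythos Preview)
Your proposal is correct and follows essentially the same approach as the paper: the key step is invoking Proposition~\ref{pr:boundedness_for_a}(i) to conclude that the optimal extended feedback $\hat\psi$ is independent of $a$ and hence Markovian, which immediately identifies the two infima. You supply more explicit detail than the paper's terse argument---namely the marginalization in $a$ and the verification that $(\tilde\bmu,\bar u)$ solves \eqref{fo:pde_system1} together with the cost identity---but this extra bookkeeping is exactly what the paper relegates to the Remark following the theorem.
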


\begin{proof}
If $(\hat\mu_t,\hat u_t)_{0\le t\le T}$ is the couple analyzed in Proposition~\ref{pr:boundedness_for_a} above from a bounded feedback function $\hat \psi$, we know that $\hat\psi$ does not depend upon the variable $a$ because $\hat u$ does not. So $\hat\psi$ is in fact a Markovian feedback control function, identifying the infima over closed loop (Markovian) and open loop controls.
\end{proof}

\begin{remark}
Also note that if for each $t\in[0,T]$, we denote by $\hat\mu^1_t$ the first marginal of $\hat\mu_t$, then the couple $(\hat\mu^1_t,\hat u_t)_{0\le t\le T}$
solves the forward-backward PDE system \eqref{fo:pde_system1}.
Indeed, since $V$ is independent of $a$, $<V,\hat\mu_t> = \int \int V(x) \hat \mu_t(dx,da)  = \int V(x) \hat \mu^1_t(dx)  = <V,\hat\mu^1_t>$. The same applies to $\hat F$ once we know that $\hat u_t(x,a)$ is independent of $a$. 
\end{remark}

\begin{remark}
\label{re:Daudin}
A direct argument (provided to us by Samuel Daudin) can be used if we are merely interested in the equality of the infima over the classes of Markov and extended feedback control functions. Indeed, using the notations of the above proof, if $\mu_t(dx,da)$ is the solution of the FPK equation for some extended feedback control $\psi$, and if we denote by $\mu_t(dx,da)=\mu_t(x,da)\mu^1(dx)$ its desintegration against its first marginal $\mu^1(dx)$, then $\phi_t(x)=\int_{[0,\infty)}\psi_t(x,a)\mu_t(x,da)$ is an admissible Markovian feedback control, $\bmu^1=(\mu^1_t)_{0\le t\le T}$ solves the corresponding FPK equation, and Schwarz inequality implies $J^V(\phi)\le J^V(\psi)$ which in turn, implies equality of the infima. 
\end{remark}

\begin{remark}
When the convex set $A$ of control values is not necessarily equal to the whole space $\RR^d$, the right hand side of \eqref{fo:psi_opt} needs to be replaced by its projection on the convex set $A$, namely the operator $\Pi_A$ defined by \eqref{fo:Pi_A}.
With the use of the operator $\Pi_A$, the form of the forward-backward PDE system is more involved, so we refrained from having to rely on the projection $\Pi_A$.
\end{remark}

%%%%%%%%%%%%%%%%%%%%%%%%%%%%%%%%%%%%%%%%%%%
\section{Numerical Experiments}
\label{sec:numerics}

For the purpose of numerical illustration we consider the following setting: $f(x,\alpha) = \frac{1}{2}|\alpha|^2$, $g(x) = |x-x^\star|$ for some fixed $x^\star$ which is interpreted as a target position, and $D$ is a ball centered at $0 \in \RR^d$ and of radius $R$. We focus on the one-dimensional and two-dimensional cases (i.e., $d=1,2$). We compare the optimal values obtained with two classes of controls: Markovian ƒeedback controls, which are functions of $(t,X_t)$, as well as functions of $(t,X_t,A_t)$. Based on our analysis (see Section~\ref{sub:reformulation}), optimizing over the latter class is equivalent to optimizing over the class of open-loop controls. For this class of controls, the state space is $\RR^{d+1}$, which is three-dimensional when $d=2$. For this reason, we rely on neural network-based methods to learn the optimal control. In the spirit of the algorithm analyzed in~\cite{carmona2022convergence} for McKean-Vlasov control, we approximate the control by a neural network $\phi_\theta$ with parameters $\theta$ which takes as inputs $(t,x) \in \RR^{d+1}$ (resp. $(t,x,a) \in \RR^{d+2}$) for the first (resp. second) class of controls. The optimal control problem becomes an optimization problem, which consists in minimization over $\theta$ the loss function defined as the total expected cost when using control $\phi_\theta$. We approximate the expectation in the denominator and the numerator using Monte Carlo samples. Furthermore, we discretize time using an Euler-Maruyama scheme over a uniform grid in time $t_n = n \Delta t$, with $\Delta t = T/N_T$ for some positive integer $N_T$. This leads to the following problem, which is an approximation of~\eqref{fo:J_V_of_alpha}: 
\begin{equation}
\label{fo:J_V_of_alpha_numerics}
    \tilde{J}^V(\theta)
    = \EE\left[\frac{1}{N} \sum_{i=1}^N \left(\sum_{n=0}^{N_T-1}
    \frac{f(X^i_{t_n}, \phi_{\theta}(t_n, X^i_{t_n})) e^{-A^i_{t_n}}}{\frac{1}{N} \sum_{j=1}^N e^{-A^j_{t_n}}} \Delta t
    + \frac{g(X^i_T)e^{-A^i_{T}}}{\frac{1}{N} \sum_{j=1}^N e^{-A^j_{T}}} \right)\right],
\end{equation}
subject to the dynamics:
\begin{equation}
\label{fo:traj_numerics}
    \begin{cases}
    X^i_0 = x_0,  &X^i_{t_{n+1}} = X^i_{t_{n}} + \phi_{\theta}(t_n, X^i_{t_n}) \Delta t + \sigma (W^i_{t_{n+1}} - W^i_{t_{n}}), 
    \\
    A^i_0 = 0, &A^i_{t_{n+1}} = A^i_{t_{n}} + V(X^i_{t_{n}}) \Delta t, \quad n = 0,\dots,N_T-1,
    \end{cases}
\end{equation}
where the $W^i$ are independent $d$-dimensional Brownian motions. The average over $j$ in the denominators are used to approximate expectations. The average over $i$ inside the expectation is superfluous since $X^i$ are i.i.d. However, we write the cost in this way since it is closer to the numerical implementation. Indeed, to optimize over $\theta$ we use stochastic gradient descent (SGD) (or rather one of its variants) and at each iteration we simulate $N$ trajectories $(X^i,A^i)_{i=1,\dots,N}$ following~\eqref{fo:traj_numerics}, compute the expression inside the expectation in~\eqref{fo:J_V_of_alpha_numerics}, and use its gradient with respect to $\theta$ to do one gradient descent step. 

\vskip 6pt
\paragraph{\bf Equality of the value functions. } 
In the one dimensional case $d=1$, we take $x^\star = 0.0$ and we consider $x_0 \in \{-1.25$ $-1.0,$ $-0.75,$ $-0.5,$ $-0.25,$ $0.0,$ $0.25,$ $0.5,$ $0.75,$ $1.0,$ $1.25\}$. For each value of $x_0$, we (approximately) compute the optimal value functions for both types of controls using the deep learning method described above. We use $N=100$ during training, and then we use $N=1000$ for testing, i.e., to compute the values reported below once the neural networks have been trained. We repeat the training and testing 5 times and report in the left pane of Figure~\ref{fig:values} the average value (solid and dashed lines). %, as well as the standard deviation (dashed regions).
We note that, for a given value of $v$, the values for the two classes of controls match very well. Furthermore, when $V$ increases, the value decreases. This seems to be consistent with the fact that when $V$ tends to infinity, we expect to recover the original problem with stopping time (see Section~\ref{sec:conditional-exit-stopping}). In this latter case, at least for feedback controls, the value function satisfies an HJB equation inside the domain with Dirichlet boundary condition at the boundary (see~\cite{achdou2021optimal} for more details).  

\vskip 2pt
In the two dimensional case $d=2$, we take $x^\star = (0.0,0.0)$ and we consider $x_0 \in \{(-1.0,-1.0),$ $(-0.75,-0.75),$ $(-0.5,-0.5),$  $(-0.25,-0.25),$$(0.0,0.0),$ $(0.25,0.25),$ $(0.5,0.5),$ $(0.75,0.75),$ $(1.0,1.0)\}$. For each value of $x_0$, we (approximately) compute the optimal value functions for both types of controls using the deep learning method described above. We use $N=200$ during training, and then we use $N=1000$ for testing, i.e., to compute the values reported below once the neural networks have been trained. We repeat the training and testing 5 times and report in the right pane of Figure~\ref{fig:values} the average value (solid and dashed lines). 
We can observe the same phenomenon as in the 1D case, although the values are different due to the 2D structure. 

\vskip -4pt
\begin{figure}[h]
\centerline{
\includegraphics[width=8cm,height=4.5cm]{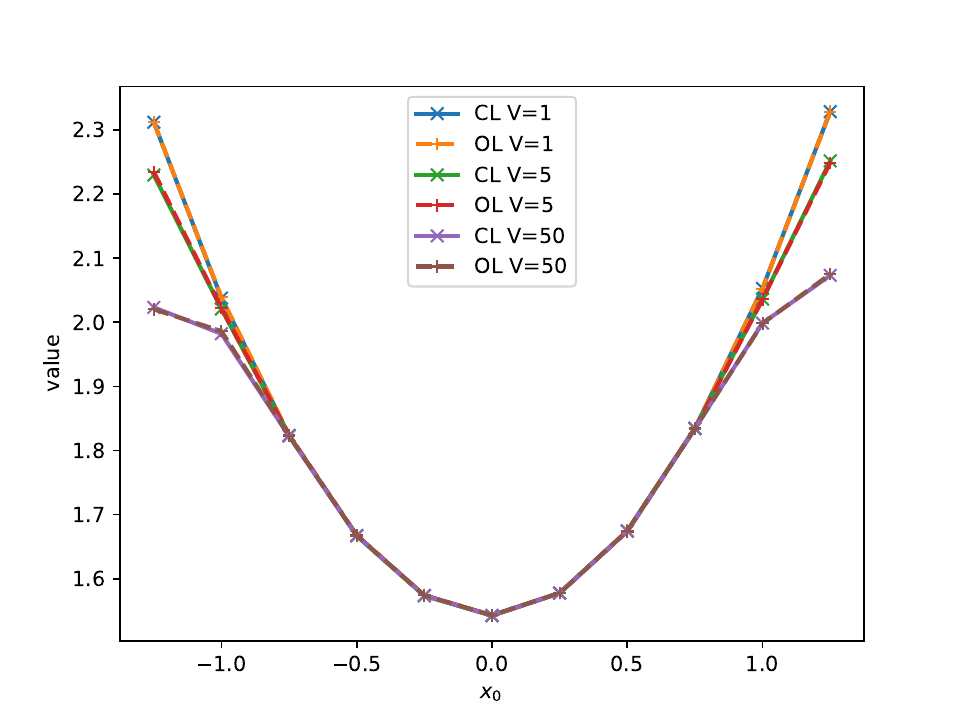}
\hskip 2pt
\includegraphics[width=7.5cm,height=4.5cm]{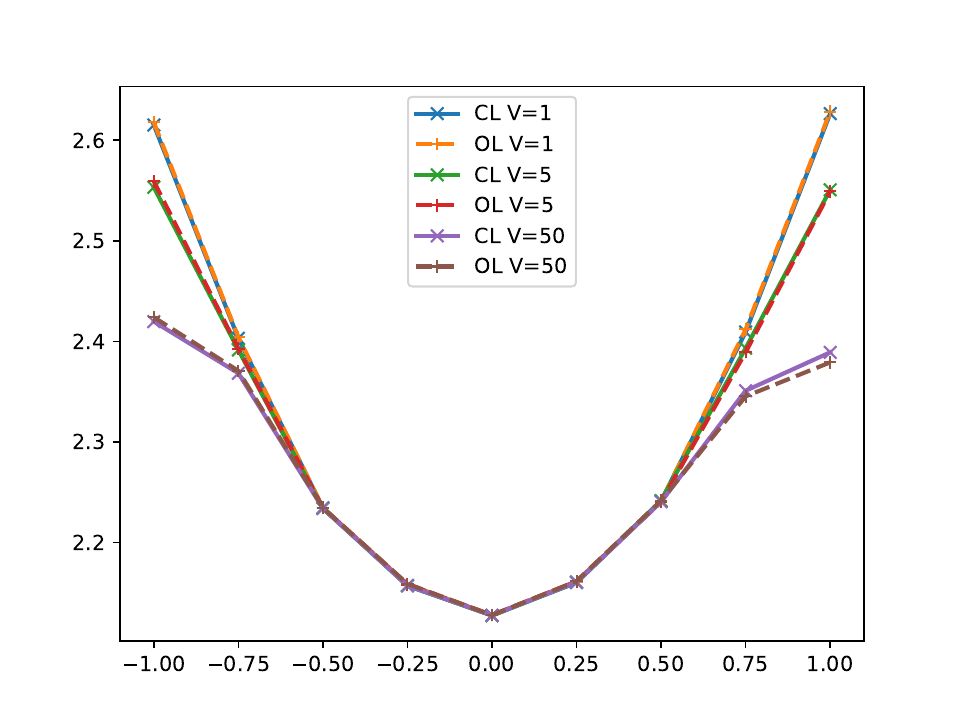}
}
\vskip -6pt
\caption{\label{fig:values}Value of the total cost for three values of $V$ outside the domain ($V=0$ inside the domain), $d=1$ (left pane), $d=2$ (right pane). The full lines correspond to the closed-loop Markovian case (label ``CL'') and the dashed lines correspond to the open-loop case (labeled ``OL'').}
\end{figure}

\vskip -6pt
\begin{figure}[h]
\centerline{
\includegraphics[width=6cm,height=4.5cm]{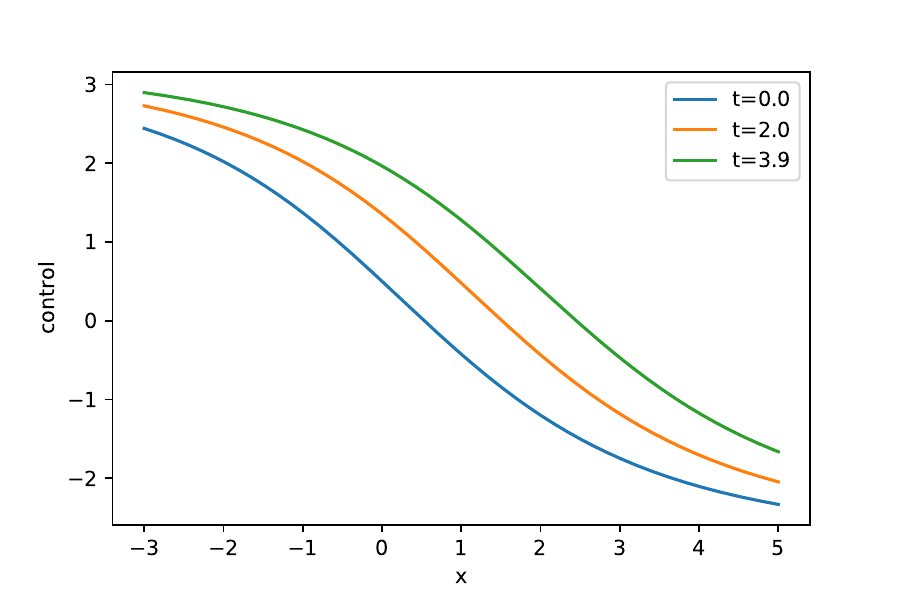}
\hskip 2pt
\includegraphics[width=6cm,height=4.5cm]{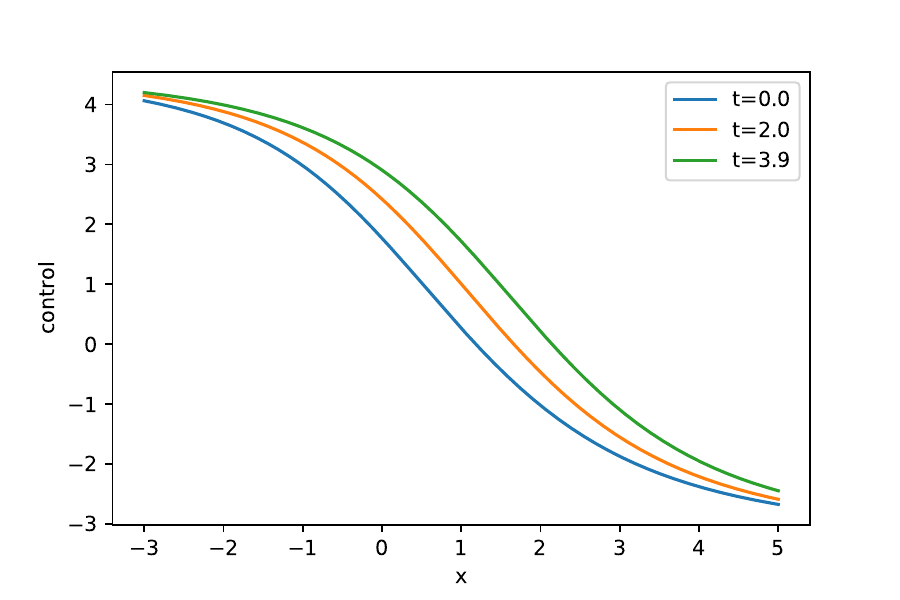}
}
\vskip -4pt
\centerline{
\includegraphics[width=6cm,height=4.5cm]{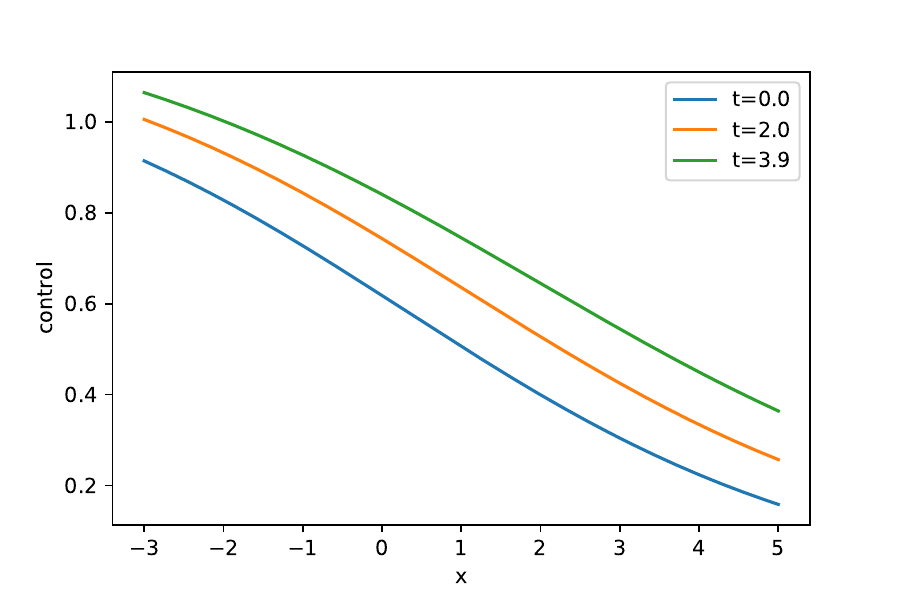}
\hskip -4pt
\includegraphics[width=6cm,height=4.5cm]{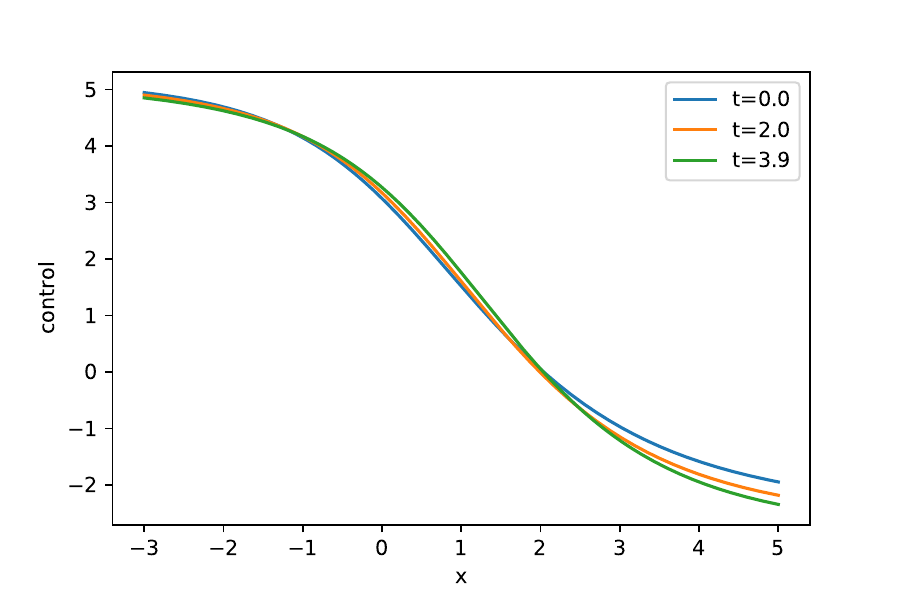}
\hskip -4pt
\includegraphics[width=6cm,height=4.5cm]{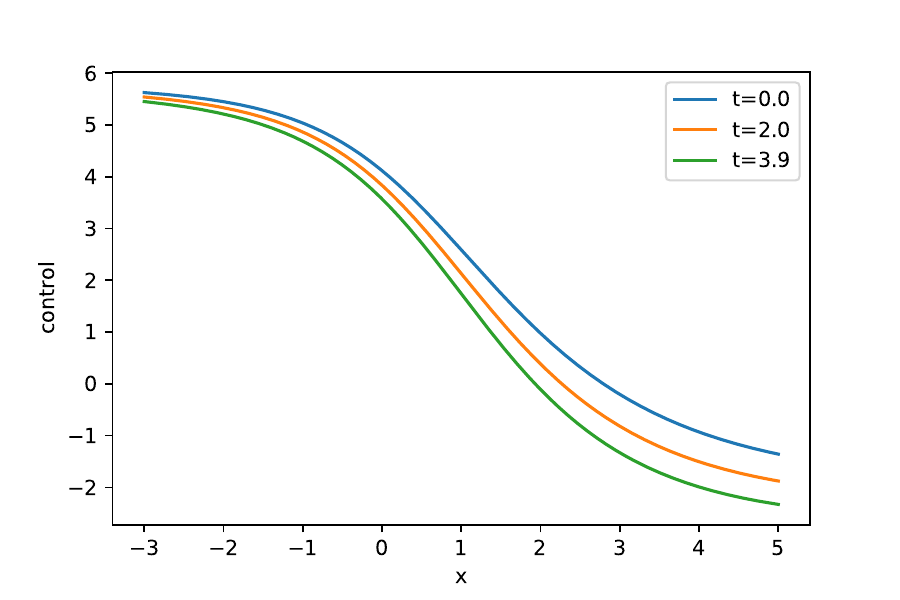}
}
\vskip -4pt
\caption{\label{fig:control} Plots of the optimal feedback function $\hat\phi_t(x)$ at different times $t$ for $d=1$, for initial starting points $x_0=-1$ (top row, left pane), $x_0=0$ (top row, right pane), and  $x_0=1$, $x_0=1$ and $x_0=2$ (bottom row, from left to right).
}
\end{figure}

\vskip 6pt
\paragraph{\bf Dependence upon the initial condition. } 
Figure \ref{fig:control} illustrate another peculiarity of the conditional control problem. In classical control problems, the optimal feedback control function is typically independent of the initial condition or the initial distribution. It is known that this property does not hold any longer for mean field control problems. This figure shows that it still does not hold in the present situation even though our model of conditional control cannot be reduced to a mean field control. Still the strong dependence upon the past forces the optimal control to remember where the state process started from!

\bibliographystyle{plain}

\begin{thebibliography}{10}

\bibitem{achdou2021optimal}
Yves Achdou, Mathieu Lauriere, and Pierre-Louis Lions.
\newblock Optimal control of conditioned processes with feedback controls.
\newblock {\em Journal de Math{\'e}matiques Pures et Appliqu{\'e}es},
  148:308--341, 2021.

\bibitem{Ambrosio_1}
L.~Ambrosio, N.~Fusco, and D.~Pallara.
\newblock {\em Functions of bounded variation and free discontinuity problems}.
\newblock Oxford Mathematical Monographs. Clarendon Press, 2000.

\bibitem{Ambrosio_et_al}
Luigi Ambrosio, Nicola Gigli, and Giuseppe Savar\'e.
\newblock {\em Gradient Flows in Metric Spaces and the Space of Probability
  Measures}.
\newblock Lectures in Mathematics {ETH} Zürich. Birkha{\"{u}}ser, second
  edition, 2008.

\bibitem{BeckJentzen}
C.~Beck, M.~Hutzenthaler, and A.~Jentzen.
\newblock On nonlinear feynman–kac formulas for viscosity solutions of
  semilinear parabolic partial differential equations.
\newblock {\em Stochastics and Dynamics}, 21(08), 2021.

\bibitem{bogachev2022fokker}
Vladimir~I Bogachev, Nicolai~V Krylov, Michael R{\"o}ckner, and Stanislav~V
  Shaposhnikov.
\newblock {\em Fokker--Planck--Kolmogorov Equations}, volume 207.
\newblock American Mathematical Society, 2022.

\bibitem{brunick2013mimicking}
Gerard Brunick and Steven Shreve.
\newblock Mimicking an {I}t{\^o} process by a solution of a stochastic
  differential equation.
\newblock {\em Annals of applied probability}, 23(4):1584--1628, 2013.

\bibitem{carmona2022convergence}
Ren{\'e} Carmona and Mathieu Lauri{\`e}re.
\newblock Convergence analysis of machine learning algorithms for the numerical
  solution of mean field control and games: {II}—the finite horizon case.
\newblock {\em The Annals of Applied Probability}, 32(6):4065--4105, 2022.

\bibitem{CarmonaDelarue_book_I}
René Carmona and François Delarue.
\newblock {\em Probabilistic Theory of Mean Field Games: vol. I, Mean Field
  FBSDEs, Control, and Games}.
\newblock Stochastic Analysis and Applications. Springer Verlag, 2017.

\bibitem{ethier1993fleming}
Stewart~N Ethier and Thomas~G Kurtz.
\newblock Fleming--viot processes in population genetics.
\newblock {\em SIAM Journal on Control and Optimization}, 31(2):345--386, 1993.

\bibitem{FedrizziFlandoli}
E.~Fredrissi and R.~Flandoli.
\newblock H{\"{o}}lder flow and differentiability for sdes with nonregular
  drift.
\newblock {\em Stochastic Analysis and Allications}, 31:708 -- 736, 2013.

\bibitem{spiders}
E.~A. Fronhofer, H.~Pasurka, O.~Mitesser, and H.~J. Poethke.
\newblock Scarce resources, risk sensitivity, and egalitarian resource sharing.
\newblock {\em Evolutionary Ecology Research}, 13:253 -- 267, 2011.

\bibitem{gyongy1986mimicking}
Istv{\'a}n Gy{\"o}ngy.
\newblock Mimicking the one-dimensional marginal distributions of processes
  having an {I}t{\^o} differential.
\newblock {\em Probability theory and related fields}, 71(4):501--516, 1986.

\bibitem{KrylovRockner}
N.V. Krylov and M.~R{\"{o}}ckner.
\newblock Strong solutions to stochastic equations with singular time dependent
  drift.
\newblock {\em Probability Theory and Related Fields}, 131:154 -- 196, 2005.

\bibitem{MR2295621}
Jean-Michel Lasry and Pierre-Louis Lions.
\newblock Mean field games.
\newblock {\em Jpn. J. Math.}, 2(1):229--260, 2007.

\bibitem{Lions2016CDF}
P-L. Lions.
\newblock Cours du {C}oll{\`e}ge de {F}rance.
\newblock http://www.college-de-france.fr/default/EN/all/equ$_-$der/, 2016.

\bibitem{MaZhang}
J.~Ma and J.~Zhang.
\newblock Representation theorems for {B}ackward {S}tochastic {D}ifferential
  {E}quations.
\newblock {\em Annals of Applied Probability}, 12:1390 -- 1418, 2002.

\bibitem{DelMoral}
P.~Del Moral.
\newblock {\em Feynman-Kac Formulae: Genealogical and Interacting Particle
  Systems with Applications}.
\newblock Probability and Applications. Springer Verlag, 2004.

\bibitem{PardouxNATO}
Etienne Pardoux.
\newblock {BSDE}s, weak convergence and homogenization of semilinear {PDE}s.
\newblock In F.H. Clarke and R.J. Stern, editors, {\em Nonlinear Analysis,
  Differential Equations and Control}, NATO Science Series, pages 503 -- 549.
  Springer-Science+Business Media, 1999.

\bibitem{PardouxPeng}
Etienne Pardoux and Shige Peng.
\newblock Backward stochastic differential equations and quasilinear parabolic
  partial differential equations.
\newblock volume 176 of {\em Lecture Notes in Control and Information Science},
  page 200–217. Springer Verlag, New York, 1992.

\bibitem{Trevisan}
D.~Trevisan.
\newblock Well-posedness of multidimensional diffusion processes with weakly
  differentiable coefficients.
\newblock {\em Electronic Journal of Probability}, 21:1–41, 2016.

\bibitem{veretennikov1981strong}
Alexander~Ju Veretennikov.
\newblock On strong solutions and explicit formulas for solutions of stochastic
  integral equations.
\newblock {\em Mathematics of the USSR-Sbornik}, 39(3):387, 1981.

\bibitem{zvonkin1974transformation}
Alexander~K Zvonkin.
\newblock A transformation of the phase space of a diffusion process that
  removes the drift.
\newblock {\em Mathematics of the USSR-Sbornik}, 22(1):129, 1974.

\end{thebibliography}
 \small

\appendix 

\section{Existence and uniqueness for the PDE system in short time}

We show existence and uniqueness of a classical solution for the  following PDE system, which corresponds to the case of closed-loop controls, functions of $(t,x)$:
\begin{equation}
\label{eq:pde_system_closedloop-mu-u}
\begin{cases}
    &\partial_t \mu=\frac12 \Delta_x \mu +\div_x(\nabla_xu\;\mu)  -(V-<\mu,V>)\mu\\
    &0=\partial_t u + \frac12\Delta_x u- \frac12 |\nabla_x u|^2-(V-<\mu,V>)u +V<\mu,u>+\tilde f
    \\
    &\mu_0 = m_0, \qquad u_T = g.
\end{cases}
\end{equation}

{\bf Assumptions. } In this part, we will assume that:
\begin{itemize}
    \item $m_0$ has compact support 
    \item $\tilde{f}$ satisfies: $\tilde{f}$ is twice differentiable and $\tfrac{1}{2}$-H\"older continuous; $\tilde{f}$, $\nabla \tilde{f}$ and $D^2 \tilde{f}$ are bounded; we consider constants $C_{\tilde{f}}, C_{\nabla \tilde{f}}, C_{D^2 \tilde{f}}$ such that $\|\tilde{f}\|_\infty \le C_{\tilde{f}}$, $\|\nabla \tilde{f}\|_\infty \le C_{\nabla\tilde{f}}$, $\|D^2 \tilde{f}\|_\infty \le C_{D^2 \tilde{f}}$.
    \item $g$ satisfies: $g$ is twice differentiable and $\tfrac{1}{2}$-H\"older continuous; $g$, $\nabla g$ and $D^2 g$ are bounded; let $C_g>0$ such that $\|g\|_\infty\le C_g$, $\|\nabla g\|_\infty\le C_g$ and $\|D^2 g\|_\infty\le C_g$. 
    \item $V$ satisfies: $V$ is twice differentiable and $\tfrac{1}{2}$-H\"older continuous; $\|\nabla V\|_\infty \le C_{\nabla V}$, $\|D^2 V\|_\infty \le C_{D^2 V}$; for the existence and uniqueness theorem provided below, we will allow $\|V\|_\infty$ to be small enough (in particular smaller than $1$, in contrast with the rest of the paper where we assumed $\|V\|_\infty=1$ for simplicity)
    \item $\tilde{u}$ satisfies: $\tilde{u}$ is differentiable in time and twice differentiable in space and it is $\frac{1}{2}$-H\"older continuous; as a function of $(t,x)$.
\end{itemize}
Additionally, to prove the existence and uniqueness result below, we will assume that $T$, $C_g$ and $\|V\|_\infty$ are small enough.

We will use the following classical notations:
\begin{itemize}
    \item $\cC^{\alpha}([0,T] \times \RR^d)$ and $\cC^{\alpha}(\RR^d)$ denote the sets of $\alpha-$H\"older functions on $[0,T] \times \RR^d$ and $\RR^d$ respectively.
    \item $\cC^{2+\alpha}$ (see Cardaliaguet's notes, top of page 37) is the set of functions $u: [0,T] \times \RR^d \to \RR$ such that the derivatives $\partial_t^k D_x^\ell u$ exist for any pair $(k,\ell)$ such that $2k+\ell \le 2$ and such that these derivatives are bounded, $\alpha$-H\"older continuous in space and $\frac{\alpha}{2}$-H\"older continuous in time.
    \item $\mathcal{C}^{1,2}$ denotes the space of functions that are one-time differentiable w.r.t. $t$ and two times differentiable w.r.t. $x$ and such that these derivatives are continuous. It is a subset of $\mathcal{C}^{2+\frac{1}{2}}$.
\end{itemize}

We introduce the following notations: 
\begin{align*}
    \cK &= \bigcup_{C_1 \in \RR_+} \cK_{C_1}, \qquad \\
    &\qquad \cK_{C_1} = \left\{\mu \in \cC^0([0,T], \cP_1(\RR^d)) \,\Big|\, \sup_{s \neq t} \frac{W_1(\mu_s, \mu_t)}{|s-t|^{1/2}} \le C_1, \quad \sup_{t \in [0,T]} \int |x|^2 \mu(t,dx) \le C_1 \right\},
    \\
    \cU &= \bigcup_{K_2\in \RR_+, C_2\in \RR_+, C_3  \in \RR_+} \cU_{K_2, C_2, C_3}, \qquad
    \\
    &\qquad \cU_{K_2, C_2, C_3} = \left\{ u \in \cC^{1,2} \,\Big|\, \| u\|_\infty \le K_2, \| \nabla_x u\|_\infty \le K_2, \|D^2_x u\|_\infty \le C_2, \|D^2_x u\|_\infty \le C_2, \|\partial_t u\|_\infty \le C_3 \right\}.
\end{align*}
Notice that these spaces depend implicitly on the time horizon $T$ and on $C_g$ but to alleviate the notations, we do not write this dependence explicitly. 

We endow 
\begin{itemize}
    \item $\cK$ (and its subsets) with the distance: $d_{\cK}(\mu,\mu') = \sup_{t \in [0,T]} W_1(\mu_t,\mu_t')$,
    \item $\cU$ (and its subsets) with the following norm:
$$
    \|u\|_\mathcal{U} = \sum_{k \in \NN, \ell \in \NN \,: \, 2k+\ell \le 2} \|\partial_t^k D^\ell u\|_\infty = \| u\|_\infty + \|\partial_t u\|_\infty + \| \nabla_x u\|_\infty + \|D^2_x u\|_\infty.
$$
\end{itemize}

For every $C_1>0$, $K_2>0$, $C_2>0$, $C_3>0$, $\cK_{C_1}$ and $\cU_{K_2,C_2, C_3}$ are Banach spaces. Furthermore, for every $C_1$, the set $\cK_{C_1}$ is a convex and closed subset of $\cC^0([0,T], \cP_1(\RR^d))$ which is compact. 

We further introduce the notations:
\begin{equation}
        \label{eq:def-Gamma-2}
        \Gamma_2(C_g) = 3[C_g + 1]^2 e^{6 C_g},
\end{equation}
\[
    C_3(C_2) = \max\{ 
        2C_2 + C_{\tilde{f}},
        2C_{\nabla V} C_2 + C_2 + C_{\nabla \tilde{f}},
        2C_{D^2 V} C_2 + 2C_{\nabla V} C_2 + C_2 + C_{D^2 \tilde{f}} 
        \},
\]
and
\begin{equation}
    \label{eq:def-Gamma-3}
    \Gamma_3(C_2) = C_2 + C_3(C_2).
\end{equation}
Note that $C_2 \le \Gamma_3(C_2)$.

We will also use the notation:
\begin{equation}
    \label{eq:def-KTCg}
    K(T,C_g) = e^{3(TC_3(\Gamma_2(C_g))+C_g)}(TC_3(\Gamma_2(C_g))+C_g).
\end{equation}
As $T \to 0$, $K(T,C_g) \to e^{3C_g}C_g \le \Gamma_2(C_g)$. 
As a matter of fact, we will take $T$ small enough such that:
\[
    K(T,C_g) \le \Gamma_2(C_g). 
\]
Note that, as $T \to 0$ and $C_g \to 0$, $K(T,C_g) \to 0.$ 

The main result of this section is the following existence and uniqueness result.
\begin{theorem}
\label{thm:main-existence-uniqueness-shortT}
Let $\epsilon>0$ and let $C_1>\int |x|^2 \mu_0(dx)$.
    There exists $T_1$, $C_{g,1}$, and $C_{V,1}$ depending only on the model parameters (except $g$ and $V$) and on $\epsilon$ such that, if $T<T_1$, $\|g\|_\infty < C_{g,1}$ and $\|V\|_\infty<C_{V,1}$, then there exists a unique solution $(u,\mu)$ to system~\eqref{eq:pde_system_closedloop-mu-u} in $\cK_{C_1} \times \cU_{K(T,C_g), \Gamma_2(C_g)+\epsilon, \Gamma_3(\Gamma_2(C_g))+\epsilon}$.
\end{theorem}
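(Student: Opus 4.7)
The strategy is a Banach fixed point argument in the small data regime. Define a map $\Phi : \cK_{C_1} \to \cK_{C_1}$ as follows: given $\mu\in\cK_{C_1}$, freeze the non-local quantity $\langle\mu_t,V\rangle$ and the coefficient $V\langle\mu_t,u_t\rangle$, and solve the semilinear backward HJB equation
\[
    0=\partial_t u + \tfrac12\Delta_x u- \tfrac12 |\nabla_x u|^2-(V-\langle\mu_t,V\rangle)u +V\langle\mu_t,u\rangle+\tilde f, \qquad u_T=g,
\]
to obtain $u\in\cU$. Then using the drift $-\nabla_x u$, solve the linear forward FPK equation with non-local zero-order term $-(V-\langle\mu_t,V\rangle)\mu'$ to obtain $\mu'\in\cK_{C_1}$, and set $\Phi(\mu)=\mu'$. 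A fixed point of $\Phi$ is exactly a classical solution of~\eqref{eq:pde_system_closedloop-mu-u}.

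\textbf{Step 1: Resolution and bounds on the HJB equation.} Following the Hopf--Cole trick already used in the proof of Proposition~\ref{pr:existence_pde1}, set $v=e^{-u}$ so that the quadratic term disappears. Using the fixed-point/contraction argument of Proposition~\ref{pr:existence_pde1} together with Lemma~\ref{le:viscosity_FK}(iv), one obtains a classical $C^{1,2}_b$ solution. The sup norm bound $\|u\|_\infty\le K(T,C_g)$ comes from the Feynman--Kac representation applied to the linearized equation after Hopf--Cole, balancing the terminal data of size $C_g$ and the source terms of size $TC_3(\Gamma_2(C_g))$; this gives precisely the exponential factor in the definition~\eqref{eq:def-KTCg} of $K(T,C_g)$. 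Standard interior Schauder/parabolic estimates applied to~\eqref{eq:pde_system_closedloop-mu-u}b (after differentiating once and twice in $x$) give bounds on $\nabla_x u$ and $D^2_x u$; by choosing $T$, $C_g$ and $\|V\|_\infty$ small the constants $\Gamma_2(C_g)+\epsilon$ and $C_3(\Gamma_2(C_g))+\epsilon$ are absorbed. This produces a well-defined solution map $\cK_{C_1}\ni\mu\mapsto u\in\cU_{K(T,C_g),\Gamma_2(C_g)+\epsilon,\Gamma_3(\Gamma_2(C_g))+\epsilon}$.

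\textbf{Step 2: Resolution and bounds on the FPK equation.} With the Lipschitz bounded drift $-\nabla_x u$ and the bounded non-local zero-order coefficient, the non-local FPK equation has a unique probability measure solution, which can be constructed via the superposition principle (Theorem~\ref{th:superposition}) applied to $dX_t=-\nabla_x u_t(X_t)dt+dW_t$ and formula~\eqref{fo:mu_1_t}. The $W_1$-H\"older-$1/2$ continuity in time follows from Kantorovich duality and the It\^o bound $\EE[|X_t-X_s|]\le(\|\nabla u\|_\infty+c)|t-s|^{1/2}$, which, thanks to $\|\nabla u\|_\infty\le K(T,C_g)$, stays below $C_1$ provided $T$ is small enough. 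The second moment bound propagates from $m_0$ via the Gronwall estimate~\eqref{fo:mu_cont} adapted here, and again stays below $C_1$ since we took $C_1>\int|x|^2m_0(dx)$ and $T$ small. Thus $\Phi(\cK_{C_1})\subset\cK_{C_1}$.

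\textbf{Step 3: Contraction.} Let $\mu^1,\mu^2\in\cK_{C_1}$ and let $u^i=u[\mu^i]$, $\mu'^i=\Phi(\mu^i)$. Subtract the two HJB equations to obtain a linear equation for $u^1-u^2$ whose source terms are controlled by $\|V\|_\infty\,d_\cK(\mu^1,\mu^2)$ and $\|V\|_\infty\,\|u^1-u^2\|_\infty$; absorbing the latter thanks to smallness of $T\|V\|_\infty$ yields
\[
\|u^1-u^2\|_{\cU}\le C(T,\|V\|_\infty)\,d_\cK(\mu^1,\mu^2),\qquad C(T,\|V\|_\infty)\xrightarrow[T\to 0,\,\|V\|_\infty\to 0]{}0.
\]
Next, comparing the two FPK equations by the coupling-by-reflection / synchronous coupling of the underlying SDEs and using the $W_1$-duality gives
\[
d_\cK(\mu'^1,\mu'^2)\le C'(T,\|V\|_\infty)\bigl(\|\nabla_x u^1-\nabla_x u^2\|_\infty+\|V\|_\infty\|u^1-u^2\|_\infty\bigr),
\]
with $C'(T,\|V\|_\infty)\to 0$ as $T\to 0$. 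Combining the two estimates, $\Phi$ is a strict contraction on the complete metric space $(\cK_{C_1},d_\cK)$ once $T<T_1$, $C_g<C_{g,1}$ and $\|V\|_\infty<C_{V,1}$, and Banach's theorem delivers existence and uniqueness of the fixed point in $\cK_{C_1}\times\cU_{K(T,C_g),\Gamma_2(C_g)+\epsilon,\Gamma_3(\Gamma_2(C_g))+\epsilon}$.

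\textbf{Main obstacle.} The delicate point is the simultaneous control of the \emph{non-local} coupling terms $\langle\mu_t,V\rangle$ and $V\langle\mu_t,u_t\rangle$ when propagating estimates through the Hopf--Cole transform: the second of them couples back the size of $u$ into the driver of the linearized equation, so one must carefully track how the a priori bound $\Gamma_2(C_g)$ is preserved by iteration. This is precisely why the constants are taken in the rather specific form~\eqref{eq:def-Gamma-2}--\eqref{eq:def-KTCg}, and it forces the smallness assumptions on $T$, $C_g$ and $\|V\|_\infty$ in the statement.
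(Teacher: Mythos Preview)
Your overall scheme is sound and would lead to a proof, but it is organized differently from the paper's. You run a \emph{single} Banach fixed point on the measure space $\cK_{C_1}$: given $\mu$, solve the full non-local HJB for $u$ (this is where you invoke Proposition~\ref{pr:existence_pde1}, which itself hides a fixed-point on $v=e^{-u}$ to handle the term $V\langle\mu,u\rangle$), then push forward through the FPK. The paper instead runs a \emph{two-level} scheme: the outer fixed point lives on $\cU$, mapping a frozen profile $\tilde u$ to the $u$-component of the solution of an \emph{auxiliary} forward--backward system in which the zero-order HJB terms are evaluated at $\tilde u$ rather than at $u$; the inner fixed point, on $\cK_{C_1}$, solves this auxiliary system for each fixed $\tilde u$.

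The practical difference shows up exactly at the point you flag as the ``main obstacle''. By freezing $\tilde u$ in $-(V-\langle\mu,V\rangle)\tilde u+V\langle\mu,\tilde u\rangle$, the Hopf--Cole change $w=e^{-u}$ produces a genuinely \emph{linear} heat equation $\partial_t w=-\tfrac12\Delta w+wF(t,x,\mu)$, so the paper can differentiate it twice and read off clean Feynman--Kac bounds on $w,\nabla w,D^2w$; these convert directly into the constants $K(T,C_g)$, $\Gamma_2(C_g)$, $\Gamma_3$ without any $\log v$ nonlinearity. In your route the transformed equation still carries $v\log v$ terms, so the derivative estimates and the Lipschitz dependence $\mu\mapsto u$ in the full $\cU$-norm require the extra layer of analysis already packaged in Proposition~\ref{pr:existence_pde1}. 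Both approaches ultimately need smallness of $T$, $C_g$ and $\|V\|_\infty$; yours is structurally leaner (one contraction instead of two nested ones), while the paper's buys simpler PDE bookkeeping at each step.
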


We will use the following proof strategy. We denote by $\Phi: \tilde{u} \mapsto u$ the function which maps a function $\tilde{u} \in \cU$ to the function $u$ of the solution $(\mu,u)$ to the following system, which is a modification of~\eqref{eq:pde_system_closedloop-mu-u}:
\begin{equation}
\label{fo:pde_system-auxiliary-proof}
\begin{cases}
    &\partial_t \mu=\frac12 \Delta_x \mu +\div_x(\nabla_xu\;\mu)  -(V-<\mu,V>)\mu
    \\
    &0=\partial_t u + \frac12\Delta_x u- \frac12 |\nabla_x u|^2-(V-<\mu,V>)\tilde{u} +V<\mu,\tilde{u}>+\tilde f
    \\
    &\mu(0) = m_0, \qquad u(T) = g.
\end{cases}
\end{equation}
The proof will consist of two steps which, at a high level, can be described as follows. In the {\bf first step}, we will prove that for $T$ small enough, $\Phi$ is well defined: we will show that, for every $\tilde{u} \in \cU_{K_2,C_2,C_3}$, the above system has a unique solution $(\mu,u)$ in a set of the form $\cK_{C_1} \times \cU_{K_2,C_2,C_3}$ where $C_1, K_2, C_2$ and $C_3$ are to be determined. This step will itself rely on a fixed point argument for the forward-backward system~\eqref{fo:pde_system-auxiliary-proof}. The fact that $\tilde{u}$ is fixed implies that the equation has no zero-order term and hence the Hopf-Cole transform can readily be applied. In the {\bf second step}, we will prove that $\Phi$ is a strict contraction on $\cU_{K_2,C_2,C_3}$, which will establish existence and uniqueness of the solution to~\eqref{eq:pde_system_closedloop-mu-u}.

To be specific, we now provide the main statements for each of the two steps. 
For the {\bf first step}, while keeping $\tilde{u}$ fixed, we will consider $\Psi := \Psi_2 \circ \Psi^{\tilde{u}}_1$, where $\Psi^{\tilde{u}}_1: \mu \mapsto u$ maps $\mu \in \cK$ to the solution $u$ of the backward PDE in~\eqref{fo:pde_system-auxiliary-proof}, and $\Psi_2: u \mapsto \mu'$ maps $u \in \cU$ to the solution $\mu'$ of the forward PDE in~\eqref{fo:pde_system-auxiliary-proof}. We will prove the following:

\begin{proposition}
\label{prop:existence-shortT-Psi-contraction}

Let $\epsilon>0$ and let $C_1>\int |x|^2 \mu_0(dx)$. 
 There exists $T_0>0$ and $C_{g,0}$ depending only on the model's parameters except $C_g$, on $\epsilon$ and on $C_1$ such that: if $T<T_0$ and $C_g < C_{g,0}$, and if $\tilde{u} \in \cU_{K(T,C_g), \Gamma_2(C_g)+\epsilon, \Gamma_3(\Gamma_2(C_g)) + \epsilon}$,
 then the function $\Psi^{\tilde{u}} = \Psi_2 \circ \Psi^{\tilde{u}}_1$ is well defined on $\cK_{C_1}$, $\Psi^{\tilde{u}}(\cK_{C_1}) \subseteq \cK_{C_1}$, $\Psi^{\tilde{u}}$ is a contraction on $\cK_{C_1}$, and furthermore $\Psi^{\tilde{u}}_1(\cK_{C_1}) \subseteq \cU_{K(T,C_g), \Gamma_2(C_g)+\epsilon, \Gamma_3(\Gamma_2(C_g)) + \epsilon}$.
\end{proposition}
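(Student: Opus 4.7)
The plan is to analyse $\Psi^{\tilde u} = \Psi_2 \circ \Psi_1^{\tilde u}$ as the composition of two separately controllable maps and then prove the composition is a strict contraction on $\cK_{C_1}$ for $T$ small. Throughout, $\tilde u$ stays frozen in $\cU_{K(T,C_g),\Gamma_2(C_g)+\epsilon,\Gamma_3(\Gamma_2(C_g))+\epsilon}$; the decisive structural feature is that, because $\tilde u$ is frozen, the backward equation in~\eqref{fo:pde_system-auxiliary-proof} has no zero-order term in $u$ and therefore admits a Hopf-Cole linearisation.

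For the first map $\Psi_1^{\tilde u}$, fix $\mu\in\cK_{C_1}$ and set
$$ c_\mu(t,x)=-(V(x)-\langle\mu_t,V\rangle)\tilde u(t,x)+V(x)\langle\mu_t,\tilde u\rangle+\tilde f(x),$$
so that the backward PDE reads $\partial_t u+\frac12\Delta u-\frac12|\nabla u|^2+c_\mu=0$ with terminal condition $g$. The substitution $v=e^{-u}$ linearises it into $\partial_t v+\frac12\Delta v=c_\mu v$, $v(T,\cdot)=e^{-g}$, which by Feynman-Kac has the unique classical representation $v(t,x)=\EE[e^{-g(B_T^{t,x})}\exp(-\int_t^T c_\mu(s,B_s^{t,x})ds)]$. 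This $v$ is bounded above and below away from $0$ by constants depending only on $T\|c_\mu\|_\infty$ and $C_g$, so $u=-\log v$ is well defined and classical. Differentiating under the stochastic integral (or, equivalently, applying Schauder estimates to the transformed linear equation) then gives bounds on $\|u\|_\infty,\|\nabla u\|_\infty,\|D^2 u\|_\infty,\|\partial_t u\|_\infty$ in terms of $T$, $C_g$, $\|V\|_\infty$ and the parameters of $\tilde u$. Because $T$ and $\|V\|_\infty$ are taken small and the derivative assumptions on $g,\tilde f,V$ of Theorem~\ref{thm:main-existence-uniqueness-shortT} are in force, these bounds can be matched precisely with $K(T,C_g)$, $\Gamma_2(C_g)+\epsilon$, $\Gamma_3(\Gamma_2(C_g))+\epsilon$; the $\epsilon$-slack absorbs the contributions involving $\|V\|_\infty$ and $\tilde u$ that do not vanish as $T\searrow 0$. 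This yields $\Psi_1^{\tilde u}(\cK_{C_1})\subseteq\cU_{K(T,C_g),\Gamma_2(C_g)+\epsilon,\Gamma_3(\Gamma_2(C_g))+\epsilon}$.

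For the second map $\Psi_2$, the resulting $u$ has bounded $C^{1,2}$ regularity, so the forward non-local FPK equation falls within the scope of Lemma~\ref{le:FPK1}, Lemma~\ref{le:gamma_t} and Theorem~\ref{th:superposition}: it has a unique solution of the form $\mu_t=\EE[\delta_{X_t}e^{-\int_0^t V(X_s)ds}]/\EE[e^{-\int_0^t V(X_s)ds}]$, where $dX_t=-\nabla u(t,X_t)dt+dW_t$, $X_0\sim m_0$. Applying It\^o to $|X_t|^2$ and Gr\"onwall, using $\|\nabla u\|_\infty\le K(T,C_g)$, gives $\sup_t\int|x|^2\mu_t(dx)\le e^{CT}\bigl(\int|x|^2\mu_0(dx)+(d+K(T,C_g)^2)T\bigr)$, which is $\le C_1$ for $T$ small, since $\int|x|^2\mu_0(dx)<C_1$. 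The $W_1$ half-H\"older continuity in time of $t\mapsto\mu_t$ follows from the standard SDE bound $\EE[|X_t-X_s|]\le K(T,C_g)|t-s|+\sqrt{d|t-s|}$ combined with the uniform control $e^{-T\|V\|_\infty}\le\EE[e^{-\int_0^t V(X_s)ds}]\le 1$ on the Gibbs normalisation. Hence $\Psi^{\tilde u}(\cK_{C_1})\subseteq\cK_{C_1}$.

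For the contraction, let $\mu,\mu'\in\cK_{C_1}$ and denote $u=\Psi_1^{\tilde u}(\mu)$, $u'=\Psi_1^{\tilde u}(\mu')$, $\tilde\mu=\Psi^{\tilde u}(\mu)$, $\tilde\mu'=\Psi^{\tilde u}(\mu')$. Subtracting the two Hopf-Cole representations and using Kantorovich-Rubinstein duality together with the Lipschitz regularity of $V$ and $\tilde u$ yields $\|u-u'\|_\infty+\|\nabla u-\nabla u'\|_\infty\le C\,T\,d_{\cK}(\mu,\mu')$. Coupling the SDEs driving $\tilde\mu$ and $\tilde\mu'$ with the same Brownian motion, applying Gr\"onwall, and controlling the Gibbs renormalisation Lipschitz-continuously in $\sup_t|\langle\mu_t-\mu'_t,V\rangle|$, one obtains $d_{\cK}(\tilde\mu,\tilde\mu')\le C' T\|\nabla u-\nabla u'\|_\infty\le CC'T^2\,d_{\cK}(\mu,\mu')$, which is a strict contraction for $T$ sufficiently small. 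The main technical obstacle is the bookkeeping required to match the derivative bounds on $u$ exactly to the prescribed constants $K(T,C_g)$, $\Gamma_2(C_g)+\epsilon$, $\Gamma_3(\Gamma_2(C_g))+\epsilon$: the specific exponential-in-$T$ form of $K(T,C_g)$ in~\eqref{eq:def-KTCg} reflects precisely the Gr\"onwall factors produced when differentiating $v$ and then $u$ under the Feynman-Kac representation, and the $\epsilon$-slack is what allows the three smallness parameters $T$, $C_g$, $\|V\|_\infty$ to be chosen independently once $\epsilon$ is fixed.
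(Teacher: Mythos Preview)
Your proposal is correct and follows essentially the same approach as the paper: the paper also splits the analysis into two lemmas, one for $\Psi_1^{\tilde u}$ (Hopf--Cole linearisation $w=e^{-u}$, Feynman--Kac representation, and differentiation under the expectation to get the $K(T,C_g)$, $\Gamma_2(C_g)+\epsilon$, $\Gamma_3(\Gamma_2(C_g))+\epsilon$ bounds) and one for $\Psi_2$ (stochastic representation via the superposition principle, Gr\"onwall for the second moment, and the SDE/Kantorovich--Rubinstein argument for time-H\"older continuity), and then obtains the contraction by composing the two Lipschitz estimates, with the $\Psi_2$ step handled exactly as you describe via Lemma~\ref{lem:W1-mu-delta-nabla-u}. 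The only cosmetic difference is that the paper phrases each of $\Psi_1^{\tilde u}$ and $\Psi_2$ as separately Lipschitz with constant $<1$ (for $T,C_g$ small) rather than composing to a single $O(T^2)$ factor, but the substance is identical.
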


For the {\bf second step}, we will prove the following result.
\begin{proposition}
\label{prop:existence-shortT-Phi-contraction}

Let $\epsilon>0$. 
There exists $T_1$, $C_{g,1}$, and $C_{V,1}$ depending only on the model parameters (except $g$ and $V$) and on $\epsilon$ such that, if $T<T_1$, $\|g\|_\infty < C_{g,1}$ and $\|V\|_\infty<C_{V,1}$, 
then the function $\Phi$ is well defined on $\cU_{K(T,C_g),\Gamma_2(C_g)+\epsilon, \Gamma_3(\Gamma_2(C_g)) + \epsilon}$, $\Phi(\cU_{K(T,C_g),\Gamma_2(C_g)+\epsilon, \Gamma_3(\Gamma_2(C_g)) + \epsilon}) \subseteq \cU_{K(T,C_g),\Gamma_2(C_g)+\epsilon, \Gamma_3(\Gamma_2(C_g)) + \epsilon}$ and, moreover, $\Phi$ is a strict contraction on the set $\cU_{K(T,C_g),\Gamma_2(C_g)+\epsilon, \Gamma_3(\Gamma_2(C_g)) + \epsilon}$. 
\end{proposition}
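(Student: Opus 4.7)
The plan unfolds in three main steps, followed by identification of the main difficulty.

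\emph{Step 1 (Well-definedness and invariance of $\Phi$).} Fix any $\tilde u\in\cU_{K(T,C_g),\Gamma_2(C_g)+\epsilon,\Gamma_3(\Gamma_2(C_g))+\epsilon}$. Provided $T<T_0$ and $\|g\|_\infty<C_{g,0}$, Proposition~\ref{prop:existence-shortT-Psi-contraction} guarantees that $\Psi^{\tilde u}$ is a strict contraction of $(\cK_{C_1},d_\cK)$ into itself. Since $\cK_{C_1}$ is a closed subset of the complete metric space $\cC^0([0,T],\cP_1(\RR^d))$, Banach's fixed-point theorem produces a unique $\mu^\ast\in\cK_{C_1}$ with $\Psi^{\tilde u}(\mu^\ast)=\mu^\ast$, and the pair $(\mu^\ast,u^\ast)$ with $u^\ast=\Psi_1^{\tilde u}(\mu^\ast)$ is then the unique solution of the auxiliary system~\eqref{fo:pde_system-auxiliary-proof} in $\cK_{C_1}\times\cU_{K(T,C_g),\Gamma_2(C_g)+\epsilon,\Gamma_3(\Gamma_2(C_g))+\epsilon}$. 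Setting $\Phi(\tilde u):=u^\ast$, the last assertion of Proposition~\ref{prop:existence-shortT-Psi-contraction} gives at once the invariance $\Phi(\cU_{K(T,C_g),\Gamma_2(C_g)+\epsilon,\Gamma_3(\Gamma_2(C_g))+\epsilon})\subseteq\cU_{K(T,C_g),\Gamma_2(C_g)+\epsilon,\Gamma_3(\Gamma_2(C_g))+\epsilon}$.

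\emph{Step 2 (Stability of the backward equation).} Take $\tilde u^1,\tilde u^2$ in the ball, let $(\mu^i,u^i)$ be the solutions produced in Step 1, and set $\delta u=u^1-u^2$, $\delta\mu=\mu^1-\mu^2$, $\delta\tilde u=\tilde u^1-\tilde u^2$. Subtracting the two backward equations in~\eqref{fo:pde_system-auxiliary-proof} yields a linear parabolic equation for $\delta u$ with zero terminal condition, first-order term $-\tfrac12(\nabla u^1+\nabla u^2)\cdot\nabla\delta u$ (whose drift is bounded by $2K(T,C_g)$ thanks to the $\cU$-bounds), and source
\begin{equation*}
\mathcal S=(V-\langle\mu^1,V\rangle)\,\delta\tilde u - V\langle\mu^1,\delta\tilde u\rangle + \langle\delta\mu,V\rangle\,\tilde u^2 - V\langle\delta\mu,\tilde u^2\rangle.
\end{equation*}
A Feynman-Kac representation along the drifted Brownian motion, combined with differentiation in $x$ (which produces analogous linear equations for $\nabla\delta u$ and $D^2\delta u$ whose sources involve at most one space derivative of $\delta\tilde u$ and $\delta\mu$) and algebraic recovery of $\partial_t\delta u$ from the equation itself, yields a bound of the form
\begin{equation*}
\|\delta u\|_{\cU}\;\le\;\Lambda_1(T,C_g)\,\|\delta\tilde u\|_{\cU} + \Lambda_2(T,C_g,\|V\|_\infty)\,d_\cK(\mu^1,\mu^2),
\end{equation*}
where $\Lambda_1(T,C_g)\to 0$ as $T\to 0$ (each source term in $\delta\tilde u$ is multiplied either by $V$, by $\tilde u^i$, or by an integral over $[0,T]$), and $\Lambda_2\to 0$ as $T,\|V\|_\infty\to 0$ (every contribution of $\delta\mu$ is weighted by $V$).

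\emph{Step 3 (Forward stability and closing the contraction).} By the non-local superposition principle of Theorem~\ref{th:superposition}, each $\mu^i$ is the Gibbs flow associated with the soft-killed SDE $dX^i_t=-\nabla u^i(t,X^i_t)\,dt+dW_t$ at rate $V$. Coupling $X^1,X^2$ synchronously through the same Brownian motion and using the uniform Lipschitz regularity of $\nabla u^i$ together with an elementary likelihood-ratio estimate for the soft-killing normalisations produces
\begin{equation*}
d_\cK(\mu^1,\mu^2)\;\le\;\Lambda_3(T,\|V\|_\infty)\,\|\nabla u^1-\nabla u^2\|_\infty\;\le\;\Lambda_3(T,\|V\|_\infty)\,\|\delta u\|_{\cU},
\end{equation*}
with $\Lambda_3\to 0$ as $T,\|V\|_\infty\to 0$. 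Chaining Steps 2 and 3 gives $\|\delta u\|_{\cU}\le\Lambda_1\|\delta\tilde u\|_{\cU}+\Lambda_2\Lambda_3\|\delta u\|_{\cU}$. Choosing $T_1\le T_0$, $C_{g,1}\le C_{g,0}$ and $C_{V,1}$ small enough so that $\Lambda_2\Lambda_3\le 1/2$ and $2\Lambda_1<1$ yields the required strict contraction and concludes the proof.

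\emph{Main obstacle.} The delicate point is Step 2: the non-local zero-order terms of the backward equation forbid a stand-alone $L^\infty$ bound on $\delta u$ in terms of $\|\delta\tilde u\|_\infty$ alone, because they couple the backward stability to the Wasserstein distance between $\mu^1$ and $\mu^2$. Tracking the \emph{full} $\cU$-norm — and in particular the second space derivatives and the time derivative — forces one to invoke Schauder-type estimates whose constants must be made explicit in $T$ and $\|V\|_\infty$, so that the product $\Lambda_2\Lambda_3$ can be driven below $1/2$ while simultaneously keeping $2\Lambda_1<1$; this is the only way the feedback loop between the forward and backward equations can be closed into a genuine contraction.
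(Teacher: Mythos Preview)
Your Step~1 and Step~3 are essentially correct and match the paper's approach (Step~3 is the content of the paper's auxiliary Lemma~\ref{lem:W1-mu-delta-nabla-u}). The gap is in Step~2.

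You propose to obtain the $\cU$-norm stability estimate by writing the linear equation satisfied by $\delta u$ and then differentiating it in $x$ to get ``analogous'' equations for $\nabla\delta u$ and $D^2\delta u$. This works for $\|\delta u\|_\infty$ and $\|\nabla\delta u\|_\infty$: the differentiated equation for $\partial_i\delta u$ has drift $h=\tfrac12(\nabla u^1+\nabla u^2)$ and source $-(\partial_i h)\cdot\nabla\delta u+\partial_i\mathcal S$, and since $\partial_i h$ involves only $D^2u^k$ (bounded by $\Gamma_2(C_g)+\epsilon$), Feynman--Kac closes for small $T$. But at the next level the equation for $\partial_{ij}\delta u$ picks up the source term $-(\partial_{ij}h)\cdot\nabla\delta u$, and $\partial_{ij}h=\tfrac12(\partial_{ij}\nabla u^1+\partial_{ij}\nabla u^2)$ involves $D^3u^k$, which is \emph{not} controlled in the space $\cU_{K_2,C_2,C_3}$. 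So your claim that the sources ``involve at most one space derivative of $\delta\tilde u$ and $\delta\mu$'' is incorrect, and the direct Feynman--Kac scheme breaks down precisely at the $D^2$ estimate you flag as the hard part.

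The paper circumvents this by the Hopf--Cole transform $w^i=e^{-u^i}$, which absorbs the quadratic $\tfrac12|\nabla u|^2$ and yields the \emph{purely} linear heat equation $\partial_t w^i+\tfrac12\Delta w^i-w^iF^i=0$ with \emph{no first-order term}, where $F^i$ depends only on $\tilde u^i$, $\mu^i$, $V$, $\tilde f$ --- not on $u^i$. Differentiating the equation for $\delta w$ twice then introduces at most $D^2F^i$ (hence $D^2\tilde u^i$, which \emph{is} controlled) together with lower derivatives of $w^i,\delta w$; no third derivative of $u$ ever appears. One recovers the $\cU$-estimate for $\delta u$ from that for $\delta w$ via $\nabla u=-\nabla w/w$ and $D^2u=-D^2w/w+(\nabla w)^{\otimes 2}/w^2$, using the two-sided bound $e^{-K_2}\le w\le e^{K_2}$. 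Your fallback mention of ``Schauder-type estimates'' could in principle be an alternative, but making the constants vanish as $T\to0$ at the $D^2$ level is delicate (the raw heat-kernel bound $\int|\nabla^2_xp|\,dy\sim(s-t)^{-1}$ is not time-integrable), whereas the Hopf--Cole detour keeps the whole argument at the level of elementary Feynman--Kac bounds.
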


From these two propositions, the proof of Theorem~\ref{thm:main-existence-uniqueness-shortT} is concluded by applying Banach fixed point theorem to the contraction $\Phi$ on the complete metric 
space $\cU_{K(T,C_g), \Gamma_2(C_g)+\epsilon, \Gamma_3(\Gamma_2(C_g)) + \epsilon}$. 
In the next two subsections, we prove Proposition~\ref{prop:existence-shortT-Psi-contraction} and Proposition~\ref{prop:existence-shortT-Phi-contraction}. 
The proof of each proposition is itself split into lemmas. 

%%%%%%%%%%%%%%%%%%%%%%%%%%%%
\subsection{Proof of Proposition~\ref{prop:existence-shortT-Psi-contraction} }

We use the notation and assumptions in the statement of Proposition~\ref{prop:existence-shortT-Psi-contraction}. 
We split the proof into two lemmas.

As mentioned above, we define $\Psi^{\tilde{u}} = \Psi_2 \circ \Psi^{\tilde{u}}_1$, where $\Psi^{\tilde{u}}_1: \mu \mapsto u$ maps $\mu \in \cK$ to the solution $u$ of the backward PDE:
\begin{equation}
    \label{eq:u-auxiliary-proof-form}
    -\partial_t u(t,x) = \frac12\Delta_x u(t,x)- \frac12 |\nabla_x u(t,x)|^2 + F(t, x, \mu_t), \qquad u(T,x) = g(x),
\end{equation}
where $F$ implicitly depends on $\tilde{u}$ and is defined as:
\begin{equation}
    \label{eq:proof-Phi1-def-F}
    F(t,x,\mu) = -(V(x)-<\mu(t),V>)\tilde{u}(t,x) +V(x)<\mu(t),\tilde{u}(t)>+\tilde f(x), 
\end{equation}
and $\Psi_2: u \mapsto \mu$ maps $u \in \cU$ to the solution $\mu$ of the forward PDE:
\begin{equation}
    \label{eq:mu-auxiliary-proof-form}
    \partial_t \mu(t,x) = \frac12 \Delta_x \mu(t,x) +\div_x(\nabla_xu(t,x)\;\mu(t,x)) -(V(x)-<\mu_t,V>) \mu(t,x), \qquad \mu(0,x) = m_0(x).
\end{equation}

We start with the following result about $\Psi_1^{\tilde{u}}$. 
\begin{lemma}
\label{lem:existence-shortT-Psi1}
    Assume the conditions in Proposition~\ref{prop:existence-shortT-Psi-contraction} hold. 
    There exists $T_0>0$ and $C_{g,0}$ depending only on the model's parameters except $C_g$, on $\epsilon$ and on $C_1$ such that: if $T<T_0$ and $C_g < C_{g,0}$, and if $\tilde{u} \in \cU_{K(T,C_g), \Gamma_2(C_g)+\epsilon, \Gamma_3(\Gamma_2(C_g)) + \epsilon}$, 
    then the function $\Psi^{\tilde{u}}_1$ is well defined on $\cK_{C_1}$, $\Psi^{\tilde{u}}_1(\cK_{C_1}) \subseteq \cU_{K(T,C_g), \Gamma_2(C_g)+\epsilon, \Gamma_3(\Gamma_2(C_g)) + \epsilon}$ and $\Psi^{\tilde{u}}_1$ is a contraction on $\cK_{C_1}$.
\end{lemma}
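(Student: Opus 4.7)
The plan is to linearize the semilinear backward equation \eqref{eq:u-auxiliary-proof-form} by a Hopf--Cole substitution and then solve the resulting \emph{linear} Kolmogorov equation by Feynman--Kac, extracting from this representation all four $\cU$-bounds; contractivity will come from a maximum-principle comparison of two solutions corresponding to $\mu$ and $\mu'$. Since $\mu$ (and hence $F(\cdot,\cdot,\mu)$) is frozen and the PDE contains no zero-order term in $u$, setting $v=e^{-u}$ formally gives
\[
    \partial_t v + \tfrac12\Delta v \;=\; F(t,x,\mu_t)\,v, \qquad v(T,x)=e^{-g(x)},
\]
with $F$ as in \eqref{eq:proof-Phi1-def-F}. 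Since $g,V,\tilde f,\tilde u$ are bounded, $F\in L^\infty$ with $\|F\|_\infty\le 2\|V\|_\infty\|\tilde u\|_\infty+\|\tilde f\|_\infty$, and the Feynman--Kac formula
\[
    v(t,x) \;=\; \mathbb{E}\Bigl[e^{-g(B_T^{t,x})}\exp\Bigl(-\int_t^T F(s,B_s^{t,x},\mu_s)\,ds\Bigr)\Bigr]
\]
yields $v\in(c_1,c_2)$ with $c_1,c_2>0$ explicit, and standard Schauder/parabolic regularity together with the prescribed regularity of the data puts $v\in\cC^{1,2}_b$, so $u=-\log v\in\cC^{1,2}_b$ is a classical solution of \eqref{eq:u-auxiliary-proof-form}; thus $\Psi_1^{\tilde u}$ is well defined on $\cK_{C_1}$.

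Next I would prove the four $L^\infty$ estimates defining the target set. From $v=\mathbb{E}[\cdots]$ and $u=-\log v$, one gets $\|u\|_\infty\le \|g\|_\infty+T\|F\|_\infty$; differentiating inside the expectation (or, equivalently, differentiating $v$ by Bismut--Elworthy--Li and using $u=-\log v$ with $v$ bounded below) gives representations of $\nabla u$ and $D^2 u$ as conditional expectations of $\nabla g, D^2 g$ plus time-integrals involving $\nabla F$ and $D^2 F$; here $\nabla F$ and $D^2 F$ are bounded by the data and by $\|\tilde u\|_\cU\le \Gamma_2(C_g)+\epsilon$. The constants $K(T,C_g)$, $\Gamma_2(C_g)$ and $\Gamma_3(\Gamma_2(C_g))$ in \eqref{eq:def-KTCg}--\eqref{eq:def-Gamma-3} are engineered precisely so that the contribution of $g$ alone closes under these formulas, while the remaining $O(T)$ terms arising from the integrals of $F$ and its derivatives can be absorbed into the $+\epsilon$ slack after choosing $T<T_0$ and $C_g<C_{g,0}$ small enough; the time derivative $\partial_t u$ is then recovered a posteriori from the PDE \eqref{eq:u-auxiliary-proof-form} itself, which turns $\|\partial_t u\|_\infty$ into a combination of $\|D^2 u\|_\infty$, $\|\nabla u\|_\infty^2$ and $\|F\|_\infty$, and the quadratic term $|\nabla u|^2$ is harmless because $\|\nabla u\|_\infty\le K(T,C_g)\to 0$ as $T,C_g\to 0$. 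This gives $\Psi_1^{\tilde u}(\cK_{C_1})\subseteq \cU_{K(T,C_g),\Gamma_2(C_g)+\epsilon,\Gamma_3(\Gamma_2(C_g))+\epsilon}$.

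For the Lipschitz/contraction statement, take $\mu,\mu'\in\cK_{C_1}$, set $u=\Psi_1^{\tilde u}(\mu)$, $u'=\Psi_1^{\tilde u}(\mu')$ and $w=u-u'$. Subtracting the two copies of \eqref{eq:u-auxiliary-proof-form},
\[
    -\partial_t w \;=\; \tfrac12\Delta w - \tfrac12(\nabla u+\nabla u')\cdot\nabla w + \bigl(F(t,x,\mu_t)-F(t,x,\mu'_t)\bigr), \qquad w(T,\cdot)=0.
\]
Since $F$ depends on $\mu$ only through $\langle\mu,V\rangle$ and $\langle\mu,\tilde u\rangle$, Kantorovich--Rubinstein gives
$\|F(t,\cdot,\mu_t)-F(t,\cdot,\mu'_t)\|_\infty \le C(\|V\|_{\mathrm{Lip}},\|\tilde u\|_\cU)\,W_1(\mu_t,\mu'_t)$.
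The drift $\tfrac12(\nabla u+\nabla u')$ is bounded by the previous step, so the linear parabolic equation for $w$ satisfies the maximum principle and standard parabolic Schauder bounds, yielding $\|w\|_\infty + \|\nabla w\|_\infty \le C\,T\,d_\cK(\mu,\mu')$ for a constant $C$ depending only on the data and on $\|\tilde u\|_\cU$. Shrinking $T_0$ further (and $C_{g,0}$, which controls the size of the set in which $\tilde u$ lives) makes this constant arbitrarily small, delivering the contraction statement in the sense compatible with the composition $\Psi^{\tilde u}=\Psi_2\circ\Psi_1^{\tilde u}$ in Proposition~\ref{prop:existence-shortT-Psi-contraction}.

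The main obstacle is the precise bookkeeping needed to hit the \emph{explicit} constants $K(T,C_g)$, $\Gamma_2(C_g)+\epsilon$, $\Gamma_3(\Gamma_2(C_g))+\epsilon$: thresholds on $T_0$, $C_{g,0}$ must be chosen in the right order so that the ``terminal'' parts of the a priori bounds remain sharp (producing the base constants coming from $g$ alone, as designed in the definitions of $\Gamma_2$ and $\Gamma_3$), while the $O(T)$ running contributions from $F$ and its derivatives, which themselves depend on $\tilde u\in\cU_{K(T,C_g),\Gamma_2(C_g)+\epsilon,\Gamma_3(\Gamma_2(C_g))+\epsilon}$, are absorbed in the $+\epsilon$ slack without inflating $\|\nabla u\|_\infty$ beyond $K(T,C_g)$ (an implicit consistency that is exactly what the definition \eqref{eq:def-KTCg} is tailored to ensure). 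The Hopf--Cole transform is what makes the quadratic nonlinearity $|\nabla u|^2$ innocuous enough for this circular sharpness to close.
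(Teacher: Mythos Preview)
Your treatment of well-definedness and of the four a-priori bounds (Parts~(i) and~(ii)) is essentially identical to the paper's: Hopf--Cole $v=e^{-u}$, Feynman--Kac for $v$, $\nabla v$, $D^2 v$, and recovery of $\partial_t u$ from the equation itself. Your reading of the constants $K(T,C_g)$, $\Gamma_2$, $\Gamma_3$ as ``terminal contribution from $g$ closes exactly, running $O(T)$ contributions absorbed in $+\epsilon$'' is also what the paper does, with the same explicit bookkeeping.

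The contraction step is where you and the paper diverge. The paper stays in Hopf--Cole coordinates: with $w^i=e^{-u^i}$ and $\delta w=w^1-w^2$, the PDE for $\delta w$ is genuinely linear with source $w'\,\delta F$, and the paper writes down separate Feynman--Kac formulas for $\delta w$, $\nabla\delta w$, $D^2\delta w$, then reads $\partial_t\delta w$ off the equation, obtaining the full $\|\delta w\|_{\cU}\le C\,d_{\cK}(\mu,\mu')$ with $C$ of order $T$ (plus a term of order $K(T,C_g)$ for the time derivative). You instead subtract the $u$-equations and linearize the quadratic term, which is perfectly legitimate and gives $\|\delta u\|_\infty\le CT\,d_{\cK}(\mu,\mu')$ by the maximum principle. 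The imprecise step is the sentence ``standard parabolic Schauder bounds yield $\|w\|_\infty+\|\nabla w\|_\infty\le CT\,d_{\cK}(\mu,\mu')$'': Schauder controls the $\cC^{2+\alpha}$ norm by the $\cC^{\alpha}$ norm of the source, with no smallness factor in $T$. To recover the $O(T)$ gradient bound in your coordinates you must do what the paper does in its coordinates: differentiate your linear equation, treat $\tfrac12\partial_i(\nabla u+\nabla u')\cdot\nabla(\delta u)$ and $\partial_i\delta F$ as source, and apply Feynman--Kac/maximum principle once more. The paper's choice of staying with $\delta w$ simply avoids the first-order drift $\tfrac12(\nabla u+\nabla u')$ and makes each differentiation produce only bounded zero-order terms, which is marginally cleaner.

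Finally, the paper proves the full $\cU$-norm Lipschitz estimate (including $D^2$ and $\partial_t$), matching the lemma's ``contraction'' claim literally; you stop at $\|\cdot\|_\infty+\|\nabla\cdot\|_\infty$. As you correctly note, only the gradient piece is actually consumed by $\Psi_2$ via Lemma~\ref{lem:W1-mu-delta-nabla-u}, so your weaker estimate suffices for Proposition~\ref{prop:existence-shortT-Psi-contraction}, but it does not reproduce the lemma as stated.
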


\begin{proof}

\textbf{Part (i). $\Psi^{\tilde{u}}_1$ is well defined over $\cK_{C_1}$.}

Consider $\mu \in \cK_{C_1}$. Let $u = \Psi^{\tilde{u}}_1(\mu)$. Let $\mu \in \cK$. We use the Hopf-Cole transform as follows. Let $w = e^{-u}$. We have: $\partial_t w = - w \partial_t u$, $\nabla w = - w \nabla u$, and $\Delta w = w |\nabla u|^2 - w \Delta u$. So, using the equation satisfied by $u$, we deduce that $w$ solves the backward PDE:
$$
    \partial_t w(t,x) = -\frac{1}{2} \Delta w(t,x) + w(t,x) F(t,x,\mu(t)), \qquad w(T,x) = e^{-g(x)}.
$$
Letting $v(t,x) = w(T-t,x)$, we obtain that $v$ solves the forward PDE:
\begin{equation}
\label{eq:existence-shortT-aux-v}
    \partial_t v(t,x) - \frac{1}{2} \Delta v(t,x) + \tilde{F}(t,x) v(t,x) = 0, \qquad v(0,x) = v_0(x),
\end{equation}
where $\tilde{F}(t,x) = -F(T-t,x,\mu(T-t))$ and $v_0(x) = e^{-g(x)}$. This is a heat equation.
We have that $\tilde{F}:[0,T] \times \RR^d \to \RR$ and $v_0: \RR^d \to \RR$ are in $\cC^{1/2}([0,T] \times \RR^d)$ and $\cC^{1/2}(\RR^d)$ respectively. Indeed, recall that we assumed: 
$\tilde{f}$ is $\frac{1}{2}$-H\"older continuous, 
$g$ is bounded and $1/2$-H\"older, 
$V$ is $\frac{1}{2}$-H\"older continuous, 
and $(t,x) \mapsto \tilde{u}(t,x)$ is bounded and $\frac{1}{2}$-H\"older continuous.

So:
\begin{itemize}
    \item For $\tilde{F}$:
    \begin{align*}
        &|\tilde{F}(s,x) - \tilde{F}(t,y)| 
        \\
        &= |-F(T-s,x,\mu(T-s)) + F(T-t,y,\mu(T-t))| 
        \\
        &= |(V(x)-<\mu_s,V>)\tilde{u}(s,x) - (V(y)-<\mu_t,V>)\tilde{u}(t,y)| 
        \\
        &\qquad + |V(x)<\mu_s,\tilde{u}(s)> - V(y)<\mu_t,\tilde{u}(t)>| + |\tilde f(x) - \tilde f(y)| 
        \\
        &\le C |V(x)-V(y)| + C|<\mu_s - \mu_t,V>| + C|\tilde{u}(s,x) - \tilde{u}(t,y)| 
        \\
        &\qquad + C |V(x) - V(y)| + C|<\mu_s-\mu_t,\tilde{u}(s)>| + C|<\mu_t,\tilde{u}(s)-\tilde{u}(t)>| + |\tilde f(x) - \tilde f(y)|
        \\
        &\le C (|x-y|^{1/2} + |s-t|^{1/2}),
    \end{align*}
    where we used the $\frac{1}{2}$-H\"older continuity of $x \mapsto V(x)$, $(t,x) \mapsto \tilde{u}(t,x)$, $t \mapsto <\mu_t,1>$, and $x \mapsto \tilde{f}(x)$, and the fact that $\tilde{u}$ and $V$ are bounded. We also used the fact that, since $\mu \in \cK_{C_1}$, $W_1(\mu_s,\mu_t) \le C_1 |s-t|^{1/2}$. 
    \item Using the fact that the exponential function is locally Lipschitz and $g$ is bounded and $\frac{1}{2}$-H\"older, $|v_0(x) - v_0(y)| \le C|g(x) - g(y)| \le C|x-y|^{1/2}$ where the value of $C$ changes from one inequality to the next one and it may depend on $\|\tilde{u}\|_\infty$, which is at most $K(T,C_g)$.
\end{itemize}

Then the above equation~\eqref{eq:existence-shortT-aux-v} for $v$ has a unique weak solution, and this solution is of class $\cC^{2+\frac{1}{2}}$.
This implies the unique solvability of~\eqref{eq:u-auxiliary-proof-form}, with a solution $u \in \cC^{2+\frac{1}{2}}$. Hence $\Psi^{\tilde{u}}_1$ is well defined on $\cK_{C_1}$.  

\vskip 12pt

\textbf{Part (ii). $\Psi^{\tilde{u}}_1(\cK_{C_1}) \subseteq \cU_{K(T,C_g), \Gamma_2(C_g)+\epsilon, \Gamma_3(\Gamma_2(C_g))+\epsilon}$. }

To alleviate the notation, in the sequel we let $C_3 = C_3(\Gamma_2(C_g))$ with $\Gamma_2(C_g)$ as in the statement. 
We need to bound the norm of $u$ and its derivatives involved in the definition of $\cU$.

\vskip 6pt

{\bf Bound on $\|u\|_\infty$. } 
By Feynman-Kac formula, we have that $w$ satisfies:
\[
    w(t,x) = \EE\left[  e^{-\int_t^T F(s, X_s, \mu(s)) ds} e^{-g(X_T)} \Big| X_t = x\right].
\]
We have that $x \mapsto g(x)$ is bounded by $C_g$ and $x \mapsto F(x,\mu(t))$ is bounded by a constant $C_3$ (recall that we assumed
$\tilde{f}$ is bounded by $C_{\tilde{f}}$, 
$g$ is bounded by $C_g$, 
$(t,x) \mapsto \tilde{u}(t,x)$ is bounded by $C_{\tilde{u}}$,
and $2C_{\tilde{u}} + C_{\tilde{f}} \le C_3$).
We deduce 
\[
    e^{-TC_3 -C_g}
    \le w(t,x)
    \le e^{TC_3 +C_g},
\]
and hence
\[
    u(t,x) = -\log(w(t,x)) 
    \in [- T C_3 - C_g, T C_3 + C_g].
\]
So 
\[
    \|u\|_\infty < T C_3 + C_g \le K(T,C_g).
\]

\vskip 6pt

{\bf Bound on $\|\nabla u\|_\infty$. } Similarly, notice that, for each $i=1,\dots,d$, $\partial_i w$ solves the PDE:
$$
    \partial_t \partial_i w(t,x) = -\frac{1}{2} \Delta \partial_i w(t,x) + \partial_i w(t,x) F(t,x,\mu(t)) + w(t,x) \partial_i F(t,x,\mu(t)), \qquad \partial_i w(T,x) = -\partial_i g(x) e^{-g(x)}.
$$
By Feynman-Kac formula, we have that $\partial_i w$ satisfies:
\[
    \partial_i w(t,x) 
    = \EE\left[ 
        -\int_t^T e^{-\int_t^r F(s, X_s, \mu(s)) ds} w(r, X_r) \partial_i F(r, X_r, \mu(r)) dr
        + e^{-\int_t^T F(s, X_s, \mu(s)) ds} (-\partial_i g(X_T)e^{-g(X_T)}) \Big| X_t = x
    \right]. 
\]
Moreover,
\[
    \partial_i F 
    = -\partial_i V \tilde{u}  -(V-<\mu,V>)\partial_i \tilde{u} 
    + \partial_i V <\mu,\tilde{u}> 
    + \partial_i \tilde f.
\]
So $\|\partial_i F\|_\infty \le C_3$ from our assumptions. 

We deduce
\[
    |\partial_i w(t,x)|
    \le  T e^{T C_3} e^{TC_3+C_g} C_3
        + e^{TC_3}C_ge^{C_g} 
        =  T e^{2T C_3+C_g} C_3
        + e^{TC_3+C_g}.
\]
Since $\partial_i w = -\partial_i u e^{-u} = -\partial_i u w$, we deduce:
\begin{align*}
    |\partial_i u|
    &\le |\partial_i w| |\tfrac{1}{w}|
    \\
    &\le (T C_3 e^{2T C_3+C_g}
        + e^{TC_3+C_g}) e^{TC_3+C_g}
    \\
    &= (T C_3 e^{T C_3}
        + C_g) e^{2(TC_3+C_g)} 
    \\
    &\le e^{3(TC_3+C_g)} (TC_3 + C_g)
    \\
    &= K(T,C_g). 
\end{align*}
 
So:
\[
    \|\nabla u\|_\infty \le K(T,C_g).
\]

\vskip 6pt
{\bf Bound on $\|D^2 u\|_\infty$. } We proceed similarly for the second order derivative. Let $i, j \in \{1,2,\dots,d\}$. We have: 
\begin{align*}
    \partial_t \partial_{ji} w(t,x) 
    &= -\frac{1}{2} \Delta \partial_{ji} w(t,x) + \partial_{ji} w(t,x) F(t,x,\mu(t))  + \partial_{i} w(t,x) \partial_{j} F(t,x,\mu(t)) + w(t,x) \partial_{ji} F(t,x,\mu(t)) 
    \\
    &=: -\frac{1}{2} \Delta \partial_{ji} w(t,x) + \partial_{ji} w(t,x) F(t,x,\mu(t)) + \check{F}_{j,i}(t,x)
    \\
    \partial_{ji} w(T,x) 
    &= -\partial_{ji} g(x) e^{-g(x)} + \partial_{i} g(x) \partial_{j} g(x) e^{-g(x)}
    \\
    &=:\check{G}_{ji}(T,x).
\end{align*}
By Feynman-Kac formula, we have that $\partial_{ji} w$ satisfies:
\[
    \partial_{ji} w(t,x) 
    = \EE\left[ 
        -\int_t^T e^{-\int_t^r F(s, X_s, \mu(s)) ds} \check{F}_{j,i}(r, X_r) dr
        + e^{-\int_t^T F(s, X_s, \mu(s)) ds} \check{G}_{ji}(T, X_T) \Big| X_t = x
    \right]. 
\]
We deduce
\begin{align*}
    |\partial_{ji} w(t,x)|
    &\le  T e^{T C_3} \|\check{F}_{j,i}\|_\infty
        + e^{TC_3}\|\check{G}_{i}\|_\infty
    \\
    &\le T e^{T C_3} (\|\partial_i w\|_\infty \|\partial_j F\|_\infty + \|w\|_\infty \|\partial_{ji} F\|_\infty)
        + e^{TC_3}(C_g e^{C_g} + C_g^2 e^{C_g} ).
\end{align*}
As before, $\|\partial_{i} F\|_\infty \le C_3$. 
Moreover,
\begin{align*}
    \partial_{ji} F 
    &= -\partial_{ji} V \tilde{u} - \partial_j V \partial_i \tilde{u}  -(V-<\mu,V>)\partial_{ji} \tilde{u} 
    \\
    &\qquad + \partial_{ji} V <\mu,\tilde{u}> 
    + \partial_{ji} \tilde f.
\end{align*}
So $\|\partial_{ji} F\|_\infty \le C_3$ from our assumptions. 

Hence:
\begin{align*}
    |\partial_{ji} w(t,x)|
    &\le T e^{T C_3} \left([T e^{2T C_3+C_g} C_3
        + e^{TC_3+C_g}] C_3 + [TC_3 + C_g] C_3\right)
        + e^{TC_3}(C_g e^{C_g} + C_g^2 e^{C_g} )
    \\
    &= T C_3 e^{T C_3} \left([T C_3 e^{T C_3} 
        + 1] e^{TC_3+C_g} + [TC_3 + C_g]\right)
        + e^{TC_3}(1 + C_g) C_g e^{C_g}
    \\
    &\le  \left([TC_3 + C_g + 1] e^{3(TC_3+C_g)} \right) (T C_3+C_g)
    \\
    &\le  2[TC_3 + C_g + 1] e^{3(TC_3+C_g)}.
\end{align*}

Since $\partial_{ji} w = -\partial_{ji} u e^{-u} + \partial_j u \partial_i u e^{-u} = (-\partial_{ji} u + \partial_j u \partial_i u)w$, we deduce:
\begin{align*}
    |\partial_{ji} u|
    &\le |\partial_{ji} w | |\tfrac{1}{w}| + |\partial_j u| |\partial_i u|
    \\
    &\le 2[TC_3 + C_g + 1] e^{3(TC_3+C_g)} e^{TC_3 + C_g} + |C'_{TC_3, C_g}|^2
    \\
    &\le 3[TC_3 + C_g + 1]^2 e^{6(TC_3+C_g)}
    \\
    &=: C''_{TC_3, C_g}.
\end{align*}
Note that, as $T \to 0$, 
\begin{align*}
    C''_{TC_3, C_g} 
    \to 
    C''_{0, C_g} 
    = 3[C_g + 1]^2 e^{6 C_g}
    \le \Gamma_2(C_g).
\end{align*}

\vskip 6pt

{\bf Bound on $\|\partial_t u\|_\infty$. } Last, we obtain a bound on $\|\partial_t u\|_\infty$. From the PDE~\eqref{eq:u-auxiliary-proof-form} satisfied by $u$, we have $\partial_t u(t,x) =  -\frac12\Delta_x u(t,x) + \frac12 |\nabla_x u(t,x)|^2 - F(t, x, \mu_t)$. Using the above bounds, we get that: 
\begin{align*}
    \|\partial_t u\|_\infty 
    &\le \frac{1}{2} \|D^2 u\|_\infty + \frac{1}{2} \|\nabla u\|_\infty^2 + \|F\|_\infty 
    \\
    &\le \frac{1}{2} C''_{TC_3, C_g} + \frac{1}{2}(C'_{TC_3, C_g})^2 + C_3
    \\
    &= \frac{1}{2} 3[TC_3 + C_g + 1]^2 e^{6(TC_3+C_g)} + \frac{1}{2}(e^{3(TC_3+C_g)} (TC_3 + C_g))^2 + C_3
    \\
    &\le 3[TC_3 + C_g + 1]^2 e^{6(TC_3+C_g)} + C_3
    \\
    &=: C'''_{TC_3, C_g}.
\end{align*}
Note that, as $T \to \infty$,
\begin{align*}
    C'''_{TC_3, C_g} 
    \to C'''_{0, C_g}
    = 3[C_g + 1]^2 e^{6 C_g} + C_3
    \le \Gamma_3(\Gamma_2(C_g)).
\end{align*}

\vskip 6pt

{\bf Conclusion of this part. } 
Overall, we obtain that, for any $\epsilon>0$ and any $C_g>0$, there exists $T_0>0$ depending only on the model's parameters and on $\epsilon$ such that: for all $T<T_0$,
\begin{align*}
    \Psi^{\tilde{u}}_1(\cK_{C_1}) 
    \subseteq 
    \, &\cU_{K(T,C_g), \Gamma_2(C_g)+\epsilon, \Gamma_3(\Gamma_2(C_g))+\epsilon}.
\end{align*}

\vskip 6pt

\textbf{Part (iii). $\Psi^{\tilde{u}}_1$ is a contraction on $\cK_{C_1}$. }

Consider $\mu,\mu' \in \cK_{C_1}$. Let $u = \Psi^{\tilde{u}}_1(\mu)$ and $u = \Psi^{\tilde{u}}_1(\mu')$. Based on the above analysis, we have $u, u' \in \cU_{K(T,C_g), \Gamma_2(C_g)+\epsilon, \Gamma_3(\Gamma_2(C_g))+\epsilon}$ for some $\epsilon>0$ depending on $T$ in a non-decreasing way. To alleviate the notations, we let $K_2 = K(T,C_g)$ and $C_2 = \Gamma_2(C_g)+\epsilon$. Note that $K_2$ decreases when $T$ and $C_g$ decrease and $C_2$ when $T$ decreases. Furthermore, we take $T,C_g$ small enough that $K_2 \le C_2$.

Let $w = e^{-u}$, $w' = e^{-u'}$, and $\delta w = w - w'$. 
Intuitively, we want to show that:
$$
    \|\delta w\|_{\cU} 
    \le C d_{\cK}(\mu,\mu'),
$$
where the constant $C$ is strictly smaller than $1$ at least when $T$ and $C_g$ are small enough.

We obtain such a bound in the following way. 

\vskip 6pt

{\bf Bound on $\|\delta w\|_\infty$. } 
First, we note that $\delta w$ satisfies: $\delta w_T = 0$ and:
\begin{equation}
\label{eq:proof-Psi1-deltaw}
    \partial_t \delta w = -\frac{1}{2} \Delta \delta w + \delta w F(t,x,\mu_t) 
    + w' \delta F,
\end{equation}
where $\delta F = F(t,x,\mu_t) - F(t,x,\mu_t')$. 

By Feynman-Kac formula, we have:
\begin{equation}
\label{eq:proof-Psi1-deltaw-FC}
    |\delta w(t,x)| 
    = 
    \EE \left[ \int_t^T e^{-\int_t^r F(s, X_s, \mu_s) ds} |w'(r,X_r) \delta F(r,X_r) dr| \right]
\end{equation}
Next, we bound $\|\delta F\|_\infty$. We have:
\begin{align}
    &|\delta F(t,x)|  = |F(t,x,\mu(t)) - F(t,x,\mu'(t))| 
    \notag\\
    &= |<\mu(t),V>\tilde{u}(t,x) +V(x)<\mu(t),\tilde{u}(t)>
    -<\mu'(t),V>\tilde{u}(t,x) -V(x)<\mu'(t),\tilde{u}(t)> |   
    \notag\\
    &\le |<\mu(t) - \mu'(t),V>||\tilde{u}(t,x)| +V(x)|<\mu(t) - \mu'(t),\tilde{u}(t)>|
    \notag\\
    &= \|\nabla V\|_\infty |<\mu(t) - \mu'(t), \frac{V}{\|\nabla V\|_\infty}>||\tilde{u}(t,x)| +V(x) \|\nabla\tilde{u}(t)\|_\infty|<\mu(t) - \mu'(t),\frac{\tilde{u}(t)}{\|\nabla\tilde{u}(t)\|_\infty}>|
    \notag\\
    &\le K(T,C_g) (\|\nabla V\|_\infty+1) W_1(\mu(t), \mu'(t))
    \\
    &\le C_{\nabla V, K_2} W_1(\mu(t), \mu'(t)),
    \label{eq:Psi1-bound-deltaF}
\end{align}
where $C_{\nabla V, K_2}$ is a constant that depends on $\|\nabla V\|_\infty$ and $K_2=K(T,C_g)$ but remains bounded when they remain bounded. We used the fact that $\frac{V}{\|\nabla V\|_\infty}$ and $\frac{\tilde{u}(t)}{\|\nabla\tilde{u}(t)\|_\infty}$ are $1$-Lipschitz. 

Going back to~\eqref{eq:proof-Psi1-deltaw-FC}, we have:
\begin{align}
    |\delta w(t,x)| 
    &= 
    \EE \left[ \int_t^T e^{-\int_t^r F(s, X_s, \mu_s) ds} |w'(r,X_r) \delta F(r,X_r) dr| \right]
    \notag 
    \\
    &\le C_{\nabla V, C_2} \|w'\|_\infty T \sup_{r \in [0,T]} W_1(\mu_r, \mu_r')
    \notag 
    \\
    &\le T C_{\nabla V, K_2} \sup_{r \in [0,T]} W_1(\mu_r, \mu_r'),
    \label{eq:Psi1-bound-deltaw}
\end{align}
where $C_{\nabla V, K_2}$ is a constant that depends on $\|\nabla V\|_\infty$ and $K_2=K(T,C_g)$ but remains bounded when they remain bounded.

\vskip 6pt

{\bf Bound on $\|\nabla \delta w\|_\infty$. } 
 Taking the gradient on both sides in~\eqref{eq:proof-Psi1-deltaw}, we deduce:
\begin{equation}
\label{eq:proof-Psi1-nabla-deltaw}
    \partial_t \nabla \delta w = -\frac{1}{2} \Delta \nabla w + \nabla \delta w F(t,x,\mu_t) + \underbrace{\delta w \nabla F(t,x,\mu_t) 
    + \nabla w' \delta F
    + w' \nabla \delta F}_{\tilde{F}}. 
\end{equation} 
This equation is to be understood coordinate by coordinate. 
By Feynman-Kac formula, we have:
\begin{equation}
\label{eq:proof-Psi1-nabla-deltaw-FC}
    |\nabla \delta w(t,x)| 
    = 
    \EE \left[ \int_t^T e^{-\int_t^r F(s, X_s, \mu_s) ds} |\tilde F(r,X_r) dr| \right].
\end{equation}
In order to bound $\tilde F$, we note that:
\begin{itemize}
    \item We have, recalling the definition~\eqref{eq:proof-Phi1-def-F} of $F$:
    \begin{align*}
        \nabla F(t,x,\mu) = -\nabla V(x)\tilde{u}(t,x) -(V(x)-<\mu(t),V>)\nabla \tilde{u}(t,x) + \nabla V(x)<\mu(t),\tilde{u}(t)> + \nabla \tilde f(x).
    \end{align*}
    Hence
    \begin{align*}
        \|\nabla F\|_\infty \le K_2(1+2C_{\|\nabla V\|_\infty}) + C_{\nabla \tilde f}.
    \end{align*}
    \item Similarly, for $\delta F$ as defined above,
    \begin{align*}
        &|\nabla \delta F(t, x)| 
        = |F(t,x,\mu(t)) - F(t,x,\mu'(t))| 
        \\
        &\le |<\mu(t),V>\nabla \tilde{u}(t,x) - <\mu'(t),V>\nabla \tilde{u}(t,x)| + |\nabla V(x)<\mu(t),\tilde{u}(t)> - \nabla V(x)<\mu'(t),\tilde{u}(t)>|
        \\
        &\le   \|\nabla \tilde{u}\|_\infty \|\nabla V\|_\infty |<\mu(t)-\mu'(t),\frac{V}{\|\nabla V\|_\infty}>| +  \|\nabla V\|_\infty \|\nabla \tilde{u}\|_\infty |<\mu(t)-\mu'(t),\frac{\tilde{u}(t)}{\|\nabla \tilde{u}\|_\infty}>|
        \\
        &\le K_2 C_{\|\nabla V\|_\infty} \sup_{t \in [0,T]} W_1(\mu(t), \mu'(t)),
    \end{align*}
    using the fact that $\frac{V}{\|\nabla V\|_\infty}$ and $\frac{\tilde{u}(t)}{\|\nabla \tilde{u}\|_\infty}$ are $1$-Lipschitz.
\end{itemize}
Hence,
\[
    \|\tilde{F}\|_\infty 
    \le 
    C_{\nabla V, K_2} \sup_{t \in [0,T]} W_1(\mu(t), \mu'(t)).
\]

Going back to~\eqref{eq:proof-Psi1-nabla-deltaw-FC}, we deduce that:
\begin{equation}
\label{eq:Psi1-bound-nabla-deltaw}
    \|\nabla \delta w\|_\infty \le TC_{\nabla V, K_2} \sup_{t \in [0,T]} W_1(\mu(t), \mu'(t)),
\end{equation}
where here again the constant $C_{\nabla V, K_2}$ depends on $\|\nabla V\|_\infty$ and $K_2$ but remains bounded when they remain bounded.

\vskip 6pt

{\bf Bound on $\|D^2 \delta w\|_\infty$. }

We take the partial derivative $\partial_i$ in the $j$-coordinate of ~\eqref{eq:proof-Psi1-nabla-deltaw}:
\begin{equation}
\label{eq:proof-Psi1-D2-deltaw}
    \begin{split}
    \partial_t \partial_{i,j} \delta w 
    &= -\frac{1}{2} \Delta \partial_{i,j} w + \partial_{i,j} \delta w F(t,x,\mu_t) + \partial_{j} \delta w \partial_{i} F(t,x,\mu_t) \\
    &\qquad + \partial_{i} \delta w \partial_{j} F(t,x,\mu_t) + \delta w \partial_{i,j} F(t,x,\mu_t) 
    + \partial_{i,j} w' \delta F + \partial_{j} w' \partial_{i} \delta F
    + \partial_{i} w' \partial_{j} \delta F + w' \partial_{i,j} \delta F
    \\
    &= -\frac{1}{2} \Delta \partial_{i,j} w + \partial_{i,j} \delta w F(t,x,\mu_t) + \tilde{\tilde{F}},
    \end{split} 
\end{equation} 
with 
$\tilde{\tilde{F}} = \partial_{j} \delta w \partial_{i} F(t,x,\mu_t) + \partial_{i} \delta w \partial_{j} F(t,x,\mu_t) + \delta w \partial_{i,j} F(t,x,\mu_t) 
    + \partial_{i,j} w' \delta F + \partial_{j} w' \partial_{i} \delta F
    + \partial_{i} w' \partial_{j} \delta F + w' \partial_{i,j} \delta F$. Using similar arguments as above, we find:
\[
    \|\tilde{\tilde{F}}\|_\infty
    \le C_{\nabla V, D^2 V, C_2} \sup_{t \in [0,T]} W_1(\mu(t), \mu'(t))
\]
where the constant $C_{\nabla V, D^2 V, C_2}$ depends on $\|\nabla V\|_\infty$, $\|D^2 V\|_\infty$ and $C_2$ (recall that $K_2 \le C_2$) but remains bounded when they remain bounded.

Using Feynman-Kac formula for  $\partial_{i,j} \delta w$ for each $(i,j)$, we obtain:
\begin{equation}
\label{eq:Psi1-bound-D2-deltaw}
    \|D^2 \delta w\|_\infty \le T C_{\nabla V, D^2 V, C_2} \sup_{t \in [0,T]} W_1(\mu(t), \mu'(t)).
\end{equation}

\vskip 6pt

{\bf Bound on $\|\partial_t \delta w\|_\infty$. } 

Using~\eqref{eq:proof-Psi1-deltaw} once again, we deduce
\begin{align}
    \|\partial_t \delta w\|_\infty 
    &\le \frac{1}{2} \|\Delta \delta w\|_\infty + \|\delta w\|_\infty \|F\|_\infty 
    + \|w'\|_\infty \|\delta F\|_\infty
    \notag
    \\
    &\le \left[ T C_{\nabla V, D^2 V, C_2, C_{\tilde f}}  
    +  \|w'\|_\infty K(T,C_g) (\|\nabla V\|_\infty + 1)  \right] \sup_{t \in [0,T]} W_1(\mu(t), \mu'(t))
    \notag
    \\
    &\le \left[ T C_{\nabla V, D^2 V, C_2, C_{\tilde f}}  
    + 2 e^{TC_3 +C_g} K(T,C_g) (\|\nabla V\|_\infty + 1) \right] \sup_{t \in [0,T]} W_1(\mu(t), \mu'(t)),
    \label{eq:Psi1-bound-partialt-deltaw}
\end{align}
where we used~\eqref{eq:Psi1-bound-D2-deltaw}, \eqref{eq:Psi1-bound-deltaw} and \eqref{eq:Psi1-bound-deltaF}. Recall that, for $T$ and $C_g$ small enough, the coefficient multiplying $\sup_{t \in [0,T]} W_1(\mu(t), \mu'(t))$ is strictly smaller than $1$.

\vskip 6pt

{\bf Conclusion. } Combining, \eqref{eq:Psi1-bound-deltaw}, \eqref{eq:Psi1-bound-nabla-deltaw}, \eqref{eq:Psi1-bound-D2-deltaw} and \eqref{eq:Psi1-bound-partialt-deltaw}, we obtain the following result: for any $\epsilon>0$, there exists $T_0>0$ and $C_{g,0}$ depending only on the model's parameters except $C_g$ and on $\epsilon$ such that for all $T<T_0$ and $C_g < C_{g,0}$,
\[
    \|\delta w\|_\cU \le C \sup_{t \in [0,T]} W_1(\mu(t), \mu'(t)),
\]
with $C<1$. 
Hence, under these assumptions, $\Psi^{\tilde{u}}_1$ is a strict contraction on $\cK_{C_1}$. This concludes the proof of Lemma~\ref{lem:existence-shortT-Psi1}. 
\end{proof}

Next, we turn our attention to $\Psi_2$ and prove the following. 
\begin{lemma}
\label{lem:existence-shortT-Psi2}

Let $C_1 > \int |x|^2 \mu_0(dx)$. Let $C_2>0$ and $C_3>0$. There exists $T_0>0$ depending only on the model's parameters, on $C_1$  and on $C_2$, such that if $T<T_0$, the function  $\Psi_2$ is well defined on $\cU_{C_2,C_2,C_3}$, $\Psi_2(\cU_{C_2,C_2,C_3}) \subseteq \cK_{C_1}$ and it is a contraction on $\cU_{C_2,C_2,C_3}$.
\end{lemma}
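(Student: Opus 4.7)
The plan is to construct $\Psi_2(u)$ via the Gibbs-measure representation used in Section~\ref{sub:FPK}, and then to verify the three required properties using It\^o-calculus estimates together with a synchronous coupling of SDEs.

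\textbf{Step 1 (well-posedness).} Fix $u\in\cU_{C_2,C_2,C_3}$ and set $\phi_t(x)=-\nabla_x u_t(x)$. Since $\|\phi\|_\infty\le C_2$ and $\phi$ is Lipschitz in $x$ with constant $C_2$ (from $\|D^2_x u\|_\infty\le C_2$), the SDE $dX_t=\phi_t(X_t)dt+dW_t$ with $X_0\sim\mu_0$ admits a unique strong solution $\bX$. Define $\mu_t$ by the Gibbs formula~\eqref{fo:mu_1_t}. Lemma~\ref{le:FPK1} then gives that $\bmu=(\mu_t)_{0\le t\le T}$ solves the FPK equation appearing in~\eqref{eq:mu-auxiliary-proof-form}, and I take $\Psi_2(u):=\bmu$.

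\textbf{Step 2 (range in $\cK_{C_1}$).} By It\^o on $|X_t|^2$, using $|\phi|\le C_2$ and Young's inequality, one gets $\EE|X_t|^2\le \int|x|^2\mu_0(dx)+(C_2^2+d)T+\int_0^t\EE|X_s|^2ds$, and Gronwall yields $\sup_{0\le t\le T}\EE|X_t|^2\le \bigl(\int|x|^2\mu_0(dx)+(C_2^2+d)T\bigr)e^{T}$. Since $0\le V\le 1$ we have $\EE[e^{-A_t}]\ge e^{-T}$, hence $\int|x|^2\mu_t(dx)\le e^T\EE|X_t|^2$, which is $\le C_1$ for $T$ small (using $C_1>\int|x|^2\mu_0(dx)$). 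For the $1/2$-H\"older estimate in $W_1$, I use the Kantorovich dual formulation: for any 1-Lipschitz $\varphi$ and $0\le s<t\le T$, decompose
\begin{align*}
\int\varphi\,d\mu_t-\int\varphi\,d\mu_s
&=\frac{\EE[(\varphi(X_t)-\varphi(X_s))e^{-A_t}]}{\EE[e^{-A_t}]}+\frac{\EE[\varphi(X_s)(e^{-A_t}-e^{-A_s})]}{\EE[e^{-A_t}]}\\
&\qquad+\int\varphi\,d\mu_s\cdot\frac{\EE[e^{-A_s}-e^{-A_t}]}{\EE[e^{-A_t}]}.
\end{align*}
Using $\EE|X_t-X_s|\le C_2(t-s)+\sqrt{d(t-s)}$ from the SDE, $|e^{-A_t}-e^{-A_s}|\le\|V\|_\infty(t-s)$, the lower bound on the denominators, and the just-proved second-moment control (to handle growth at infinity), each term is bounded by $C(t-s)^{1/2}$. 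The resulting H\"older constant can be made $\le C_1$ for $T$ small.

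\textbf{Step 3 (Lipschitz/contraction in $u$).} Given $u,u'\in\cU_{C_2,C_2,C_3}$, I couple the corresponding SDEs with the same Brownian motion and initial value. Writing $\phi=-\nabla_x u$, $\phi'=-\nabla_x u'$, and noting that $\|\phi-\phi'\|_\infty\le\|u-u'\|_{\cU}$,
\[
|X_t-X_t'|\le C_2\int_0^t|X_r-X_r'|\,dr+t\,\|u-u'\|_{\cU},
\]
so Gronwall gives $\sup_{0\le t\le T}\EE|X_t-X_t'|\le Te^{C_2T}\|u-u'\|_{\cU}$. Using the Lipschitz property of $V$, $|A_t-A_t'|\le\|\nabla V\|_\infty\int_0^t|X_s-X_s'|ds$ admits the same $O(T)$ control. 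For a $1$-Lipschitz $\varphi$ I expand
\[
\mu_t(\varphi)-\mu_t'(\varphi)=\frac{\EE[\varphi(X_t)e^{-A_t}]\,\EE[e^{-A_t'}]-\EE[\varphi(X_t')e^{-A_t'}]\,\EE[e^{-A_t}]}{\EE[e^{-A_t}]\,\EE[e^{-A_t'}]}
\]
and bound the numerator by triangle inequality into pieces controlled by $\EE|X_t-X_t'|$ and by $\EE|e^{-A_t}-e^{-A_t'}|$, using the uniform lower bound $\EE[e^{-A_t}]\ge e^{-T}$ on denominators and the second-moment bound from Step~2 to absorb the growth of $\varphi$. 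This gives $d_{\cK}(\Psi_2(u),\Psi_2(u'))\le L(T)\|u-u'\|_{\cU}$ with $L(T)\le C(C_2,\|\nabla V\|_\infty)\,T\,e^{CT}$, so that $L(T)$ can be made arbitrarily small by choosing $T$ small; combined with the contraction estimate for $\Psi_1^{\tilde u}$ from Lemma~\ref{lem:existence-shortT-Psi1}, this is what drives Proposition~\ref{prop:existence-shortT-Psi-contraction}.

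\textbf{Main obstacle.} The principal technical nuisance is the non-local normalization $\EE[e^{-A_t}]$ appearing in the denominator of the Gibbs representation: all differences of two such ratios have to be expanded into a sum of differences of numerators and of denominators. What keeps this under control is that $V$ is bounded by $1$ (so $\EE[e^{-A_t}]\ge e^{-T}$ uniformly) and Lipschitz (so $|e^{-A_t}-e^{-A_t'}|$ is controlled by $\int_0^t|X_s-X_s'|ds$). A secondary issue is that $W_1$-test functions are unbounded, which is handled by combining 1-Lipschitzness with the uniform second-moment bound of Step~2.
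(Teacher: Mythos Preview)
Your proof follows essentially the same route as the paper's: the Gibbs-measure representation of $\mu_t$, the Kantorovich--Rubinstein three-term decomposition for both the time-H\"older estimate and the $W_1(\mu_t,\mu_t')$ bound, and a synchronous-coupling Gr\"onwall argument to control $\EE|X_t-X_t'|$ (the paper packages your Step~3 as a separate auxiliary lemma). Your Step~2 second-moment bound via It\^o on $|X_t|^2$ combined with $\mu_t\le e^T\nu_t$ is a legitimate variant of the paper's direct FPK computation on $\langle|x|^2,\mu_t\rangle$.

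One point deserves scrutiny in Step~2: the Brownian increment contributes $\sqrt{d(t-s)}$ to $\EE|X_t-X_s|$, so the $1/2$-H\"older constant of $t\mapsto\mu_t$ picks up a term of order $e^T\sqrt d$ that does \emph{not} vanish as $T\to0$. Your assertion that the H\"older constant ``can be made $\le C_1$ for $T$ small'' therefore needs $C_1$ to be at least of this size, which is not guaranteed by the hypothesis $C_1>\int|x|^2\mu_0(dx)$ alone. The paper's own computation at this step actually drops the Brownian term and writes $\EE|X_s-X_t|\le\|\nabla u\|_\infty|s-t|$, which is incorrect but leads it to a H\"older constant $\sqrt{T}\,C_{T,C_2,m_0}\to0$; your inclusion of the Brownian piece is the right move, but it exposes that the lemma as stated needs a slightly stronger lower bound on $C_1$ (or, equivalently, that $C_1$ should be chosen large enough to absorb the diffusion contribution). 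This is a cosmetic repair, not a structural gap.
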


 Combining Lemmas~\ref{lem:existence-shortT-Psi1} and~\ref{lem:existence-shortT-Psi2} with $K_2 = K(T,C_g)$, $C_2 = \Gamma_2(C_g) + \epsilon$ and $C_3 = \Gamma_3(\Gamma_2(C_g))+\epsilon$ under the assumption that $C_1 > \int |x|^2 \mu_0(dx)$ yields Proposition~\ref{prop:existence-shortT-Psi-contraction} (notice that $\cU_{K_2,C_2,C_3} \subseteq \cU_{C_2,C_2,C_3}$). We now prove Lemma~\ref{lem:existence-shortT-Psi2}. 

\begin{proof}[Proof of Lemma~\ref{lem:existence-shortT-Psi2}]

\textbf{Part (i). $\Psi_2$ is well defined on $\cU_{C_2, C_2,C_3}$. }
This fact stems directly from the existence of a solution to the KFP equation.

\vskip 6pt
\textbf{Part (ii). $\Psi_2(\cU_{C_2, C_2,C_3}) \subseteq \cK_{C_1}$ for a suitable value of $C_1$. }

Let $u \in \cU_{C_2, C_2,C_3}$.  Let $\mu$ be the solution to the KFP driven by $\nabla u$, see the forward equation in~\eqref{fo:pde_system-auxiliary-proof}. We need to show the two inequalities in the definition of $\cK$.

We provide an upper bound on the second moment of $\mu_t$.
We have, with $\phi = - \nabla u$, 
\begin{equation*}
\begin{split}
    \partial_t <|x|^2,\mu_t>
    &=<\frac12 \Delta|x|^2 +\phi_t(x)\nabla|x|^2 -(V(x)-<V,\mu_t>)|x|^2,\mu_t>\\
    &\le d +2<x\cdot\phi_t(x),\mu_t>+<|x|^2,\mu_t>\\
    &\le d +4<|\phi_t(x)|^2,\mu_t>+5<|x|^2,\mu_t>\\
    &\le d +4C_2+5<|x|^2,\mu_t>
\end{split}
\end{equation*}
so:
$$
    <|x|^2,\mu_t> \,\le\, <|x|^2,\mu_0> +  t C_{d,C_2}+C\int_0^t<|x|^2,\mu_s> ds.
$$
Since $<|x|^2,\mu_0> + t C_{d,C_2}$ is non-decreasing in $t$, Gronwall's inequality gives
\begin{equation}
\label{eq:psi2-bound-part1}
    \int_{\RR^d}|x|^2\mu_t(dx) 
    \le 
    \left(\int_{\RR^d}|x|^2\mu_0(dx) + T C_{d,C_2}\right) e^{CT}.
\end{equation}

As for the first bound in the definition of $\cK$, we use the definition of $\mu$. Contrary to the usual MFG PDE system, here we cannot interpret $\mu$ as the law of an It\^o process. However, we can argue as follows using the characterization of the Wasserstein-1 distance using Lipschitz functions. Let us recall that, thanks to the Kantorovich-Rubinstein duality theorem, we have:
$$
    W_1(\mu_s, \mu_t)
    =
    \inf_{\varphi: 1-Lip} \int \varphi(x) d(\mu_s(x) - \mu_t(x)) 
    =
    \inf_{\varphi: 1-Lip, \varphi(0) = 0} \int \varphi(x) d(\mu_s(x) - \mu_t(x))
$$
where the second equality holds because $\mu_s$ and $\mu_t$ both have total mass equal to $1$ so we can replace $\varphi(\cdot)$ by $\varphi(\cdot) - \varphi(0)$ without changing the integral. Let $\varphi$ be a $1$-Lipschitz function such that $\varphi(0) = 0$. Since we assume that $m_0$ has compact support, $\EE[|\varphi(X_0)|^2]^{1/2}$ is bounded by a constant $C_{m_0}$ depending only on the support of $m_0$. For any $s \neq t$ in $[0,T]$, we have:
\begin{align*}
    <\mu_s - \mu_t, \varphi>
    &= \frac{\EE[\varphi(X_s) e^{-A_s}]}{\EE[e^{-A_s}]} - \frac{\EE[\varphi(X_t) e^{-A_t}]}{\EE[e^{-A_t}]}
    \\
    &= \frac{\EE[(\varphi(X_s) -\varphi(X_t)) e^{-A_s}]}{\EE[e^{-A_s}]} 
    + \frac{\EE[\varphi(X_t) (e^{-A_s} - e^{-A_t})]}{\EE[e^{-A_s}]}
    + \EE[\varphi(X_t) e^{-A_t}]\left(\frac{1}{\EE[e^{-A_s}]} - 
    \frac{1}{\EE[e^{-A_t}]}
    \right).
\end{align*}
We bound each of the three terms. Below, it is important that $C$ is a constant whose value is independent of $\varphi$. 
\begin{itemize}
    \item For the first term:
    \begin{align*}
        \left|\frac{\EE[(\varphi(X_s) -\varphi(X_t)) e^{-A_s}]}{\EE[e^{-A_s}]}\right| 
        &\le e^{T} \EE[|\varphi(X_s) -\varphi(X_t)|] 
        \\
        &\le e^{T} \EE[|X_s -X_t|] 
        \\
        &\le e^{T} \EE[|\int_t^s -\nabla u(r, X_r) dr|] 
        \\
        &\le e^{T} \|\nabla u\|_\infty |s-t|
        \\
        &\le C_{T,C_2} |s-t|.
    \end{align*} 
    \item For the second term:
    \begin{align*}
        \left|\frac{\EE[\varphi(X_t) (e^{-A_s} - e^{-A_t})]}{\EE[e^{-A_s}]}\right| 
        &\le e^{T} \EE[|\varphi(X_t)| |e^{-A_s} - e^{-A_t}|] 
        \\
        &\le e^{T} \EE[|\varphi(X_t)|^2]^{1/2} \EE[|e^{-A_s} - e^{-A_t}|^2]^{1/2}.
    \end{align*}
    We first note that $A_s,A_t \in [0,T]$ and the exponential function is locally Lipschitz so it is Lipschitz on $[0,T]$. So $|e^{-A_s} - e^{-A_t}| \le C|A_s - A_t| \le C|s - t|$. Moreover, $\EE[|\varphi(X_t)|^2]^{1/2} \le \EE[|\varphi(X_0)|^2]^{1/2} + \EE[|\varphi(X_t) - \varphi(X_0)|^2]^{1/2}$, and $\EE[|\varphi(X_0)|^2]^{1/2}<C_{m_0}$, while 
    \begin{align*}
        \EE[|\varphi(X_t) - \varphi(X_0)|^2]
        &\le \EE[|X_t - X_0|^2] 
        \\
        &= \EE[|\int_0^t -\nabla u(r, X_r) dr|^2] 
        \\
        &\le (T \|\nabla u\|_\infty)^2 
        \\
        &\le T^2 C_2^2.
    \end{align*} 
 
    \item To bound $\EE[\varphi(X_t) e^{-A_t}]\left(\frac{1}{\EE[e^{-A_s}]} - 
    \frac{1}{\EE[e^{-A_t}]}\right)$, we first note that 
    \begin{align*}
        \EE[|\varphi(X_t)| e^{-A_t}] 
        &\le \EE[|\varphi(X_t) - \varphi(X_0)|] + \EE[|\varphi(X_0)|]
        \\
        &\le T C_2 + C_{m_0}
        \\
        &\le C_{T, C_2, m_0}    
    \end{align*}
    with similar arguments as above, and moreover $\left|\frac{1}{\EE[e^{-A_s}]} - 
    \frac{1}{\EE[e^{-A_t}]} \right| = \left|\frac{1}{\EE[e^{-A_s}]\EE[e^{-A_t}]}(\EE[e^{-A_t} - e^{-A_s}])\right| \le C_T |t-s|$ with similar arguments as above.
\end{itemize}
So we have shown that: there exists a constant $C_{T,C_2,m_0}$ such that for every 1-Lipschitz function $\varphi$ and every $s, t \in [0,T]$, $<\mu_s - \mu_t, \varphi> \le C_{T,C_2,m_0} |t-s|$. Taking the supremum over 1-Lipschitz functions $\varphi$ yields:
$\sup_{s \neq t} \frac{W_1(\mu_s, \mu_t)}{|s-t|^{1/2}} \le C_1 = \sqrt{T} C_{T,C_2,m_0}$, where $C_{T,C_2,m_0}$ is bounded for bounded $T$. 

Combining this result with~\eqref{eq:psi2-bound-part1}, we obtain that 
$ \mu \in \cK_{C_1'(T)}$ with
$$
    C_1'(T) = \max\left\{\left(\int_{\RR^d}|x|^2\mu_0(dx) + T C_{d,C_2}\right) e^{CT}, \sqrt{T} C_{T,C_2,m_0}\right\}.
$$
Hence, for this $C_1'(T)$, 
$$
    \Psi_2(\cU_{C_2,C_3}) \subseteq \cK_{C_1'(T)}.
$$
Remember that we assumed $C_1 > \int_{\RR^d}|x|^2\mu_0(dx)$. So for $T$ small enough depending only on the model's parameters, on $C_1$ and on $C_2$, we have: $C_1'(T) \le C_1$. In that case,
$$
    \Psi_2(\cU_{C_2,C_2,C_3}) \subseteq \cK_{C_1}.
$$

\vskip 12pt

\textbf{Part (iii). $\Psi_2$ is a strict contraction on $\cU_{C_2,C_2,C_3}$. }

Let $u,u' \in \cU_{C_2,C_2,C_3}$. Let $\mu = \Psi_2(u),\mu' = \Psi_2(u)$. 

From Lemma~\ref{lem:W1-mu-delta-nabla-u} below, we get:
\begin{align*}
    W_1(\mu_t, \mu_t')
    & \le T C_{T,C_2} \| \nabla \delta u \|_\infty.
\end{align*}
where $C_{T,C_2}$ is bounded for bounded $T$. 
So for $T < 1/C_{T,C_2}$, $\Psi_2$ is a strict contraction. 
\end{proof}

%%%%%%%%%%%%%%%%%%%%%%%%%%%%%%%%%%%%%
\subsection{Step 2. Proof of Proposition~\ref{prop:existence-shortT-Phi-contraction}}

We split the proof into two lemmas. 

\begin{lemma}
\label{lem:proof-existence-deltau-Nu}
Let $\epsilon>0$. 
Let $T_0>0$ and $C_{g,0}$ be as in Proposition~\ref{prop:existence-shortT-Psi-contraction} and consider any $T<T_0$ and $C_g < C_{g,0}$. Consider $\tilde u^1,\tilde u^2 \in \cU_{K(T,C_g), \Gamma_2(C_g)+\epsilon, \Gamma_3(\Gamma_2(C_g))+\epsilon}$. Denote by $(u^1,\mu^1), (u^2,\mu^2)$ the solutions to system~\eqref{fo:pde_system-auxiliary-proof} corresponding respectively to $\tilde u = \tilde u^1$ and $\tilde u = \tilde u^2$. Let us denote $\delta u = u^1-u^2$, $\delta \tilde{u} = \tilde{u}^1-\tilde{u}^2$. 
Then there exists a constant $C_{\epsilon, T_0, C_{g,0}}$ depending only on $\epsilon$, on $T_0$, on $C_{g,0}$, and on the model's parameters such that: 
\begin{equation}
        \|\delta u\|_\cU 
    \le \max\{T, \|V\|_\infty, K(T,C_g)\} C_{\epsilon, T_0, C_{g,0}} (1+e^{T L_{\nabla u^2}})\|\delta\tilde{u}\|_\cU.
    \label{eq:proof-existence-step2-abc}
\end{equation}
\end{lemma}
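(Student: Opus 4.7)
The plan is to derive a (nearly) linear backward PDE for $\delta u:=u^1-u^2$, represent it by Feynman-Kac along an auxiliary SDE driven by $\nabla u^2$, bound the source terms termwise using the regularity of $\tilde u^1,\tilde u^2\in\cU_{K(T,C_g),\Gamma_2(C_g)+\epsilon,\Gamma_3(\Gamma_2(C_g))+\epsilon}$, invoke Lemma \ref{lem:W1-mu-delta-nabla-u} to replace $W_1(\mu^1_s,\mu^2_s)$ by $T\cdot\|\nabla\delta u\|_\infty$, and then close a smallness bootstrap for $T$, $\|V\|_\infty$ and $C_g$.

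First, subtract the backward equations of system~\eqref{fo:pde_system-auxiliary-proof} for $i=1,2$. Writing $\tfrac12|\nabla u^1|^2-\tfrac12|\nabla u^2|^2=\nabla u^2\cdot\nabla\delta u+\tfrac12|\nabla\delta u|^2$ and splitting the bilinear non-local terms (by inserting and subtracting $<\mu^1,\tilde u^2>$ etc.), one obtains
\begin{equation*}
\partial_t\delta u+\tfrac12\Delta\delta u-\nabla u^2\cdot\nabla\delta u=\tfrac12|\nabla\delta u|^2+\tilde S,\qquad \delta u_T=0,
\end{equation*}
where $\tilde S=(V-<\mu^1,V>)\delta\tilde u-V<\mu^1,\delta\tilde u>+<\delta\mu,V>\tilde u^2-V<\delta\mu,\tilde u^2>$. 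The zero-order coefficient cancels, and the drift $-\nabla u^2$ is uniformly bounded and Lipschitz in $x$ with constant $L_{\nabla u^2}\le \Gamma_2(C_g)+\epsilon$.

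Second, introduce, on an auxiliary probability space, the SDE $dX_s^{t,x}=-\nabla u^2(s,X_s^{t,x})ds+dW_s$ with $X_t^{t,x}=x$; since $\nabla u^2$ is bounded and Lipschitz this SDE has a unique strong solution, and the classical Feynman-Kac formula gives
\begin{equation*}
\delta u(t,x)=\EE\left[\int_t^T\left(\tfrac12|\nabla\delta u(s,X_s^{t,x})|^2+\tilde S(s,X_s^{t,x})\right)ds\right].
\end{equation*}
Next I bound $\tilde S$ termwise. Using $\|\tilde u^i\|_\infty,\|\nabla\tilde u^i\|_\infty\le K(T,C_g)$ and the dual characterisation of $W_1$ through Lipschitz test functions, one finds
\begin{equation*}
\|\tilde S\|_\infty\le C\,\|V\|_\infty\,\|\delta\tilde u\|_\cU+C\,(\|\nabla V\|_\infty+K(T,C_g))\,\sup_{s}W_1(\mu^1_s,\mu^2_s),
\end{equation*}
with analogous estimates for $\nabla\tilde S$ and $D^2\tilde S$, obtained by differentiating the four constituent terms of $\tilde S$ and using the uniform bounds on the derivatives of $V$, $\tilde u^1$, $\tilde u^2$.

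Third, to control $\nabla\delta u$ and $D^2\delta u$, differentiate the Feynman-Kac representation in $x$. The first variation $\partial_x X_s^{t,x}$ satisfies a linear ODE with coefficient $-D^2u^2(s,X_s^{t,x})$, so Gronwall yields $\|\partial_xX^{t,x}\|_\infty\le e^{TL_{\nabla u^2}}$; the second variation is controlled analogously, producing at most one power of $e^{TL_{\nabla u^2}}$ and constants depending on $C_2$. Combined with Lemma~\ref{lem:W1-mu-delta-nabla-u}, namely $\sup_s W_1(\mu^1_s,\mu^2_s)\le TC\,\|\nabla\delta u\|_\infty$, this yields
\begin{equation*}
\|\delta u\|_\infty+\|\nabla\delta u\|_\infty+\|D^2\delta u\|_\infty\le C(1+e^{TL_{\nabla u^2}})\max\{T,\|V\|_\infty,K(T,C_g)\}\bigl[\|\delta\tilde u\|_\cU+\|\nabla\delta u\|_\infty+\|\nabla\delta u\|_\infty^2\bigr],
\end{equation*}
and $\|\partial_t\delta u\|_\infty$ is then recovered directly from the PDE satisfied by $\delta u$, which involves only already-controlled quantities. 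For $T<T_1$, $\|V\|_\infty<C_{V,1}$, $C_g<C_{g,1}$ chosen small enough in terms of $T_0$, $C_{g,0}$, $\epsilon$ and the model constants, the coefficient of $\|\nabla\delta u\|_\infty$ (and of the quadratic term $\|\nabla\delta u\|_\infty^2$, which is a priori at most $\|\delta u\|_\cU\cdot\|\nabla\delta u\|_\infty$) on the right-hand side is strictly smaller than $1/2$; absorbing it to the left produces the claimed bound
\begin{equation*}
\|\delta u\|_\cU\le\max\{T,\|V\|_\infty,K(T,C_g)\}\,C_{\epsilon,T_0,C_{g,0}}\,(1+e^{TL_{\nabla u^2}})\,\|\delta\tilde u\|_\cU.
\end{equation*}

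The main obstacle is the bootstrap coupling between $\delta u$ and $\delta\mu$: the source $\tilde S$ contains $<\delta\mu,\cdot>$, which via Lemma~\ref{lem:W1-mu-delta-nabla-u} is controlled by $\|\nabla\delta u\|_\infty$, while the linearisation of the nonlinearity leaves the quadratic remainder $\tfrac12|\nabla\delta u|^2$. Both terms can only be absorbed by smallness of $T$, $\|V\|_\infty$ and $C_g$ simultaneously, and one must carefully track that each of them contributes one of the three prefactors appearing in the $\max\{\cdot\}$ on the right-hand side; the factor $e^{TL_{\nabla u^2}}$ appears once and only once because the Hessian bound on $u^2$ is uniform on $\cU_{K(T,C_g),\Gamma_2(C_g)+\epsilon,\Gamma_3(\Gamma_2(C_g))+\epsilon}$, preventing the compounding of exponentials when differentiating the Feynman-Kac representation twice.
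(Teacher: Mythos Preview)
Your overall strategy—subtract the backward equations, represent $\delta u$ via Feynman–Kac along the SDE with drift $-\nabla u^2$, and close via Lemma~\ref{lem:W1-mu-delta-nabla-u}—is reasonable for $\|\delta u\|_\infty$ and $\|\nabla\delta u\|_\infty$, but it breaks down at the $D^2\delta u$ step, and this is a genuine gap rather than a technicality.

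The problem is that the second variation process $\partial_{xx}X^{t,x}$ solves
\[
d(\partial_{xx}X_s)=-D^3u^2(s,X_s)[\partial_xX_s,\partial_xX_s]\,ds-D^2u^2(s,X_s)\,\partial_{xx}X_s\,ds,
\]
so controlling it requires a bound on $D^3u^2$, which is \emph{not} part of the $\cU$-norm (the space $\cU_{K_2,C_2,C_3}$ only controls up to $D^2_x$). The same obstruction appears if instead you differentiate the PDE for $\delta u$ twice: the first-order coefficient $\nabla u^2$ (or $\tfrac12(\nabla u^1+\nabla u^2)$ if you use the exact factorisation $a^2-b^2=(a+b)(a-b)$) produces, after two $x$-derivatives, a term $D^3u^i\cdot\nabla\delta u$ in the source. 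Likewise, the quadratic remainder $\tfrac12|\nabla\delta u|^2$ yields $D^3\delta u\cdot\nabla\delta u$ upon two differentiations. Your remark that ``the Hessian bound on $u^2$ is uniform \dots\ preventing the compounding of exponentials'' does not address this: a uniform $D^2$ bound gives the Lipschitz constant of $\nabla u^2$, not a bound on $D^3u^2$.

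The paper avoids this entirely by passing to the Hopf--Cole variables $w^i=e^{-u^i}$. The point is that $w^i$ solves the \emph{linear} equation $\partial_t w^i+\tfrac12\Delta w^i-w^iF^i=0$, where $F^i$ depends only on $\tilde u^i$, $\mu^i$, $V$ and $\tilde f$—not on $u^i$ or its derivatives. Differentiating this PDE in $x$ once or twice yields linear equations for $\nabla w^i$, $D^2w^i$ whose sources involve only $\nabla F^i$, $D^2F^i$; these are controlled because $\tilde u^i\in\cU$ carries $C^2$ bounds. The Feynman--Kac representation is then along pure Brownian motion (no drift, hence no variation process), and one reads off bounds on $\|\nabla\delta w\|_\infty$, $\|D^2\delta w\|_\infty$ directly, then transfers them back to $\delta u$ via $u=-\log w$. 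This is precisely the mechanism that lets the paper reach the $D^2$ level without ever touching third derivatives of $u^i$.
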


\begin{proof}
Note that we know, from Proposition~\ref{prop:existence-shortT-Psi-contraction}, that $u^1$ and $u^2$ are in $\cU_{K(T,C_g), \Gamma_2(C_g)+\epsilon, \Gamma_3(\Gamma_2(C_g))+\epsilon}$.  To see this, we first find the fixed points $\mu^1, \mu^2 \in \cK_{C_1}$ (for some $C_1$ as in the statement, which depends on $T_0$ and $\Gamma_2(C_g)+\epsilon$) of $\Psi^{\tilde{u}}$ and then we apply $\Psi_2$ to obtain $u^1$ and $u^2$. We denote $\delta u = u^1-u^2$, $\delta \tilde{u} = \tilde{u}^1-\tilde{u}^2$ and $\delta \mu = \mu^1-\mu^2$.

The goal is to bound each term appearing in the definition of the norm $\|\delta u\|_\cU$ by the norm $\|\delta \tilde{u}\|_\cU$. We split the proof into several steps. To alleviate the notation, we denote $K_2 = K(T,C_g)$, $C_2 = \Gamma_2(C_g)+\epsilon$ and $C_3 = \Gamma_3(\Gamma_2(C_g))+\epsilon$. Recall that $T$ and $C_g$ are small enough that $K_2 \le C_2$. 

\vskip 6pt
{\bf Part (ii). Bound on $\|u\|_\infty$. } 

We have:
\begin{align*}
    0&=
    \partial_t (u^1-u^2) + \frac12\Delta_x (u^1-u^2) - \frac12 |\nabla_x u^1|^2 + \frac12 |\nabla_x u^2|^2 
    \\ 
    &\qquad -(V-<\mu^1,V>)\tilde{u}^1 +(V-<\mu^2,V>)\tilde{u}^2  
    \\
    &\qquad +V<\mu^1,\tilde{u}^1> -V<\mu^2,\tilde{u}^2>
    \\
    &=
    \partial_t \delta u + \frac12\Delta_x \delta u - \frac12 \nabla_x \delta u (\nabla_x u^1+\nabla_x u^2)
    \\ 
    &\qquad -V \delta \tilde{u} +<\mu^1,V>\delta\tilde{u} +<\delta\mu,V>\tilde{u}^2  
    \\
    &\qquad +V<\mu^1,\delta\tilde{u}> + V<\delta\mu,\tilde{u}^2>,
\end{align*}
with terminal condition $\delta u(T,x) = 0$ for all $x$. 
Rearranging the terms, we obtain:
\begin{align*}
    0
    &=\partial_t \delta u + \frac12\Delta_x \delta u - \frac12 \tilde{h} \nabla_x \delta u 
    \\ 
    &\qquad - (V - <\mu^1,V>)\delta\tilde{u} +V<\mu^1,\delta\tilde{u}> + \tilde{q} 
\end{align*}
with $\tilde{h} = (\nabla_x u^1+\nabla_x u^2)$, and $\tilde{q} = <\delta\mu,V>\tilde{u}^2   + V<\delta\mu,\tilde{u}^2>$.

Note that:
\begin{equation*}
    |- (V - <\mu^1,V>)\delta\tilde{u} +V<\mu^1,\delta\tilde{u}>| \le 2 \|\delta\tilde{u}\|_\infty,  \qquad 
    |\tilde{q}| \le 2 C_2 \sup_{t\in[0,T]}W_1(\mu^1_t,\mu^2_t).
\end{equation*}

We can check that $\underline{u}(t,x) = - (T-t)2(\|\delta\tilde{u}\|_\infty + C_2 \sup_{t\in[0,T]}W_1(\mu^1_t,\mu^2_t))$ and $\overline{u}(t,x) = (T-t)2(\|\delta\tilde{u}\|_\infty + C_2 \sup_{t\in[0,T]}W_1(\mu^1_t,\mu^2_t))$ are respectively a sub-solution and a super-solution. By comparison principle, we get: 
\begin{equation}
\label{eq:deltau-infty-deltatilreu-infty}
    \|\delta u\|_\infty \le T2 \max(C_2, 1)(\|\delta\tilde{u}\|_\infty + \sup_{t\in[0,T]}W_1(\mu^1_t,\mu^2_t)).
\end{equation}

\vskip 6pt
{\bf Part (ii). Hopf-Cole and Feynman-Kac formulas. } 

We introduce $w^i = e^{-u^i}, i=1,2$. They solve: 
$$
    \partial_t w^i(t,x) = -\frac{1}{2} \Delta w^i(t,x) + w^i(t,x) F^i(t,x,\mu^i(t)), \qquad w^i(T,x) = e^{-g(x)},
$$
where $F^i(t, x, \mu) = -(V(x)-<\mu,V>)\tilde{u}^i(t,x) +V(x)<\mu,\tilde{u}^i(t)>+\tilde f(x)$. 

Taking the gradient on both sides, we obtain (this PDE is understood coordinate by coordinate):
\begin{equation}
\label{eq:proof-existence-step2-nabla-w}
    \partial_t \nabla w^i(t,x) = -\frac{1}{2}  \Delta \nabla w^i(t,x) + \nabla w^i(t,x) F^i(t,x,\mu^i(t)) + w^i(t,x) \nabla F^i(t,x,\mu^i(t)), \qquad \nabla w^i(T,x) = -\nabla g(x) e^{-g(x)}.
\end{equation}
Now, taking the difference and denoting $\delta w = w^1 - w^2$ and $\delta F(t,x) = F^1(t,x,\mu^1(t)) - F^2(t,x,\mu^2(t))$,
\begin{equation}
\label{eq:proof-existence-step2-nabla-delta-w}
    \partial_t \nabla \delta w(t,x) = -\frac{1}{2}  \Delta \nabla \delta w(t,x) + \nabla \delta w(t,x) F^1(t,x,\mu^1(t))  + \check{F}(t,x), 
    \qquad \nabla \delta w(T,x) = 0,
\end{equation}
where 
\begin{equation}
    \label{eq:proof-existence-def-checkF}
    \check{F}(t,x) = - \nabla w^1(t,x) \delta F(t,x) + w^1(t,x) \nabla F^1(t,x,\mu^1(t)) - w^2(t,x) \nabla F^2(t,x,\mu^2(t)).
\end{equation}
We deduce by Feynman-Kac formula that $\nabla \delta w$ satisfies:
\begin{align}
\label{eq:proof-existence-step2-gradientw}
    \nabla \delta w(t,x) = \EE\left[\int_t^T e^{-\int_t^r F^1(s,W_s,\mu^1(s)) ds} \check{F}(r,W_r)dr \,\Big|\, W_t = x \right],
\end{align}
where $W$ is a standard Brownian motion. We want to bound the right-hand side by a quantity that is proportional to $T$ and that depends on $\delta\tilde u$ and $\delta \mu$.

\vskip 6pt
{\bf Part (iii). Bound on $\check{F}$. } 
We focus on $\check{F}$ defined in~\eqref{eq:proof-existence-def-checkF} and show that $\|\check{F}\|_\infty
    \le C(\|\delta \tilde u\|_\infty + \|\delta \nabla \tilde u\|_\infty + \sup_{t\in[0,T]}W_1(\mu^1_t,\mu^2_t) )$.

Recall that $\tilde u^i \in \mathcal{E}$. Moreover, 
\begin{align*}
    \delta F(t,x) 
    &= -(V(x)-<\mu^1(t),V>)\delta\tilde{u}(t,x) + <\delta\mu(t),V>)u^2(t,x)  
    \\
    &\qquad +V(x)<\mu^1(t),\delta\tilde{u}(t)> + V(x)<\delta\mu(t),\tilde{u}^2(t)>.
\end{align*}
So, using the fact that $V$ and $u^i,i=1,2,$ are Lipschitz, we deduce that 
\begin{align}
    \|\delta F \|_\infty
    \le C(\|\delta \tilde u\|_\infty + \sup_{t\in[0,T]}W_1(\mu^1_t,\mu^2_t)),
    \label{eq:proof-existence-phi-deltaF-tildeu}
\end{align}
where the constant $C$ may depend on the Lipschitz constant of $V$ and on $C_2$. 

So we can bound the first term in $\check{F}$: $\|\nabla w^1 \delta F\|_\infty \le C(\|\delta\tilde{u}\|_\infty + \sup_{t\in[0,T]}W_1(\mu^1_t,\mu^2_t))$, for some constant $C$ depending on $C_2$.  

For the second and third terms, we have:
$$
    w^1(t,x) \nabla F^1(t,x,\mu^1(t)) - w^2(t,x) \nabla F^2(t,x,\mu^2(t))
    = \delta w(t,x) \nabla F^1(t,x,\mu^1(t)) + w^2(t,x) \nabla \delta F(t,x), 
$$
which can be bounded by $C(\|\delta w\|_\infty + \|\delta\tilde{u}\|_\infty + \sup_{t\in[0,T]}W_1(\mu^1_t,\mu^2_t))$, for some constant $C$ depending on $C_2$. We analyze each term as follows: 
\begin{itemize}
    \item Since $w^i = e^{-u^i}$ and $\|u^i\|_\infty \le C_2$ and since the exponential function is locally Lipschitz, $\|\delta w\|_\infty \le C \|\delta u\|_\infty$. Remember that equation~\eqref{eq:deltau-infty-deltatilreu-infty} in \textbf{Part (i)} gave $\|\delta u\|_\infty \le 2T\max(C_2, 1)(\|\delta\tilde{u}\|_\infty + \sup_{t\in[0,T]}W_1(\mu^1_t,\mu^2_t))$. So we obtain: 
    \begin{equation}\label{eq:proof-existence-phi-deltaw-deltatildeu}
        \|\delta w\|_\infty \le 2 C T(\|\delta\tilde{u}\|_\infty + \sup_{t\in[0,T]}W_1(\mu^1_t,\mu^2_t)),    
    \end{equation} 
    where $C$ may depend on $C_2$. 
    \item We have $\nabla \delta F(t,x) 
    = - \nabla V(x) \delta\tilde{u}(t,x)
    -(V(x)-<\mu^1(t),V>)\delta \nabla \tilde{u}(t,x) + <\delta\mu(t),V>) \nabla u^2(t,x)  
    +V(x)<\mu^1(t),\delta\tilde{u}(t)> + \nabla V(x)<\delta\mu(t),\tilde{u}^2(t)>$. So $\|\nabla \delta F \|_\infty \le C(\| \delta \tilde u \|_\infty + \| \delta \nabla \tilde u \|_\infty + \sup_{t\in[0,T]}W_1(\mu^1_t,\mu^2_t) )$. 
\end{itemize} 
Hence,
$$
    \|\check{F}\|_\infty
    \le C(T+1)(\|\delta \tilde u\|_\infty + \|\delta \nabla \tilde u\|_\infty + \sup_{t\in[0,T]}W_1(\mu^1_t,\mu^2_t) ),
$$
where the constant $C$ depends on $C_2$ and $\|V \|_\infty$.

\vskip 6pt
{\bf Part (iv). Bound on $\|\nabla \delta w\|_\infty$.  } 
Going back to \eqref{eq:proof-existence-step2-gradientw}, we deduce:
\begin{align*}
    \|\nabla \delta w\|_\infty 
    &\le \EE\left[\int_0^T e^{-\int_0^r F^1(s,W_s,\mu^1(s)) ds} \|\check{F} \|_\infty dr \right]
    \\
    &\le T  C(T+1)(\|\delta \tilde u\|_\infty + \|\delta \nabla \tilde u\|_\infty + \sup_{t\in[0,T]}W_1(\mu^1_t,\mu^2_t) ),
\end{align*}
where the constant $C$ depends on $C_2$, $V$ and $\|V \|_\infty$.

Using again the local Lipschitz property of the exponential function, we deduce that a similar bound holds for  $\|\nabla \delta u\|_\infty $ except that the constant $C$ depends on $C_2$, $\|V \|_\infty$ and  $\|\nabla V \|_\infty$. 

\vskip 6pt
{\bf Part (v). Bound on $\|D^2 \delta w\|_\infty$.  }

We can take one more derivative in~\eqref{eq:proof-existence-step2-nabla-delta-w} and proceed as we did for $\nabla w$ by using Feynman-Kac formula. Using the fact that $u^i \in \mathcal{U}$, we deduce that 
\begin{equation}\label{eq:proof-existence-phi-D2deltaw}
    \|D^2 \delta w\|_\infty \le T  C(T+1)(\|\delta \tilde u\|_\infty + \|\delta \nabla \tilde u\|_\infty + \|\delta D^2\tilde u\|_\infty + \sup_{t\in[0,T]}W_1(\mu^1_t,\mu^2_t) )
\end{equation}
where the constant $C$ depends on $C_2$, $V$, $\|\nabla V \|_\infty$ and  $\|D^2 V \|_\infty$.  Using again the local Lipschitz property of the exponential function, we deduce that a similar bound holds for  $\|D^2 \delta u\|_\infty $ except that the constant $C$ depends on $C_2$, $\|V \|_\infty$, $\|\nabla V \|_\infty$ and  $\|D^2 V \|_\infty$. 

\vskip 6pt
{\bf Part (vi). Bound on $\|\partial_t \delta w\|_\infty$. }  

Note that $\delta w$ solves:
\begin{align*}
    \partial_t \delta w(t,x) 
    & = -\frac{1}{2} \Delta \delta w(t,x) + \delta w(t,x) F^1(t,x,\mu^1(t)) + w^1(t,x) \delta F(t,x),
\end{align*}
where we recall that: 
\[
    F^1(t, x, \mu) = -(V(x)-<\mu,V>)\tilde{u}^1(t,x) +V(x)<\mu,\tilde{u}^1(t)>+\tilde f(x)
\]
and
\begin{align*}
    \delta F(t,x) 
    &= -(V(x)-<\mu^1(t),V>)\delta\tilde{u}(t,x) + <\delta\mu(t),V>)u^2(t,x)  
    \\
    &\qquad +V(x)<\mu^1(t),\delta\tilde{u}(t)> + V(x)<\delta\mu(t),\tilde{u}^2(t)>.
\end{align*}

To bound $\|\partial_t \delta w\|_\infty$, we will combine the following inequalities:
\begin{itemize}
    \item for $D^2 \delta w$: \eqref{eq:proof-existence-phi-D2deltaw}
    \item for $\delta w$: \eqref{eq:proof-existence-phi-deltaw-deltatildeu}
    \item for $F^1$: it is bounded (by a constant depending only on the model's parameters and $C_2,C_3$) since we assumed $\tilde{f}$ is bounded by $C_{\tilde{f}}$,  $g$ is bounded by $C_g$,  $(t,x) \mapsto \tilde{u}^1(t,x)$ is bounded by $C_{\tilde{u}^1} \le C_2$, and $2C_{\tilde{u}^1} + C_{\tilde{f}} \le C_3$
    \item for $w^1$: its norm is bounded by a constant depending only on $C_2$
    \item for $\delta F$:
    \begin{align*}
        \|\delta F(t,x)\|
        &\le 2\|V\|_\infty \|\delta\tilde{u}\|_\infty + C(\|u^2\|_\infty + \|\tilde{u}^2\|_\infty) \sup_{t\in[0,T]}W_1(\mu^1_t,\mu^2_t),
    \end{align*}
    where $C$ depends on the Lipschitz constant of $V$ and on $C_2$.  
\end{itemize}

Combining the above bounds, we obtain:
$$
    \|\partial_t \delta w\|_\infty  \le \max\{T, \|V\|_\infty, K(T,C_g)\}  C(T+1)(\|\delta \tilde u\|_\infty + \|\delta \nabla \tilde u\|_\infty + \|\delta D^2\tilde u\|_\infty + \sup_{t\in[0,T]}W_1(\mu^1_t,\mu^2_t) )
$$
where the constant $C$ depends (in a non-increasing way) on $C_2$, $C_3$, $V$, $\|\nabla V \|_\infty$ and  $\|D^2 V \|_\infty$. Using again the local Lipschitz property of the exponential function, we deduce that a similar bound holds for  $\|\partial_t \delta u\|_\infty $.

\vskip 6pt
{\bf Part (vii). Conclusion. } 

Combining the above results, we have: 
\begin{equation}
    \label{eq:proof-existence-step2-ab}
    \|\delta u\|_\cU \le \max\{T, \|V\|_\infty, K(T,C_g)\}(T+1)C(\|\delta\tilde{u}\|_\cU  + \sup_{t\in[0,T]}W_1(\mu^1_t,\mu^2_t) ).
\end{equation}

From Lemma~\ref{lem:W1-mu-delta-nabla-u}, we have
$
    W_1(\mu^1_t,\mu^2_t) \le TC e^{C L_{\nabla u^2}}\|\delta\nabla u\|_\infty.
$
We then deduce:
\begin{align*}
    \|\delta u\|_\cU 
    &\le \max\{T, \|V\|_\infty, K(T,C_g)\} C_{\epsilon, T_0, C_{g,0}} (1+e^{T L_{\nabla u^2}})\|\delta\tilde{u}\|_\cU
\end{align*}
where the constant $C_{\epsilon, T_0, C_{g,0}}$ depends only on $\epsilon$, on $T_0$, on $C_{g,0}$, and on the model's parameters. This yields~\eqref{eq:proof-existence-step2-abc}. 
\end{proof}

\begin{lemma}
Let $\epsilon>0$. 
Let $T_0>0$ and $C_{g,0}$ be as in Proposition~\ref{prop:existence-shortT-Psi-contraction}. 
There exists $T_1<T_0$, $C_{g,1}<C_{g,0}$, and $C_{V,1}$ depending only on the model parameters (except $g$ and $V$) and on $\epsilon$ such that, if $T<T_1$, $C_g < C_{g,1}$ and $\|V\|_\infty<C_{V,1}$, then $\Phi$ is a strict contraction on  $\cU_{K(T,C_g), \Gamma_2(C_g)+\epsilon, \Gamma_3(\Gamma_2(C_g))+\epsilon}$.
\end{lemma}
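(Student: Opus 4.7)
The plan is to combine the bound established in Lemma~\ref{lem:proof-existence-deltau-Nu} with a careful choice of the three small parameters $T$, $C_g$ and $\|V\|_\infty$ so that the coefficient multiplying $\|\delta\tilde u\|_\cU$ becomes strictly less than one, while simultaneously verifying that $\Phi$ stabilizes the candidate set. There is no new PDE estimate to prove; everything reduces to a parameter-tuning argument on top of what has already been done.

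First, I would check the invariance $\Phi\bigl(\cU_{K(T,C_g),\Gamma_2(C_g)+\epsilon,\Gamma_3(\Gamma_2(C_g))+\epsilon}\bigr)\subseteq \cU_{K(T,C_g),\Gamma_2(C_g)+\epsilon,\Gamma_3(\Gamma_2(C_g))+\epsilon}$. For this, given any $\tilde u$ in that set, Proposition~\ref{prop:existence-shortT-Psi-contraction} (with some $C_1>\int|x|^2 m_0(dx)$) guarantees that $\Psi^{\tilde u}$ is a well-defined strict contraction on the compact convex set $\cK_{C_1}$. Banach's fixed point theorem then produces a unique $\mu^*\in\cK_{C_1}$ with $\Psi^{\tilde u}(\mu^*)=\mu^*$, and by the inclusion $\Psi^{\tilde u}_1(\cK_{C_1})\subseteq \cU_{K(T,C_g),\Gamma_2(C_g)+\epsilon,\Gamma_3(\Gamma_2(C_g))+\epsilon}$ proved in the same proposition, $\Phi(\tilde u)=\Psi^{\tilde u}_1(\mu^*)$ lies in the required set. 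This shows $\Phi$ is well defined and stabilizes the set.

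Next, for the contraction estimate, I would take two elements $\tilde u^1,\tilde u^2\in\cU_{K(T,C_g),\Gamma_2(C_g)+\epsilon,\Gamma_3(\Gamma_2(C_g))+\epsilon}$, set $u^i=\Phi(\tilde u^i)$, and directly invoke Lemma~\ref{lem:proof-existence-deltau-Nu}, which yields
\[
\|\delta u\|_\cU \le \max\{T,\|V\|_\infty,K(T,C_g)\}\, C_{\epsilon,T_0,C_{g,0}}\,\bigl(1+e^{T L_{\nabla u^2}}\bigr)\,\|\delta\tilde u\|_\cU .
\]
Since $u^2\in\cU_{K(T,C_g),\Gamma_2(C_g)+\epsilon,\Gamma_3(\Gamma_2(C_g))+\epsilon}$, its gradient $\nabla u^2$ is Lipschitz in $x$ with constant $L_{\nabla u^2}\le \Gamma_2(C_g)+\epsilon$, a bound depending on $\epsilon$ and $C_g$ but not on $T$. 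Consequently the factor $1+e^{TL_{\nabla u^2}}$ is bounded by $2e^{T_0(\Gamma_2(C_{g,0})+\epsilon)}$ once $T\le T_0$ and $C_g\le C_{g,0}$, i.e.\ by a constant depending only on $\epsilon$, $T_0$, $C_{g,0}$ and the model data.

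The final step is the scalar inequality
\[
\max\{T,\|V\|_\infty,K(T,C_g)\}\;\cdot\;\underbrace{C_{\epsilon,T_0,C_{g,0}}\bigl(1+e^{T_0(\Gamma_2(C_{g,0})+\epsilon)}\bigr)}_{=:M_{\epsilon,T_0,C_{g,0}}}\;<\;1.
\]
Recall that $K(T,C_g)=e^{3(TC_3(\Gamma_2(C_g))+C_g)}(TC_3(\Gamma_2(C_g))+C_g)\to 0$ as $(T,C_g)\to(0,0)$. Choosing first $C_{g,1}\le C_{g,0}$ small enough that $K(T_0,C_{g,1})\le \tfrac{1}{4M_{\epsilon,T_0,C_{g,0}}}$, then $T_1\le T_0$ small enough that both $T_1\le\tfrac{1}{4M_{\epsilon,T_0,C_{g,0}}}$ and $K(T_1,C_{g,1})\le \tfrac{1}{4M_{\epsilon,T_0,C_{g,0}}}$, and finally $C_{V,1}\le\tfrac{1}{4M_{\epsilon,T_0,C_{g,0}}}$, the displayed maximum is at most $\tfrac14$, so $\Phi$ is a contraction with Lipschitz constant at most $1/4$ on the set in question. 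The only mildly delicate point of the argument is making sure that the constant $M_{\epsilon,T_0,C_{g,0}}$ really does not degrade when $T,C_g,\|V\|_\infty$ are further shrunk (it is constructed from $C_{\epsilon,T_0,C_{g,0}}$ which by Lemma~\ref{lem:proof-existence-deltau-Nu} depends only on $\epsilon$, $T_0$, $C_{g,0}$ and the model data), after which the selection of $T_1,C_{g,1},C_{V,1}$ is entirely elementary.
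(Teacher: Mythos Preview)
Your proposal is correct and follows essentially the same approach as the paper: both invoke Proposition~\ref{prop:existence-shortT-Psi-contraction} for invariance, apply Lemma~\ref{lem:proof-existence-deltau-Nu} for the contraction estimate, bound $L_{\nabla u^2}$ by $\Gamma_2(C_g)+\epsilon$ via membership in the $\cU$ set, and then shrink $T$, $C_g$, $\|V\|_\infty$ to force the contraction constant below $1$. Your write-up is slightly more explicit about the invariance step and the parameter selection, but the argument is the same.
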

\begin{proof}

We prove that there exist $T_1<T_0$, $C_{g,1}<C_{g,0}$, and $C_{V,1}$ depending only on the model parameters (except $g$ and $V$) and on $\epsilon$ such that, if $T<T_1$, $\|g\|_\infty < C_{g,1}$ and $\|V\|_\infty<C_{V,1}$, then: there exists $\Gamma<1$ such that for all $\tilde u^1,\tilde u^2 \in \cU_{\Gamma_2(C_g)+\epsilon, \Gamma_3(\Gamma_2(C_g))+\epsilon}$, 
    \[
        \|\Phi(\tilde{u}^1) - \Phi(\tilde{u}^2)\|_\cU \le \Gamma \|\tilde{u}^1 - \tilde{u}^2\|_\cU.
    \]
    Take $\tilde u^1,\tilde u^2 \in \cU_{K(T,C_g), \Gamma_2(C_g)+\epsilon, \Gamma_3(\Gamma_2(C_g))+\epsilon}$. From Proposition~\ref{prop:existence-shortT-Psi-contraction}, we know that $u^1:= \Phi(\tilde{u}^1)$ and $u^2:=\Phi(\tilde{u}^2)$ are in $\cU_{K(T,C_g), \Gamma_2(C_g)+\epsilon, \Gamma_3(\Gamma_2(C_g))+\epsilon}$.
    
    By Lemma~\ref{lem:proof-existence-deltau-Nu},  
    \begin{align*}
        \|\delta u\|_\cU 
        &\le \max\{T, \|V\|_\infty, K(T,C_g)\} C_{\epsilon,T_0}(1+e^{T L_{\nabla u^2}})\|\delta\tilde{u}\|_\cU
        \\
        &= \Gamma \|\delta\tilde{u}\|_\cU
    \end{align*}
    with $\Gamma := \max\{T, \|V\|_\infty, K(T,C_g)\} C_{\epsilon,T_0} (1+e^{T (\Gamma_2(C_g)+\epsilon)})$,  
    where we used the fact that $L_{\nabla u^2} \le \|u^2\|_\cU \le \Gamma_2(C_g)+\epsilon$. To have $\Gamma<1$, it is sufficient to choose $T_1, C_{g,1}, C_{V,1}$ such that:
    \[
        \max\{T_1, C_{V,1}, K(T_1,C_{g,1})\} < \min\left(\tfrac{1}{\Gamma_2(C_g)+\epsilon}, \tfrac{1}{C_{\epsilon,T_0}(1+e)}\right).
    \] 

\end{proof}

%%%%%%%%%%%%%%%%%%%%%%%%
\subsection{Auxiliary lemma to bound $W_1$}
\begin{lemma}
\label{lem:W1-mu-delta-nabla-u}

    Let $u_1, u_2 \in \cU$. Let $\mu_i = \Psi_2(u_i), i=1,2$. Then
    $$
        W_1(\mu^1_t,\mu^2_t) \le T C_{T,L_{\nabla u^2}}\|\delta\nabla u\|_\infty,
    $$
    where $C$ is bounded when $T,L_{\nabla u^2}$ are bounded.
\end{lemma}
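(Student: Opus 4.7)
The plan is to use a synchronous coupling followed by Kantorovich--Rubinstein duality, treating the Gibbs normalization by the same three-term decomposition that appears in \textbf{Part (ii)} of the proof of Lemma~\ref{lem:existence-shortT-Psi2}. First, by the superposition principle (Theorem~\ref{th:superposition}) applied to each $\mu^i=\Psi_2(u^i)$, we can realize $\mu^i_t$ as the tilted marginal
$$
\mu^i_t(dx)=\frac{\EE[\delta_{X^i_t}(dx)e^{-A^i_t}]}{\EE[e^{-A^i_t}]},\qquad A^i_t=\int_0^tV(X^i_s)ds,
$$
where $X^i$ solves $dX^i_s=-\nabla u^i_s(X^i_s)ds+dW_s$ with $X^i_0\sim\mu_0$. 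Using the \emph{same} Brownian motion and the same initial state for $i=1,2$ couples $X^1$ and $X^2$ so that
$$
|X^1_t-X^2_t|\le\int_0^t|\nabla u^1_s(X^1_s)-\nabla u^2_s(X^2_s)|\,ds\le T\|\delta\nabla u\|_\infty+L_{\nabla u^2}\int_0^t|X^1_s-X^2_s|\,ds,
$$
and Gronwall's inequality yields a pathwise bound
$$
\sup_{0\le s\le T}|X^1_s-X^2_s|\le T\,e^{TL_{\nabla u^2}}\|\delta\nabla u\|_\infty,
$$
which automatically gives $|A^1_t-A^2_t|\le T\|\nabla V\|_\infty\cdot T\,e^{TL_{\nabla u^2}}\|\delta\nabla u\|_\infty$ and hence $|e^{-A^1_t}-e^{-A^2_t}|\le|A^1_t-A^2_t|$.

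Next, to bound $W_1(\mu^1_t,\mu^2_t)$, I use the Kantorovich--Rubinstein formula restricting to $1$-Lipschitz test functions $\varphi$ with $\varphi(0)=0$, and decompose
\begin{align*}
\langle\varphi,\mu^1_t-\mu^2_t\rangle
&=\frac{\EE[(\varphi(X^1_t)-\varphi(X^2_t))e^{-A^1_t}]}{\EE[e^{-A^1_t}]}
+\frac{\EE[\varphi(X^2_t)(e^{-A^1_t}-e^{-A^2_t})]}{\EE[e^{-A^1_t}]}\\
&\qquad+\EE[\varphi(X^2_t)e^{-A^2_t}]\Bigl(\frac{1}{\EE[e^{-A^1_t}]}-\frac{1}{\EE[e^{-A^2_t}]}\Bigr).
\end{align*}
Since $V\ge 0$ and $\|V\|_\infty\le 1$, the denominators are bounded below by $e^{-T}$. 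Using $|\varphi(X^2_t)|\le|X^2_t|$ together with the standard moment bound $\sup_{0\le t\le T}\EE[|X^2_t|^2]\le C(T,\|\nabla u^2\|_\infty,m_0)$ (obtained exactly as in the proof of Lemma~\ref{lem:existence-shortT-Psi2}, Part (ii), via It\^o's formula and Gronwall), each of the three terms is bounded by a constant times $T\,e^{TL_{\nabla u^2}}\|\delta\nabla u\|_\infty$, and taking the supremum over $\varphi$ yields the claimed inequality with a constant $C_{T,L_{\nabla u^2}}$ that stays bounded when $T$ and $L_{\nabla u^2}$ are bounded.

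No single step is genuinely hard: the coupling yields the pathwise Gronwall bound immediately, and the three-term decomposition has already been exercised in Lemma~\ref{lem:existence-shortT-Psi2}. The only point requiring mild care is the control of $\EE[|\varphi(X^2_t)|]$ in the second and third terms; this is handled by the normalization $\varphi(0)=0$ combined with the second moment estimate for $X^2_t$, which is available because $\nabla u^2$ is bounded by $\|u^2\|_{\cU}$.
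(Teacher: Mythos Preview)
Your proof is correct and follows essentially the same approach as the paper: synchronous coupling of the state processes driven by the same Brownian motion and initial condition, Gronwall to control $|X^1_t-X^2_t|$, and the three-term Kantorovich--Rubinstein decomposition to handle the Gibbs normalization. The only difference is that you obtain a pathwise (almost-sure) Gronwall bound on $\sup_s|X^1_s-X^2_s|$ rather than the paper's $L^2$ bound, which is a mild simplification since it spares you the Cauchy--Schwarz step in the subsequent estimates.
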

\begin{proof}[Proof of Lemma~\ref{lem:W1-mu-delta-nabla-u}]
{\bf First step: Bounding $W_1(\mu^1_t,\mu^2_t)$ by $\sup_{t \in [0,T]} \EE \left[ |X^1_t - X^2_t|^2 \right]$. }

Let us recall that, thanks to the Kantorovich-Rubinstein duality theorem, we have:
$$
    W_1(\mu_s, \mu_t)
    =
    \inf_{\varphi: 1-Lip} \int \varphi(x) d(\mu_s(x) - \mu_t(x)) 
    =
    \inf_{\varphi: 1-Lip, \varphi(0) = 0} \int \varphi(x) d(\mu_s(x) - \mu_t(x))
$$
where the second equality holds because $\mu_s$ and $\mu_t$ both have total mass equal to $1$ so we can shift the function by a constant without changing the integral. Let $\varphi$ be a $1$-Lipschitz function such that $\varphi(0) = 0$. Since we assume that $m_0$ has compact support, $\EE[|\varphi(X_0)|^2]^{1/2}$ is bounded by a constant $C_{m_0}$ depending only on the support of $m_0$.

Let $\varphi$ be a 1-Lipschitz function satisfying $\varphi(0)=0$. We have:
\begin{align*}
    <\mu^1_t - \mu^2_t, \varphi>
    &= \frac{\EE[\varphi(X^1_t) e^{-A^1_t}]}{\EE[e^{-A^1_t}]} - \frac{\EE[\varphi(X^2_t) e^{-A^2_t}]}{\EE[e^{-A^2_t}]}
    \\
    &= \frac{\EE[(\varphi(A^1_t) -\varphi(X^2_t)) e^{-A^1_t}]}{\EE[e^{-A^1_t}]} 
    + \frac{\EE[\varphi(X^2_t) (e^{-A^1_t} - e^{-A^2_t})]}{\EE[e^{-A^1_t}]}
    + \EE[\varphi(X^2_t) e^{-A^2_t}]\left(\frac{1}{\EE[e^{-A^1_t}]} - 
    \frac{1}{\EE[e^{-A^2_t}]}
    \right).
\end{align*}
We bound each of the three terms. Below, it is important that $C$ is a constant whose value is independent of $\varphi$ (except for the fact that $\varphi$ has Lipschitz constant $1$). Its value may depend on the Lipschitz constant of $V$. 
\begin{itemize}
    \item $\left|\frac{\EE[(\varphi(X^1_t) -\varphi(X^2_t)) e^{-A^1_t}]}{\EE[e^{-A^1_t}]}\right| \le \EE[|\varphi(X^1_t) -\varphi(X^2_t)|^2]^{1/2} \EE[e^{-2A^1_t}]^{1/2}  \frac{1}{\EE[e^{-A^1_t}]}\le C \EE[|X^1_t -X^2_t|^2]^{1/2}$ using H\"older inequality and the fact that $A^1_t \in [0,T]$. 
    \item $\left|\frac{\EE[\varphi(X^2_t) (e^{-A^1_t} - e^{-A^2_t})]}{\EE[e^{-A^1_t}]}\right| \le C \EE[|\varphi(X^2_t)| |e^{-A^1_t} - e^{-A^2_t}|] \le C\EE[|\varphi(X^2_t)|^2]^{1/2} \EE[|e^{-A^1_t} - e^{-A^2_t}|^2]^{1/2}$. We first note that $A^1_t,A^2_t \in [0,T]$ and the exponential function is locally Lipschitz so it is Lipschitz on $[0,T]$. So $|e^{-A^1_t} - e^{-A^2_t}| \le C|A^1_t - A^2_t| \le C\int_0^t |V(X^1_s) - V(X^2_s)|ds \le C \int_0^t |X^1_s - X^2_s|ds$. Moreover, $\EE[|\varphi(X^2_t)|^2]^{1/2} \le \EE[|\varphi(X_0)|^2]^{1/2} + \EE[|\varphi(X^2_t) - \varphi(X_0)|^2]^{1/2}$, and $\EE[|\varphi(X_0)|^2]^{1/2}<+\infty$ since we assume $m_0$ has compact support, while $\EE[|\varphi(X^2_t) - \varphi(X_0)|^2] \le \EE[|X^2_t - X_0|^2] \le C$ where the last constant $C$ depends on $\|u\|_\cU$. As a consequence: 
    $\left|\frac{\EE[\varphi(X^2_t) (e^{-A^1_t} - e^{-A^2_t})]}{\EE[e^{-A^1_t}]}\right| \le C \int_0^t \EE[|X^1_s - X^2_s|^2]^{1/2} ds$
    \item To bound $\EE[\varphi(X^2_t) e^{-A^2_t}]\left(\frac{1}{\EE[e^{-A^1_t}]} - 
    \frac{1}{\EE[e^{-A^2_t}]}\right)$, we first note that $\EE[|\varphi(X^2_t)| e^{-A^2_t}] \le C$ (roughly same argument as above), and moreover $\left|\frac{1}{\EE[e^{-A^1_t}]} - 
    \frac{1}{\EE[e^{-A^2_t}]} \right| = \left|\frac{1}{\EE[e^{-A^1_t}]\EE[e^{-A^2_t}]}(\EE[e^{-A^2_t} - e^{-A^1_t}])\right| \le C |\EE[e^{-A^2_t} - e^{-A^1_t}]| \le C \int_0^t \EE[|X^1_s - X^2_s|^2]^{1/2} ds$ (same argument as above).
\end{itemize}
So overall, collecting terms:
\begin{align*}
    <\mu^1_t - \mu^2_t, \varphi>
    &\le C \sup_{t \in [0,T]} \EE[|X^1_t - X^2_t|^2]^{1/2},
\end{align*}
where the constant $C$ is independent of $\varphi$ but may depend on $T$ (while remaining bounded for bounded $T$) and on the Lipschitz constant of $V$. As consequence,
\begin{align*}
    \sup_{t \in [0,T]} W_1(\mu^1_t, \mu^2_t)
    &\le C \sup_{t \in [0,T]} \EE[|X^1_t - X^2_t|^2]^{1/2}.
\end{align*}

{\bf Second step: Bounding $\sup_{t \in [0,T]} \EE \left[ |X^1_t - X^2_t|^2 \right]$ by $\|\nabla \delta u\|_\infty$. }

We have
\begin{align*}
    \EE \left[ |X^1_t - X^2_t|^2 \right]
    &\le C \int_0^t \EE \left[ |\nabla u^1_s(X^1_s) - \nabla u^2_s(X^2_s)|^2 \right]ds
    \\
    &\le C \left(\int_0^t \EE \left[ |\nabla u^1_s(X^1_s) - \nabla u^2_s(X^1_s)|^2 \right] ds + \int_0^t \EE \left[ |\nabla u^2_s(X^1_s) - \nabla u^2_s(X^2_s)|^2 \right] ds\right)
    \\
    &\le C\left( t \|\nabla \delta u\|_\infty + L_{\nabla u^2}\int_0^t \EE \left[ |X^1_s - X^2_s|^2 ds \right] \right)
\end{align*}
where $L_{\nabla u^2}$ is the supremum over $t \in [0,T]$ of the Lipschitz constant of $\nabla u^2_t(\cdot)$.

By Gr\"onwall's lemma, 
\begin{align*}
    \sup_{t \in [0,T]} \EE \left[ |X^1_t - X^2_t|^2 \right]
    \le T C e^{T L_{\nabla u^2}} \|\nabla \delta u\|_\infty,
\end{align*}
where $C$ is independent of $u^i$.

\vskip 6pt
Combining the the results of the two steps above, we obtain:
\begin{align*}
    \sup_{t \in [0,T]} W_1(\mu^1_t, \mu^2_t)
    \le T C e^{T L_{\nabla u^2}} \|\nabla \delta u\|_\infty,
\end{align*}
where $C$ is independent of $u^i, i=1,2$ and is bounded for bounded $T$.
\end{proof}

\end{document}